\theoremstyle{plain}
\newcommand\no[1]{}
\newtheorem{theorem}{Theorem}[section]
\newtheorem*{thm}{Main Theorem}
\newtheorem{lemma}[theorem]{Lemma}
\newtheorem{proposition}[theorem]{Proposition}
\newtheorem{conjecture}{Conjecture}
\theoremstyle{definition}
\newtheorem{remark}[theorem]{Remark}
\newtheorem{example}[theorem]{Example}
\newtheorem{definition}[theorem]{Definition}
\newtheorem*{ack}{Acknowledgments}
\renewcommand{\Im}{\operatorname{Im}}
\newcommand{\C}{\mathbb{C}}
\newcommand{\R}{\mathbb{R}}
\newcommand{\Z}{\mathbb{Z}}
\newcommand{\Q}{\mathbb{Q}}
\newcommand{\SL}{\mathrm{SL}}
\newcommand{\SU}{\mathrm{SU}}
\newcommand{\Id}{\operatorname{Id}}
\newcommand{\Int}{\operatorname{Int}}
\newcommand{\Res}{\operatorname{Res}}
\newcommand{\Vol}{\operatorname{Vol}}
\newcommand{\tr}{\operatorname{tr}}
\newcommand{\sign}{\operatorname{sign}}
\newcommand{\ad}{\operatorname{ad}}
\newcommand{\modtwo}{\underset{(2)}{\equiv}}
\newcommand{\nmodtwo}{\underset{(2)}{\not\equiv}}
\newcommand{\tGamma}{\tilde{\Gamma}}
\newcommand{\tTheta}{\tilde{\Theta}}
\newcommand{\trho}{\tilde{\rho}}
\newcommand{\htau}{\hat{\tau}}
\newcommand{\CS}{\operatorname{CS}}
\newcommand{\Tor}{\operatorname{Tor}}
\newcommand{\ChernSimons}{\mathcal{CS}}
\newcommand{\Reidemeister}{\mathcal{T}}
\newcommand{\RD}{\mathcal{R}^{\Delta}}
\newcommand{\Rp}{\mathcal{R}_{+}}
\newcommand{\Rm}{\mathcal{R}_{-}}
\newcommand{\Rpm}{\mathcal{R}_{+}}
\newcommand{\RpD}{\mathcal{R}_{+}^{\Delta}}
\newcommand{\RmD}{\mathcal{R}_{-}^{\Delta}}
\newcommand{\RpmD}{\mathcal{R}_{\pm}^{\Delta}}
\newcommand{\RN}{\mathcal{R}^{\nabla}}
\newcommand{\RpN}{\mathcal{R}_{+}^{\nabla}}
\newcommand{\RmN}{\mathcal{R}_{-}^{\nabla}}
\newcommand{\RpmN}{\mathcal{R}_{\pm}^{\nabla}}
\newcommand{\HpD}{\mathcal{H}_{+}^{\Delta}}
\newcommand{\HmD}{\mathcal{H}_{-}^{\Delta}}
\newcommand{\HpmD}{\mathcal{H}_{\pm}^{\Delta}}
\newcommand{\HpN}{\mathcal{H}_{+}^{\nabla}}
\newcommand{\HmN}{\mathcal{H}_{-}^{\nabla}}
\newcommand{\HpmN}{\mathcal{H}_{\pm}^{\nabla}}
\renewcommand{\i}{\sqrt{-1}}
\begin{document}

\title[Quantum invarians of three-manifold]{Quantum invariants of three-manifolds obtained by surgeries along torus knots}

\author{Hitoshi Murakami}
\address{
Graduate School of Information Sciences,
Tohoku University,
Aramaki-aza-Aoba 6-3-09, Aoba-ku,
Sendai 980-8579, Japan}
\email{hitoshi@tohoku.ac.jp}
\author{Anh T.~Tran}
\address{
Department of Mathematical Sciences, The University of Texas at Dallas, Richardson,
TX 75080, USA}
\email{att140830@utdallas.edu}
\date{\today}
\begin{abstract}
We study the asymptotic behavior of the Witten--Reshetikhin--Turaev invariant associated with the square of the $n$-th root of unity with odd $n$ for a Seifert fibered space obtained by an integral Dehn surgery along a torus knot.
We show that it can be described as a sum of the Chern--Simons invariants and the twisted Reidemeister torsions both associated with representations of the fundamental group to the two-dimensional complex special linear group.
\end{abstract}
\subjclass[2010]{Primary 57M27 57M25 57R56}
\thanks{H.M. was supported by JSPS KAKENHI Grant Number JP17K05239.
A.T. was supported by a JSPS postdoctoral fellowship and a grant from the Simons Foundation (\#354595)}
\maketitle
\section{Introduction}
For a closed three-manifold $M$ and an integer $r\ge2$, we denote by $\tau_r(M;\exp(2\pi\i/r))$ the Witten--Reshetikhin--Turaev quantum $\mathrm{SU}(2)$ invariant \cite{Reshetikhin/Turaev:INVEM1991,Witten:COMMP1989}.
Here E.~Witten introduced, in a physical way, the invariant by using the Chern--Simons action and the path integral, and N.~Reshetikhin and V.~Turaev defined it, in a mathematical way motivated by Witten's paper, by using quantum groups.
\par
In \cite[(2.23)]{Witten:COMMP1989}, Witten suggests that when $r\to\infty$, $\tau_r(M;\exp(2\pi\i/r))$ splits into sums of terms of $\SU(2)$ representations of $\pi_1(M)$, and each term can be expressed in terms of the associated Chern--Simons invariant and the Reidemeister torsion.
The asymptotic behavior of $\tau_r(M;\exp(2\pi\i/r))$ are studied in \cite{Freed/Gompf:COMMP1991,Jeffrey:COMMP1992,Rozansky:COMMP1995,Andersen:JREIA2013,Andersen/Hansen:JKNOT2006}.
In \cite[(1.36)]{Freed/Gompf:COMMP1991}, D.~Freed and R.~Gompf gave a precise formula as a speculation and did computer calculation for the asymptotic behaviors of the invariants of some Seifert fibered three-manidolds including lens spaces.
L.~Jeffrey \cite{Jeffrey:COMMP1992} confirmed the formula for lens spaces and for torus bundles over circles.
L.~Rozansky \cite{Rozansky:COMMP1995,Rozansky:COMMP1996_2} obtained the asymptotic expansion of $\tau_r(M;\exp(2\pi\i/r))$ for Seifert fibered spaces $(O,g;0\mid0;\alpha_1,\beta_1;\alpha_2,\beta_2,\dots,\alpha_k,\beta_k)$.
See also \cite{HikamiCOMMP2006}.
\par
The following conjecture is a part of the Asymptotic Expansion Conjecture \cite{Andersen:problem,Andersen:JREIA2013,Andersen/Himpel:QT2012} by J.~Andersen.
\begin{conjecture}[Asymptotic Expansion Conjecture]\label{conj:AEC}
There exist constants $b_j\in\C$ and $d_j\in\Q$ such that
\begin{equation*}
  \tau_r(M;\exp(2\pi\i/r))
  =
  \sum_{j=1}^{n}b_je^{2\pi\i rq_j}r^{d_j}
  +
  O(r^{-1})
\end{equation*}
for $r\to\infty$, where $0=q_0<q_1<q_2<\dots<q_n$ are different values of the Chern--Simons invariants of $M$ associated with representations of $\pi_1(M)$ to $\mathrm{SU}(2)$.
See \cite{Andersen:problem} for the original conjecture.
\end{conjecture}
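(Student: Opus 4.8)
The plan is to start from the Reshetikhin--Turaev surgery presentation of $\tau_r(M;\exp(2\pi\i/r))$ for the Seifert fibered space $M$ obtained by integral Dehn surgery along a torus knot $K$. First I would express the invariant as a finite sum over colors of the colored Jones polynomial of $K$, weighted by the standard surgery data and the normalization coming from the framing. Since $K$ is a torus knot, its colored Jones polynomial admits a closed Gauss-sum form, so the whole invariant should collapse to a one-dimensional lattice sum whose summand is essentially a quadratic exponential in the color variable multiplied by an explicit trigonometric factor. This reduces the analytic problem to the asymptotics of a single arithmetic sum as $r\to\infty$.

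Next I would apply the Poisson summation formula to rewrite this lattice sum as a sum of integrals, each of the shape
\begin{equation*}
  \int e^{2\pi\i r\,\varphi(x)}\,a(x)\,dx,
\end{equation*}
where the phase $\varphi$ is quadratic to leading order and $a$ collects the slowly varying amplitude. The stationary phase (saddle point) method then governs the limit: the critical points $x_j$ of $\varphi$ are the controlling data, and I expect them to be in bijection with the conjugacy classes of representations $\rho\colon\pi_1(M)\to\SU(2)$, with the reducible ones --- including the trivial representation, which should give $q_0=0$ --- appearing as boundary rather than interior contributions. Verifying this bijection, and in particular that the irreducible $\SU(2)$ characters of a Seifert fibered space match the interior saddles, is a concrete step that I would check against the known description of the representation variety in terms of the Seifert data $(\alpha_i,\beta_i)$.

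The value $\varphi(x_j)$ should then equal the Chern--Simons invariant $q_j$ of the corresponding flat connection, producing the oscillatory factor $e^{2\pi\i r q_j}$; I would confirm this identification by comparing $\varphi(x_j)$ with the explicit Chern--Simons values computable for Seifert fibered spaces from the rotation numbers at the exceptional fibers. The Gaussian integration transverse to each saddle yields the power $r^{d_j}$, with $d_j$ read off from the rank and degeneracy of the Hessian of $\varphi$, together with a leading coefficient $b_j$. Matching $b_j$ to the twisted Reidemeister torsion $\Tor$ of $M$ at $\rho$ is the crux, and I would attempt it by exploiting the Seifert structure, in which the torsion factorizes over the solid-torus pieces around the exceptional fibers, so that the transverse determinant of the Hessian can be recognized as (a multiple of) this torsion.

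The hardest part will be the precise matching of the saddle point data to the topological invariants. Computing the transverse Hessian correctly, so that $d_j$ is the true polynomial order predicted by the conjecture, and controlling degenerate or clustered critical points where ordinary stationary phase degenerates, both require care. Above all, identifying the coefficient $b_j$ with the Reidemeister torsion --- rather than with a bare Hessian determinant --- is the step most likely to resist routine calculation, since it demands an independent computation of $\Tor$ and a clean comparison of normalizations. Finally, I would need to ensure that the off-saddle remainder is genuinely $O(r^{-1})$ and to separate cleanly the reducible contributions from the irreducible interior saddles, since mishandling the reducible locus is the usual source of error in results of this type.
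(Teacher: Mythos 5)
The first thing to note is that the statement you were asked to prove is not proved in the paper at all: it is Conjecture~\ref{conj:AEC}, Andersen's Asymptotic Expansion Conjecture, quoted as background and still open in general. The paper's actual theorem (Theorem~\ref{thm:main}) concerns a different invariant, $\htau_n(X_p;\exp(4\pi\i/n))$, at a different root of unity, and only for the special family $X_p$ of Seifert fibered spaces obtained by integral surgery along torus knots. So there is no proof in the paper to compare yours with, and a genuine proof of the conjecture as stated --- for an \emph{arbitrary} closed three-manifold $M$ --- would be a major result. Your proposal silently replaces ``closed three-manifold $M$'' by ``Seifert fibered space obtained by integral Dehn surgery along a torus knot,'' which is a drastic restriction: for a general $M$ the colored Jones polynomials of a surgery presentation admit no Gauss-sum closed form, so your very first reduction step is unavailable and the whole Poisson-summation/stationary-phase plan never gets started. (For the restricted family your plan addresses, the conjecture is in fact already verified in the literature cited by the paper --- Jeffrey, Rozansky, and Hansen --- not in this paper.)

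Even within the restricted family, two substantive gaps remain. First, you treat the amplitude $a(x)$ as slowly varying, but in the Gauss-sum form of a torus knot invariant the amplitude has poles (coming from factors like $1/\sinh(2az)$, as in the function $\varphi(z)$ and the cotangent factors $\psi_1,\psi_2$ of Section~3 of the paper); when contours are shifted to pass through the saddle points, the residues at these poles produce extra terms that stationary phase does not see, and in the paper's parallel computation these residue terms (the sums $I_{1,1}$, $I_{1,2}$, $I_{1,3}$ and their counterparts for $V_2$) carry the \emph{leading} asymptotics, not a correction. Ignoring them makes the claimed $O(r^{-1})$ error bound unjustifiable. Second, your asserted bijection between interior saddles and irreducible $\SU(2)$ representations is precisely the hard point, not a routine check: the critical-point data naturally index flat $\SL(2;\C)$ connections, and the paper's analogous expansion for $\htau_n$ exhibits contributions attached to $\SU(1,1)$ representations and to abelian ones alongside the $\SU(2)$ ones. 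For the invariant at $\exp(2\pi\i/r)$ one expects only $\SU(2)$ phases to survive in the final formula, but that requires an argument, and nothing in your outline supplies it.
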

The Asymptotic Expansion Conjecture is proved for all finite order mapping tori in \cite{Andersen:JREIA2013} (see also \cite{Andersen/Himpel:QT2012}),
for three-manifold obtained by rational Dehn surgeries along the figure-eight knot \cite{Andersen/Hansen:JKNOT2006}.
See \cite{Hansen2005} for more general Seifert fibered spaces.
\par
Note that the Asymptotic Expansion Conjecture states that $\tau_r(M;\exp(2\pi\i/r))$ grows at most polynomially when $r\to\infty$, which is true by topological quantum field theory.
\par
For an odd integer $n\ge3$, we can define another quantum invariant denoted by $\htau_n(M;\exp(4\pi\i/n))$.
See Subsection~\ref{subsec:WRT} for the combinatorial definition together with that of $\tau_r(M;\exp(2\pi\i/r))$.
In \cite{Chen/Yang:QT2018}, Q.~Chen and T.~Yang studies the asymptotic behavior of $\htau_n(M;\exp(4\pi\i/n))$ by using computer for closed three-manifolds obtained by integral Dehn surgeries along the knots $4_1$ and $5_2$, and proposed the following conjecture:
\begin{conjecture}[{\cite[Conjecture~1.2]{Chen/Yang:QT2018}}]\label{conj:CY}
For a closed, hyperbolic three-manifold $M$, $\htau_n(M;\exp(4\pi\sqrt{-1}/n))$ grows exponentially and
\begin{equation}\label{eq:CY}
  4\pi\i\lim_{n\to\infty}\frac{\log\htau_n(M;\exp(4\pi\sqrt{-1}/n))}{n}=\CS(M)+\i\Vol(M),
\end{equation}
where $\Vol(M)$ is the hyperbolic volume of $M$ and $\CS(M):=2\pi\operatorname{cs}(M)$ with $\operatorname{cs}(M)$ the Chern--Simons invariant of the complete hyperbolic metric of $M$ \cite{Chern/Simons:ANNMA21974,Cheeger/Simons:LNM1167}.
\end{conjecture}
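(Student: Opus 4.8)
The plan is to combine a Dehn surgery presentation of $M$ with a saddle-point analysis of the resulting state sum. Since every closed orientable three-manifold is obtained by integral surgery on some link $L\subset S^{3}$, I would first fix such a presentation and apply the surgery formula for $\htau_{n}(\,\cdot\,;\exp(4\pi\i/n))$, writing the invariant as a finite multi-sum over admissible colorings of the components of $L$. Each summand is a product of values of the colored Jones polynomials of $L$ at $\exp(4\pi\i/n)$ with Gaussian weights coming from the framings and surgery coefficients. The crucial analytic step is to express each quantum factorial occurring in these colored Jones values through Faddeev's quantum dilogarithm, thereby converting the multi-sum into a finite-dimensional oscillatory integral whose integrand takes the form $\exp\!\bigl(\tfrac{n}{4\pi\i}V(\mathbf{z})\bigr)$ for a holomorphic potential $V$ determined by the surgery and coloring data; the factor $4\pi\i$, rather than the more familiar $2\pi\i$, reflects precisely the evaluation at $\exp(4\pi\i/n)$.

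The second stage is the method of steepest descent applied to this integral. I would enumerate the critical points of $V$, deform the integration contour through the dominant saddle, and read off the leading exponential term as $\exp\!\bigl(\tfrac{n}{4\pi\i}V(\mathbf{z}_{0})\bigr)$, where $\mathbf{z}_{0}$ is the critical point maximizing $\Im V$. The geometric heart of the argument is to identify the saddle equations $\partial V=0$ with the hyperbolic gluing and completeness (Neumann--Zagier) equations of an ideal triangulation of the link complement compatible with the filling slopes, so that the critical points correspond to $\SL(2,\C)$ characters of $\pi_{1}(M)$ and the holonomy of the complete hyperbolic structure singles out a distinguished saddle $\mathbf{z}_{\mathrm{geo}}$. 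Evaluating the Neumann--Zagier potential there yields $V(\mathbf{z}_{\mathrm{geo}})=\CS(M)+\i\Vol(M)$, which upon multiplication by $4\pi\i$ and division by $n$ gives exactly the right-hand side of \eqref{eq:CY}; since $\Re\bigl(\tfrac{1}{4\pi\i}V(\mathbf{z}_{\mathrm{geo}})\bigr)=\tfrac{1}{4\pi}\Vol(M)>0$, the invariant grows exponentially as claimed.

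The hard part will be turning this heuristic into a theorem. One must prove that $\mathbf{z}_{\mathrm{geo}}$ strictly maximizes $\Im V$ among all critical points, which amounts to the statement that the discrete faithful representation strictly maximizes the volume functional over the relevant $\SL(2,\C)$ characters, and that the steepest-descent contour can genuinely be deformed onto the Lefschetz thimble of $\mathbf{z}_{\mathrm{geo}}$ without crossing Stokes walls, while the Gaussian surgery weights do not conspire to cancel the leading contribution. Making the error term uniform in $n$ demands sharp asymptotics of Faddeev's quantum dilogarithm near $\exp(4\pi\i/n)$, where one works on the opposite side of the unit circle from the classical volume conjecture and the usual convergence estimates no longer apply directly. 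It is exactly this combination of global dominance and uniform analytic control that has so far confined rigorous proofs to explicit families---such as surgeries on the figure-eight knot---and a treatment valid for an arbitrary closed hyperbolic $M$ is the central obstacle.
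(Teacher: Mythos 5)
There is nothing in the paper for your argument to be checked against: the statement you have addressed is Conjecture~\ref{conj:CY}, which the paper quotes from Chen and Yang as an \emph{open} conjecture and does not prove. The authors only record that it has been verified in special families (notably by Ohtsuki for integral surgeries on the figure-eight knot), and the body of the paper deliberately works in the complementary, non-hyperbolic situation: for the Seifert fibered spaces $X_p$ obtained by surgery on torus knots they prove, by exact Gaussian-integral and residue manipulations of the torus-knot colored Jones polynomial (no quantum dilogarithm, no saddle selection problem), that $\htau_n(X_p;\exp(4\pi\i/n))$ grows only polynomially, like $n^{3/2}$, with coefficients given by Chern--Simons invariants and Reidemeister torsions of flat $\SL(2;\C)$ connections. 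So your proposal must stand or fall as an attempted proof of an open problem, and as such it is a (correct and standard) research program rather than a proof.

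The gaps are exactly where you locate them, but they are not technical loose ends --- each is itself open in general. First, the conversion of the surgery state sum into an oscillatory integral $\int e^{\frac{n}{4\pi\i}V(\mathbf{z})}\,d\mathbf{z}$ whose critical equations are the Neumann--Zagier gluing/filling equations is not available for an arbitrary integral surgery presentation; such potential functions have been constructed only for specific diagrams or triangulations, and the identification of \emph{all} critical points with $\SL(2;\C)$ characters (rather than spurious or degenerate saddles) fails without case-by-case work. Second, the dominance of the geometric saddle requires simultaneously that $\Im V(\mathbf{z}_{\mathrm{geo}})$ strictly exceeds the imaginary parts of all other \emph{accessible} critical values and that the torus of summation can be deformed onto the corresponding Lefschetz thimble; the Stokes analysis needed to certify this has been carried out only for explicit small examples, and no general volume-maximization principle over the full character variety is known. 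Third, the step ``sum $\to$ integral'' (Poisson summation over the growing set of colorings) and the uniform error control for the quantum dilogarithm at $q=\exp(4\pi\i/n)$ must be established with constants uniform in $n$, which is precisely what breaks down outside the known families. In short, your outline reproduces the expected mechanism behind \eqref{eq:CY}, correctly predicts exponential growth from $\Re\bigl(\tfrac{1}{4\pi\i}V(\mathbf{z}_{\mathrm{geo}})\bigr)=\tfrac{1}{4\pi}\Vol(M)>0$, and correctly names the obstructions; but none of the three obstructions is overcome, so no part of the conjecture is actually proved, and the paper under review offers no proof to fall back on.
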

\begin{remark}
The quantity $\Vol(M)+\i\CS(M)$ is often called the complex volume.
See \cite[Theorem~2.8]{Garoufalidis/Thurston/Zickert:DUKMJ2015}.
\end{remark}
\begin{remark}
Note that this conjecture says that $\htau_n(M;\exp(4\pi\i/n))$ grows exponentially when $n\to\infty$ if $M$ is hyperbolic.
Note also that the first author calculated the asymptotic behavior of $\tau_n(M;\exp(2\pi\sqrt{-1}/n))$ by using computer for three-manifold obtained by integral Dehn surgeries of the figure-eight knot $4_1$, and observed that, possibly because of lack of precision, it grows exponentially and \eqref{eq:CY} holds if we replace $4\pi\sqrt{-1}$ with $2\pi\sqrt{-1}$ \cite{Murakami:SURIK2000}.
\end{remark}
\par
In \cite{Ohtsuki:AGT2018}, T.~Ohtsuki proved Conjecture~\ref{conj:CY} in the case of three-manifolds obtained from the figure-eight knot by integral surgeries.
He also generalized the conjecture above as follows.
\begin{conjecture}\label{conj:Ohtsuki}
For a closed, hyperbolic three-manifold $M$, we have
\begin{equation*}
  \htau_n(M;\exp(4\pi\sqrt{-1}/n))
  \sim
  (\text{some root of unity})\times\omega(M)n^{3/2}
  \exp\left(\frac{n}{4\pi\i}(\CS(M)+\i\Vol(M))\right)
\end{equation*}
for $n\to\infty$, where $\omega(M)$ involves the square root of the twisted Reidemeister torsion associated with the holonomy representation \rm{(}see for example \cite{Porti:MAMCAU1997}\rm{)}, and we write $f(n)\sim g(n)$ for $n\to\infty$ if $\lim_{n\to\infty}\frac{f(n)}{g(n)}=1$.
\end{conjecture}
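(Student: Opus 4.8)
The plan is to reduce the computation of $\htau_n(M;\exp(4\pi\i/n))$ to a single finite-dimensional saddle-point problem and then to read off the three ingredients of the asserted asymptotics -- the exponential rate $\frac{1}{4\pi\i}(\CS(M)+\i\Vol(M))$, the polynomial order $n^{3/2}$, and the torsion factor $\omega(M)$ -- from the critical value, the Hessian, and the prefactor respectively. Present $M$ by an integral surgery along a framed link $L\subset S^3$ (the model case, as in Ohtsuki's figure-eight argument, is surgery along a single knot). First I would use the surgery presentation of the Reshetikhin--Turaev invariant to write $\htau_n$ as a finite sum over colors of the colored Jones polynomial of $L$ at $\exp(4\pi\i/n)$, weighted by a Gaussian factor coming from the surgery coefficients. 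Next I would insert the known asymptotic expansion of the quantum integers and quantum factorials at this root of unity: each such factor contributes, to leading order, the exponential of $\frac{n}{4\pi\i}$ times a dilogarithmic term, so that after collecting terms the summand takes the shape $\exp\bigl(\tfrac{n}{4\pi\i}V(\mathbf{z})+o(n)\bigr)$ for an explicit multivalued potential function $V$ of the rescaled color variables $\mathbf{z}$.

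The core of the argument is the asymptotic analysis of this sum. I would apply Poisson summation to replace the sum by an integral and then evaluate by the saddle-point method. The stationary points of $V$ are exactly the solutions of Thurston's gluing equations for an ideal triangulation of $M$, hence correspond to components of the $\SL(2,\C)$ character variety; the geometric solution, the complete hyperbolic structure, is a distinguished critical point $\mathbf{z}_0$. At that point the Neumann--Zagier formula identifies the critical value $V(\mathbf{z}_0)$ with $\CS(M)+\i\Vol(M)$ up to normalization, producing precisely the claimed exponential factor. The essential step here is to show that this geometric saddle \emph{dominates}: one must prove that the real part of $\frac{1}{4\pi\i}V$, namely $\frac{1}{4\pi}\Vol(M)$, strictly exceeds the contribution of every other critical point and of the boundary, so that only the holonomy representation survives in the limit.

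The subleading terms then supply $n^{3/2}$ and $\omega(M)$. A nondegenerate saddle in the relevant number of variables contributes a factor $n^{-k/2}$ from the Gaussian integration, and combining this with the positive $n$-powers carried by the normalization of $\htau_n$ and by the surgery Gaussian yields the overall order $n^{3/2}$. The prefactor is $(\det\operatorname{Hess}V)^{-1/2}$ times elementary factors, and the key identity to be proved is that this Hessian determinant equals, up to an explicit constant and a root of unity, Porti's adjoint twisted Reidemeister torsion of $M$ associated with the holonomy representation; this is the content of the $1$-loop expectation of Dimofte--Garoufalidis, and it is what makes $\omega(M)$ a square root of a torsion. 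The residual phase is tracked through the signatures of the Gaussian integrations, which yield eighth roots of unity, together with the branch choices in the quantum dilogarithm, and is packaged into the ``root of unity'' factor.

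I expect the principal obstacle to be twofold. Analytically, one must control the error in the asymptotic expansion \emph{uniformly} in the summation variables and justify interchanging the large-$n$ limit with the sum and integral, including the passage across the boundary of the region where $V$ is defined; this is exactly where Ohtsuki's figure-eight argument demanded delicate estimates, and where a general hyperbolic $M$ forces one to work with an arbitrary ideal triangulation and the Neumann--Zagier symplectic data rather than a single explicit integral. Geometrically, proving strict domination of the geometric saddle over all Galois-conjugate and non-geometric solutions is not known in general and is intertwined with the volume conjecture; establishing it, together with the Hessian--torsion identity, constitutes the heart of a complete proof of the stated formula.
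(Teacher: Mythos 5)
You have written a proof sketch for a statement that the paper itself does not prove: Conjecture~\ref{conj:Ohtsuki} is stated as an open conjecture (Ohtsuki's refinement of the Chen--Yang conjecture), known only in special cases such as integral surgeries along the figure-eight knot, which the paper cites but does not reprove. The paper's actual theorem (Theorem~\ref{thm:main}) concerns the \emph{non-hyperbolic} Seifert fibered spaces $X_p$ obtained by surgery on torus knots, and its method is entirely exact: the Rosso--Jones formula for the colored Jones polynomial, Gaussian-integral representations of theta-like factors, residue calculus while shifting contours, and elementary steepest descent, yielding a closed-form asymptotic expansion as a \emph{sum over all} flat $\SL(2;\C)$ connections (irreducible and Abelian) with only polynomial growth --- there is no hyperbolic volume, no dominating geometric saddle, and no quantum dilogarithm anywhere in the paper. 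So your proposal cannot be matched against a proof in the paper, because none exists.

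As a would-be proof of the conjecture, your sketch has genuine gaps at exactly the two places you flag as ``obstacles,'' and deferring them does not close the argument --- they \emph{are} the conjecture. First, strict domination of the geometric saddle, i.e.\ that $\frac{1}{4\pi}\Vol(M)$ strictly exceeds the real part of $\frac{1}{4\pi\i}V$ at every other critical point \emph{and} the boundary contributions of the deformed contours, is unknown for general closed hyperbolic $M$; without it the asymptotics could be polluted or cancelled by non-geometric or Galois-conjugate saddles, and proving it is of the same depth as the volume conjecture itself. Second, the identity $\bigl(\det\operatorname{Hess}V(\mathbf{z}_0)\bigr)^{-1/2}\doteq\omega(M)$ with Porti's adjoint torsion is the $1$-loop conjecture of Dimofte--Garoufalidis, also open in general. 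There are further unaddressed technical failures: an arbitrary ideal triangulation need not admit a solution of the gluing equations recovering the complete structure (positively oriented solutions may not exist), so the critical point $\mathbf{z}_0$ you need may be absent from your potential; and the Poisson-summation step requires uniform-in-color control of the quantum dilogarithm error terms near the boundary of the summation range, which is precisely the ``delicate estimates'' part of Ohtsuki's figure-eight argument and has no general substitute. In short, your text is a reasonable description of the expected program, but it is a program, not a proof, and the statement remains a conjecture.
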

Note that the conjecture above implies Conjecture~\ref{conj:CY}.
\par
Compare this with the complexified version \cite{Murakami/Murakami/Okamoto/Takata/Yokota:EXPMA02} of Kashaev's conjecture for hyperbolic knots \cite{Kashaev:LETMP97} in terms of the colored Jones polynomial \cite{Murakami/Murakami:ACTAM12001}, where a knot is called hyperbolic if its complement $S^3\setminus{K}$ possesses a unique complete hyperbolic structure with finite volume.
\begin{conjecture}[Complexification of Kashaev's Conjecture]\label{conj:cKc}
Let $K\subset S^3$ be a hyperbolic knot.
Then we have
\begin{equation}\label{eq:cKc}
  2\pi\lim_{n\to\infty}
  \frac{\log J_n(K;\exp(2\pi\i/n))}{n}
  =
  \Vol(S^3\setminus{K})+\i\CS(S^3\setminus{K}),
\end{equation}
where $J_n(K;q)$ is the colored Jones polynomial of $K$ associated with the $n$-dimensional representation of $\mathfrak{sl}(2;\C)$ and $\CS$ is the Chern--Simons invariant for a knot \cite{Meyerhoff:LMSLN112}.
\end{conjecture}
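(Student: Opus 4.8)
The plan is to establish \eqref{eq:cKc} by the saddle-point (stationary phase) method applied to an explicit state-sum expression for the colored Jones polynomial at $q=\exp(2\pi\i/n)$. First I would write $J_n(K;\exp(2\pi\i/n))$ as a finite sum built from Kashaev's $R$-matrix, equivalently from quantum-dilogarithm factors $(q)_k$, using a fixed diagram of $K$; the number $m$ of summation variables is controlled by the number of crossings. As $n\to\infty$ each factor is governed by the Euler dilogarithm through the classical estimate $\log(q)_k\sim\tfrac{n}{2\pi\i}\operatorname{Li}_2(e^{2\pi\i k/n})$, so that the summand is, up to subexponential factors, $\exp\!\bigl(\tfrac{n}{2\pi\i}V(\mathbf z)\bigr)$ for a \emph{potential function} $V$ assembled from dilogarithms in the variables $\mathbf z=(z_1,\dots,z_m)$, $z_j=e^{2\pi\i k_j/n}$.

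Next I would pass from the sum to an integral---either directly, because the summand varies slowly on the scale $1/n$, or via Poisson summation---obtaining
\begin{equation*}
  J_n(K;\exp(2\pi\i/n))
  \sim
  (\text{polynomial in }n)\times
  \int\exp\!\Bigl(\frac{n}{2\pi\i}V(\mathbf z)\Bigr)\,d\mathbf z .
\end{equation*}
The exponential growth is then read off from the critical points of $V$, i.e.\ the solutions of $\partial V/\partial z_j=0$. The heart of the argument is to identify these critical-point equations with Thurston's gluing equations for an ideal triangulation of $S^3\setminus K$, so that a distinguished geometric critical point $\mathbf z^{\ast}$ corresponds to the complete hyperbolic structure; via the Neumann--Zagier description of the Chern--Simons invariant one then checks that $V(\mathbf z^{\ast})=\i\bigl(\Vol(S^3\setminus K)+\i\CS(S^3\setminus K)\bigr)$. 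Substituting this critical value and absorbing the Gaussian (Hessian) contribution into the polynomial prefactor yields $\log J_n\sim\tfrac{n}{2\pi}\bigl(\Vol+\i\CS\bigr)$, which is exactly \eqref{eq:cKc}.

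The main obstacle is the rigorous justification of the saddle-point passage for an \emph{arbitrary} hyperbolic knot. Two difficulties stand out. First, one must prove that the geometric critical point $\mathbf z^{\ast}$ maximizes $\Re\bigl(\tfrac{1}{2\pi\i}V\bigr)$ among all critical points that contribute, so that it genuinely dominates; this is a global statement about the critical locus of $V$ that is not controlled by the local geometric input. Second, one must deform the domain of summation onto a steepest-descent contour through $\mathbf z^{\ast}$ without crossing the poles of the quantum dilogarithm, and bound both the tails of the integral and the error in replacing the sum by the integral. These steps have so far been carried out only for special families---most prominently the figure-eight knot---and it is precisely the lack of uniform control over the combinatorics of $V$ and the global structure of its critical set for general $K$ that keeps \eqref{eq:cKc} open.
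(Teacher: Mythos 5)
There is nothing in the paper to compare your proposal against: the statement you were given is Conjecture~\ref{conj:cKc}, an \emph{open conjecture} that the paper records but does not prove and does not claim to prove. The paper only notes that it has been verified for the knots $4_1$ (Ekholm), $5_2$ (Ohtsuki), and $6_1,6_2,6_3$ (Ohtsuki--Yokota); the paper's own theorems concern the asymptotics of $\htau_n$ for surgeries along \emph{torus} knots, whose complements have zero simplicial volume, so none of its machinery bears on \eqref{eq:cKc} for hyperbolic $K$.

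Judged as a proof, your proposal has a genuine gap --- indeed, exactly the gap that keeps the conjecture open, and you candidly say so yourself. The outline (Kashaev's $R$-matrix state sum, quantum-dilogarithm asymptotics $\log(q)_k\sim\frac{n}{2\pi\i}\operatorname{Li}_2(e^{2\pi\i k/n})$, a potential function $V$, saddle-point analysis, identification of the critical equations with Thurston's gluing equations and of the critical value with $\i\left(\Vol+\i\CS\right)$ via Neumann--Zagier) is the standard strategy and is faithful to how the known special cases were handled. But two steps are asserted, not proved, for general hyperbolic $K$: (a) that the geometric critical point $\mathbf{z}^{\ast}$ dominates, i.e.\ that $\Re\bigl(\tfrac{1}{2\pi\i}V\bigr)$ at $\mathbf{z}^{\ast}$ strictly exceeds its value at every other critical point whose contribution survives the contour deformation --- a global statement about the critical locus of $V$ with no known general proof; and (b) that the sum can be replaced by an integral and the contour moved to a steepest-descent path through $\mathbf{z}^{\ast}$ without crossing poles of the quantum dilogarithm, with controlled tails and sum-versus-integral error. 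In the proved cases these are verified by hand from an explicit $V$ with few variables; no argument you give (or that exists) makes them uniform in the diagram of $K$. There are also smaller unaddressed points --- the branch of $\log J_n$, the possibility $J_n(K;e^{2\pi\i/n})=0$ for some $n$, and uniformity of the dilogarithm estimate near its singular points --- but the fatal issue is (a)--(b). So your text is a correct description of a strategy, not a proof; the statement remains a conjecture, consistent with how the paper presents it.
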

\begin{remark}
R.~Kashaev's original conjecture is that for any \emph{hyperbolic} knot $K$, $\lim_{N\to\infty}\frac{2\pi\log\left|\langle K\rangle_{N}\right|}{N}=\Vol(S^3\setminus{K})$, where $\langle K\rangle_{N}$ is his invariant defined in \cite{Kashaev:MODPLA95} depending on an integer $N\ge2$.
\end{remark}
Conjecture~\ref{conj:cKc} is proved for $4_1$ by T.~Ekholm, $5_2$ by T.~Ohtsuki \cite{Ohtsuki:QT2016}, $6_1,6_2,6_3$ by Ohtsuki and Y.~Yokota \cite{Ohtsuki/Yokota:MATPC2018}.
\par
For general knots, J.~Murakami and the first author proposed the following conjecture generalizing Kashaev's conjecture, which was also complexified later.
\begin{conjecture}[Complexification of the Volume Conjecture]\label{conj:cvc}
For any knot $K$, we have
\begin{equation}\label{eq:cvc}
  2\pi\lim_{n\to\infty}
  \frac{\log J_n(K;\exp(2\pi\i/n))}{n}
  =
  \Vol(S^3\setminus{K})+\i\CS(S^3\setminus{K}).
\end{equation}
Here we define $\Vol(S^3\setminus{K}):=v_3\left\|S^3\setminus{K}\right\|$ with $v_3$ the volume of the regular, ideal, hyperbolic tetrahedron and $||S^3\setminus{K}||$ the simplicial volume \rm{(}or Gromov's invariant\rm{)} \rm{(}\cite{Gromov:INSHE82}, \cite[Chapter~6]{Thurston:GT3M}\rm{)}, and $\CS$ is a topological Chern--Simons invariant (defined by the left hand side), which coincides with the Chern--Simons invariant when $K$ is hyperbolic.
\end{conjecture}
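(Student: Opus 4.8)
The plan is to reduce the general statement to the hyperbolic case by means of the geometric (JSJ) decomposition of the knot complement, exploiting that both sides of \eqref{eq:cvc} are additive under the natural gluing operations. On the geometric side, $\Vol(S^3\setminus K)=v_3\|S^3\setminus K\|$ equals the sum of the hyperbolic volumes of the hyperbolic pieces of the JSJ decomposition, while Seifert-fibered and graph pieces contribute nothing to the simplicial volume; the topological Chern--Simons term is likewise additive (modulo the usual framing corrections) along the decomposing tori. On the quantum side, I would use the cabling and satellite formulas for the colored Jones polynomial together with the multiplicativity of $J_n$ under connected sum to express the asymptotics of a composite knot in terms of its geometric constituents. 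Granting this, it suffices to treat two cases: a hyperbolic knot, where the right-hand side is the complex volume of the complete structure, and a Seifert piece (in particular a torus knot), where one must show that $J_n(K;\exp(2\pi\i/n))$ grows at most subexponentially so that it does not disturb the leading term.

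For a hyperbolic knot I would follow the state-integral approach underlying Conjecture~\ref{conj:cKc}: starting from an R-matrix state sum for $J_n$, replace the quantum factorials by Faddeev's quantum dilogarithm and pass to an integral (or finite-dimensional sum) whose summand has the shape $\exp\!\bigl(n\,\Phi(z)/(2\pi\i)+\cdots\bigr)$ for an explicit potential $\Phi$ built from the dilogarithm. I would then apply the saddle-point (stationary-phase) method: the critical-point equations $\partial\Phi/\partial z_j=0$ should coincide, after exponentiation, with Thurston's hyperbolic gluing equations for an ideal triangulation of $S^3\setminus K$, so that a distinguished solution corresponds to the complete hyperbolic structure. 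The final step is to evaluate $\Phi$ at this geometric saddle and identify the value, via the Neumann--Zagier formulation, with the complex volume $\Vol(S^3\setminus K)+\i\,\CS(S^3\setminus K)$ up to the explicit normalizing constant appearing in \eqref{eq:cvc}; the subexponential and polynomial prefactors are then absorbed into the limit.

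The hardest part will be the analytic rigor of the saddle-point step together with the reduction in full generality. Concretely, one must (i) choose a steepest-descent contour through the geometric critical point and prove that this point dominates all other solutions of the gluing equations, (ii) verify that the potential is correctly normalized so that its critical value reproduces the complex volume rather than some Galois conjugate, and (iii) control the error terms uniformly in $n$ so that the limit in \eqref{eq:cvc} exists. The genuinely open difficulty, however, lies in the non-hyperbolic and non-simple cases: establishing that torus-knot and, more generally, graph-manifold pieces contribute only polynomially, and that the cabling and satellite formulas combine the pieces with exactly the additive bookkeeping of simplicial volume and Chern--Simons, is what places Conjecture~\ref{conj:cvc} out of reach of present methods, even though the individual hyperbolic cases cited after Conjecture~\ref{conj:cKc} are accessible.
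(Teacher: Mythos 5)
There is no proof of this statement to compare against: in the paper, Conjecture~\ref{conj:cvc} is exactly that --- a conjecture. The paper records it only as background (noting the special cases that are known: torus knots by Kashaev--Tirkkonen, and the hyperbolic knots $4_1$, $5_2$, $6_1$, $6_2$, $6_3$ via Conjecture~\ref{conj:cKc}), and the paper's actual theorem concerns a different quantity, $\htau_n$ of surgeries along torus knots, established by explicit Gauss-sum and residue computations. So your proposal cannot be ``the same approach as the paper'' or ``a different correct route''; it is an attempt at an open problem, and as written it contains genuine gaps --- which, to your credit, you largely acknowledge yourself.

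Two of these gaps deserve to be named precisely, because they are not technical loose ends but the actual obstructions. First, the reduction step (``granting this, it suffices to treat two cases'') assumes that the exponential growth rate of $J_n(K;\exp(2\pi\i/n))$ is additive under JSJ/satellite decomposition because the simplicial volume is. But the satellite and cabling formulas express $J_n$ of the composite knot as a sum over colors of evaluations of the companion's colored Jones polynomials, and at the root of unity $\exp(2\pi\i/n)$ these summands oscillate and can cancel; no one knows how to extract the claimed additivity of $\lim \frac{\log|J_n|}{n}$ from such a sum. This is precisely why the cases treated in \cite{Zheng:CHIAM22007} and \cite{Le/Tran:JKNOT2010} required separate, ad hoc arguments rather than a general gluing principle. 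Second, the saddle-point step for a general hyperbolic knot --- constructing a steepest-descent contour through the geometric critical point and proving it dominates all other critical points of the potential --- has been carried out only knot-by-knot for the small examples cited after Conjecture~\ref{conj:cKc}; there is no general argument, and the difficulty is not merely ``analytic rigor'' but the absence of any known mechanism forcing the geometric solution of the gluing equations to dominate. Until both of these are supplied, the proposal is a roadmap of known strategies, not a proof.
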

As for non-hyperbolic knots, R.~Kashaev and O.~Tirkkonen proved the Volume Conjecture for torus knots.
Note that since the simplicial volume equals the sum of those of hyperbolic pieces of the knot complement after the torus decomposition (also known as the Jaco--Shalen--Johannson decomposition) \cite{Jaco/Shalen:MEMAM1979,Johannson:1979}, the simplicial volume of a torus knot vanishes.
See \cite{Zheng:CHIAM22007} and \cite{Le/Tran:JKNOT2010} for other cases, that is, for non-hyperbolic knots with non-zero volume.
\par
Now it would be natural to study the asymptotic behavior of $\htau_n(M;\exp(4\pi\sqrt{-1}/n))$ in the case where $M$ is not hyperbolic, and derive a formula similar to Conjecture~\ref{conj:AEC}.
In this paper, we calculate it for a certain family of Seifert fibered spaces with three singular fibers.
\par
Let $T(a,b)$ be the torus knot of type $(a,b)$ in the three-sphere $S^3$ for positive coprime integers $a$ and $b$.
Put $X:=S^3\setminus\Int{N(T(a,b))}$, where $N(T(a,b))$ is the regular neighborhood of $T(a,b)$ in $S^3$ and $\Int$ is the interior.
Note that $X$ is a compact three-manifold with boundary $\partial{X}$ a torus $S^1\times S^1$.
For an integer $p$ we denote by $X_p$ the closed three-manifold obtained from $S^3$ by $p$-Dehn surgery.
\par
It can be shown that $X_p$ is the Seifert fibered three-manifold of type $S(-a/c,b/d,p-ab)$, where $c$ and $d$ are integers such that $ad-bc=1$ \cite{Moser:PACJM1971} (see Figure~\ref{fig:link} for a rational surgery description for $S(r_1,r_2,r_3)$ ($r_1,r_2,r_3\in\Q$).
Note that $S(r_1,r_2,r_3)$ is the Seifert fibered space $(O,o;0\mid0;\alpha_1,\beta_1;\alpha_2,\beta_2,\alpha_3,\beta_3)$ with $r_i=\alpha_i/\beta_i$ ($i=1,2,3$) in Seifert's notation \cite[Satz~ 5]{Seifert:ACTAM11933} (see \cite{Seifert/Threlfall:Topology} for an English translation).
We give a proof in Subsection~\ref{subsec:surgery_description} because we need to carefully choose the signs of the surgery coefficients.
\begin{figure}[h]
\includegraphics[scale=0.3]{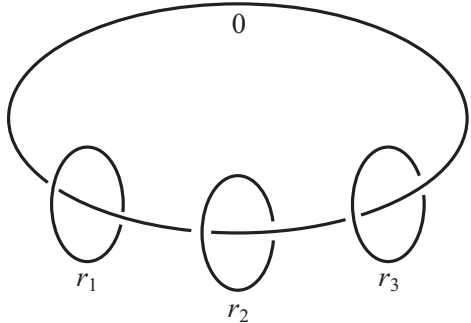}
\caption{A surgery description for $X_p$, where $0$, $r_1$, $r_2$, and $r_3$ are surgery coefficients.}
\label{fig:link}
\end{figure}
\par
Assume that $n$ is odd, $p-ab>0$, and $\gcd(p,ab)=1$.
Then we can express the asymptotic behavior of $\htau_n(X_p;\exp(4\pi\sqrt{-1}/n))$ in terms of the Chern--Simons invariants and the Reidemeister torsions associated with representations of $\pi_1(X_p)$ to $\SL(2;\C)$.
In fact we prove
\begin{thm}[Theorem~\ref{thm:main}]
\begin{equation*}
  \htau_n(X_p;\exp(4\pi\sqrt{-1}/n))
  =
  \frac{(-1)^{p+1}n^{3/2}}{2\pi}
  \left(A(n)+B(n)n^{-1/2}+O(n^{-1})\right)
\end{equation*}
with
\begin{equation*}
\begin{split}
  A(n)
  :=&
  2e^{\frac{n+1}{4}\pi\sqrt{-1}}
  \left(
    \sum_{\substack{(h,k,l)\in\mathcal{H}_{+}^{\Delta}\\ h\modtwo k\modtwo l\modtwo1}}
    -
    \sum_{\substack{(h,k,l)\in\mathcal{H}_{+}^{\nabla}\\ h\modtwo k\modtwo l\modtwo0}}
  \right)
  \Reidemeister_{+}^{\rm{Irr}}(h,k,l)e^{n\ChernSimons_{+}^{\rm{Irr}}(h,k,l)\pi\i}
  \\
  &+
  2e^{\frac{n+1}{4}\pi\sqrt{-1}}
  \left(
    \sum_{\substack{(h,k,l)\in\mathcal{H}_{-}^{\Delta}\\ h\modtwo k\modtwo l\modtwo1}}
    -
    \sum_{\substack{(h,k,l)\in\mathcal{H}_{-}^{\nabla}\\ h\modtwo k\modtwo l\modtwo0}}
  \right)
  \Reidemeister_{-}^{\rm{Irr}}(h,k,l)e^{n\ChernSimons_{-}^{\rm{Irr}}(h,k,l)\pi\i}
\end{split}
\end{equation*}
and
\begin{equation*}
  B(n)
  :=
  \frac{1}{2}\i(-1)^{a+b+ab}e^{n(1-p)\pi\i/4}
  \sum_{0<l<(p-1)/2}
  \Reidemeister^{\rm{Abel}}(l)e^{n\ChernSimons^{\rm{Abel}}(l)\pi\i},
\end{equation*}
where $\Reidemeister_{\pm}^{\rm{Irr}}(h,k,l)$ and $\Reidemeister^{\rm{Abel}}(l)$ are related to the twisted Reidemeister torsions, and $\ChernSimons_{\pm}^{\rm{Irr}}(h,k,l)$ and $\ChernSimons^{\rm{Abel}}(l)$ are related to the Chern--Simons invariant as described in the following.
\begin{itemize}
\item
$\left(\Reidemeister^{\rm{Abel}}(l)\right)^{-2}=\pm\Tor(X_{p};\trho^{\rm{Abel}}_{l})$, where $\Tor(X_{p};\trho^{\rm{Abel}}_{l})$ is the homological Reidemeister torsion of $X_p$ twisted by the Abelian representation $\trho^{\rm{Abel}}_{l}\colon\pi_1(X_p)\to\SL(2;\C)$,
\item
$\ChernSimons^{\rm{Abel}}(l)=\CS(X_{p};\trho^{\rm{Abel}}_{l})\pmod\Z$, where $\CS(X_{p};\trho^{\rm{Abel}}_{l})$ is the Chern--Simons invariant of $X_p$ associated with $\trho^{\rm{Abel}}_{l}$,
\item
$\left(\Reidemeister_{\pm}^{\rm{Irr}}(h,k,l)\right)^{-2}=\left|\mathbb{T}(X_{p};\trho_{p-ab-h,a-k,b-l}^{\rm{Irr}})\right|$, where $\Tor(X_p;\trho_{p-ab-h,a-k,b-l}^{\rm{Irr}})$ is the homological Reidemeister torsion of $X_p$ twisted by the irreducible representation $\trho_{p-ab-h,a-k,b-l}^{\rm{Irr}}\colon\pi_1(X_p)\to\SL(2;\C)$,
\item
$\ChernSimons(h,k,l)\equiv\CS(X_p;\trho_{p-ab-h,a-k,b-l}^{\rm{Irr}})\pmod{2\Z}$, where $\CS(X_p;\trho_{p-ab-h,a-k,b-l}^{\rm{Irr}})$ is the Chern--Simons invariant of $X_p$ associated with $\trho_{p-ab-h,a-k,b-l}^{\rm{Irr}}$,
\item
$\mathcal{H}^{\Delta}_{\pm}\subset\mathcal{H}$ and $\mathcal{H}^{\nabla}_{\pm}\subset\mathcal{H}$ are certain index sets, where $\mathcal{H}:=\{(h,k,l)\in\Z^3\mid0<h<p-ab,0<k<a,0<l<b,h\equiv k\equiv l\pmod{2}\}$.
\end{itemize}
\end{thm}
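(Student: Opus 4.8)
The plan is to begin from the surgery formula for $\htau_n(X_p;\exp(4\pi\i/n))$ developed in Subsection~\ref{subsec:WRT}, which expresses the invariant as a weighted finite sum of the colored Jones polynomials $J_N(T(a,b);\exp(4\pi\i/n))$ of the torus knot; the weight carries the quantum-dimension factor together with a Gaussian factor in $N$ coming from the $p$-framing. Into this I would substitute the well-known explicit formula for the colored Jones polynomial of $T(a,b)$, which writes $J_N(T(a,b);q)$ as a finite alternating sum of monomials of the form $q^{-abr^2+(\text{linear in }r)}$ divided by $q^{N/2}-q^{-N/2}$. After clearing this denominator against the surgery weight and collecting all exponents, the entire invariant becomes a finite multiple Gauss sum whose summand is $\exp\bigl(\tfrac{2\pi\i}{n}Q(\mathbf{m})\bigr)$ for a quadratic form $Q$ in the summation variables, times slowly varying prefactors.

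First I would apply the Poisson summation formula in each summation variable, turning the Gauss sum into a finite sum, indexed by a dual lattice, of Gaussian integrals over a bounded domain. The surviving dual-lattice vectors are precisely what produce the index set $\mathcal{H}$, while the parity constraints of the Poisson dual lattice produce the congruence conditions $h\modtwo k\modtwo l\modtwo0$ and $h\modtwo k\modtwo l\modtwo1$ appearing in the two families of sums; the refinement into $\HpD,\HmD$ and $\HpN,\HmN$ records which dual vectors yield a stationary point lying in the interior of the integration domain and with a prescribed sign of the Gaussian square root.

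Next I would evaluate each Gaussian integral by the saddle point method as $n\to\infty$. The stationary points of the phase solve a system of equations whose geometric content is the existence of a flat $\SL(2;\C)$-connection, and I would match these solutions with the abelian representations $\trho^{\rm{Abel}}_{l}$ and the irreducible representations $\trho^{\rm{Irr}}$ of $\pi_1(X_p)$. The value of the phase at a stationary point should then equal, modulo the stated integer ambiguities, the Chern--Simons invariant $\CS(X_p;\trho)$, which I would verify using a surgery formula for the Chern--Simons invariant and its known evaluation on Seifert fibered spaces; the leading amplitude coming from the Hessian determinant must reproduce the twisted Reidemeister torsion $\Tor(X_p;\trho)$ via the surgery formula for the torsion. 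Because the abelian stationary points lie on a lower-dimensional stratum and are degenerate in one direction, their contribution is of lower order, which is exactly why they appear only through $B(n)n^{-1/2}$ rather than through $A(n)$.

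The hardest part will be the exact bookkeeping of amplitudes and phases: identifying the Hessian determinants and the Poisson/Gaussian normalization constants with the square roots $\Reidemeister^{\rm{Irr}}_{\pm}(h,k,l)$ and $\Reidemeister^{\rm{Abel}}(l)$ of the torsions, and simultaneously tracking every root of unity and sign, such as the overall $(-1)^{p+1}$, the $(-1)^{a+b+ab}$ in $B(n)$, and the $e^{\frac{n+1}{4}\pi\i}$ and $e^{n(1-p)\pi\i/4}$ prefactors. This demands an independent computation of the twisted Reidemeister torsion and the Chern--Simons invariant for these Seifert fibered spaces and their exact reconciliation with the analytic normalization; obtaining the uniform $O(n^{-1})$ remainder, rather than merely the leading asymptotics, is the final technical obstacle.
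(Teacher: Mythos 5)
Your outline is, in essence, the Poisson-summation route that Ohtsuki and Takata followed for $S(-2,p_1,p_2)$ (Theorem~\ref{thm:Ohtsuki_Takata}), and it is a genuinely different implementation from the paper's: the paper linearizes the quadratic exponentials with the Gaussian-integral identity \eqref{eq:integral_phi}, sums the resulting linear-phase sums in closed form to get $\coth$-type integrands, and then recovers the discrete ``dual'' sums as residues picked up when the contours $C_{\theta}\times C_{\theta}$ are shifted through the critical point of $F(z_1,z_2)$ (the poles of $\coth$, spaced $\pi\i$ apart, play the role of your dual lattice). So the skeleton is reasonable. One technical caveat already at the start: the inner summation range in the Rosso--Jones formula is nested ($-k+1\le l\le k-1$ with $l\equiv k+1\pmod 2$), so Poisson summation cannot be applied ``in each summation variable'' independently; the paper circumvents this by symmetrizing (Lemma~\ref{sum_l}) and summing over $k$ exactly, and your argument needs an analogous device before any duality step makes sense.

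The genuine gap is your assumption that every surviving stationary-phase contribution corresponds to a flat $\SL(2;\C)$-connection, with irreducibles at order $n^{3/2}$ and abelians at order $n$. That is false as stated: because the effective summation domain is a triangle, there are additional contributions --- in the paper's notation the boundary terms $\partial\mathcal{S}_1$ and the terms of $I_{1,2}$, $I_{2,2}$ proportional to $\exp\left[n\left(-ab-\frac{m^2}{4ab}\right)\pi\i\right]$, whose phases are \emph{not} Chern--Simons invariants of any representation of $\pi_1(X_p)$ --- and these enter at the same orders, $n^{3/2}$ and $n$, as $A(n)$ and $B(n)n^{-1/2}$. The theorem takes the stated form only because these spurious terms cancel, by two separate mechanisms: the $\partial\mathcal{S}_1$ contributions cancel combinatorially against the $-\pi\i$ part of the $m$-sum (Section~\ref{sec:V1V2}), and the remainder vanishes by the nontrivial identity of Lemma~\ref{lem:sin_sin_exp}, whose proof requires Gauss-sum reciprocity (Theorem~\ref{thm:Gauss}) together with recognizing the resulting sum as a multiple of a colored Jones evaluation containing the factor $e^{2\pi\i}-e^{-2\pi\i}=0$. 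Your proposal contains no mechanism for detecting, let alone cancelling, such terms; without it the expansion you would obtain has extra leading-order summands and the identification with $\mathcal{H}^{\Delta}_{\pm}$, $\mathcal{H}^{\nabla}_{\pm}$ fails. Relatedly, the matching of the surviving terms with representations is not automatic from ``stationary point $=$ flat connection'': the paper needs the explicit bijections $\tGamma_{\pm}\colon\mathcal{H}\to\mathcal{R}_{\pm}$ built from Sunzi's theorem, plus the computations of Lemmas~\ref{lem:torsion_hkl}, \ref{lem:CS_X_p_irr}, \ref{lem:Reidemeister_X_p_irr} and \ref{lem:ChernSimons_X_p_irr}, to pin down the torsions and the Chern--Simons values modulo $\Z$ and $2\Z$ with the correct parities; this bookkeeping is where the sets $\mathcal{H}^{\Delta}_{\pm}$, $\mathcal{H}^{\nabla}_{\pm}$ and the congruence conditions $h\modtwo k\modtwo l\modtwo 0,1$ actually come from, not from the dual-lattice parities alone.
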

\begin{remark}
Let $\operatorname{TV}_{n}(M)$ be the Turaev--Viro invariant of a three-manifold $M$ \cite{Turaev/Viro:TOPOL1992}.
It is known that $\operatorname{TV}_n(M)=|\htau_n(M;\exp(4\pi\i/n))|^2$ \cite{Roberts:TOPOL1995,Benedetti/Petronio:JKNOT1996}, where we choose the parameter of $\operatorname{TV}_n(M)$ so that this equality holds.
\par
Chen and Yang proposed the following conjecture:
\begin{conjecture}[{\cite[Conjecture~1.1]{Chen/Yang:QT2018}}]
\label{conj:Chen_Yang}
For a hyperbolic three-manifold $M$ with possibly non-empty boundary, we have
\begin{equation*}
  2\pi\lim_{n\to\infty}
  \frac{\log\operatorname{TV}_n(M)}{n}
  =
  \Vol(M),
\end{equation*}
where $n$ runs over all odd integers.
\end{conjecture}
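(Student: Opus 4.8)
The plan is to reduce the statement to the growth rate of the \emph{modulus} of $\htau_n$, and then to a volume conjecture for the colored Jones polynomial at the root of unity $\exp(4\pi\i/n)$. By the identity $\operatorname{TV}_n(M)=|\htau_n(M;\exp(4\pi\i/n))|^2$ recorded above,
\begin{equation*}
  2\pi\lim_{n\to\infty}\frac{\log\operatorname{TV}_n(M)}{n}
  =
  4\pi\lim_{n\to\infty}\frac{\log\bigl|\htau_n(M;\exp(4\pi\i/n))\bigr|}{n}.
\end{equation*}
When $M$ is closed this is precisely the imaginary part of Conjecture~\ref{conj:CY}: writing $\log\htau_n=\log|\htau_n|+\i\arg\htau_n$, the imaginary part of $4\pi\i\,\tfrac{\log\htau_n}{n}$ equals $4\pi\,\tfrac{\log|\htau_n|}{n}$, whose limit is $\Vol(M)$. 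So the genuinely new content lies in the case of non-empty boundary. Every such hyperbolic $M$ admits an ideal triangulation, and I would attack it through the resulting $6j$-symbol state sum; the cleanest model is a link complement $M=S^3\setminus L$, which I treat first.

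For a link complement I would replace the state-sum definition of $\operatorname{TV}_n$ by a closed expression in colored Jones polynomials. Using the ideal triangulation together with the Detcherry--Kalfagianni--Yang reduction, one obtains
\begin{equation*}
  \operatorname{TV}_n(S^3\setminus L)
  =
  c_n\sum_{\vec{i}}\bigl|J_L(\vec{i};\exp(4\pi\i/n))\bigr|^2,
\end{equation*}
where $\vec{i}$ ranges over admissible colorings of the components and the prefactor $c_n$ grows only polynomially. Since the logarithm of a finite sum of positive terms is asymptotic to the logarithm of its largest term, this reduces the conjecture to
\begin{equation*}
  4\pi\lim_{n\to\infty}\frac{\log\max_{\vec{i}}\bigl|J_L(\vec{i};\exp(4\pi\i/n))\bigr|}{n}
  =
  \Vol(S^3\setminus L),
\end{equation*}
that is, to the modulus part of a Chen--Yang type volume conjecture for $J_L$ at $q=\exp(4\pi\i/n)$, parallel to but distinct from the evaluation at $\exp(2\pi\i/n)$ appearing in Conjecture~\ref{conj:cvc}.

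The analytic core is then a saddle-point analysis. I would write each colored Jones polynomial as a multisum of ratios of quantum factorials, apply the asymptotic expansion of the quantum dilogarithm to rewrite each summand as $\exp\!\bigl(\tfrac{n}{4\pi\i}\Phi(\vec{z})+O(\log n)\bigr)$ for a potential function $\Phi$ assembled from dilogarithm terms, and evaluate the multisum by the method of steepest descent. The critical-point equations for $\Phi$ should reproduce Thurston's hyperbolic gluing equations for the triangulation, so that a distinguished critical point corresponds to the complete hyperbolic structure; by the Neumann--Zagier and Yokota theory the critical value of $\Phi$ there equals $\i\bigl(\Vol(M)+\i\CS(M)\bigr)$, and taking the imaginary part recovers $\Vol(M)$. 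The same scheme applies to the $6j$-symbol state sum of a general bounded hyperbolic $M$.

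The hard part will be making this analysis rigorous and \emph{uniform} over all hyperbolic $M$: one must locate the geometric critical point, show that it lies on (or can be reached by an admissible deformation of) the contour while avoiding the poles and zeros of the summand, prove that it strictly dominates every other critical point and all boundary contributions, and control both the sum over colorings and the polynomial prefactor $c_n$. Exhibiting the geometric solution is routine; proving that it \emph{dominates} is exactly the open heart of the volume conjecture, and at present this can be carried out only case by case. A single argument valid for every hyperbolic three-manifold with boundary is what settling Conjecture~\ref{conj:Chen_Yang} in full would demand.
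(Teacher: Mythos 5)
This statement is \emph{Conjecture}~1.1 of Chen--Yang: the paper does not prove it, does not claim to, and it remains open; so your text has to be judged as a free-standing proof attempt, and as such it has a genuine gap that you yourself flag in the last paragraph. Every reduction you perform lands on another open conjecture rather than on established ground. For closed $M$ you ``reduce'' the statement to the imaginary part of Conjecture~\ref{conj:CY}, which is unproven (the paper proves nothing hyperbolic; its Theorem~\ref{thm:main} concerns non-hyperbolic Seifert fibered spaces, where $\Vol=0$ and the invariant grows only polynomially). For link complements, the identity $\operatorname{TV}_n(S^3\setminus L)=c_n\sum_{\vec{i}}|J_L(\vec{i};e^{4\pi\i/n})|^2$ is indeed a theorem of Detcherry--Kalfagianni--Yang, and since the summands are nonnegative the step $\log\sum\sim\log\max$ is harmless; but the resulting statement $4\pi\lim_n n^{-1}\log\max_{\vec{i}}|J_L(\vec{i};e^{4\pi\i/n})|=\Vol$ is precisely (the hyperbolic case of) Conjecture~\ref{conj:link}, again open. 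So the chain of reductions is circular with respect to what is actually known.

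The analytic core is where the argument would fail if pressed. Writing the state sum as $\sum\exp\bigl(\tfrac{n}{4\pi\i}\Phi(\vec{z})+O(\log n)\bigr)$ and identifying a critical point of $\Phi$ satisfying Thurston's gluing equations with critical value $\i(\Vol(M)+\i\CS(M))$ is, as you say, routine; what is missing is (a) a contour deformation through the geometric saddle avoiding the poles and zeros of the quantum-dilogarithm summand, (b) a proof that this saddle strictly dominates all other critical points and boundary contributions \emph{uniformly} over the coloring sum, and (c) non-vanishing of the leading coefficient, without which one gets only an upper bound on $\limsup$ and no matching lower bound. Point (c) is not a technicality: the present paper, in a far simpler non-hyperbolic setting where the full asymptotic expansion is computed exactly, still cannot rule out $A(n)=B(n)=0$ and therefore cannot even confirm Conjecture~\ref{conj:DK} for its own manifolds $X_p$. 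A correct write-up should present your text as a reduction of Conjecture~\ref{conj:Chen_Yang} to Conjectures~\ref{conj:CY} and \ref{conj:link} plus a dominance hypothesis on the geometric saddle, not as a proof.
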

\par
For a link complement, the following conjecture was proposed by R.~Detcherry, E.~Kalfagianni and Yang.
\begin{conjecture}[{\cite[Conjecture~5.1]{Detcherry/Kalfagianni/Yang:QT2018}}]
\label{conj:link}
For any link $L$ in $S^3$, we have
\begin{equation*}
  2\pi\lim_{n\to\infty}
  \frac{\log\operatorname{TV}_n(S^3\setminus{L})}{n}
  =
  v_3\|S^3\setminus{L}\|,
\end{equation*}
where $n$ runs over all odd integers.
\end{conjecture}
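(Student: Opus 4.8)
The plan is to reduce Conjecture~\ref{conj:link} to the asymptotics of the colored Jones polynomials of $L$ evaluated at the root $\exp(4\pi\i/n)$, and then to analyse those asymptotics by a saddle--point argument. The first step is the Detcherry--Kalfagianni--Yang formula \cite{Detcherry/Kalfagianni/Yang:QT2018} expressing the Turaev--Viro invariant of a link complement as a sum of squared moduli of colored Jones values: for a link $L$ with $c$ components and $n$ odd one has, up to a factor $\eta_n$ that is subexponential in $n$,
\[
  \operatorname{TV}_n(S^3\setminus L)
  =
  2^{c-1}\,\eta_n^2
  \sum_{1\le i_1,\dots,i_c\le(n-1)/2}
  \bigl|J_L(i_1,\dots,i_c;\exp(4\pi\i/n))\bigr|^2 ,
\]
where $J_L$ is the colored Jones polynomial with the $j$-th component coloured by $i_j$. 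Since the logarithm of a finite sum of positive terms agrees, to leading exponential order, with the logarithm of its largest summand (the number of summands being polynomial in $n$), the problem becomes to show
\[
  \max_{\mathbf{i}}\ \frac{2}{n}\log\bigl|J_L(\mathbf{i};\exp(4\pi\i/n))\bigr|
  \to
  \frac{1}{2\pi}\,v_3\|S^3\setminus L\|
  \qquad(n\to\infty) .
\]

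For a hyperbolic link I would establish the individual asymptotics of $J_L(\mathbf{i};\exp(4\pi\i/n))$ with every $i_j$ near $(n-1)/2$ by a stationary--phase analysis. Writing $J_L$ as a state sum over an ideal triangulation of $S^3\setminus L$ (or via the Kauffman bracket expansion of a diagram) and replacing the quantum factorials by their asymptotics through Faddeev's quantum dilogarithm, one obtains $J_L\sim\exp\bigl(\tfrac{n}{4\pi\i}\Phi\bigr)$ for a potential $\Phi$ of the continuous shape variables. I would then locate the critical point of $\Phi$, identify it via the Neumann--Zagier gluing equations with the complete hyperbolic structure, and thereby show that the critical value equals $\CS(S^3\setminus L)+\i\Vol(S^3\setminus L)$. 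Taking real parts gives $\tfrac1n\log|J_L|\to\tfrac1{4\pi}\Vol(S^3\setminus L)$, which is the lower bound. A matching upper bound on the whole sum follows by bounding each summand by the maximal modulus of the state--sum weights, whose exponential growth rate is controlled by $v_3$ times the number of tetrahedra and refined down to the Gromov norm using \cite{Gromov:INSHE82,Thurston:GT3M}.

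For an arbitrary link $L$ I would invoke the torus (Jaco--Shalen--Johannson) decomposition \cite{Jaco/Shalen:MEMAM1979,Johannson:1979} of $S^3\setminus L$ into Seifert fibered and hyperbolic pieces. Because the simplicial volume is additive along incompressible tori, $v_3\|S^3\setminus L\|=\sum_j\Vol(P_j)$ over the hyperbolic pieces $P_j$, and the goal is to show that the exponential growth rate of $\operatorname{TV}_n$ is likewise additive, with each hyperbolic piece contributing $\Vol(P_j)$ by the argument above and each Seifert fibered piece contributing nothing. The Main Theorem of the present paper supplies exactly this last input in the prototypical case: for $X_p$ obtained by surgery on a torus knot, $\htau_n(X_p;\exp(4\pi\i/n))$ grows only like $n^{3/2}$ with no exponential factor, so $\operatorname{TV}_n=|\htau_n|^2$ is polynomial and contributes $0$, in agreement with $\|S^3\setminus T(a,b)\|=0$. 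The remaining work is a cut--and--paste argument expressing $\operatorname{TV}_n(S^3\setminus L)$ as a pairing of the relative invariants of the pieces along the decomposing tori, together with a proof that this pairing does not cancel the leading exponential growth of the hyperbolic factors.

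The hardest step is the critical--value identification in the saddle--point analysis: for a general triangulation one must prove that the dominant critical point of $\Phi$ is the geometric solution of the gluing equations (not a degenerate or spurious one), that it dominates the state sum, and that no boundary or lattice contribution overtakes it. This is precisely where the colored Jones volume conjecture at $\exp(4\pi\i/n)$ \cite{Chen/Yang:QT2018} is open beyond a few examples, so a fully general proof would in effect require settling that conjecture first. A realistic intermediate target is to carry out the program for families with explicit colored Jones formulae --- two--bridge links, and, using the computations of this paper, links whose complements are graph manifolds or have only Seifert fibered and small hyperbolic pieces --- before attempting the general gluing.
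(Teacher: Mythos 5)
The first thing to say is that the statement you were handed is not a theorem of this paper: Conjecture~\ref{conj:link} is quoted verbatim from Detcherry--Kalfagianni--Yang and is open; the paper proves nothing about it beyond recalling that those authors verified it for knots of simplicial volume zero, and noting that $\operatorname{TV}_n$ of a link complement is a sum of squared moduli of colored Jones values. So there is no proof in the paper to compare against, and your text is---as you yourself concede near the end---a research program rather than a proof. Judged as a program, its skeleton is the standard one: the reduction via the Detcherry--Kalfagianni--Yang formula, together with the observation that a sum of polynomially many positive terms has the exponential growth rate of its largest term, is correct and is how the conjecture is attacked in the literature.

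The genuine gaps are in all three substantive steps, and two of them would fail as written. (1) The saddle-point identification of the geometric critical point of the potential for $J_L(\mathbf{i};\exp(4\pi\i/n))$ is equivalent to the Chen--Yang volume conjecture at this root of unity (Conjecture~\ref{conj:CY} and its relatives), known only for a few links; you acknowledge this, but it means the hyperbolic building block of your argument is unavailable. (2) Your upper bound is incorrect as stated: bounding the state-sum weights termwise controls the growth by a constant times the \emph{number of tetrahedra}, and the claimed refinement ``down to the Gromov norm'' with the sharp constant $v_3$ is not known---the Detcherry--Kalfagianni bound in \cite{Detcherry/Kalfagianni:ANNSE12019} has a strictly larger universal constant, and improving it to $v_3$ is precisely the hard half of Conjecture~\ref{conj:DK}. (3) The JSJ cut-and-paste step is not a routine TQFT pairing: the relative invariants of the pieces are vectors in the skein modules of the decomposing tori, and nothing currently rules out cancellation in the pairing, so additivity of the growth rate along incompressible tori is itself conjectural. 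Finally, your appeal to this paper's Main Theorem overstates what it gives: Theorem~\ref{thm:main} concerns the \emph{closed} manifolds $X_p$, not link-complement pieces, and it yields only the polynomial upper bound of order $n^{3/2}$; to conclude that the growth rate is exactly $0$ (even for Conjecture~\ref{conj:DK} applied to $X_p$) one needs $A(n)$ or $B(n)$ to be nonvanishing, which the paper explicitly leaves open. The zero-volume input you actually need for Seifert pieces of link exteriors is the Detcherry--Kalfagianni--Yang result for zero-volume knots, which the paper cites but does not reprove.
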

Note that they also proved that the Turaev--Viro invariant of a link complement can be calculated as a sum of the squares of the absolute values of the colored Jones polynomials.
They also proved Conjecture~\ref{conj:link} in the case where $L$ is a knot with volume zero.
\par
In \cite{Detcherry/Kalfagianni:ANNSE12019}, Detcherry and Kalfagianni proposed a similar conjecture for the Turaev--Viro invariant:
\begin{conjecture}[{\cite[Conjecture~8.1]{Detcherry/Kalfagianni:ANNSE12019}}]
\label{conj:DK}
For any compact orientable three-manifold $M$ with empty or toroidal boundary, we have
\begin{equation*}
  2\pi\limsup_{n\to\infty}\frac{\log\operatorname{TV}_n(M)}{n}
  =
  v_3\|M\|,
\end{equation*}
where $n$ runs over all odd integers.
\end{conjecture}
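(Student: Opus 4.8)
The plan is to begin from a surgery formula expressing $\htau_n(X_p;\exp(4\pi\i/n))$ as a single finite sum over colors of the colored Jones polynomial of the torus knot $T(a,b)$, weighted by the quadratic Gaussian factor coming from the $p$-framing. Into this I would insert the Rosso--Jones formula for the colored Jones polynomial of $T(a,b)$, which writes it as a finite sum of quantum integers times roots of unity with a quadratic phase in the color. After combining the framing phase with the torus-knot phase and clearing denominators using the quantum-integer (sine-over-sine) normalization, the invariant becomes a finite Gauss-type sum whose summand is a product of sine functions divided by sines, multiplied by $\exp\bigl((\text{quadratic})/n\bigr)$.

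Next I would convert this finite sum into oscillatory integrals. The natural tool is the Poisson summation formula: regarding the summand as (the restriction of) a smooth periodic function and applying Poisson, the sum is replaced by a sum over a dual integer index of integrals of the form $\int f(x)\exp\bigl(n\,\varphi(x)\pi\i\bigr)\,dx$. Expanding the remaining product of sines as a sum of exponentials splits each phase $\varphi$ into a family indexed by signs and by the three integers $(h,k,l)$, so that the resulting collection of phase functions is parametrized precisely by the index set $\mathcal{H}$ together with the two signs $\pm$.

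Then I would perform a stationary-phase (saddle-point) analysis of each integral. The critical points of $\varphi$ are governed by a trigonometric system whose solutions correspond bijectively to conjugacy classes of representations $\pi_1(X_p)\to\SL(2;\C)$: the non-degenerate interior critical points correspond to the irreducible representations $\trho_{\,\cdot}^{\rm{Irr}}$, producing contributions at order $n^{3/2}$ collected into $A(n)$, while degenerate or boundary critical points—treated via an Euler--Maclaurin correction or via the boundary terms in the asymptotic expansion—produce the Abelian family $\trho^{\rm{Abel}}_{l}$ at order $n^{1}$, collected into $B(n)$. For each non-degenerate saddle $x_0$ the leading term is an amplitude, involving $1/\sqrt{\varphi''(x_0)}$ together with the leftover sine factors, times $\exp\bigl(n\,\varphi(x_0)\pi\i\bigr)$.

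Finally I would identify the geometric content. The critical value $\varphi(x_0)$ is shown to equal the Chern--Simons invariant $\CS(X_p;\trho)$ modulo the stated integer, using the surgery formula for the Chern--Simons invariant of a Seifert fibered space; this matches the exponents $\ChernSimons_{\pm}^{\rm{Irr}}$ and $\ChernSimons^{\rm{Abel}}$. The Hessian and amplitude factors are then matched to the square root of the twisted Reidemeister torsion $\Tor(X_p;\trho)$ through the explicit torsion computation for $S(-a/c,b/d,p-ab)$, yielding $\Reidemeister_{\pm}^{\rm{Irr}}$ and $\Reidemeister^{\rm{Abel}}$. \emph{The main obstacle} will be the precise matching of the stationary-phase amplitudes with the Reidemeister torsions, including the correct normalizing constant and the $\pm$ sign in $\bigl(\Reidemeister_{\pm}^{\rm{Irr}}\bigr)^{-2}=\bigl|\mathbb{T}(X_p;\trho^{\rm{Irr}})\bigr|$, together with the delicate asymptotic treatment of the Abelian contributions, which arise from degenerate or boundary critical points and so lie one power of $n^{1/2}$ below the generic terms; keeping exact track of the accumulated roots of unity and signs throughout the Poisson and saddle-point steps is what produces the explicit prefactors $(-1)^{p+1}$, $e^{(n+1)\pi\i/4}$, and $e^{n(1-p)\pi\i/4}$ in the final formula.
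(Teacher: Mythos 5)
There is a fundamental mismatch between the statement you were asked to address and what your proposal actually argues. The statement is Conjecture~\ref{conj:DK}, a conjecture of Detcherry and Kalfagianni quoted from the literature, asserting $2\pi\limsup_{n\to\infty}\frac{\log\operatorname{TV}_n(M)}{n}=v_3\|M\|$ for \emph{every} compact orientable three-manifold with empty or toroidal boundary. The paper does not prove this conjecture, and neither does your proposal: what you have sketched is a derivation of the asymptotic expansion of $\htau_n(X_p;\exp(4\pi\i/n))$, i.e.\ the paper's Theorem~\ref{thm:main}. Even if that expansion were established in full, it bears on Conjecture~\ref{conj:DK} only for the specific Seifert fibered spaces $X_p$ (for which $v_3\|X_p\|=0$), and only conditionally: since $\operatorname{TV}_n(X_p)=|\htau_n(X_p;\exp(4\pi\i/n))|^2$, the expansion gives the upper bound $\limsup\le 0$ via polynomial growth, but the matching lower bound requires showing that $A(n)$, or failing that $B(n)$, does not vanish for infinitely many odd $n$. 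The paper explicitly flags this nonvanishing as open (``if we could show that $A(n)$ or $B(n)$ does not vanish, then Conjecture~\ref{conj:DK} holds for $X_p$''), and your proposal contains no idea for it; the leading coefficients are sums of unit-modulus exponentials $e^{n\ChernSimons\pi\i}$ weighted by torsion terms of both signs, so cancellation for particular $n$ cannot be ruled out by the stationary-phase machinery itself.

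As a secondary point, even as a sketch of Theorem~\ref{thm:main} your route differs from the paper's and glosses over its hardest step. The paper does not use Poisson summation: it applies the Gaussian integral identity \eqref{eq:integral_phi} twice to convert the color sum into double contour integrals $V_1,V_2$ over $C_\theta\times C_\theta$, then shifts contours past the poles of $\coth$ and of $\varphi(z_2)$, so that the residue theorem produces the discrete double sums over $(l,m)$ whose delicate cancellations (the sets $\mathcal{S}_1$, $\mathcal{S}_2$, $\partial\mathcal{S}_1$, and the vanishing Gauss sums of Lemma~\ref{lem:sin_sin_exp}) determine which exponentials survive in $A(n)$ and $B(n)$. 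In your Poisson-summation framing the Abelian terms do not arise as ``degenerate or boundary critical points'' of a single phase; in the paper they come from the saddle-point contribution of the shifted $z_2$-integrals attached to the residues at $\xi_{1,l}$, and the $I_{1,0}=I_{2,0}$ cancellation of the shifted double integrals is what removes the would-be dominant term entirely. Without an analogue of that cancellation and of the residue bookkeeping, a Poisson-plus-stationary-phase argument would not by itself produce the exact index sets $\HpmD$, $\HpmN$ or the prefactors you mention.
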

Compare this with Conjecture~\ref{conj:Chen_Yang}; they replace $\lim$ with $\limsup$.
\par
See \cite{Maria/Rouille:arXiv2020} for computer calculations of the asymptotic behaviors of the Turaev--Viro invariants.
\par
Finally, note that if we could show that $A(n)$ or $B(n)$ does not vanish, then Conjecture~\ref{conj:DK} holds for $X_p$.
\end{remark}
For the asymptotic behavior of $\htau_n(X_p;\exp(2\pi\sqrt{-1}/n))$ evaluated at $\exp(2\pi\i/n)$, see \cite{Jeffrey:COMMP1992}, \cite{Freed/Gompf:COMMP1991}, \cite[7.2 The asymptotic expansion conjecture]{Ohtsuki:2002}, and \cite{Andersen:JREIA2013}.
\par
For coprime, odd integers $p_1$ and $p_2$ greater than or equal to three, put $S(-2,p_1,p_2)$ the Seifert fibered space with three singular fibers with index $-2$, $p_1$ and $p_2$.
Then the fundamental group $\pi_1(S(-2,p_1,p_2))$ has the following presentation:
\begin{equation*}
  \pi_1(M_{p_1,p_2})
  =
  \langle
    \alpha,\beta,\gamma,f
    \mid
    [\alpha,f]=[\beta,f]=[\gamma,f]=\alpha^{2}f=\beta^{p_1}f=\gamma^{p_2}f
    =\alpha\beta\gamma=1
  \rangle
\end{equation*}
For coprime, odd integers $k_1$ and $k_2$ with $0<k_1<p_1$ and $0<k_2<p_2$, let $\hat{\rho}_{k_1,k_2}\colon\pi_1(S(-2,p_1,p_2))\to\SL(2;\C)$ be an irreducible representation such that the eigenvalues of $\hat{\rho}_{k_1,k_2}(\beta)$ ($\hat{\rho}_{k_1,k_2}(\gamma)$, respectively) are $\exp\left(\pm\frac{k_1\pi\i}{p_1}\right)$ ($\exp\left(\pm\frac{k_2\pi\i}{p_1}\right)$, respectively).
Then in \cite[Proposition~4.17]{Ohtsuki/Takata::COMMP2019}, it is shown that the conjugacy class of $\hat{\rho}_{k_1,k_2}$ is unique.
\par
Put $\widehat{\ChernSimons}(S(-2,p_1,p_2);\hat{\rho}_{k_1,k_2}):=\frac{1}{4}\left(\frac{1}{2}-\frac{k_1^2}{p_1}-\frac{k_2^2}{p_2}\right)$ and $\hat{\Reidemeister}(S(-2,p_1,p_2);\hat{\rho}_{k_1,k_2}):=\frac{8\sin\left(\frac{k_1\pi}{p_1}\right)\sin\left(\frac{\pi}{p_2}\right)}{\sqrt{2p_1p_2}}$.
Then Ohtsuki and Takata proved the following theorem.
\begin{theorem}[{\cite[Theorem~1.3]{Ohtsuki/Takata::COMMP2019}}]\label{thm:Ohtsuki_Takata}
For odd integers $n$, we have the following asymptotic expansion:
\begin{equation*}
\begin{split}
  &\htau_n\left(S(-2,p_1,p_2);e^{4\pi\i/n}\right)
  \\
  =&
  \frac{(-1)^{(n-1)/2}e^{\pi\i/n}e^{-n(p_1+p_2)\pi\i/4}n^{3/2}}{8\pi}
  \\
  &\times
  \left(
    \sum_{\mathrm{SU}(2)}
    +
    2\sum_{\substack{\SL(2;\R)\\\frac{k_1}{p_1}+\frac{k_2}{p_2}<\frac{1}{2}}}
  \right)
  (-1)^{(k_1+k_2)/2}\hat{\Reidemeister}\left(S(-2,p_1,p_2);\hat{\rho}_{k_1,k_2}\right)
  e^{n\widehat{\ChernSimons}\left(S(-2,p_1,p_2);\hat{\rho}_{k_1,k_2}\right)\pi\i}
  +O(n).
\end{split}
\end{equation*}
Here
\begin{itemize}
\item
$\sum_{\mathrm{SU}(2)}$ means that the summation is over all $\mathrm{SU}(2)$ representations.
In this case the corresponding range is
\begin{equation}\label{eq:SU2_OT}
  \left\{
    (k_1,k_2)\in\Z^2
    \Biggm|
    \left|\frac{k_1}{p_1}-\frac{k_2}{p_2}\right|<\frac{1}{2}<
    \frac{k_1}{p_1}+\frac{k_2}{p_2}<\frac{3}{2},
    k_1\modtwo k_2\modtwo1
  \right\},
\end{equation}
\item
$\sum_{\SL(2;\R)}$ means that the summation is over all $\SL(2;\R)$ representations with $\frac{k_1}{p_1}+\frac{k_2}{p_2}<\frac{1}{2}$.
In this case the corresponding range is
\begin{equation*}
  \left\{
    (k_1,k_2)\in\Z^2
    \Biggm|
    \frac{k_1}{p_1}+\frac{k_2}{p_2}<\frac{1}{2},
    k_1\modtwo k_2\modtwo1
  \right\},
\end{equation*}
\item
$\widehat{\ChernSimons}\left(S(-2,p_1,p_2);\hat{\rho}_{k_1,k_2}\right)$ is the Chern--Simons invariant of $S(-2,p_1,p_2)$ associated with $\hat{\rho}_{k_1,k_2}$,
\item
$\hat{\Reidemeister}\left(S(-2,p_1,p_2);\hat{\rho}_{k_1,k_2}\right)^{-2}$ is the homological Reidemeister torsion of $S(-2,p_1,p_2)$ twisted by $\hat{\rho}_{k_1,k_2}$.
\end{itemize}
\end{theorem}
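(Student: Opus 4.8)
The plan is to obtain the asymptotic expansion from an explicit state-sum presentation of $\htau_n$ followed by a saddle point analysis. First I would write down a surgery formula for $\htau_n\left(S(-2,p_1,p_2);e^{4\pi\i/n}\right)$. Since $S(-2,p_1,p_2)$ has the star-shaped plumbing presentation of Figure~\ref{fig:link} --- a $0$-framed central unknot encircled by three meridians with surgery coefficients $-2$, $p_1$, and $p_2$ --- the Reshetikhin--Turaev surgery formula, after performing the three leaf summations, collapses to a single finite sum over one color variable $k$. The summand is a product of quantum-integer (sine) factors carried by the three exceptional fibers, multiplied by a Gaussian phase $e^{\pi\i\,c\,k^2/n}$ whose quadratic coefficient $c$ is fixed by the framings; the normalization, after the signature correction, supplies the prefactor $e^{-n(p_1+p_2)\pi\i/4}$ together with the root of unity $(-1)^{(n-1)/2}e^{\pi\i/n}$.

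The next step is to turn this finite trigonometric sum into a sum of integrals. I would expand every sine into exponentials and apply the Poisson summation formula, so that the discrete sum becomes $\sum_{m}\int f(t)\,e^{n\psi_m(t)}\,dt$ indexed by integer frequencies $m$, each integral over a real interval determined by the original summation range. The fiber of index $-2$ is responsible for the congruence $k_1\modtwo k_2\modtwo1$ appearing in the statement: tracking its half-integer contribution through the Poisson step selects the correct sublattice of indices and produces the sign $(-1)^{(k_1+k_2)/2}$.

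I would then evaluate each integral by the method of steepest descent. The critical points of the phase $\psi_m$ are exactly the solutions of the trace equations cutting out the conjugacy classes of the representations $\hat{\rho}_{k_1,k_2}$, whose uniqueness is guaranteed by \cite[Proposition~4.17]{Ohtsuki/Takata::COMMP2019}. At each admissible critical point the critical value of $n\psi_m$ reproduces $n\,\widehat{\ChernSimons}\left(S(-2,p_1,p_2);\hat{\rho}_{k_1,k_2}\right)\pi\i$, identifying the phase with the Chern--Simons invariant, while the transverse Gaussian integral yields the reciprocal square root of the Hessian, which I would match to $\hat{\Reidemeister}\left(S(-2,p_1,p_2);\hat{\rho}_{k_1,k_2}\right)$ through an independent computation of the homological Reidemeister torsion via the Mayer--Vietoris gluing of the Seifert pieces. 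The one-dimensional saddle supplies $n^{-1/2}$, which combines with the powers of $n$ produced by the normalization and by the quantum-integer factors --- at $q=e^{4\pi\i/n}$ a quantum integer has size $\sim\frac{n}{2\pi}\sin(2\pi k/n)$, of order $n$ --- to give the claimed overall growth $n^{3/2}$.

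The hardest part will be the precise bookkeeping of which critical points actually contribute. After Poisson summation a given saddle lies on the relevant steepest-descent contour only for certain frequencies $m$, and deciding this is exactly what produces the range \eqref{eq:SU2_OT} for the $\SU(2)$ representations and the range $\frac{k_1}{p_1}+\frac{k_2}{p_2}<\frac{1}{2}$ for the $\SL(2;\R)$ representations, as well as the factor $2$ weighting the $\SL(2;\R)$ terms, which reflects the fact that two distinct saddles, related by the symmetry of $\psi_m$, lie on the contour in that range. Equally delicate is controlling the non-stationary regions so as to obtain the uniform $O(n)$ remainder: this requires bounding the tails and the contributions away from the saddles, and verifying that no critical point sits on the boundary of the summation interval, where the quadratic approximation would degenerate and the estimate would have to be modified.
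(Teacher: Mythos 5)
The first thing to say is that this paper contains no proof of the statement you were asked about: Theorem~\ref{thm:Ohtsuki_Takata} is quoted verbatim from Ohtsuki--Takata \cite[Theorem~1.3]{Ohtsuki/Takata::COMMP2019} and appears in the introduction only as a point of comparison with the paper's own result (Theorem~\ref{thm:main}, specialized to $a=2$ in Example~\ref{ex:Main_Theorem_a_2}). So your proposal can only be measured against the proof the paper gives for its \emph{analogous} theorem, and against the general methodology it borrows from \cite{Ohtsuki/Takata::COMMP2019}. At that level your outline has the right overall shape: reduce the surgery formula to a one-variable trigonometric sum with Gaussian phase, pass to integrals, and read off the asymptotics, with the hard bookkeeping producing the representation-theoretic ranges and the factor $2$.

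There are, however, two genuine gaps. First, you treat the identification of critical values with Chern--Simons invariants and of Hessian factors with Reidemeister torsions as something the steepest-descent computation ``reproduces.'' It does not: the analytic side only yields explicit constants (in this paper, exponents like $-\frac{(2l-m+p)^2}{4(p-ab)}-\frac{m^2}{4ab}$ and products of sines), and equating them with $\CS$ and $\Tor$ requires an independent geometric computation --- here via Kirk--Klassen (Theorem~\ref{thm:Kirk_Klassen}) for the Chern--Simons invariants, Porti-type surgery formulas \eqref{eq:torsion_gamma}, \eqref{eq:torsion_surgery} for the torsions, the combinatorial bijections $\tGamma_{\pm}\colon\mathcal{H}\to\Rpm$ occupying a whole section, and a separate classification of which representations are $\SU(2)$ and which are $\SU(1,1)\cong\SL(2;\R)$ (Proposition~\ref{prop:SU2_SL2R}). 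That matching is the core of the theorem, not a corollary of stationary phase. Second, a structural point: in the paper's own analysis the terms attached to irreducible representations do \emph{not} arise as saddle-point contributions but as residues at the poles of $\coth$ and $\varphi$ picked up when shifting the contours $C_\theta$ (the double sums over $\mathcal{S}_1$, $\mathcal{S}_2$), while the genuine saddle points produce the \emph{subleading} abelian terms; correspondingly the $O(n)$ remainder in Theorem~\ref{thm:Ohtsuki_Takata} is not merely ``non-stationary tails'' but contains the reducible-representation contributions (the analogue of $B(n)n^{-1/2}$ in Theorem~\ref{thm:main}). A proof organized as you describe would either have to justify why the leading terms appear as saddles after Poisson summation, or be reorganized along residue lines; as written, the mechanism producing the leading sum, the ranges such as \eqref{eq:SU2_OT}, and the size of the error term is not actually established.
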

Compare this formula with ours when $a=2$.
If $a=2$, then our formula can be simplified as follows (see Example~\ref{ex:Main_Theorem_a_2}).
\begin{equation*}
\begin{split}
  &\htau_n\left(X_p;e^{4\pi\i/n}\right)
  \\
  =&
  \frac{e^{(n+1)\pi\i/4}n^{3/2}}{8\pi}
  \left(
    2\sum_{\substack{(h,1,l)\in\HmD\\ b-l\equiv2\pmod4}}+
    \sum_{(h,1,l)\in\HpD\setminus\HmD}
  \right)
  \Reidemeister_{+}^{\rm{Irr}}(h,1,l)e^{n\ChernSimons_{+}^{\rm{Irr}}(h,1,l)\pi\i}
  \\
  &-
  \frac{\i e^{n(1-p)\pi\i/4}n}{4\pi}
  \sum_{0<l<(p-1)/2}
  \Reidemeister^{\rm{Abel}}(l)e^{n\ChernSimons^{\rm{Abel}}(l)\pi\i}
  +
  O(n^{1/2})
  \\
  =&
  \frac{e^{(n+1)\pi\i/4}n^{3/2}}{8\pi}
  \left(
    2\sum_{
      \substack{\text{$\trho_{h,1,l}^{\rm{Irr}}$: $\SU(2)$-representation}
      \\
      \frac{l}{2b}+\frac{h}{p-2b}<1, b-l\equiv2\pmod4}}
    +
    \sum_{\text{$\trho_{h,1,l}^{\rm{Irr}}$: $\SU(1,1)$-representation}}
  \right)
  \Reidemeister_{+}^{\rm{Irr}}(h,1,l)e^{n\ChernSimons_{+}^{\rm{Irr}}(h,1,l)\pi\i}
  \\
  &-
  \frac{\i e^{n(1-p)\pi\i/4}n}{4\pi}
  \sum_{0<l<(p-1)/2}
  \Reidemeister^{\rm{Abel}}(l)e^{n\ChernSimons^{\rm{Abel}}(l)\pi\i}
  +
  O(n^{1/2}).
\end{split}
\end{equation*}
\begin{ack}
The authors would like to thank Professor E.~Kalfagianni and Professor T.~Ohtsuki for helpful discussions and suggestions.
\end{ack}

\section{Preliminaries}
In this section, we describe some basic facts that are necessary for this paper.
%%%%%%%%%%%%%%%%%%%%%%%%%%%%%%%%%%%%%%%%%%%%%%%%%%%%%%%%%%%%%%%%%%%%%
\subsection{Quantum three-manifold invariants}\label{subsec:WRT}
\par
We will explain how to compute the $SU(2)$ Witten--Reshetikhin--Turaev invariant \cite{Witten:COMMP1989,Reshetikhin/Turaev:INVEM1991} following \cite{Lickorish:JKNOT1993}.
See also \cite{Blanchet/Habegger/Masbaum/Vogel:TOPOL1992} and \cite[Chapter~13]{Lickorish:1997}.
\par
Let $M$ be a closed, oriented three-manifold.
Suppose that $M$ is obtained from the three-sphere $S^3$ by Dehn surgery on a framed link presented by a link diagram $D:=D_1\cup D_2\cup\dots\cup D_m$.
Note that the surgery coefficients are integers because they are given by the framings.
\par
If $C_i$ ($i=1,2,\dots,m$) is a link diagram in an annulus, $\langle C_1,C_2,\dots,C_m\rangle_D$ means the Kauffman bracket \cite{Kauffman:TOPOL1987} of the link diagram obtained by inserting $C_i$ in the regular neighborhood of $D_i$ in the plane, respecting the under/over crossing information.
We extend the definition to linear combinations of link diagrams multilinearly.
\par
For a non-negative integer $k$, let $S_k(x)$ be the $k$-th Chebyshev polynomial defined by $S_0(x)=1$, $S_1(x)=x$, and
\begin{equation*}
  S_{k+1}(x)=xS_k(x)-S_{k-1}(x).
\end{equation*}
Denoting by $\alpha$ the core of an annulus, let $S_k(\alpha)$ be the linear combination of $\alpha^j$ determined by $S_k(x)$, where $\alpha^j$ means the $j$ parallels of $\alpha$.
So $S_k(\alpha)$ is a linear combination of link diagram in the annulus.
It is known that $S_k(x)$ is obtained from the $k$-th Jones--Wenzl idempotent \cite{Wenzl:CRMAR1987} by closing it along the annulus.
Put
\begin{equation*}
  \omega
  :=
  \sum_{k=0}^{n-2}
  \Delta_k\times S_k(\alpha),
\end{equation*}
with $\Delta_k:=(-1)^k\frac{A^{2(k+1)}-A^{-2(k+1)}}{A^2-A^{-2}}$.
\par
Let $r\ge3$ be an integer.
Suppose that either $A$ is either a primitive $4r$-th root of unity, or $A$ is a primitive $2r$-th root of unity and $r$ is odd.
Then
\begin{equation*}
  \frac{\langle\omega,\omega,\dots,\omega\rangle_{D}}
  {\langle\omega\rangle_{U_+}^{b_+}\langle\omega\rangle_{U_-}^{b_-}}
\end{equation*}
is an invariant of $M$, where
\begin{itemize}
\item
$b_{+}$ ($b_{-}$, respectively) is the number of positive (negative, respectively) eigenvalues of the linking matrix of $D$, where the diagonal entries are the framings and the off diagonal entries are the linking numbers \cite[Theorem~5]{Lickorish:JKNOT1993}.
\item
$U_{\pm}$ is a knot diagram of the unknot with framing $\pm1$.
\end{itemize}
\par
Now we define
\begin{equation*}
  \tau_r(M;\exp(2\pi\i/r))
  :=
  \left.
    \frac{\langle\omega,\omega,\dots,\omega\rangle_{D}}
    {\langle\omega\rangle_{U_+}^{b_+}\langle\omega\rangle_{U_-}^{b_-}}
  \right|_{A=\exp\left(\frac{\pi\i}{2r}\right)}
\end{equation*}
for any integer $r\ge3$, and
\begin{equation}\label{def:htau}
  \htau_n(M;\exp(4\pi\i/n))
  :=
  \left.
    \frac{\langle\omega,\omega,\dots,\omega\rangle_{D}}
    {\langle\omega\rangle_{U_+}^{b_+}\langle\omega\rangle_{U_-}^{b_-}}
  \right|_{A=\exp\left(\frac{\pi\i}{n}\right)}
\end{equation}
Here we use $\exp(2\pi\i/r)$ and $\exp(4\pi\i/n)$ because we want to mention the value of $A^{4}$ that is usually used for the parameter of the Jones polynomial (see Section~\ref{sec:U}).
%%%%%%%%%%%%%%%%%%%%%%%%%%%%%%%%%%%%%%%%%%%%%%%%%%%%%%%%%%%%%%%%%%%%%%
\subsection{Reidemeister torsion}
Here we explain the Reidemeister torsion twisted by the adjoint action of a representation from the fundamental group of a closed three-manifold to the Lie group $\SL(2;\C)$.
\par
For a closed, oriented, connected three-manifold $M$, let $\rho\colon\pi_1(M)\to\SL(2;\C)$ be a representation, where we take a basepoint of $M$ appropriately.
Denoting by $\tilde{M}$ be universal cover of $M$, the $i$-th chain group $C_{i}(\tilde{M};\C)$ and the Lie algebra $\mathfrak{sl}(2;\C)$ can be regarded as a $\Z(\pi_1(M))$.
Here an element in $\pi_1(M)$ acts on $C_{i}(\tilde{M};\Z)$ by a deck transformation, and acts on $\mathfrak{sl}(2;\C)$ by $x\cdot g:=\ad_{\rho(x)}(x)$, where $x\in\pi_1(M)$, $g\in\mathfrak{sl}(2;\C)$ and $\ad_{\rho(x)}(g):=\rho(x)^{-1}g\rho(x)$ is the adjoint action.
Then the tensor product $C_i(M;\rho):=C_i(\tilde{M};\Z)\otimes_{\Z(\pi_1(M))}\mathfrak{sl}(2;\C)$ ($i=0,1,2$) forms a chain complex
\begin{equation*}
  C_{\bullet}
  :
  \{0\}
  \to
  C_2(M;\rho)
  \xrightarrow{\partial_2}
  C_1(M;\rho)
  \xrightarrow{\partial_1}
  C_0(M;\rho)
  \to
  \{0\}.
\end{equation*}
Let $H_i(M;\rho)$ be the homology group of the chain complex $C_{\bullet}$.
Let $\mathbf{c}_i$ be a basis of $C_i(M;\rho)$, $\mathbf{h}_j$ be a basis of $H_i(M;\rho)$, and $\mathbf{b}_j$ be a set of vectors in $C_i(M;\rho)$ such that the set $\partial_i(\mathbf{b}_i)$ forms a basis of $\Im(\partial_i)$.
Then we define
\begin{equation*}
  \Tor(M;\rho)
  :=
  \frac{\left[
          \partial_2(\mathbf{b}_2)\cup\tilde{\mathbf{h}}_1\cup\bf{b}
          \bigm|\mathbf{c}_1
        \right]}
       {\left[
          \partial_1(\mathbf{b}_1)\cup\tilde{\mathbf{h}}\bigm|\mathbf{c}_0
        \right]
        \left[
          \tilde{\mathbf{h}_2}\cup\mathbf{b}_2\bigm|\mathbf{c}_2
        \right]}
\end{equation*}
and call it the (homological) Reidemeister torsion twisted by the adjoint action of $\rho$.
Here if $\mathbf{u}$ and $\mathbf{v}$ are bases of a vector space, then $\mathbf{x}\bigm|\mathbf{y}$ is the determinant of the base-change-matrix from $\mathbf{x}$ to $\mathbf{y}$.
\par
If $X$ is a three-manifold with torus boundary, then we can also define the Reidemeister torsion $\Tor_{\gamma}(M;\rho)$ if one fixes a simple closed curve $\gamma$ in $\partial{X}$.
The following facts are known to calculate the Reidemeister torsions.
\par
Let $\mu$ be the meridian and $\lambda$ be the preferred longitude.
\begin{itemize}
\item
If $\rho$ is a representation in the irreducible component indexed by $(k,l)$, then $\Tor_{\lambda}(X;\rho)$ is given by
\begin{equation}\label{eq:torsion_lambda}
  \Tor_{\lambda}(X;\rho)
  =
  \pm\frac{a^2b^2}{16\sin^2\left(\frac{k\pi}{a}\right)\sin^2\left(\frac{l\pi}{b}\right)}.
\end{equation}
Note that one can also determine the sign.
See \cite[6.2]{Dubois:CANMB2006}.
\item
Suppose that $\rho(\mu)=\begin{pmatrix}e^{u/2}&\ast\\0&e^{-u/2}\end{pmatrix}$ and $\rho(\gamma)=\begin{pmatrix}e^{w(u)/2}&\ast\\0&e^{-w(u)/2}\end{pmatrix}$ after a certain conjugation.
Then we have
\begin{equation}\label{eq:torsion_gamma}
  \Tor_{\gamma}(X;\rho)
  =
  \pm
  \frac{d\,w(u)}{d\,u}
  \Tor_{\mu}(X,\rho)
\end{equation}
from \cite[Th{\'e}or{\`e}me~4.7]{Porti:MAMCAU1997}.
\item
Let $M$ a closed three-manifold obtained from $X$ by Dehn surgery.
We assume that $M=X\cup_{i}D$, where $i\colon\partial{D}\to\partial{X}$ is a homeomorphism.
From \cite[Proposition~4.10]{Porti:MAMCAU1997}, $\Tor(M;\trho)$ is given as
\begin{equation}\label{eq:torsion_surgery}
  \Tor(M;\trho)
  =
  \pm
  \frac{\Tor_{i(\mu_D)}(X;\rho)}{(\tr\rho(i(\lambda_D)))^2-4},
\end{equation}
where $\mu_D$ and $\lambda_D$ are the meridian and the longitude of $D$, respectively.
\end{itemize}
%%%%%%%%%%%%%%%%%%%%%%%%%%%%%%%%%%%%%%%%%%%%%%%%%%%%%%%%%%%%%%%%%%%%%%
\subsection{Chern--Simons invariant}
For a representation $\rho\colon\pi_1(M)\to\SL(2;\C)$, let $A$ be a flat connection  on $M\times\SL(2;\C)$ that induces $\rho$ as the holonomy representation.
Then the following integral is called the $\SL(2;\C)$ Chern--Simons invariant $\CS(M;\rho)$ of $M$ associated with $\rho$ \cite{Chern/Simons:ANNMA21974}.
\begin{equation*}
  \CS(M;\rho)
  :=
  \frac{1}{8\pi^2}
  \int_{M}\tr\left(A\wedge dA+\frac{2}{3}A\wedge A\wedge A\right)
  \in
  \C/\Z.
\end{equation*}
\par
Suppose that a closed three-manifold $M$ is obtained by Dehn surgery along a knot $K\subset S^3$.
Put $D:=D^2\times S^1$ and we assume that $M$ is obtained from $E:=S^3\setminus\Int{N(K)}$ and $D$ by identifying $\mu_D\subset\partial{E}$ with the meridian of $D$.
Here the meridian of $D$ is $\partial{D^2}\times\{\text{point}\}$.
We also assume that the longitude $\{\text{point in $\partial{D}^2$}\}\times S^1$ of $D$ is identified with $\lambda_D\subset\partial{E}$.
Then the following theorem is known \cite[Theorem~4.2]{Kirk/Klassen:MATHA1990}.
\begin{theorem}[Kirk--Klassen]\label{thm:Kirk_Klassen}
Let $\tilde{\rho}_0$ and $\tilde{\rho}_1$ be representations of $\pi_1(M)\to\SL(2;\C)$.
Assume that there exists a path of representations $\rho_t\colon\pi_1(E)\to\SL(2;\C)$ \rm{(}$0\le t\le1$\rm{)} avoiding parabolic representations such that $\rho_i=\tilde{\rho}_i\Bigm|_{E}$ for $i=0,1$.
We assume that after conjugation, the images of $\mu_D$ and $\lambda_D$ are as follows:
\begin{align*}
  \rho_t(\mu_D)
  &=
  \begin{pmatrix}e^{2\pi\i\alpha(t)}&0\\0&e^{-2\pi\i\alpha(t)}\end{pmatrix},
  \\
  \rho_t(\lambda_D)
  &=
  \begin{pmatrix}e^{2\pi\i\beta(t)}&0\\0&e^{-2\pi\i\beta(t)}\end{pmatrix}.
\end{align*}
\par
Then we have
\begin{equation*}
  \CS(M;\tilde{\rho}_1)-\CS(M;\tilde{\rho}_0)
  =
  2
  \int_{0}^{1}\beta(t)\frac{d\,\alpha(t)}{d\,t}\,dt
\end{equation*}
as an element in $\C/\Z$.
\end{theorem}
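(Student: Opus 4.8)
The plan is to realize the difference $\CS(M;\tilde{\rho}_1)-\CS(M;\tilde{\rho}_0)$ as the integral of the first variation of the Chern--Simons functional along the given path, and then to localize that variation to the surgery solid torus. Concretely, I would choose a smooth one-parameter family of connections $A_t$ on $M$ ($0\le t\le1$) whose restriction to $E$ is flat with holonomy $\rho_t$, and whose restriction to $D$ interpolates in such a way that $A_0$ and $A_1$ are globally flat and induce $\tilde{\rho}_0$ and $\tilde{\rho}_1$. Writing $\CS(M;\tilde{\rho}_i)=\frac{1}{8\pi^2}\int_M\tr(A_i\wedge dA_i+\frac23 A_i\wedge A_i\wedge A_i)$, the basic tool is the first-variation identity
\[
  \frac{d}{dt}\,\CS(A_t)=\frac{1}{4\pi^2}\int_M\tr(\dot A_t\wedge F_{A_t}),
\]
which holds with no Stokes boundary term because $M$ is closed. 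Since $A_t$ is flat on $E$ we have $F_{A_t}\equiv0$ there, so the integral collapses to $\frac{1}{4\pi^2}\int_D\tr(\dot A_t\wedge F_{A_t})$: the whole contribution comes from $D$.

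Second, I would evaluate this solid-torus integral in an abelian model. Because $\rho_t(\mu_D)$ and $\rho_t(\lambda_D)$ commute, near $T=\partial D$ the flat connection is gauge-equivalent to a diagonal abelian one; the hypothesis that we avoid parabolics guarantees that the eigenvalue coordinates $\alpha(t),\beta(t)$ are well defined and smooth, so I may write $A_t=(\alpha(t)\,\eta_\mu+\beta(t)\,\eta_\lambda)V$ with $V=\mathrm{diag}(\i,-\i)$ and $\eta_\mu,\eta_\lambda$ the closed $1$-forms dual to $\mu_D,\lambda_D$. I would extend this over $D$ by damping the meridional component, which must vanish along the core since $\mu_D$ bounds, say $A_t=(f(s)\alpha(t)\,\eta_\mu+\beta(t)\,\eta_\lambda)V$ with $f(1)=1$, $f(0)=0$. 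Then $A_t\wedge A_t=0$, $F_{A_t}=\alpha(t)f'(s)\,ds\wedge\eta_\mu\,V$, and a direct computation (with the normalization dictated by the $2\pi\i$-convention of the hypothesis) gives $\tr(\dot A_t\wedge F_{A_t})=-2\,\alpha(t)\dot\beta(t)f'(s)\,ds\wedge\eta_\mu\wedge\eta_\lambda$; integrating over $D$ and then over $t$ yields $-2\int_0^1\alpha(t)\dot\beta(t)\,dt$.

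Third, and this is the delicate part, I would convert this into the stated form and reduce modulo $\Z$. Integration by parts gives $-2\int_0^1\alpha\dot\beta\,dt=2\int_0^1\beta\dot\alpha\,dt-2[\alpha\beta]_0^1$, so the two expressions differ by the endpoint term $2\bigl(\alpha(1)\beta(1)-\alpha(0)\beta(0)\bigr)$. The subtlety is that the damped family above fails to be flat at $t=0,1$ precisely when $\alpha(0),\alpha(1)$ are nonzero integers (which is exactly the regime forced by $\rho_i(\mu_D)=I$), so to obtain genuinely flat connections inducing $\tilde{\rho}_0,\tilde{\rho}_1$ one must strip off the residual meridional term by a large gauge transformation on $D$; such gauge transformations shift the Chern--Simons integral by integers and account for the endpoint discrepancy in $\C/\Z$. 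Equivalently, $\CS_E(A_t)$ is naturally a section of the Chern--Simons line bundle over the character variety $\mathcal{X}(T)$ of the boundary torus, and $D$ supplies a trivialization over the Lagrangian $\{\alpha\in\Z\}$ of representations that extend over $D$; it is this trivialization that selects $\beta\,d\alpha$ rather than $-\alpha\,d\beta$ as the transgressed $1$-form. I expect the main obstacle to be exactly this gauge/trivialization bookkeeping: checking that the integer contributions of the large gauge transformations, together with the parabolic-avoidance hypothesis keeping the entire construction inside the smooth abelian stratum of $\mathcal{X}(T)$, combine to give precisely $2\int_0^1\beta(t)\,\frac{d\alpha(t)}{dt}\,dt$ as an element of $\C/\Z$.
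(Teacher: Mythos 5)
The paper itself contains no proof of this statement: it is imported, up to the sign convention discussed in Remark~\ref{rem:CS_sign}, from Kirk--Klassen, so your proposal can only be measured against the standard gauge-theoretic argument of that paper --- which, in outline, is exactly what you follow. Your first two steps are correct: the first-variation identity $\frac{d}{dt}\CS(A_t)=\frac{1}{4\pi^2}\int_M\tr(\dot A_t\wedge F_{A_t})$ on a closed manifold, the localization to $D$ because the family is flat on $E$, and the abelian computation yielding $\CS(A_1)-\CS(A_0)=-2\int_0^1\alpha(t)\dot\beta(t)\,dt$ all go through (granting your normalization), and you correctly identify the endpoint bookkeeping as the place where $2\int\beta\,d\alpha$ gets selected over $-2\int\alpha\,d\beta$.

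The genuine gap is that your resolution of that last step is false as stated, and it is precisely the content of the theorem. Your damped endpoints $A_0|_D,A_1|_D$ are not flat when $\alpha(i)\ne0$, and no gauge transformation can repair this (gauge transformations conjugate curvature); one must instead replace the connection on $D$ rel boundary by a flat extension, which exists because $e^{2\pi\i\alpha(i)}=1$. The decisive error is your claim that the ensuing corrections are integer shifts: the correction $\CS(\tilde A_i)-\CS(A_i)$ equals $2\alpha(i)\beta(i)\bmod\Z$, which with $\alpha(i)\in\Z$ and $\beta(i)$ arbitrary is generically not an integer (it need not even be real), so the assertion that large gauge transformations ``shift the Chern--Simons integral by integers and account for the endpoint discrepancy'' is self-contradictory. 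This very paper supplies a test case: for the path $\alpha(t)=lt$, $\beta(t)=-lt/p$ used to compute $\CS(X_p;\trho_{l}^{\rm{Abel}})$, your family gives $-2\int_0^1\alpha\dot\beta\,dt=+l^2/p$, while the correct value is $2\int_0^1\beta\dot\alpha\,dt=-l^2/p$; the gap $-2l^2/p=2\alpha(1)\beta(1)$ cannot be closed by integers. What is missing is the boundary term in the transformation law of the Chern--Simons integral on a manifold \emph{with} boundary: writing the flat extension over $D$ as the gauge transform of $\beta(i)\eta_\lambda V$ by $g$ with $g|_{\partial D}=e^{\mp\alpha(i)\theta V}$, one has, up to sign and normalization, $\CS_D(B^g)=\CS_D(B)+\frac{1}{8\pi^2}\int_{\partial D}\tr\left(g^{-1}dg\wedge B\right)+(\text{integer winding term})$, and it is the boundary integral, not the integer term, that produces $2\alpha(i)\beta(i)$. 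Your closing remark about the Chern--Simons line over the character variety of the boundary torus and the Lagrangian $\{\alpha\in\Z\}$ names the right framework, but as written it is an expectation rather than a computation; until that boundary-term identity is proved, your argument establishes only the integration-by-parts identity, not the theorem.
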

\begin{remark}\label{rem:CS_sign}
Our sign convention is different from that in \cite{Kirk/Klassen:MATHA1990}.
See the footnote of Page~98 in \cite{Freed/Gompf:COMMP1991}.
\end{remark}
%%%%%%%%%%%%%%%%%%%%%%%%%%%%%%%%%%%%%%%%%%%%%%%%%%%%%%%%%%%%%%%%%%%%%
\subsection{Surgery description}\label{subsec:surgery_description}
In this subsection, we show that $X_{p}$ is homeomorphic to the Seifert fibered space $S(O,0,-a/c,b/d,p-ab)$ by using techniques described in \cite{Rolfsen:1990}.
See also \cite{Rolfsen:PACJM1984}.
Here we assume that $b>a>0$ and $(a,b)=1$, but we do not assume that $b$ is odd for simplicity.
\par
Let $S\cong D^2\times S^1$ be an unknotted solid torus, where $D^2$ is a $2$-disk and $S^1$ is a circle.
Let $\left(T(q,r)\cup L\cup M\right)_{(s,t,u)}$ be the $3$-component link  with rational surgery coefficients, where $T(q,r)$ is the torus knot with surgery coefficient $s$ that lies on $\partial{S}$, $L$ is the core $\{0\}\times S^1$ of $S$ with coefficient $t$, and $M$ is an unknotted circle in $S^3\setminus{S}$ that is parallel to the meridian $\partial{D^2}\times\text{point}$ of $S$ with surgery coefficient $u$.
Here we assume that $s,t,u\in\Q\cup\{\infty\}$, and that the knot $T(q,r)$ presents the homology class $[q\times\text{longitude}+r\times\text{meridian}]\in H_1(\partial{S};\Z)$.
See Figure~\ref{fig:S_L_M}.
\begin{figure}[H]
\includegraphics[scale=0.3]{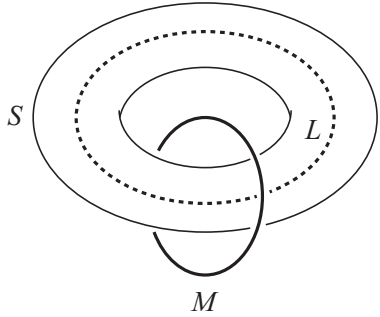}
\caption{A solid torus $S$, $L$, and $M$. The torus knot $T(q,r)$ is on $\partial{S}$.  It passes through $M$ $q$ times and goes around $L$ $r$ times.}
\label{fig:S_L_M}.
\end{figure}
\par
Putting $r_0=:b$ and $r_1:=a$, we have the following $k$ equalities from the Euclidean algorithm:
\begin{equation}\label{eq:Euclidean}
\begin{split}
  r_0=&q_1r_1+r_2,
  \\
  r_1=&q_2r_2+r_3,
  \\
  &\vdots
  \\
  r_{k-3}=&q_{k-2}r_{k-2}+r_{k-1},
  \\
  r_{k-2}=&q_{k-1}r_{k-1}+1,
  \\
  r_{k-1}=&q_k
\end{split}
\end{equation}
since $(a,b)=1$, where $r_i$ ($i=0,1,\dots,k-1$) and $q_j$ ($j=1,w,\dots,k$) are positive integers with $r_{i+1}<r_i$.
\par
We start with $\left(T(r_0,r_1)\cup L\cup M\right)_{(p,\infty,\infty)}$.
\par
If we apply $-q_1$ times twists about $L$, $\left(T(r_0,r_1)\cup L\cup M\right)_{(p,\infty,\infty)}$ is changed into $\left(T(r_2,r_1)\cup L\cup M\right)_{(p-r_1^2q_1,-1/q_1,\infty)}$ without changing the associated three-manifold.
Similarly, we see that $-q_2$ times twists about $M$ changes $\left(T(r_2,r_1)\cup M\cup L\right)_{(p-r_1^2q_1,\infty,-1/q_1)}$ into $\left(T(r_2,r_3)\cup L\cup M\right)_{(p-r_1^2q_1-r_2^2q_2,-1/q_2,-1/q_1-q_2)}$ without changing the associated three-manifold.
\par
Continuing these twists, we get the following sequence of links with rational surgery coefficient without changing the associated three-manifold:
\begin{equation*}
\begin{split}
  &\left(T(r_0,r_1)\cup L\cup M\right)_{(p,\infty,\infty)}
  \quad\xrightarrow{\text{$-q_1$ twists about $L$}}
  \\
  &\left(T(r_2,r_1)\cup L\cup M\right)_{(p-r_1^2q_1,-1/q_1,\infty)}
  \quad\xrightarrow{\text{$-q_2$ twists about $M$}}
  \\
  &\left(T(r_2,r_3)\cup L\cup M\right)
  _{(p-r_1^2q_1-r_2^2q_2,-[q_2,q_1],-1/q_2)}
  \quad\xrightarrow{\text{$-q_3$ twists about $L$}}
  \\
  &\left(T(r_4,r_3)\cup L\cup M\right)
  _{(p-r_1^  2q_1-r_2^2q_2-r_3^2q_3,-1/[q_3,q_2,q_1],-[q_3,q_2])}
  \quad\xrightarrow{\text{$-q_4$ twists about $M$}}
  \\
  &\hspace{20mm}\vdots
  \\
  &
  \begin{cases}
    \left(T(1,r_{k-1})\cup L\cup M\right)
    _{(p-\sum_{j=1}^{k-1}r_j^2q_j,
      -1/[q_{k-1},q_{k-2},\cdots,q_1],-[q_{k-1},q_{k-2},\dots,q_2])}
    &\quad\text{(if $k$ is even)},
    \\[3mm]
    \left(T(r_{k-1},1)\cup L\cup M\right)
    _{(p-\sum_{j=1}^{k-1}r_j^2q_j,
      -[q_{k-1},q_{k-2},\dots,q_1],-1/[q_{k-1},q_{k-2},\dots,q_2])}
    &\quad\text{(if $k$ is odd)},
  \end{cases}
  \\[5mm]
  &\xrightarrow[\text{if $k$ is even (odd, respectively)}]
               {\text{$-q_r=-r_{k-1}$ twists about $M$ ($L$, respectively)}}
  \\[5mm]
  &
  \begin{cases}
    \left(T(1,0)\cup L\cup M\right)
    _{(p-\sum_{j=1}^{k-1}r_j^2q_j-r_{k-1},
      -[q_k,q_{k-1},\cdots,q_1],-1/[q_{k},q_{k-1},\dots,q_2])}
    &\quad\text{(if $k$ is even)},
    \\[3mm]
    \left(T(0,1)\cup L\cup M\right)
    _{(p-\sum_{j=1}^{k-1}r_j^2q_j-r_{k-1},
      -1/[q_k,q_{k-1},\dots,q_1],-[q_k,q_{k-1},\dots,q_2])}
    &\quad\text{(if $k$ is odd)},
  \end{cases}
\end{split}
\end{equation*}
from \eqref{eq:Euclidean}, where $[m_0,m_1,m_2,\dots,m_k]$ means the continued fraction $m_0+\frac{1}{m_1+\frac{1}{m_2+\dots+\frac{1}{m_k}}}$.
Ignoring the surgery coefficients, the links $\bigl(T(1,0)\cup L\cup M\bigr)$ and $\bigl(T(0,1)\cup L\cup M\bigr)$ are depicted in Figures~\ref{fig:T_1_0_T_0_1}.
\begin{figure}[H]
\includegraphics[scale=0.3]{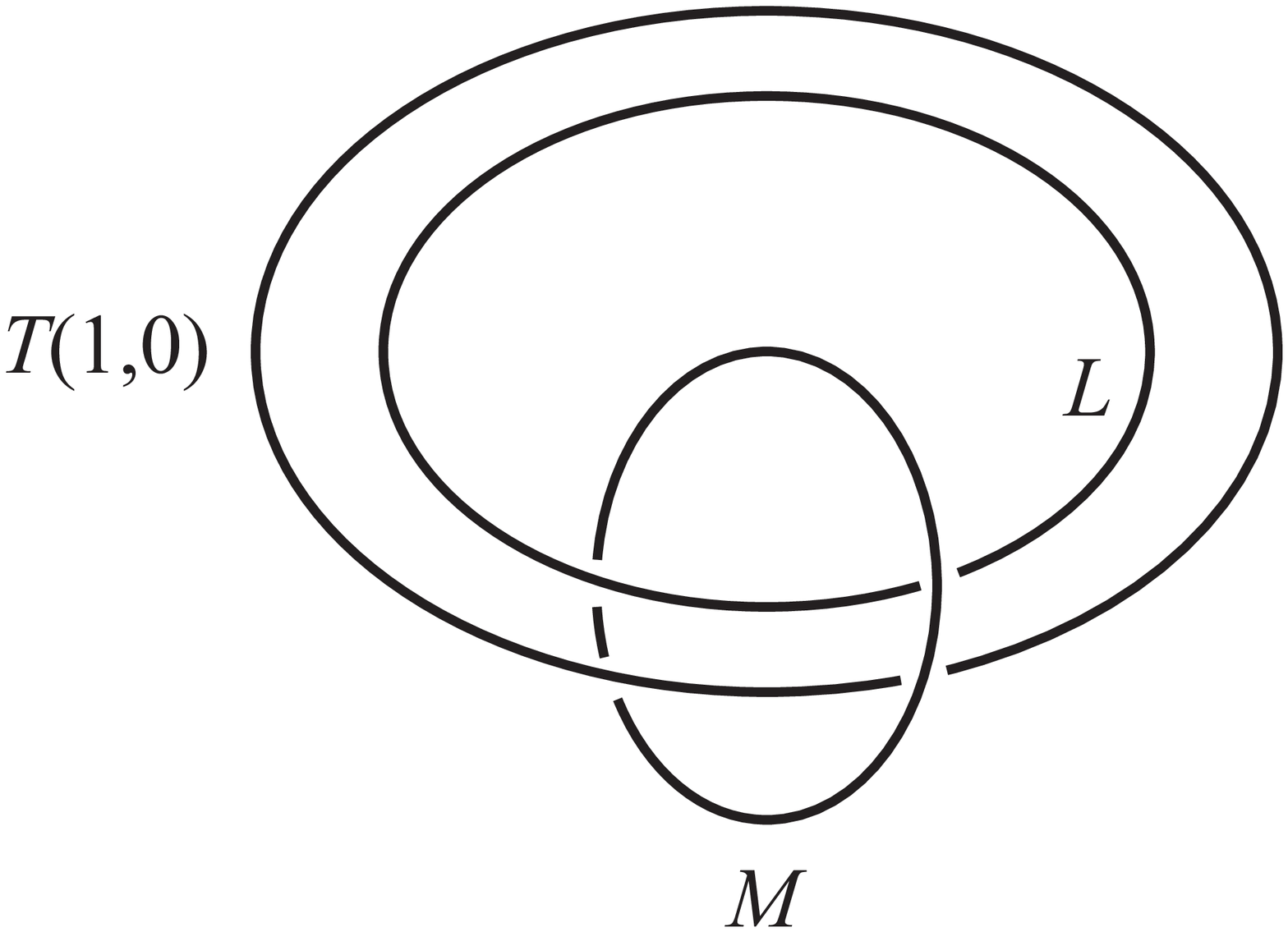}
\hspace{10mm}
\includegraphics[scale=0.3]{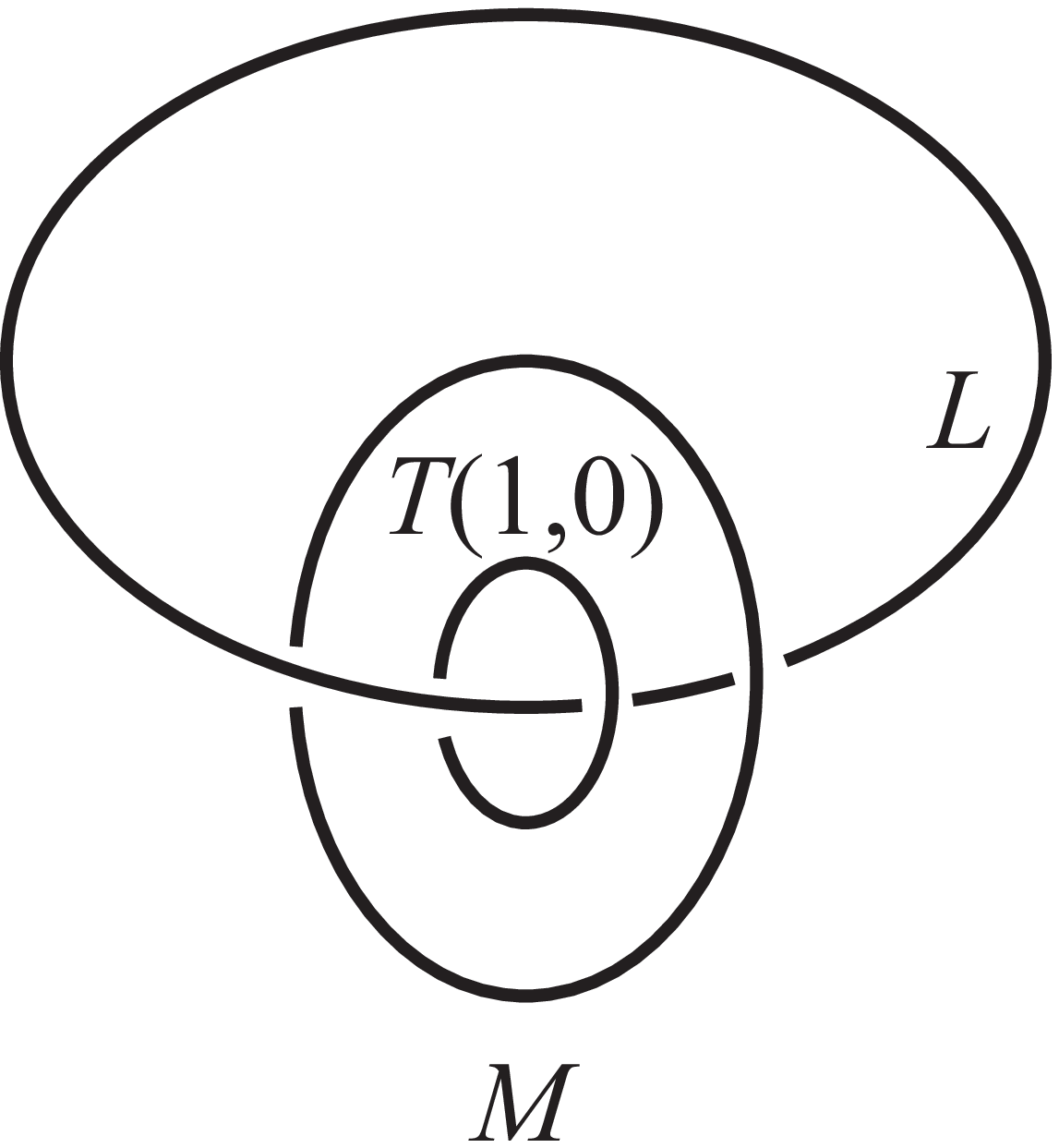}
\caption{The links $\bigl(T(1,0)\cup L\cup M\bigr)$ (left) and $\bigl(T(0,1)\cup L\cup M\bigr)$ (right).}
\label{fig:T_1_0_T_0_1}
\end{figure}
\par
Now we study the complicated coefficients.
\par
First of all, we have
\begin{equation*}
\begin{split}
  p-r_1^2q_1-\dots-r_{k-1}^2q_{k-1}-r_{k-1}
  &=
  p-r_1^2q_1-\dots-r_{k-1}r_{k-2}
  \\
  &=
  p-r_1^2q_1-\dots-r_{k-2}^2q_{k-2}-r_{k-1}r_{k-2}
  \\
  &=
  p-r_1^2q_1-\dots-r_{k-2}r_{k-3}
  \\
  &=
  \cdots
  \\
  &=
  p-r_1r_0
  =
  p-ab.
\end{split}
\end{equation*}
\par
Next, we consider the continued fractions.
\par
Dividing $j$-th equation of \eqref{eq:Euclidean} by $r_j$ ($j=1,2,\dots,k-1$), we obtain
\begin{align*}
  r_0/r_1=&q_1+r_2/r_1,
  \\
  r_1/r_2=&q_2+r_3/r_2,
  \\
  &\vdots
  \\
  r_{k-2}/r_{k-1}=&q_{k-1}+1/r_{k-1},
  \\
  r_{k-1}=&q_k.
\end{align*}
From these equations, we can see that $b/a=r_0/r_1$ is expressed as the continued fraction $[q_1,q_2,\dots,q_k]$.
We also see that $r_1/r_2=[q_2,q_3,\dots,q_k]$.
\par
Put
\begin{equation*}
  M(q)
  :=
  \begin{pmatrix}
    q&1 \\
    1&0
  \end{pmatrix}.
\end{equation*}
If we write $M(q_k)M(q_{k-1})\cdots M(q_j)=\begin{pmatrix}s_j&t_j\\u_j&v_j\end{pmatrix}$, then it is easy to prove
\begin{align*}
  \frac{s_j}{t_j}
  &=
  [q_j,q_{j+1},\dots,q_{k}],
  \\
  \frac{s_j}{u_j}
  &=
  [q_k,q_{k-1},\dots,q_{j}],
  \\
  \frac{t_j}{v_j}
  &=
  [q_{k},q_{k-1},\dots,q_{j+1}].
\end{align*}
In particular, we have
\begin{align*}
  \frac{s_1}{t_1}
  &=
  [q_1,q_2,\dots,q_{k}],
  \\
  \frac{s_1}{u_1}
  &=
  [q_k,q_{k-1},\dots,q_1],
  \\
  \frac{t_1}{v_1}
  &=
  [q_k,q_{k-1},\dots,q_{2}].
\end{align*}
Since $b/a=[q_1,q_2,\dots,q_k]=s_1/t_1$, $s_1>0$, $t_1>0$, and $(s_1,t_1)=1$, we have $b=s_1$ and $a=t_1$.
Therefore we have $[q_k,q_{k-2},\dots,q_1]=b/u_1$ and $[q_k,q_{k-1},\dots,q_2]=a/v_1$.
Now since $\det\begin{pmatrix}s_1&t_1\\u_1&v_1\end{pmatrix}=(-1)^k$, we have $bv_1-au_1=(-1)^k$.
Therefore if $k$ is even, putting $c:=-v_1$ and $d:=-u_1$, we have $[q_k,q_{k-1},\dots,q_1]=-b/d$ and $[q_k,q_{k-1},\dots,q_2]=-a/c$ with $ad-bc=1$.
If $k$ is odd, putting $c:=v_1$ and $d:=u_1$, we have $[q_k,q_{k-1},\dots,q_1]=b/d$, $[q_k,q_{k-1},\dots,q_2]=a/c$ with $ad-bc=1$.
\par
Therefore $\bigl(T(b,a)\cup L\cup M\bigr)_{(p,\infty,\infty)}$ and the link in Figure~\ref{fig:link_even_odd} give the same three-manifold.
\begin{figure}[H]
\includegraphics[scale=0.3]{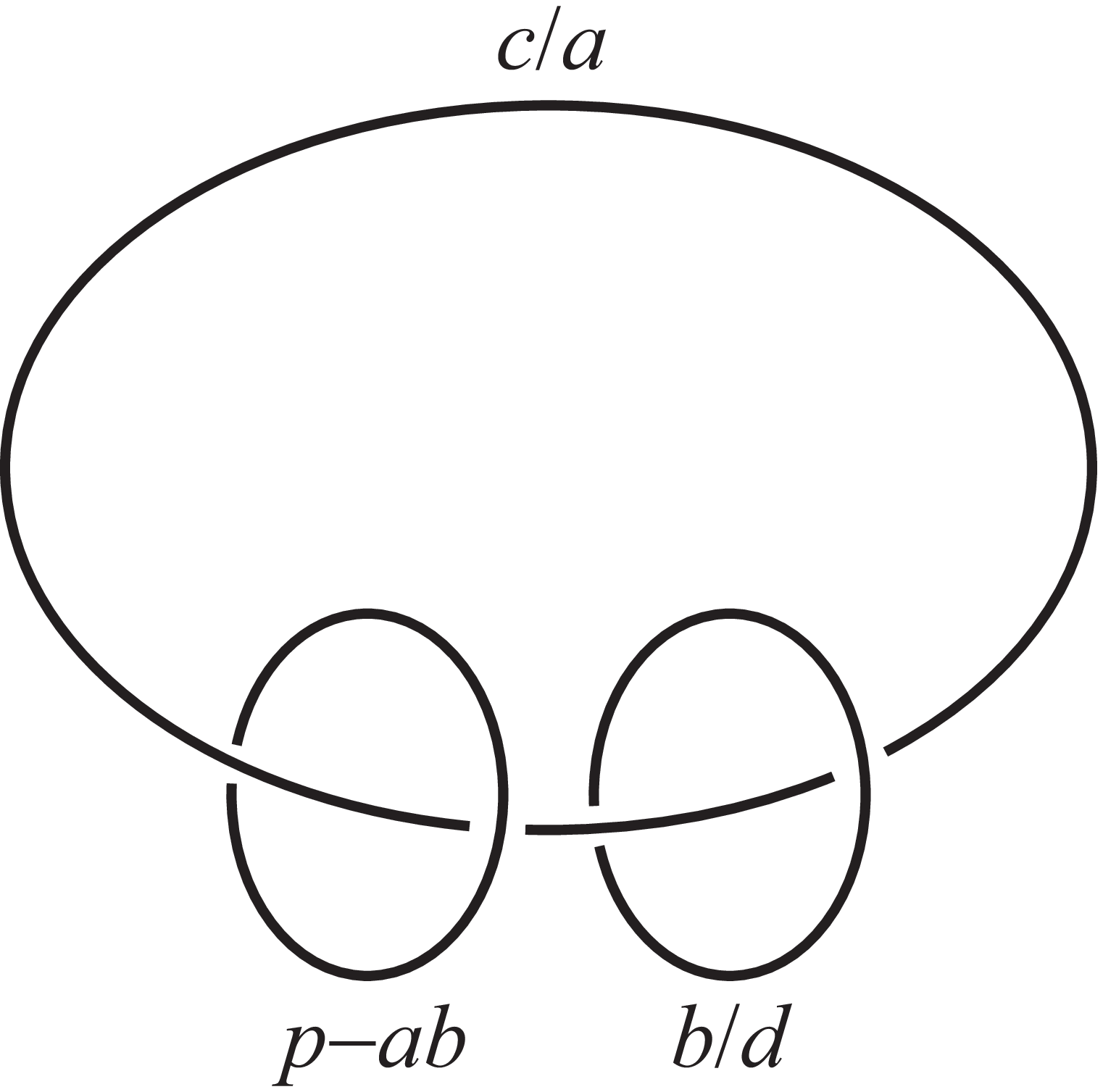}
\hspace{10mm}
\includegraphics[scale=0.3]{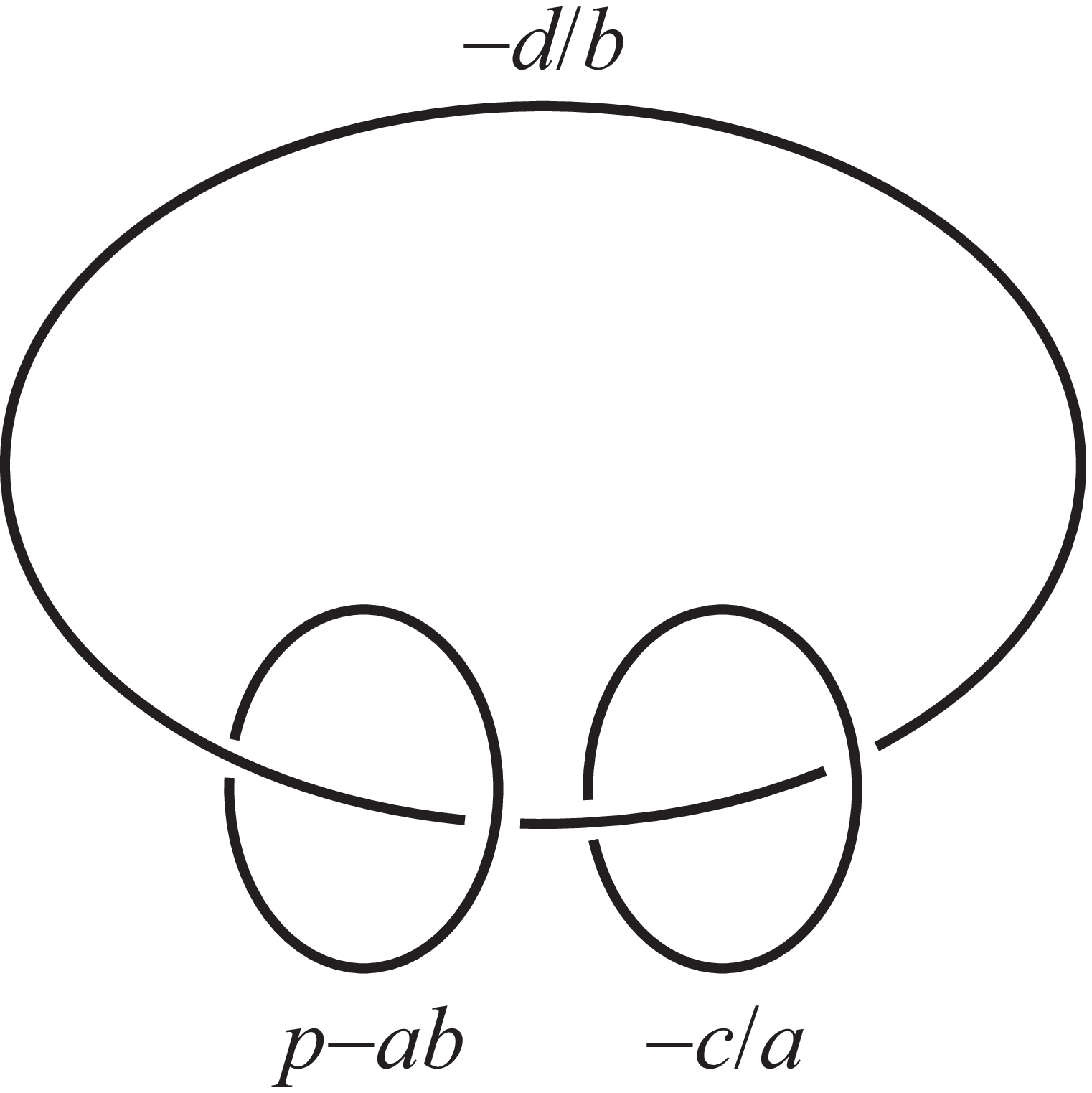}
\caption{$\bigl(T(b,a)\cup L\cup M\bigr)_{(p,\infty,\infty)}$ is equivalent to the link to the left (right, respectively) if $k$ is even (odd, respectively).}
\label{fig:link_even_odd}
\end{figure}
Here numbers beside link components are surgery coefficient.
\par
By the ``slam-dunk'' move \cite[Fig.~6]{Cochran/Gompf:TOPOL1988}, both links in Figure~\ref{fig:link_even_odd} and the link depicted in Figure~\ref{fig:link} give the same three-manifold.
So we conclude that $X_p$ is nothing but the Seifert fibered space $S(O,0,-a/c,b/d,p-ab)$.

\section{Calculation of $\htau_n(X_{p};\exp(4\pi\i/n))$}\label{sec:U}
In this section, we show that the summation in $\htau_n(X_{p};\exp(4\pi\i/n))$ can be expressed as the difference of two double integrals.
\par
In this paper we are only interested in three-manifolds obtained by Dehn surgery along a knot.
Let $K$ be a knot in the three-sphere $S^3$ and put $X:=S^3\setminus{\Int N(K)}$, where $N(K)$ is the tubular neighborhood of $K$ in $S^3$ and $\Int$ means the interior.
For an integer $p$, denote by $X_p$ the closed, oriented three-manifold obtained from $S^3$ by Dehn surgery along $K$ with coefficient $p$.
More precisely, we obtain $X_p$ from the disjoint union $X\sqcup(D^2\times S^1)$ by identifying $\partial{X}$ and $\partial(D^2\times S^1)$ so that $\partial{D^2}\times\{\ast\}\subset\partial(D^2\times S^1)$ is identified with the simple closed curve in $\partial{X}$ that goes once along $K$ and $p$ times around $K$, where $D^2$ is a disk, $S^1$ is a circle, and $\ast$ is a point in $S^1$.
So the first homology group $H_1(X_p;\Z)$ is isomorphic to $\Z/p\Z$.
\par
We denote by $\htau_n(X_p;\exp(4\pi\i/n))$ the invariant given in \eqref{def:htau}.
Let $E$ be a knot diagram presenting $K$ with framing $p$.
Then we have
\begin{equation*}
\begin{split}
  \langle S_{k}(\alpha)\rangle_{E}
  &=
  \left((-1)^kA^{(k+1)^2-1}\right)^{p}\Delta_{k}J_{k+1}(K;A^{4})
  \\
  &=
  \left(-e^{\pi\i/n}\right)^{p((k+1)^2-1)}
  (-1)^{k+1}\frac{\sin(2(k+1)\pi/n)}{\sin(2\pi/n)}J_{k+1}(K;e^{4\pi\i/n}),
\end{split}
\end{equation*}
where $J_{k+1}(K;q)$ is the $(k+1)$-dimensional colored Jones polynomial of $K$, normalized so that $J_{k+1}(\text{unknot};q)=1$.
\begin{remark}
We need to multiply by $\Delta_{k}$ since $\langle S_k(\alpha)\rangle_{U}=\Delta_{k}$ with $U$ a diagram of the unknot with no crossing.
\end{remark}
\begin{remark}
If we start with the Kauffman bracket defined as in \cite{Kauffman:TOPOL1987} and replace $A$ with $t^{-1/4}$, we obtain the original Jones polynomial $V(L;t)$ \cite{Jones:BULAM31985}.
In the formula above, we replace $q$ in the colored Jones polynomial $J_{k+1}(K;q)$ with $A^4$, and so our $2$-dimensional colored Jones polynomial $J_2(L;q)$ equals $V(L;q^{-1})$.
\end{remark}
We have the following lemma.
\begin{lemma}
The Witten--Reshetikhin-Turaev invariant $\htau_n(X_p;\exp(4\pi\i/n))$ is given by the following formula:
\begin{equation*}
  \htau_n(X_p;\exp(4\pi\i/n))
  =
  \frac{1}{\sqrt{n}\sin(2\pi/n)}\,
  e^{\sign(p)(\frac{3}{n}+\frac{n+1}{4})\pi\sqrt{-1}}\,
  \sum_{k=1}^{n-1}
  \sin^2\left(\frac{2k\pi}{n}\right)
  \left(-e^{\frac{\pi\sqrt{-1}}{n}}\right)^{p(k^2-1)}
  J_{k}\left(K;e^{4\pi\sqrt{-1}/n}\right)
\end{equation*}
where $\sign(p)$ is the sign of $p$.
\end{lemma}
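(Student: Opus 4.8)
The plan is to unwind the definition \eqref{def:htau} in the special case where the surgery link has a single component, and then to separate the work into an evaluation of the numerator, which produces the colored Jones sum, and an evaluation of the denominator, which is a quadratic Gauss sum responsible for the framing phase.

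First I would note that since $X_p$ is obtained by surgery along the single knot $K$ with framing $p$, the diagram $D$ has one component $E$ and its linking matrix is the $1\times1$ matrix $(p)$. Hence $(b_+,b_-)=(1,0)$ when $p>0$ and $(b_+,b_-)=(0,1)$ when $p<0$, so the denominator in \eqref{def:htau} collapses to the single factor $\langle\omega\rangle_{U_{\sign(p)}}$ and
\begin{equation*}
  \htau_n(X_p;\exp(4\pi\i/n))
  =
  \left.\frac{\langle\omega\rangle_E}{\langle\omega\rangle_{U_{\sign(p)}}}\right|_{A=\exp(\pi\i/n)}.
\end{equation*}
For the numerator I expand $\langle\omega\rangle_E=\sum_{k=0}^{n-2}\Delta_k\langle S_k(\alpha)\rangle_E$, insert the displayed formula for $\langle S_k(\alpha)\rangle_E$, and use that the definition of $\Delta_k$ specializes at $A=e^{\pi\i/n}$ to $\Delta_k=(-1)^k\sin\!\left(2(k+1)\pi/n\right)/\sin(2\pi/n)$. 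The two Chebyshev factors multiply to $\sin^2\!\left(2(k+1)\pi/n\right)/\sin^2(2\pi/n)$, and after reindexing $k\mapsto k-1$ this gives
\begin{equation*}
  \langle\omega\rangle_E
  =
  \frac{-1}{\sin^2(2\pi/n)}
  \sum_{k=1}^{n-1}\sin^2\!\left(\tfrac{2k\pi}{n}\right)\left(-e^{\pi\i/n}\right)^{p(k^2-1)}J_k(K;e^{4\pi\i/n}).
\end{equation*}

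For the denominator I would specialize the same identity to $K=$ unknot (so every $J$ equals $1$) with framing $\pm1$, giving $\langle\omega\rangle_{U_\pm}=\sum_{k=0}^{n-2}(-1)^k\Delta_k^2A^{\pm((k+1)^2-1)}$. Using $A^n=e^{\pi\i}=-1$ to write $(-1)^k=A^{nk}$ and expanding $4\sin^2\theta=2-e^{2\i\theta}-e^{-2\i\theta}$, this collapses to a fixed linear combination $2G(n)-G(n+4)-G(n-4)$ of partial quadratic Gauss sums $G(c)=\sum_{k=1}^{n-1}A^{\pm(k^2+ck)}$. Completing each $G(c)$ to a full-period sum $\sum_{k=0}^{2n-1}A^{\pm(k^2+ck)}$ introduces a constant correction per term, but these constants cancel in the particular combination above, leaving a clean sum of three complete Gauss sums. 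Evaluating those by the reciprocity formula for quadratic Gauss sums supplies the factor $\sqrt n$ together with an eighth‑root‑of‑unity phase, and I expect the outcome
\begin{equation*}
  \langle\omega\rangle_{U_\pm}=-\frac{\sqrt n}{\sin(2\pi/n)}\,e^{\mp(\frac3n+\frac{n+1}{4})\pi\i}.
\end{equation*}
Dividing the two boxed expressions then yields the asserted formula, the signs and the factor $\sin^{-1}(2\pi/n)$ combining as required.

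The main obstacle is the last step: pinning down the Gauss‑sum evaluation exactly, including the precise phase $\frac3n+\frac{n+1}{4}$, the power $\sqrt n$, and the global sign. Because $A=e^{\pi\i/n}$ makes the summand periodic of period $2n$ rather than $n$, one must track the summation range (the target sum runs only over $1\le k\le n-1$) carefully when completing to a full residue system, and the hypothesis that $n$ is \emph{odd} enters through $A^{n^2}=(-1)^n=-1$ at the reciprocity step. Keeping this bookkeeping consistent is where the delicacy lies; everything else reduces to routine substitution and trigonometric simplification.
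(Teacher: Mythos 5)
Your proposal is correct and its skeleton is the same as the paper's: with a one-component surgery diagram you get $(b_+,b_-)=(1,0)$ or $(0,1)$ according to $\sign(p)$, the numerator becomes the colored-Jones sum via the twist-eigenvalue formula for $\langle S_k(\alpha)\rangle_E$, and the denominator $\langle\omega\rangle_{U_{\sign(p)}}$ is a quadratic Gauss sum. The only genuinely different step is how that Gauss sum is evaluated. The paper shifts summation indices (the $(k\pm2)^2$ trick) to reach $\langle\omega\rangle_{U_+}=\frac{2A^{-3}}{A^2-A^{-2}}G_n(-A)$ and then computes $G_n(-A)=(-\i)^{(n-1)/2}\sqrt{n}$ by Jacobi-symbol identities together with Gauss's classical evaluation of $G_n(e^{2\pi\i/n})$; you instead expand $4\sin^2\theta$ into exponentials, complete the three partial sums $G(n),G(n\pm4)$ to full-period sums over $\Z/2n\Z$ (your observation that the constant corrections cancel in the combination $2G(n)-G(n+4)-G(n-4)$ is correct), and apply reciprocity --- essentially the Cauchy--Kronecker formula that the paper itself invokes later as Theorem~\ref{thm:Gauss}. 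Both evaluations produce the same $\sqrt{n}$ and eighth-root-of-unity phase, so your route closes. One bookkeeping caveat: the paper's displayed formula for $\langle S_k(\alpha)\rangle_E$ contains a sign typo --- the factor $(-1)^{k+1}$ should be $(-1)^{k}$, since $\Delta_k=(-1)^k\sin\bigl(2(k+1)\pi/n\bigr)/\sin(2\pi/n)$ at $A=e^{\pi\i/n}$ --- and taking that display at face value is what puts the overall $-1$ into your expression for $\langle\omega\rangle_E$ (the paper's own proof quietly uses the corrected sign and gets $+$). Meanwhile your starting formula for the denominator, $\sum_{k=0}^{n-2}(-1)^k\Delta_k^2A^{\pm((k+1)^2-1)}$, is the corrected version, and it evaluates to $+\frac{\sqrt{n}}{\sin(2\pi/n)}\,e^{\mp(\frac{3}{n}+\frac{n+1}{4})\pi\i}$, not to the minus sign you assert. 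So as written your argument carries two compensating minus signs, one in the numerator and one in the denominator, which cancel in the quotient; the final formula is right, but you should either correct both signs (as the paper does) or use the typo'd identity consistently in both places, since at present the two intermediate displays are not mutually consistent.
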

\begin{proof}
When $A=e^{\pi\i/n}$, we have
\begin{equation*}
  \Delta_{k-1}
  =
  (-1)^{k-1}\frac{\sin(2k\pi/n)}{\sin(2\pi/n)}.
\end{equation*}
So we have
\begin{equation*}
  \langle\omega\rangle_{E}
  =
  \sum_{k=1}^{n-1}\Delta_{k-1}\times\langle S_{k-1}(\alpha)\rangle_{E}
  =
  \frac{1}{\sin^2(2\pi/n)}
  \sum_{k=1}^{n-1}
  \left(-e^{\pi\i/n}\right)^{p(k^2-1)}
  \sin^2(2k\pi/n)
  J_k\left(K;e^{4\pi\i/n}\right).
\end{equation*}
\par
Next we calculate $\langle\omega\rangle_{U_{\pm}}$.
Since $U_{-}$ is the mirror image of $U_{+}$, $\langle\omega\rangle_{U_-}$ can be obtained from $\langle\omega\rangle_{U_+}$ by replacing $A$ with $A^{-1}$, that is, $\langle\omega\rangle_{U_-}=\overline{\langle\omega\rangle_{U_+}}$ (complex conjugate).
So we will only calculate $\langle\omega\rangle_{U_+}$.
Since $J_k(\text{unknot};q)=1$, we have
\begin{equation*}
\begin{split}
  \langle\omega\rangle_{U_{+}}
  &=
  \frac{1}{(A^2-A^{-2})^2}
  \sum_{k=1}^{n-1}
  \left(-A\right)^{k^2-1}
  (A^{2k}-A^{-2k})^2
  \\
  &=
  \frac{1}{(A^2-A^{-2})^2}
  \sum_{k=0}^{n-1}
  \left(
    -A^{-5}(-A)^{(k+2)^2}
    +
    2A^{-1}(-A)^{k^2}
    -
    A^{-5}(-A)^{(k-2)^2}
  \right)
  \\
  &=
  \frac{2\left(-A^{-5}+A^{-1}\right)}{(A^2-A^{-2})^2}
  \sum_{l=0}^{n-1}(-A)^{l^2}
  \\
  &=
  \frac{2A^{-3}}{A^2-A^{-2}}G_n(-A),
\end{split}
\end{equation*}
where the third equality follows since $(-A)^{(l+n)^2}=(-A)^{l^2}$, and $G_n(\zeta):=\sum_{l=0}^{n-1}\zeta^{l^2}$ is the quadratic Gaussian sum.
\par
Denoting by $\left(\frac{c}{n}\right)\in\{\pm1,0\}$ the Jacobi symbol, a generalization of the Legendre symbol, it is well known that
\begin{align*}
  G_n(e^{2c\pi\i/n})
  &=
  \left(\frac{c}{n}\right)G_n(e^{2\pi\i/n}),
  \\
  \left(\frac{2}{n}\right)
  &=
  (-1)^{(n^2-1)/8},
  \\
  \left(\frac{cd}{n}\right)
  &=
  \left(\frac{c}{n}\right)\left(\frac{d}{n}\right).
\end{align*}
See for example \cite{Ireland/Rosen:GTM84}.
So we have
\begin{equation*}
\begin{split}
  G_n(-A)
  &=
  G_n\left(-e^{\pi\i/n}\right)
  \\
  &=
  G_n\left(\exp\left(\frac{\frac{n+1}{2}\times2\pi\i}{n}\right)\right)
  \\
  &=
  \left(\frac{(n+1)/2}{n}\right)G_n(e^{2\pi\i/n})
  \\
  &=
  \frac{\left(\frac{n+1}{n}\right)}{\left(\frac{2}{n}\right)}G_n(e^{2\pi\i/n})
  \\
  &=
  (-1)^{(n^2-1)/8}G_n(e^{2\pi\i/n}),
\end{split}
\end{equation*}
where the last equality holds since $\left(\frac{c}{n}\right)=\left(\frac{c'}{n}\right)$ if $c\equiv c'\pmod{n}$.
Since it is also well known that
\begin{equation*}
  G_n(e^{2\pi\i})
  =
  \begin{cases}
    \sqrt{n}&\quad\text{if $n\equiv1\pmod{4}$,}
    \\
    \i\sqrt{n}&\quad\text{if $n\equiv3\pmod{4}$,}
  \end{cases}
\end{equation*}
we have
\begin{equation*}
\begin{split}
  G_n(-A)
  &=
  \begin{cases}
    (-1)^{(n^2-1)/8}\sqrt{n}&\quad\text{if $n\equiv1\pmod{4}$,}
    \\
    (-1)^{(n^2-1)/8}\i\sqrt{n}&\quad\text{if $n\equiv3\pmod{4}$}
  \end{cases}
  \\
  &=
  (-\i)^{(n-1)/2}\sqrt{n}.
\end{split}
\end{equation*}
Hence we have
\begin{equation*}
  \langle\omega\rangle_{U_{+}}
  =
  \frac{e^{-3\pi\i/n}}{\i\sin(2\pi/n)}(-\i)^{(n-1)/2}\sqrt{n}
  =
  \frac{\sqrt{n}}{\sin(2\pi/n)}
  \times e^{-\left(\frac{3}{n}+\frac{n+1}{4}\right)\pi\i}.
\end{equation*}
So we conclude that
\begin{equation*}
  \langle\omega\rangle_{U_+}^{-b_+}\langle\omega\rangle_{U_-}^{-b_-}
  =
  \frac{\sin(2\pi/n)}{\sqrt{n}}
  \times e^{\sign(p)\left(\frac{3}{n}+\frac{n+1}{4}\right)\pi\i}
\end{equation*}
and the required formula follows.
\end{proof}
\begin{remark}
Equation~(4.1) in \cite{Chen/Yang:QT2018} is not correct; they should have used $\langle\omega\rangle_{U_-}$ and $\langle\omega\rangle_{U}$ instead of $\langle\mu\omega\rangle_{U_-}$ and $\langle\mu\omega\rangle_{U}$.
\end{remark}
\begin{remark}
This is the so-called the quantum $SU(2)$-invariant.
See \cite[P.~930]{Gang/Romo/Yamazaki:COMMP2018}.
See also \cite{Kirby/Melvin:INVEM1991} and \cite{Lickorish:1997}.
\end{remark}
\par
Let $K:=T(a,b)$ be the torus knot of type $(a,b)$.
Our convention is that $T(2,3)$ is the right-handed trefoil.
Note that the knot $3_1$ in Rolfsen's table \cite{Rolfsen:1990} (see also \cite{Lickorish:1997}) is $T(2,-3)$.
Then we have
\begin{equation*}
\begin{split}
  J_k(K;q)
  &=
  \frac{q^{1/2}-q^{-1/2}}{q^{k/2}-q^{-k/2}}\,q^{-ab(k^2-1)/4}
  \sum_{j=-(k-1)/2}^{(k-1)/2}
  q^{bj(aj+1)}
  \frac{q^{aj+1/2}-q^{-(aj+1/2)}}{q^{1/2}-q^{-1/2}}
  \\
  &=
  q^{-ab(k^2-1)/4}
  \sum_{j=-(k-1)/2}^{(k-1)/2}
  q^{bj(aj+1)}
  \frac{q^{aj+1/2}-q^{-(aj+1/2)}}{q^{k/2}-q^{-k/2}}
\end{split}
\end{equation*}
from \cite{Morton:MATPC1995,Rosso/Jones:JKNOT1993}.
Hence we have
\begin{equation*}
  J_{k}(K; e^{\frac{4\pi\sqrt{-1}}{n}})
  =
  e^{-ab(k^2-1)\frac{\pi\sqrt{-1}}{n}}
  \sum_{j=-(k-1)/2}^{(k-1)/2}
  e^{4bj(aj+1)\frac{\pi \sqrt{-1}}{n}}
  \frac{\sin\left(\frac{(4aj+2)\pi}{n}\right)}{\sin\left(\frac{2k\pi}{n}\right)}.
\end{equation*}
With $l=2j$ we have
\begin{equation*}
\begin{split}
  \htau_n\left(X_p;\exp(4\pi\i/n)\right)
  &=
  \frac{1}{\sqrt{n}\sin(2\pi/n)}\,
  e^{\sign(p)(\frac{3}{n}+\frac{n+1}{4})\pi\sqrt{-1}}\,
  \sum_{k=1}^{n-1}
  \sin\left(\frac{2k\pi}{n}\right)
  (-1)^{p(k^2-1)}e^{(p-ab)(k^2-1)\frac{\pi\sqrt{-1}}{n}}
  \\
  &
  \quad\times
  \sum_{\substack{-k+1\le l\le k-1\\l\equiv{k+1}\pmod{2}}}
  e^{bl(al+2)\frac{\pi\sqrt{-1}}{n}}\sin\left(\frac{2(al+1)\pi}{n}\right).
\end{split}
\end{equation*}
Putting
\begin{equation*}
\begin{split}
  S
  &:=
  \sum_{k=1}^{n-1}
  (-1)^{pk}e^{(p-ab)k^2 h}\sinh(2kh)
  \sum_{\substack{-k+1\le l\le k-1\\l\equiv{k+1}\pmod{2}}}
  e^{bl(al+2)h}\sinh(2(al+1)h)
  \\
  &=
  e^{-\frac{b}{a}h}\sum_{k=1}^{n-1}
  (-1)^{pk}e^{(p-ab)k^2 h}\sinh(2kh)
  \sum_{\substack{-k+1\le l\le k-1\\l\equiv{k+1}\pmod{2}}}
  e^{\frac{b}{a}(al+1)^2h}\sinh(2(al+1)h)
\end{split}
\end{equation*}
with $h:=\frac{\pi\sqrt{-1}}{n}$, we have
\begin{equation*}
  \htau_n(X_p;\exp(4\pi\i/n))
  =
  -\frac{1}{\sqrt{n}\sin(2\pi/n)}\,
  e^{\sign(p)(\frac{3}{n}+\frac{n+1}{4})\pi\sqrt{-1}}\,
  (-1)^p e^{-(p-ab)\frac{\pi\sqrt{-1}}{n}}\times S.
\end{equation*}
From now on we consider the case where $p>ab>0$.
\par
We use the following formula:
\begin{equation}\label{eq:integral_phi}
  e^{\lambda w^2}
  =
  \frac{1}{\sqrt{\pi\lambda}}\displaystyle\int_{C_\theta}e^{-\frac{z^2}{\lambda}-2zw}\,dz,
\end{equation}
where  $C_{\theta}$ is the path $\{te^{\theta\sqrt{-1}}\mid t\in\R\}$.
Here $\theta$ satisfies $-\pi/2 + \arg\lambda<2\theta<\pi/2 + \arg\lambda$ so that the integral converges.
Applying \eqref{eq:integral_phi} with $\lambda=bh/a$ and $w=al+1$, we have
\begin{multline*}
  \sum_{\substack{-k+1\le l\le k-1\\l\equiv{k+1}\pmod{2}}}
  e^{\frac{b}{a}(al+1)^2h}
  \sinh(2(al+1)h)
  \\
  =
  \sqrt{\frac{a}{bh\pi}}
  \int_{C_\theta}e^{-\frac{a}{bh}z^2}
  \left(
    \sum_{\substack{-k+1\le l\le k-1\\l\equiv{k+1}\pmod{2}}}
    e^{-2(al+1)z} \sinh(2(al+1)h)
  \right)
  \,dz.
\end{multline*}
Since $\arg(\frac{b}{a}h)=\pi/2$, $\theta$ should satisfy $0<\theta<\pi$.
\par
We first calculate the summation.
\begin{lemma}\label{sum_l}
We have
\begin{multline*}
  \sum_{\substack{-k+1\le l\le k-1\\l\equiv{k+1}\pmod{2}}}
  e^{-2(al+1)z}\sinh(2(al+1)h)
  \\
  =
  \frac{1}{2}e^{2(h-z)}\frac{\sinh(2ka(z-h))}{\sinh(2a(z-h))}
  -
  \frac{1}{2}e^{-2(h+z)}\frac{\sinh(2ka(z+h))}{\sinh(2a(z+h))}.
\end{multline*}
\end{lemma}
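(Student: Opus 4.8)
The plan is to reduce the left-hand side to two finite geometric series. First I would use $\sinh(x)=\tfrac12(e^{x}-e^{-x})$ to split each summand as
\[
  e^{-2(al+1)z}\sinh\bigl(2(al+1)h\bigr)
  =
  \tfrac12 e^{-2(al+1)(z-h)}-\tfrac12 e^{-2(al+1)(z+h)},
\]
so that the whole sum becomes $\tfrac12\bigl(\Sigma(z-h)-\Sigma(z+h)\bigr)$, where $\Sigma(w):=\sum_{l}e^{-2(al+1)w}$ with $l$ ranging over the same index set.

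Next I would observe that the index set $\{l : -k+1\le l\le k-1,\ l\equiv k+1\ (\mathrm{mod}\ 2)\}$ is exactly the arithmetic progression $l=-(k-1),-(k-3),\dots,k-3,k-1$ consisting of $k$ terms with common difference $2$. Writing $l=-(k-1)+2m$ with $m=0,1,\dots,k-1$ turns $\Sigma(w)$ into $e^{-2w}\,e^{2a(k-1)w}\sum_{m=0}^{k-1}(e^{-4aw})^{m}$, a finite geometric series. Summing it and simplifying the ratio $\frac{1-e^{-4akw}}{1-e^{-4aw}}$ by extracting $e^{-2akw}$ from the numerator and $e^{-2aw}$ from the denominator, the prefactor $e^{2a(k-1)w}$ cancels against the geometric factor and one obtains the clean closed form $\Sigma(w)=e^{-2w}\,\dfrac{\sinh(2akw)}{\sinh(2aw)}$. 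Substituting $w=z-h$ and $w=z+h$ and using $e^{-2(z-h)}=e^{2(h-z)}$ and $e^{-2(z+h)}=e^{-2(h+z)}$ then yields precisely the asserted identity.

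Since each step is an elementary manipulation, I do not expect a genuine obstacle; the only point requiring real care is the correct description of the index set (its cardinality, its endpoints, and its step of $2$), because this is what governs both the reindexing and the cancellation of $e^{2a(k-1)w}$ against the geometric sum. As a consistency check I would verify the case $k=1$, where the left-hand side reduces to the single term $l=0$, namely $e^{-2z}\sinh(2h)$, and the right-hand side equals $\tfrac12 e^{2(h-z)}-\tfrac12 e^{-2(h+z)}$ (the ratios of hyperbolic sines being $1$), confirming the bookkeeping.
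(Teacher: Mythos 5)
Your proof is correct, and it reaches the identity by a genuinely different route than the paper. You split $\sinh(2(al+1)h)$ into exponentials at the outset, so the sum becomes $\tfrac12\bigl(\Sigma(z-h)-\Sigma(z+h)\bigr)$ with $\Sigma(w)=\sum_l e^{-2(al+1)w}$, and you then evaluate $\Sigma(w)$ as an explicit finite geometric series after reindexing the arithmetic progression $l=-(k-1)+2m$, $m=0,\dots,k-1$; the resulting closed form $\Sigma(w)=e^{-2w}\,\sinh(2akw)/\sinh(2aw)$ is exactly right, and your $k=1$ check confirms the bookkeeping. The paper instead exploits the invariance of the index set under $l\leftrightarrow -l$: it symmetrizes the summand, expands $\sinh\bigl(2(al\pm1)h\bigr)$ by the hyperbolic addition formulas to collect $\cosh(2alh)$ and $\sinh(2alh)$ terms, arrives at $e^{2(h-z)}\sum_l\cosh\bigl(2al(z-h)\bigr)-e^{-2(h+z)}\sum_l\cosh\bigl(2al(z+h)\bigr)$, and then evaluates $\sum_l\cosh(xl)=\sinh(kx)/\sinh(x)$ by a telescoping argument. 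The two proofs are of comparable length, but yours is more mechanical: one reindexing plus one geometric sum, with no trigonometric identities needed. What the paper's choice buys is consistency with its surrounding machinery, since the same $l\leftrightarrow -l$ (and later $z\leftrightarrow -z$) symmetrization device is reused repeatedly in the integral manipulations that follow the lemma. One cosmetic point common to both arguments: the closed forms carry $\sinh\bigl(2a(z\mp h)\bigr)$ in denominators, so the identity should be read as holding off the zero set of those denominators and extended by continuity (the singularities are removable); neither you nor the paper needs to dwell on this.
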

\begin{proof}
Since the range of summation $\{l\in\Z\mid-k+1\le l\le k-1,l\equiv k+1\pmod{2}\}$ is invariant under $l\leftrightarrow-l$, for any discrete function $f$ we have
\begin{equation*}
  \sum_{\substack{-k+1\le l\le k-1\\l\equiv{k+1}\pmod{2}}}f(l)
  =
  \sum_{\substack{-k+1\le l\le k-1\\l\equiv{k+1}\pmod{2}}}\frac{f(l)+f(-l)}{2}.
\end{equation*}
\par
Put $R(z):=\sum_{\substack{-k+1\le l\le k-1\\l\equiv{k+1}\pmod{2}}}e^{-2(al+1)z}\sinh(2(al+1)h)$.
We have 
\begin{equation*}
\begin{split}
  2R(z)
  &=
  \sum_{\substack{-k+1\le l\le k-1\\l\equiv{k+1}\pmod{2}}}
  e^{-2(al+1)z}\sinh(2(al+1)h)
  -
  \sum_{\substack{-k+1\le l\le k-1\\l\equiv{k+1}\pmod{2}}}
  e^{2(al-1)z}\sinh(2(al-1)h)
  \\
  &=
  \sum_{\substack{-k+1\le l\le k-1\\l\equiv{k+1}\pmod{2}}}
  e^{-2(al+1)z}(\sinh(2alh)\cosh(2h)+\cosh(2alh)\sinh(2h))
  \\
  &\quad
  -
  \sum_{\substack{-k+1\le l\le k-1\\l\equiv{k+1}\pmod{2}}}
  e^{2(al-1)z}(\sinh(2alh)\cosh(2h)-\cosh(2alh)\sinh(2h))
  \\        
  &=
  -(e^{2h}+e^{-2h})e^{-2z}
  \sum_{\substack{-k+1\le l\le k-1\\l\equiv{k+1}\pmod{2}}}\sinh(2alz)\sinh(2alh)
  \\
  &\quad
  +(e^{2h} - e^{-2h}) e^{-2z}
  \sum_{\substack{-k+1\le l\le k-1\\l\equiv{k+1}\pmod{2}}}
  \cosh(2alz)\cosh(2alh)
  \\
  &=
  e^{2(h-z)}
  \sum_{\substack{-k+1\le l\le k-1\\l\equiv{k+1}\pmod{2}}}\cosh(2al(z-h))
  -
  e^{-2(h+z)}
  \sum_{\substack{-k+1\le l\le k-1\\l\equiv{k+1}\pmod{2}}}\cosh(2al(z+h)).
\end{split}
\end{equation*}
Noting that
\begin{equation*}
  \sum_{\substack{-k+1\le l\le k-1\\l\equiv{k+1}\pmod{2}}}
  \cosh(xl)
  =
  \sum_{\substack{-k+1\le l\le k-1\\l\equiv{k+1}\pmod{2}}}
  \frac{\sinh((l+1)x)-\sinh((l-1)x)}{2\sinh(x)}
  =
  \frac{\sinh(kx)}{\sinh{x}},
\end{equation*}
the lemma then follows.
\end{proof}
We calculate the integral.
\begin{proposition}\label{intsum_l}
We have
\begin{equation*}
\begin{split}
  &
  \int_{C_\theta}
  e^{-\frac{a}{bh}z^2}
  \left(
    \sum_{\substack{-k+1\le l\le k-1\\l\equiv{k+1}\pmod{2}}}e^{-2(al+1)z}\sinh(2(al+1)h)
  \right)
  \,dz
  \\
  =&
  e^{-\frac{ah}{b}}
  \int_{C_\theta}
  e^{-\frac{a}{bh}z^2}
  \frac{\sinh(\frac{2a}{b}z)\sinh(2z)\sinh(2akz)}{\sinh(2az)}\,dz.
\end{split}
\end{equation*}
\end{proposition}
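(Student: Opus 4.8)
The plan is to feed the closed form from Lemma~\ref{sum_l} into the integral, splitting the integrand into two pieces built respectively from $\sinh(2ka(z-h))/\sinh(2a(z-h))$ and $\sinh(2ka(z+h))/\sinh(2a(z+h))$, and then to absorb the shifts $\pm h$ inside these hyperbolic quotients by translating the integration variable. Concretely, after substituting Lemma~\ref{sum_l} the left-hand side becomes
\begin{multline*}
  \frac12\int_{C_\theta}e^{-\frac{a}{bh}z^2}e^{2(h-z)}
    \frac{\sinh(2ka(z-h))}{\sinh(2a(z-h))}\,dz
  \\
  -\frac12\int_{C_\theta}e^{-\frac{a}{bh}z^2}e^{-2(h+z)}
    \frac{\sinh(2ka(z+h))}{\sinh(2a(z+h))}\,dz.
\end{multline*}

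In the first integral I would substitute $z=w+h$ and in the second $z=w-h$. Each hyperbolic quotient then collapses to $\sinh(2kaw)/\sinh(2aw)$, each prefactor $e^{2(h-z)}$ and $e^{-2(h+z)}$ becomes $e^{-2w}$, and expanding $(w\pm h)^2$ turns the Gaussian into $e^{-\frac{a}{bh}w^2}e^{\mp\frac{2a}{b}w}e^{-\frac{ah}{b}}$; the common factor $e^{-ah/b}$ then pulls out, matching the prefactor on the right-hand side.

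The one genuine subtlety, and the step I would treat most carefully, is that these two substitutions move the contour to $C_\theta-h$ and $C_\theta+h$ respectively, whereas the target integral is over $C_\theta$. Here I would observe that $\sinh(2kaw)/\sinh(2aw)$ is \emph{entire}: every zero $w=m\pi\i/(2a)$ of the denominator is also a zero of the numerator, since $\sinh\bigl(2ka\cdot m\pi\i/(2a)\bigr)=\sinh(km\pi\i)=0$ for $m\in\Z$. Hence both integrands are holomorphic in the strip between each shifted line and $C_\theta$, and the Gaussian factor $e^{-\frac{a}{bh}w^2}$ kills the connecting arcs at infinity, so the contours may be translated back to $C_\theta$ without crossing any pole.

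Finally, with both integrals over the common contour $C_\theta$, I would combine them: the two exponentials assemble as $e^{-\frac{2a}{b}w}-e^{\frac{2a}{b}w}=-2\sinh(\frac{2a}{b}w)$, and the leftover factor $e^{-2w}$ is handled by parity. The function $e^{-\frac{a}{bh}w^2}\sinh(\frac{2a}{b}w)\,\sinh(2kaw)/\sinh(2aw)$ is odd in $w$, while $C_\theta$ is symmetric under $w\mapsto-w$, so integrating it against the even part $\cosh(2w)$ of $e^{-2w}=\cosh(2w)-\sinh(2w)$ gives zero, leaving only the contribution of $-\sinh(2w)$. Tracking the signs then replaces $e^{-2w}$ by $\sinh(2w)$ and produces exactly $e^{-ah/b}\int_{C_\theta}e^{-\frac{a}{bh}z^2}\frac{\sinh(\frac{2a}{b}z)\sinh(2z)\sinh(2akz)}{\sinh(2az)}\,dz$, as claimed. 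Thus the only nontrivial analytic input is the contour translation justified by the entirety observation; everything else is a bookkeeping of exponentials and a parity cancellation.
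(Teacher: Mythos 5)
Your proof is correct and takes essentially the same route as the paper's: both substitute Lemma~\ref{sum_l}, translate contours using the fact that $\sinh(2kaz)/\sinh(2az)$ is entire together with the Gaussian decay, and exploit the symmetry $\int_{C_\theta}f(z)\,dz=\int_{C_\theta}f(-z)\,dz$ of the contour. The only difference is the order of the same moves: the paper first merges the two terms via $z\mapsto-z$ into a single integrand containing $\sinh(2(h-z))$ and then shifts once, whereas you shift each term by $\pm h$ first, obtain $\sinh\left(\tfrac{2a}{b}w\right)$ from the resulting pair of Gaussian side-factors, and recover $\sinh(2w)$ at the end by the parity argument applied to $e^{-2w}$.
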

\begin{proof}
Since $C_\theta\leftrightarrow -C_\theta$ under $z\leftrightarrow -z$, for any function $f$ we have $$\int_{C_\theta} f(z) dz  = \int_{C_\theta} f(-z) dz.$$. 
\par
By Lemma \ref{sum_l} the left-hand side becomes
\begin{equation*}
\begin{split}
  &
  \frac{1}{2}
  \int_{C_\theta}
  e^{-\frac{a}{bh}z^2}e^{2(h-z)}
  \frac{\sinh(2ka(z-h))}{\sinh(2a(z-h))}\,dz
  -
  \frac{1}{2}
  \int_{C_\theta}
  e^{-\frac{a}{bh}z^2}e^{-2(h+z)}
  \frac{\sinh(2ka(z+h))}{\sinh(2a(z+h))}\,dz
  \\
  =&
  \frac{1}{2}
  \int_{C_\theta}
  e^{-\frac{a}{bh}z^2}
  e^{2(h-z)} \frac{\sinh(2ka(z-h))}{\sinh(2a(z-h))}\,dz
  -
  \frac{1}{2}
  \int_{C_\theta}
  e^{-\frac{a}{bh}z^2}
  e^{-2(h-z)}\frac{\sinh(2ka(z-h))}{\sinh(2a(z-h))}\,dz
  \\
  =&
  \int_{C_\theta}
  e^{-\frac{a}{bh}z^2}
  \sinh(2(h-z))\frac{\sinh(2ka(z-h))}{\sinh(2a(z-h))}\,dz.
\end{split}
\end{equation*}
By shifting the path of integral from $C_{\theta}$ to $C_{\theta}+h$, since $\frac{\sinh(2kaz)}{\sinh(2az)}$ is analytic, and applying change of variable $z \mapsto z+h$ we get
\begin{equation*}
\begin{split}
  &
  -\int_{C_\theta}
  e^{-\frac{a}{bh}(z+h)^2}
   \sinh(2z)\frac{\sinh(2kaz)}{\sinh(2az)}\,dz
  \\
  =&
  -\frac{1}{2}
  \int_{C_\theta}e^{-\frac{a}{bh}(z+h)^2}
  \sinh (2z)\frac{\sinh(2kaz)}{\sinh(2az)}\,dz
  +
  \frac{1}{2}
  \int_{C_\theta}
  e^{-\frac{a}{bh}(z-h)^2}
  \sinh(2z)\frac{\sinh(2kaz)}{\sinh(2az)}\,dz
  \\
  =&
  e^{-\frac{ah}{b}}
  \int_{C_\theta}
  e^{-\frac{a}{bh}z^2}
  \frac{\sinh(\frac{2a}{b}z)\sinh(2z)\sinh(2akz)}{\sinh(2az)}\,dz
\end{split}
\end{equation*}
as required.
\end{proof}
By Proposition \ref{intsum_l} we have
\begin{equation*}
\begin{split}
  &
  \sum_{\substack{-k+1\le l\le k-1\\l\equiv{k+1}\pmod{2}}}
  e^{\frac{b}{a}(al+1)^2h}\sinh(2(al+1)h)
  \\
  =&
  \sqrt{\frac{a}{bh\pi}}
  e^{-\frac{ah}{b}}
  \int_{C_\theta}
  e^{-\frac{a}{bh}z^2}
  \frac{\sinh(\frac{2a}{b}z)\sinh(2z)\sinh(2akz)}{\sinh(2az)}\,dz
  \\
  &{(az\mapsto z)}
  \\
  =&
  \frac{1}{\sqrt{abh\pi}}
  e^{-\frac{ah}{b}}
  \int_{C_\theta}
  e^{-\frac{z^2}{abh}} \ \frac{\sinh (\frac{2z}{b}) \sinh(\frac{2z}{a}) \sinh (2kz)}{\sinh (2z)}\,dz
  \\
  =&
  \frac{1}{\sqrt{abh\pi}}
  e^{-\frac{ah}{b}}
  \int_{C_\theta}
  e^{-\frac{z^2}{abh}}\varphi(z)\sinh(2kz)\,dz,
\end{split}
\end{equation*}
where we put $\varphi(z):=\frac{\sinh(\frac{2z}{b})\sinh(\frac{2z}{a})}{\sinh(2z)}$.
Then we have
\begin{equation*}
  S
  =
  \frac{1}{\sqrt{abh\pi}}
  e^{-(a/b+b/a)h}
  \sum_{k=1}^{n-1}
  (-1)^{pk}e^{(p-ab)k^2h}\sinh(2kh)
  \int_{C_\theta}e^{-\frac{z^2}{abh}}\varphi(z)\sinh(2kz)\,dz.
\end{equation*}
Hence we have
\begin{equation*}
  \htau_n(X_p;\exp(4\pi\i/n))
  =
  -\frac{1}{\sqrt{n}\sin(2\pi/n)}
  e^{(\frac{3}{n}+\frac{n+1}{4})\pi\sqrt{-1}}
  (-1)^p
  e^{-(p-ab)\frac{\pi\sqrt{-1}}{n}}\frac{1}{\sqrt{abh\pi}}
  e^{-(a/b+b/a)h}\,T,
\end{equation*}
where
\begin{equation*}
  T
  :=
  \sum_{k=1}^{n-1}
  (-1)^{pk}e^{(p-ab)k^2 h}\sinh(2kh)
  \int_{C_\theta}e^{-\frac{z^2}{abh}}\varphi(z)\sinh(2kz)\,dz.
\end{equation*}
\par
Applying \eqref{eq:integral_phi} with $\lambda=(p-ab)h$ and $w=k$, we have
\begin{equation*}
\begin{split}
  T
  &=
  \frac{1}{\sqrt{(p-ab)h\pi}}
  \\
  &\quad\times
  \sum_{k=1}^{n-1}
  (-1)^{pk}
  \left(
    \int_{C_\theta}e^{-\frac{z_1^2}{(p-ab)h}-2kz_1}\,dz_1
  \right)
  \sinh(2kh)
  \int_{C_\theta}e^{-\frac{z_2^2}{abh}}\varphi(z_2)\sinh(2kz_2)\,dz_2
\end{split}
\end{equation*}
Note that $\arg((p-ab)h)=\pi/2$ and $\theta$ should satisfy $0<\theta<\pi$ as before.
Applying $\int_{C_\theta}f(z)dz =\int_{C_\theta}\frac{f(z)+f(-z)}{2}\,dz$, we have
\begin{equation*}
\begin{split}
  T
  &=
  \frac{1}{\sqrt{(p-ab)h\pi}}
  \\
  &\quad\times
  \iint_{C_{\theta}\times C_{\theta}}
  e^{-\frac{z_1^2}{(p-ab)h}}
  e^{-\frac{z_2^2}{abh}}
  \varphi(z_2)
  \sum_{k=1}^{n-1}
  (-1)^{pk}\cosh (2kz_1)\sinh (2kh)\sinh (2kz_2)
  \,dz_1\,dz_2.
\end{split}
\end{equation*}
Hence we have
\begin{equation*}
\begin{split}
  \htau_n(X_p;\exp(4\pi\i/n))
  &=
  -\frac{1}{\sqrt{n}\sin(2\pi/n)}
  e^{(\frac{3}{n}+\frac{n+1}{4})\pi\sqrt{-1}}
  (-1)^p e^{-(p-ab)\frac{\pi\sqrt{-1}}{n}}
  \frac{1}{\sqrt{abh\pi}}
  e^{-(a/b+b/a)h} \, \frac{1}{\sqrt{(p-ab)h\pi}}U
  \\
  &=
  \frac{\sqrt{n}}{\sin(2\pi/n)}
  \frac{\sqrt{-1}}{\pi^2}
  e^{(\frac{n+1}{4})\pi\sqrt{-1}}
  (-1)^p e^{-(p-ab+a/b+b/a - 3 )\frac{\pi\sqrt{-1}}{n}}\frac{1}{\sqrt{ab(p-ab)}}U,
\end{split}
\end{equation*}
where
\begin{equation*}
\begin{split}
  U
  &:=
  \iint_{C_{\theta}\times C_{\theta}}
  e^{-\frac{z_1^2}{(p-ab)h}}e^{-\frac{z_2^2}{abh}}\varphi(z_2)
  \sum_{k=1}^{n-1}
  (-1)^{pk}\cosh(2kz_1)\sinh(2kh)\sinh(2kz_2)
  \,dz_1\,dz_2
  \\
  &=
  \iint_{C_{\theta}\times C_{\theta}}
  e^{-\frac{z_1^2}{(p-ab)h}}e^{-\frac{z_2^2}{abh}}\varphi(z_2)
  \sum_{k=1}^{n-1}
  \cosh (2kz_1)\sinh (2k\tilde{h})\sinh (2kz_2)
  \,dz_1\,dz_2,
\end{split}
\end{equation*}
where $\tilde{h}:=h-p\pi\sqrt{-1}/2=(1/n-p/2)\pi\sqrt{-1}$.
\par
Note that the double integral $\iint_{C_{\theta}\times C_{\theta}}g(z_1, z_2)\,dz_1\,dz_2$ is unchanged if the integrand $g(z_1,z_2)$ is replaced with $g(-z_1,z_2)$, $g(z_1,-z_2)$ or $g(-z_1,-z_2)$.
Note also that $\varphi(z_2)$ is an odd function.
\par
Since 
\begin{align*}
  \cosh (2kz_1) \sinh (2kz_2)
  &=
  \frac{\sinh(2k(z_1+z_2))+\sinh (2k(-z_1+z_2))}{2},
  \\
  \sinh(2k(z_1+z_2))\sinh (2k\tilde{h})
  &=
  \frac{\cosh (2k(z_1+z_2+\tilde{h}))-\cosh (2k(-z_1-z_2+\tilde{h}))}{2},
\end{align*}
we have 
\begin{equation*}
\begin{split}
  U
  &=
  \iint_{C_{\theta}\times C_{\theta}}
  e^{-\frac{z_1^2}{(p-ab)h}}
  e^{-\frac{z_2^2}{abh}}\varphi(z_2)
  \sum_{k=1}^{n-1}\cosh (2k(z_1+z_2+\tilde{h}))
  \,dz_1\,dz_2
  \\
  &=
  \frac{1}{2}
  \iint_{C_{\theta}\times C_{\theta}}
  e^{-\frac{z_1^2}{(p-ab)h}}
  e^{-\frac{z_2^2}{abh}}
  \varphi(z_2)
  \left\{ \frac{\sinh ((2n-1)(z_1+z_2+\tilde{h}))}{\sinh (z_1+z_2+\tilde{h})} -1 \right\}
  \,dz_1\,dz_2
  \\
  &=
  \frac{1}{2}
  \iint_{C_{\theta}\times C_{\theta}}
  e^{-\frac{z_1^2}{(p-ab)h}}
  e^{-\frac{z_2^2}{abh}}
  \varphi(z_2)\frac{\sinh ((2n-1)(z_1+z_2+\tilde{h}))}{\sinh (z_1+z_2+\tilde{h})}
  \,dz_1\,dz_2.
\end{split}
\end{equation*}
Here we use the formula $\sum_{k=1}^{n-1}\cosh(2kx)=\frac{1}{2}\left(\frac{\sinh((2n-1)x)}{\sinh(x)}-1\right)$ in the second equality, and use the fact that $\varphi(z_2)$ is an odd function in the third.
\par
Since 
\begin{equation*}
\begin{split}
  \frac{\sinh ((2n-1)(x+\tilde{h}))}{\sinh (x+\tilde{h})}
  &=
  \frac{\sinh(2n(x+\tilde{h}))\cosh(x+\tilde{h})}{\sinh (x+\tilde{h})}
  -
  \cosh(2n(x+\tilde{h}))
  \\
  &=
  (-1)^{np}
  \left\{
    \sinh(2nx)\coth(x+\tilde{h})
    -
    \cosh(2nx)
  \right\}
\end{split}
\end{equation*}
and $\varphi(z_2)$ is an odd function, we have
\begin{equation*}
\begin{split}
  U
  &=
  \frac{(-1)^{np}}{2}
  \iint_{C_{\theta}\times C_{\theta}}
  e^{-\frac{z_1^2}{(p-ab)h}}
  e^{-\frac{z_2^2}{abh}}
  \varphi(z_2)
  \sinh(2n(z_1+z_2))\coth(z_1+z_2+\tilde{h})
  \,dz_1\,dz_2
  \\
  &=
  \frac{(-1)^{np}}{2}
  \iint_{C_{\theta}\times C_{\theta}}
  e^{-\frac{(z_1-z_2)^2}{(p-ab)h}}
  e^{-\frac{z_2^2}{abh}}\varphi(z_2)
  \sinh(2nz_1)\coth(z_1+\tilde{h})
  \,dz_1\,dz_2.
\end{split}
\end{equation*}
By using the symmetry of the integral, we have
\begin{equation*}
\begin{split}
  &
  \iint_{C_{\theta}\times C_{\theta}}
  e^{-\frac{(z_1-z_2)^2}{(p-ab)h}}
  e^{-\frac{z_2^2}{abh}}\varphi(z_2)
  \sinh(2nz_1)\coth(z_1+\tilde{h})
  \,dz_1\,dz_2
  \\
  =&
  -
  \iint_{C_{\theta}\times C_{\theta}}
  e^{-\frac{(z_1-z_2)^2}{(p-ab)h}}
  e^{-\frac{z_2^2}{abh}}\varphi(z_2)
  \sinh(2nz_1)\coth(-z_1+\tilde{h})
  \,dz_1\,dz_2
  \\
  =&
  \iint_{C_{\theta}\times C_{\theta}}
  e^{-\frac{(z_1-z_2)^2}{(p-ab)h}}
  e^{-\frac{z_2^2}{abh}}\varphi(z_2)
  \sinh(2nz_1)\coth(z_1-\tilde{h})
  \,dz_1\,dz_2.
\end{split}
\end{equation*}
Therefore we have
\begin{equation*}
\begin{split}
  U
  &=
  \frac{(-1)^{np}}{4}
  \iint_{C_{\theta}\times C_{\theta}}
  e^{-\frac{(z_1-z_2)^2}{(p-ab)h}}
  e^{-\frac{z_2^2}{abh}}\varphi(z_2)
  \sinh (2nz_1)
  (\coth(z_1+\tilde{h})-\coth(z_1-\tilde{h}))
  \,dz_1\,dz_2
  \\
  &=
  \frac{(-1)^{np}}{4}
  \iint_{C_{\theta}\times C_{\theta}}
  e^{-\frac{(z_1-z_2)^2}{(p-ab)h}}
  e^{-\frac{z_2^2}{abh}}\varphi(z_2)e^{2n z_1}
  (\coth(z_1+\tilde{h})-\coth(z_1-\tilde{h}))
  \,dz_1\,dz_2
\end{split}
\end{equation*}
where we use the symmetry $(z_1,z_2)\leftrightarrow(-z_1,-z_2)$ of the double integral again.
Hence we have
\begin{equation}\label{eq:tau_V1_V2}
  \htau_n(X_p;\exp(4\pi\i/n))
  =
  \frac{\sqrt{n}}{\sin(2\pi/n)}
  \frac{(-1)^{np+p}e^{\frac{3}{4}\pi\sqrt{-1}}}{4\pi^2\sqrt{ab(p-ab)}}
  e^{-(p-ab+a/b+b/a-3)
  \frac{\pi\sqrt{-1}}{n}}e^{\frac{n}{4}\pi\sqrt{-1}}(V_1-V_2)
\end{equation}
where
\begin{align*}
  V_1
  &=
  \iint_{C_{\theta}\times C_{\theta}}
  \psi_{1}(z_1)\varphi(z_2)
  e^{nF(z_1,z_2)}
  \,dz_1\,dz_2,
  \\
  V_2
  &=
  \iint_{C_{\theta}\times C_{\theta}}
  \psi_{2}(z_1)\varphi(z_2)
  e^{nF(z_1,z_2)}
  \,dz_1\,dz_2
\end{align*}
with
\begin{align*}
  \psi_{1}(z_1)
  &:=
  \coth(z_1+\tilde{h}),
  \\
  \psi_{2}(z_1)
  &:=
  \coth(z_1-\tilde{h}'),
  \\
  F(z_1,z_2)
  &:=
  -\frac{(z_1-z_2)^2}{(p-ab)\pi\sqrt{-1}}-\frac{z_2^2}{ab\pi\sqrt{-1}}+2z_1.
\end{align*}
Here we put $\tilde{h}'=\left(\frac{1}{n}+\frac{p}{2}\right)\pi\sqrt{-1}$, noting that $\coth(z_1-\tilde{h})=\coth(z_1-\tilde{h}')$.
We note the following:
\begin{itemize}
\item
$F(z_1,z_2)$ has a unique critical point $(w_1,w_2):=(p\pi\sqrt{-1},ab\pi\sqrt{-1})$,
\item
$\psi_{1}(z_1)$ has poles $\xi_{1,l}=l\pi\sqrt{-1}-\tilde{h}=(l+p/2-1/n)\pi\sqrt{-1}$ with $l \in\Z$.
Moreover, the residue of $\psi_{1}$ at $\xi_{1,l}$ is given by $\Res(\psi_{1};\xi_{1,l})=1$,
\item
$\psi_{2}(z_1)$ has poles $\xi_{2,l}:=l\pi\sqrt{-1}+\tilde{h}'=(l+p/2+1/n)\pi\sqrt{-1}$ with $l \in\Z$.
Moreover, the residue of $\psi_{2}$ at $\xi_{2,l}$ is given by $\Res(\psi_{2};\xi_{2,l}) = 1$,
\item
$\varphi(z_2)$ has poles $\eta_m=\frac{m}{2}\pi\sqrt{-1}$, where $m\in\Z$ such that $a\nmid m$, and $b\nmid m$.
Moreover, the residue is given by $\Res(\varphi;\eta_m)=-\frac{(-1)^m}{2}\sin(m\pi/a)\sin(m\pi/b)$.
\end{itemize}

\section{Asymptotic expansion of $V_1$}
In this section we consider the asymptotic behavior of $V_1$ as $n\to\infty$.
\par
In the double integral in the definition of $V_1$, we shift $C_{\theta}\times C_{\theta}$ to $(C_{\theta}+w_1)\times(C_{\theta}+w_2)$ by using the residue theorem.
Then we have
\begin{equation*}
  V_1
  =
  I_{1,0}+2\pi\sqrt{-1}I_{1,1}+2\pi\sqrt{-1}I_{1,2}+(2\pi\sqrt{-1})^2I_{1,3},
\end{equation*}
where we put
\begin{align*}
  I_{1,0}
  &:=
  \iint_{(C_{\theta}+w_1)\times(C_{\theta}+w_2)}
  \psi_{1}(z_1)\varphi(z_2)e^{nF(z_1,z_2)}
  \,dz_1\,dz_2
  \\
  I_{1,1}
  &:=
  \sum_{-p/2+1/n<l<p/2+1/n}
  \Res(\psi_{1};\xi_{1,l})
  \int_{C_{\theta}+w_2}\varphi(z_2)e^{nF(\xi_{1,l},z_2)}
  \,dz_2
  \\
  I_{1,2}
  &=
  \sum_{m=1}^{2ab-1}
  \Res(\varphi;\eta_m)
  \int_{C_{\theta}+w_1}
  \psi_{1}(z_1)e^{nF(z_1,\eta_m)}
  \,dz_1
  \\
  I_{1,3}
  &=
  \sum_{\substack{-p/2+1/n<l<p/2+1/n\\1\le m\le2ab-1}}
  \Res(\psi_{1};\xi_{1,l})\Res(\varphi;\eta_m)e^{nF(\xi_{1,l},\eta_m)}.
\end{align*}
Here $l$ runs over all integers such that $l+p/2-1/n$ is between $0$ and $p$, i.e., $|l-1/n|<p/2$, and $m$ runs over all integers such that $0<m<2ab$,
\begin{remark}
If $p$ is odd, then $\frac{1}{2}(1-p)\le l\le\frac{1}{2}(p-1)$.
If $p$ is even, then $-\frac{1}{2}p+1\le l\le\frac{1}{2}p.$
\end{remark}
%%%%%%%%%%%%%%%%%%%%%%%%%%%%%%%%%%%%%%%%%%%%%%%%%%%%%%%%%%%
\subsection{The double integral in $V_1$}
In this subsection we calculate the double integral $I_{1,0}$.
We have
\begin{equation*}
\begin{split}
  I_{1,0}
  &=
  \iint_{(C_{\theta}+w_1)\times(C_{\theta}+w_2)}
  \psi_{1}(z_1)\varphi(z_2)e^{nF(z_1,z_2)}
  \,dz_1\,dz_2
  \\
  &=
  \iint_{C_{\theta}\times C_{\theta}}
  \psi_{1}(z_1+w_1)\varphi(z_2+w_2)e^{nF(z_1+w_1,z_2+w_2)}
  \,dz_1\,dz_2.
\end{split}
\end{equation*}
Note that $\psi_{1}(z_1+w_1)=\psi_{1}(z_1)$ and $\varphi(z_2+w_2)=\varphi(z_2)$.
Moreover, we have
\begin{equation*}
  F(z_1+w_1,z_2+w_2)
  =
  -\frac{(z_1-z_2)^2}{(p-ab)\pi\sqrt{-1}}
  -\frac{z_2^2}{ab\pi\sqrt{-1}}
  +p\pi\sqrt{-1}.
\end{equation*}
Hence we have
\begin{equation*}
  I_{1,0}
  =
  \iint_{C_{\theta}\times C_{\theta}}
  \psi_{1}(z_1)\varphi(z_2)
  \exp
  \left[
    n
    \left(
      -\frac{(z_1-z_2)^2}{(p-ab)\pi\sqrt{-1}}
      -\frac{z_2^2}{ab\pi\sqrt{-1}}
      +p\pi\sqrt{-1}
    \right)
  \right]
  \,dz_1\,dz_2.
\end{equation*}
Later, we will show this will cancel the counterpart of $V_2$, and so we leave it as it is.
%%%%%%%%%%%%%%%%%%%%%%%%%%%%%
\subsection{The double sum in $V_1$}
Since $\xi_{1,l}=(l+p/2-1/n)\pi\sqrt{-1}$ and $\eta_m=m\pi\sqrt{-1}/2$, we have 
\begin{equation*}
  F(\xi_{1,l},\eta_m)
  =
  \left(
    -\frac{(l-m/2+p/2-1/n)^2}{p-ab}-\frac{m^2}{4ab}+2(l+p/2-1/n)
  \right) 
  \pi\sqrt{-1}.
\end{equation*}
Hence we have
\begin{multline*}
  nF(\xi_{1,l},\eta_m)
  \\
  =
  \left[
    \left(
      -\frac{(l-m/2+p/2)^2}{p-ab}-\frac{m^2}{4ab}+2l+p
    \right)n
    +
    \frac{2l-m+p}{p-ab}-2-\frac{1}{(p-ab)n}
  \right]
  \pi\sqrt{-1}.
\end{multline*}
Since $\Res(\psi_{1};\xi_{1,l})\Res(\varphi;\eta_m)=-\frac{(-1)^{m}}{2}\sin(m\pi/a)\sin(m\pi/b)$ we obtain
\begin{equation}\label{eq:I_3n}
\begin{split}
  I_{1,3}
  &=
  \sum_{\substack{-p/2+1/n<l<p/2+1/n\\1\le m\le2ab-1}}
  \frac{(-1)^{m+1}}{2}\sin\left(\frac{m\pi}{a}\right)\sin\left(\frac{m\pi}{b}\right)
  e^{\frac{2l-m + p}{p-ab}\pi \sqrt{-1}}\,e^{-\frac{1}{p-ab} \frac{\pi \sqrt{-1}}{n} }
  \\
  &\quad
  (-1)^{pn} \exp \left[ n \left( - \frac{(l - m/2 + p/2  )^2}{p-ab} - \frac{m^2}{4ab} \right) \pi \sqrt{-1} \right].
\end{split}
\end{equation}
%%%%%%%%%%%%%%%%%%%%%%%%%%%%%
\subsection{The sum over $l$ in $V_1$}
Consider the integral
\begin{equation*}
  \int_{C_{\theta_2+w_2}}\varphi(z_2)e^{nF(\xi_{1,l},z_2)}\,dz_2.
\end{equation*}
The polynomial $F(\xi_{1,l},z_2)=-\frac{(\xi_{1,l}-z_2)^2}{(p-ab)\pi\sqrt{-1}}-\frac{z_2^2}{ab\pi\sqrt{-1}}+2 \xi_{1,l}$ has a unique critical point
\begin{equation*}
  \frac{ab}{p}\xi_{1,l}
  =
  ab\left(\frac{l}{p}+\frac{1}{2}\right)\pi\sqrt{-1}-\frac{ab}{pn}\pi\sqrt{-1}.
\end{equation*}
Put
\begin{align*}
  \alpha&:=ab\left(\frac{l}{p}+\frac{1}{2}\right)\pi\sqrt{-1}
  \\
  \beta&:=\frac{ab}{p}\pi\sqrt{-1}
\end{align*}
so that the critical point becomes $\alpha-\beta/n$.
Then we can write
\begin{equation}\label{eq:F_1}
  F(\xi_{1,l},z_2)
  =
  F\left(\xi_{1,l},\alpha-\beta/n\right)
  -
  \frac{p}{ab(p-ab)\pi\sqrt{-1}}(z_2-\alpha+\beta/n)^2.
\end{equation}
Note that $\alpha-\beta/n$ is not a pole of $\varphi(z_2)$.
\par
We will shift the line of integration from $C_{\theta}+w_2$ to $C_{\theta}+\alpha-\beta/n$.
Since $0<l+p/2-1/n<p$, we see that $\alpha-\beta/n$ is below $w_2=ab\pi\sqrt{-1}$ in the complex plane.
So by the residue theorem we have
\begin{equation*}
\begin{split}
  &\int_{C_{\theta}+w_2}
  \varphi(z_2)e^{nF(\xi_{1,l},z_2)}\,dz_2
  \\
  =&
  \int_{C_{\theta}+\alpha-\beta/n}
  \varphi(z_2)
  e^{nF(\xi_{1,l},z_2)}
  -
  2\pi\sqrt{-1}
  \sum_{\frac{2ab}{p}(l+p/2-1/n)<m'<2ab}
  \Res(\varphi;\eta_{m'})e^{nF(\xi_{1,l},\eta_{m'})}.
\end{split}
\end{equation*}
Note that from \eqref{eq:F_1} we have
\begin{equation*}
\begin{split}
  &\int_{C_{\theta}+\alpha-\beta/n}\varphi(z_2)e^{nF(\xi_{1,l}, z_2)}\,dz_2
  \\
  =&
  e^{nF(\xi_{1,l},\alpha-\beta/n)}
  \int_{C_{\theta}-\beta_1/n}
  \varphi(z_2+\alpha)
  e^{\frac{-pn}{ab(p-ab)\pi\sqrt{-1}}(z_2+\beta/n)^2}\,dz_2
  \\
  =&
  e^{nF(\xi_{1,l},\alpha-\beta/n)}
  e^{\frac{ab}{p(p-ab)n\pi\sqrt{-1}}}
  \\
  &\times
  \int_{C_{\theta}-\beta/n}
  \varphi(z_2+\alpha)
  e^{-\frac{2}{p-ab}z_2}
  e^{\frac{-pn}{ab(p-ab)\pi\sqrt{-1}}z_2^2}\,dz_2
  \\
  =&
  e^{nF(\xi_{1,l},\alpha-\beta/n)}
  e^{\frac{ab\pi\sqrt{-1}}{p(p-ab)n}}
  \\
  &\times
  \int_{C_{\theta}}
  \varphi(z_2+\alpha)
  e^{-\frac{2}{p-ab}z_2}
  e^{\frac{-pn}{ab(p-ab)\pi\sqrt{-1}}z_2^2}\,dz_2,
\end{split}
\end{equation*}
where we change the variable $z_2\mapsto z_2+\alpha$ at the first equality, and use the fact that $\varphi(z_2+\alpha)$ has no poles between $C_{\theta}$ and $C_{\theta}-\beta/n$ for sufficiently large $n$.
This can be shown as follows:
Suppose for a contradiction that there is a pole between $C_{\theta}$ and $C_{\theta}-\beta/n$.
It is of the form $k\pi\sqrt{-1}/2-ab(l/p+1/2)\pi\sqrt{-1}$ with $k\in\Z$ not divisible by $a$ or $b$.
So it should satisfy the inequality $-ab/(pn)\le k/2-ab(l/p+1/2)\le0$.
If $n$ is sufficiently large, then this implies that $2abl/p=k-ab$.
Since $p$ and $ab$ are coprime, this means that $2l/p$ is an integer.
However since $-p/2\le l\le p/2$ we see that $l=-p/2$, $0$, or $p/2$.
Then $k$ equals $0$, $ab$, or $2ab$ respectively, which is divisible by $a$ and $b$, a contradiction.
\par
By the saddle point method, we have
\begin{equation*}
  \int_{C_{\theta}}
  \varphi(z_2+\alpha)
  e^{-\frac{2}{p-ab}z_2}
  e^{\frac{-pn}{ab(p-ab)\pi\sqrt{-1}}z_2^2}\,dz_2
  =
  \sqrt{\frac{ab(p-ab)\pi^2\sqrt{-1}}{pn}}\varphi(\alpha)+O(n^{-3/2}).
\end{equation*}
Hence we have
\begin{multline*}
  \int_{C_{\theta_2+\alpha-\beta/n}}
  \varphi(z_2)e^{nF(\xi_{1,l},z_2)}\,dz_2
  \\
  =
  e^{nF(\xi_{1,l},\alpha-\beta/n)}
  e^{\frac{ab\pi\sqrt{-1}}{p(p-ab)n}}
  \sqrt{\frac{ab(p-ab)\pi^2\sqrt{-1}}{pn}}\varphi(\alpha)+O(n^{-3/2}),
\end{multline*}
where 
\begin{equation*}
  \varphi(\alpha) 
  =
  (-1)^{ab+a+b}\sqrt{-1}
  \frac{\sin\left(\frac{2al\pi}{p}\right)\sin\left(\frac{2bl\pi}{l}\right)}
       {\sin\left(\frac{2abl}{p}\right)}.
\end{equation*}
\par
Since $\xi_{1,l}=(l+p/2-1/n)\pi\sqrt{-1}$ and $\alpha-\beta/n=\frac{ab}{p}\xi_{1,l}$, we have
\begin{equation*}
\begin{split}
  F(\xi_{1,l},\alpha-\beta/n)
  &=
  -\frac{(\xi_{1,l}-\frac{ab}{p}\xi_{1,l})^2}{(p-ab)\pi\sqrt{-1}}
  -\frac{(\frac{ab}{p}\xi_{1,l})^2}{ab\pi\sqrt{-1}}+2\xi_{1,l}
  \\
  &=
  \left(
    -\frac{p-ab}{p^2\pi\sqrt{-1}}-\frac{ab}{p^2\pi\sqrt{-1}}
  \right)
  \xi_{1,l}^2
  +
  2\xi_{1,l}
  \\
  &=
  -\frac{1}{p\pi\sqrt{-1}}\xi_{1,l}^2+2\xi_{1,l}
  \\
  &=
  \left(-\frac{(l+p/2-1/n)^2}{p}+2(l+p/2-1/n)\right)\pi\sqrt{-1}
  \\
  &=
  \left(-\frac{(l+p/2)^2}{p}+2(l+p/2)+(\frac{l+p/2}{p}-1)\frac{2}{n}-\frac{1}{pn^2}\right)
  \pi\sqrt{-1}.
\end{split}
\end{equation*}
Hence we have
\begin{equation*}
\begin{split}
  &
  \int_{C_{\theta+\alpha-\beta/n}}\varphi(z_2)e^{nF(\xi_{1,l},z_2)}\,dz_2
  \\
  =&
  \sqrt{\frac{ab(p-ab)\pi^2\sqrt{-1}}{pn}}
  \sqrt{-1}(-1)^{ab+a+b}
  \frac{\sin\left(\frac{2al\pi}{p}\right)\sin\left(\frac{2bl\pi}{p}\right)}
       {\sin\left(\frac{2abl\pi}{p}\right)}
  e^{-(\frac{ab}{p(p-ab)}+\frac{1}{p})\frac{\pi\sqrt{-1}}{n}}
  \\
  &\times
  (-1)^{np-1}
  e^{\frac{2l}{p}\pi\sqrt{-1}}\exp\left[ n \left(-\frac{(l+p/2)^2}{p}\right)\pi\sqrt{-1}\right]
  +O(n^{-3/2}).
\end{split}
\end{equation*}
\par
We finally have
\begin{equation}\label{eq:I_1n}
\begin{split}
  I_{1,1}
  &=
  \sum_{-p/2+1/n<l<p/2+1/n}
  \Res(\psi_{1};\xi_{1,l})
  \int_{C_{\theta}+w_2}\varphi(z_2)e^{nF(\xi_{1,l},z_2)}\,dz_2
  \\
  &=
  -2\pi\sqrt{-1}
  \sum_{\substack{-p/2+1/n<l<p/2+1/n\\ \frac{2ab}{p}(l+p/2-1/n)<m'<2ab}}
  \Res(\psi_{1};\xi_{1,l})\Res(\varphi;\eta_{m'})e^{nF(\xi_{1,l}),\eta_{m'}}
  \\
  &\quad+
  \sum_{-p/2+1/n<l<p/2+1/n}
  \Res(\psi_{1};\xi_{1,l})\int_{C_{\theta}+\alpha-\beta/n}
  \varphi(z_2)e^{nF(\xi_{1,l},z_2)}\,dz_2
  \\
  &=
  -2\pi\sqrt{-1}
  \sum_{\substack{-p/2+1/n<l<p/2+1/n\\ \frac{2ab}{p}(l+p/2-1/n)<m'<2ab}}
  \Res(\psi_{1};\xi_{1,l})\Res(\varphi;\eta_{m'})e^{nF(\xi_{1,l}),\eta_{m'}}
  \\
  &\quad+
  \sum_{-p/2+1/n<l<p/2+1/n}
  \sqrt{\frac{ab(p-ab)\pi^2\sqrt{-1}}{pn}}\sqrt{-1}(-1)^{ab+a+b}
  \frac{\sin\left(\frac{2al\pi}{p}\right)\sin\left(\frac{2bl\pi}{p}\right)}
       {\sin\left(\frac{2abl\pi}{p}\right)}
  e^{-\frac{1}{p-ab}\frac{\pi\sqrt{-1}}{n}}
  \\
  &\quad\times
  (-1)^{np-1}e^{\frac{2l}{p}\pi\sqrt{-1}}
  \exp\left[n\left(-\frac{(l+p/2)^2}{p}\right)\pi\sqrt{-1}\right]
  +O(n^{-3/2}).
\end{split}
\end{equation}
%%%%%%%%%%%%%%%%%%%%%%%%%%%%%
\subsection{The sum over $m$ in $V_1$}
In this subsection we study the asymptotic behavior of $I_{1,2}$.
\par
Consider the integral
\begin{equation*}
  \int_{C_{\theta}+w_1}\psi_{1}(z_1)e^{nF(z_1,\eta_m)}\,dz_1.
\end{equation*}
The polynomial $F(z_1,\eta_m)=-\frac{(z_1-\eta_m)^2}{(p-ab)\pi\sqrt{-1}}-\frac{\eta_m^2}{ab\pi\sqrt{-1}}+2z_1$ has a unique critical point
\begin{equation*}
  \gamma
  :=
  (m/2+p-ab)\pi\sqrt{-1}
\end{equation*}
and we can write
\begin{equation*}
  F(z_1,\eta_m)
  =
  F(\gamma,\eta_m)-\frac{1}{(p-ab)\pi\sqrt{-1}}(z_1-\gamma)^2.
\end{equation*}
Note that $\gamma$ is not a pole of $\psi_{1}(z_1)$.
Since $\gamma$ is below $w_1=p\pi\sqrt{-1}$ in the complex plane, we have
\begin{equation*}
\begin{split}
  &\int_{C_{\theta_1+w_1}}\psi_{1}(z_1)e^{nF(z_1,\eta_m)}\,dz_1
  \\
  =&
  \int_{C_{\theta}+\gamma}\psi_{1}(z_1)e^{nF(z_1,\eta_m)}\,dz_1
  -
  2\pi\sqrt{-1}
  \sum_{m/2+p/2-ab+1/n<l'<p/2+1/n}
  \Res(\psi_{1};\xi_{1,l'})e^{nF(\xi_{1,l'},\eta_m)}.
\end{split}
\end{equation*}
Note that 
\begin{equation*}
\begin{split}
  \int_{C_{\theta}+\gamma}\psi_{1}(z_1)e^{nF(z_1,\eta_m)}\,dz_1
  &=
  e^{nF(\gamma,\eta_m)}
  \int_{C_{\theta}+\gamma}
  \psi_{1}(z_1)e^{-\frac{n(z_1-\gamma)^2}{(p-ab)\pi\sqrt{-1}}}\,dz_1
  \\
  &=
  e^{nF(\gamma,\eta_m)}
  \int_{C_{\theta}}\psi_{1}(z_1+\gamma)e^{-\frac{nz_1^2}{(p-ab)\pi \sqrt{-1}}}\,dz_1.
\end{split}
\end{equation*}
Since $\gamma=(m/2+p-ab)\pi\sqrt{-1}$ and $\eta_m=m\pi\sqrt{-1}/2$ we have
\begin{equation*}
  F(\gamma,\eta_m)
  =
  \left(p-ab-\frac{m^2}{4ab}+m\right)\pi\sqrt{-1}.
\end{equation*}
\par
Note that $\psi_{1}(z_1 + \gamma)=\coth(z_1+\gamma+\tilde{h})$.
Put $\delta:=(m/2+p/2-ab)\pi\sqrt{-1}$.
Then $\psi_{1}(z_1+\gamma) = \coth(z_1+\delta+h).$ 
\par
The integral above becomes
\begin{equation*}
\begin{split}
  &
  \int_{C_{\theta}}\coth(z_1+\delta+h)e^{-\frac{nz_1^2}{(p-ab)\pi\sqrt{-1}}}\,dz_1
  \\
  =&
  \int_{C_{\theta}+h}\coth(z_1+\delta)e^{-\frac{n(z_1-h)^2}{(p-ab)\pi\sqrt{-1}}}\,dz_1
  \\
  =&
  e^{-\frac{\pi\sqrt{-1}}{(p-ab)n}}
  \int_{C_{\theta}+h}
  e^{\frac{2z_1}{p-ab}}\coth(z_1+\delta)
  e^{-\frac{nz_1^2}{(p-ab)\pi\sqrt{-1}}}\,dz_1.
\end{split}
\end{equation*}
There are two cases to consider.
\begin{itemize}
\item[Case 1:]
If $m \not\equiv p \pmod{2}$, then $\delta\in(\Z+1/2)\pi\sqrt{-1}$ and so $\coth(z_1+\delta)=\coth(z_1+\pi\sqrt{-1}/2)$.
Hence we have
\begin{equation*}
\begin{split}
  &
  \int_{C_{\theta}+h}
  e^{\frac{2z_1}{p-ab}}
  \coth(z_1+\delta)e^{-\frac{nz_1^2}{(p-ab)\pi\sqrt{-1}}}\,dz_1
  \\
  =&
  \int_{C_{\theta_1} }
  e^{\frac{2z_1}{p-ab}}\coth(z_1+\pi\sqrt{-1}/2 )e^{-\frac{nz_1^2}{(p-ab)\pi\sqrt{-1}}}\,dz_1
  \\
  =&
  \sqrt{\frac{(p-ab)\pi^2\sqrt{-1}}{n}}\coth(\pi\sqrt{-1}/2)+O(n^{-3/2}) 
  \\
  =&
  O(n^{-3/2})
\end{split}
\end{equation*}
since there are no poles of $\coth(z_1+\pi\sqrt{-1}/2)$ between $C_{\theta}$ and $C_{\theta}+h$.
\item[Case 2:]
If $m = p \pmod{2}$, then $\delta\in\Z\pi\sqrt{-1}$ and $\coth(z_1+\delta)=\coth(z_1)$.
Hence we have
\begin{equation*}
\begin{split}
  &
  \int_{C_{\theta}+h}
  e^{\frac{2z_1}{p-ab}}\coth(z_1+\delta)e^{-\frac{nz_1^2}{(p-ab)\pi\sqrt{-1}}}\,dz_1
  \\
  =&
  \int_{C_{\theta}+h}
  \left(e^{\frac{2z_1}{p-ab}}\coth(z_1)-\frac{1}{z_1}\right)
  e^{-\frac{nz_1^2}{(p-ab)\pi\sqrt{-1}}}\,dz_1
  +
  \int_{C_{\theta}+h}\frac{1}{z_1}e^{-\frac{nz_1^2}{(p-ab)\pi\sqrt{-1}}}\,dz_1
  \\
  =&
  \int_{C_{\theta}}
  \left(e^{\frac{2z_1}{p-ab}}\coth(z_1)-\frac{1}{z_1}\right)
  e^{-\frac{nz_1^2}{(p-ab)\pi\sqrt{-1}}}\,dz_1
  +
  \int_{C_{\theta}+h}\frac{1}{z_1}e^{-\frac{nz_1^2}{(p-ab)\pi\sqrt{-1}}}\,dz_1
  \\
  =&
  \sqrt{\frac{(p-ab)\pi^2\sqrt{-1}}{n}}\frac{2}{p-ab}+ O(n^{-3/2})
  +
  \int_{C_{\theta}+h}\frac{1}{z_1}e^{-\frac{nz_1^2}{(p-ab)\pi\sqrt{-1}}}\,dz_1
\end{split}
\end{equation*}
since there are no poles of $e^{\frac{2z_1}{p-ab}}\coth(z_1)-\frac{1}{z_1}$ between $C_{\theta}$ and $C_{\theta_1}+h$.
Note that $\left(e^{\frac{2z_1}{p-ab}}\coth(z_1)-\frac{1}{z_1}\right)\Bigr|_{z_1=0}=\frac{2}{p-ab}$.
\par
Replacing $z_1+h$ with $-z_1+h$, we have
\begin{equation*}
  \int_{C_{\theta}+h}\frac{1}{z_1}e^{-\frac{nz_1^2}{(p-ab)\pi\sqrt{-1}}}\,dz_1
  =
  -\int_{C_{\theta}-h}\frac{1}{z_1}e^{-\frac{nz_1^2}{(p-ab)\pi\sqrt{-1}}}\,dz_1.
\end{equation*}
Hence we have
\begin{equation*}
\begin{split}
  \int_{C_{\theta}+h}
  \frac{1}{z_1}e^{-\frac{nz_1^2}{(p-ab)\pi\sqrt{-1}}}\,dz_1
  &=
  \frac{1}{2}
  \left(
    \int_{C_{\theta}+\beta/n}\frac{1}{z_1}e^{-\frac{nz_1^2}{(p-ab)\pi\sqrt{-1}}}\,dz_1
    -
    \int_{C_{\theta}-\beta/n}\frac{1}{z_1}e^{-\frac{nz_1^2}{(p-ab)\pi\sqrt{-1}}}\,dz_1
  \right)
  \\
  &=
  -\pi\sqrt{-1}
  \Res\left(\frac{1}{z_1}e^{-\frac{nz_1^2}{(p-ab)\pi\sqrt{-1}}};0\right)
  =
  -\pi\sqrt{-1}.
\end{split}
\end{equation*}
So in this case we have
\begin{equation*}
  \int_{C_{\theta}+h}
  e^{\frac{2z_1}{p-ab}}\coth(z_1+\delta)e^{-\frac{nz_1^2}{(p-ab)\pi\sqrt{-1}}}\,dz_1
  =
  -\pi\sqrt{-1}+\frac{2}{p-ab}\sqrt{\frac{(p-ab)\pi^2\sqrt{-1}}{n}}
  +O(n^{-3/2})
\end{equation*}
and 
\begin{equation*}
\begin{split}
  &
  \int_{C_{\theta}+\gamma}
  \psi_{1}(z_1)e^{nF(z_1,\eta_m)}\,dz_1
  \\
  &=
  e^{nF(\zeta,\eta_m)}e^{-\frac{\pi\sqrt{-1}}{(p-ab)n}}
  \int_{C_{\theta}+h}e^{\frac{2z_1}{p-ab}}\coth(z_1+\delta)e^{-\frac{nz_1^2}{(p-ab)\pi\sqrt{-1}}}\,dz_1
  \\
  &=
  \left(-\pi\sqrt{-1}+2\sqrt{\frac{\pi^2\sqrt{-1}}{(p-ab)n}}\right)
  e^{-\frac{\pi\sqrt{-1}}{(p-ab)n}}
  \exp\left[n\left(p-ab-\frac{m^2}{4ab}+m\right)\pi\sqrt{-1}\right]
  +O(n^{-3/2}).
\end{split}
\end{equation*}
\end{itemize}
\par
Hence we have
\begin{equation}\label{eq:I_2n}
\begin{split}
  I_{1,2}
  &=
  \sum_{m=1}^{2ab-1}
  \Res(\varphi;\eta_m)
  \int_{C_{\theta}+w_1}\psi_{1}(z_1)e^{nF(z_1,\eta_m)}\,dz_1
  \\
  &=
  -
  2\pi\sqrt{-1}
  \sum_{\substack{m/2+p/2-ab+1/n<l'<p/2+1/n\\1\le m\le2ab-1}}
  \Res(\varphi;\eta_m)\Res(\psi_{1};\xi_{1,l'})e^{nF(\xi_{1,l'})\eta_m}
  \\&\quad
  +
  \sum_{m=1}^{2ab-1}
  \Res(\varphi;\eta_m)
  \int_{C_{\theta}+\gamma}\psi_{1}(z_1)e^{nF(z_1,\eta_m)}\,dz_1
  \\
  &=
  -
  2\pi\sqrt{-1}
  \sum_{\substack{m/2+p/2-ab+1/n<l'<p/2+1/n\\1\le m\le2ab-1}}
  \Res(\varphi;\eta_m)\Res(\psi_{1};\xi_{1,l'})e^{nF(\xi_{1,l'})\eta_m}
  \\
  &\quad
  +
  \sum_{\substack{1\le m\le2ab-1\\\text{$m-p$: even}}}
  \frac{(-1)^{m+1}}{2}\sin\left(\frac{m\pi}{a}\right)\sin\left(\frac{m\pi}{b}\right)
  \\
  &\quad
  \times
  \left(-\pi\sqrt{-1}+2\sqrt{\frac{\pi^2\sqrt{-1}}{(p-ab)n}}\right)
  e^{-\frac{\pi\sqrt{-1}}{(p-ab)n}}
  \exp\left[n\left(-ab-\frac{m^2}{4ab}\right)\pi\sqrt{-1}\right]
  +O(n^{-3/2}),
\end{split}
\end{equation}
where $l'$ is an integer such that $m/2+p-ab<l'+p/2-1/n<p$.
%%%%%%%%%%%%%%%%%%%%%%%%%%%%%
\subsection{Summary}
Now we combine the results so far.
\par
Note that $I_{1,3}$ is the sum of $\Res(\psi_{1};\xi_{1,l})\Res(\varphi;\eta_m)e^{nF(\xi_{1,l},\eta_m)}$ over $(l,m)$.
The same summand appears both in \eqref{eq:I_1n} and in \eqref{eq:I_2n}.
Taking cancellation into account, the range of summation turns out to be
\begin{equation*}
\begin{split}
  &
  \mathcal{S}_1
  \\
  :=&
  \left\{(l,m)\in\Z^2\bigm|
  -\frac{p}{2}<l-\frac{1}{n}<\frac{p}{2},
  0<m<2ab,
  a\nmid m, b\nmid m\right\}
  \\
  &\setminus
  \left\{
    (l,m)\in\Z^2\bigm|
    -\frac{p}{2}<l-\frac{1}{n}<\frac{p}{2},\frac{2ab}{p}(l+\frac{p}{2}-\frac{1}{n})<m<2ab
  \right\}
  \\
  &\setminus
  \left\{
    (l,m)\in\Z^2\Bigm|\frac{m}{2}+p-ab<l+\frac{p}{2}-\frac{1}{n}<p,0<m<2ab,
  \right\}
  \\
  =&
  \left\{
    (l,m)\in\Z^2\bigm|
    -\frac{p}{2}<l-\frac{1}{n}<\frac{p}{2},m\le\frac{2ab}{p}(l-\frac{1}{n})+ab,
    l-\frac{1}{n}\le\frac{m}{2}+\frac{p}{2}-ab,
    0<m<2ab,
    a\nmid m, b\nmid m
  \right\}
  \\
  =&
  \left\{
    (l,m)\in\Z^2\bigm|
    -\frac{p}{2}<l\le\frac{p}{2},
    m<\frac{2abl}{p}+ab,
    l\le\frac{m}{2}+\frac{p}{2}-ab,
    0<m<2ab,
    a\nmid m, b\nmid m
  \right\}.
\end{split}
\end{equation*}
for large $n$.
Note that the range $\mathcal{S}_1$ is the integer points in the interior of the triangle with vertices $(p/2,2ab)$, $(-p/2,0)$, and $(p/2-ab,0)$ on the $lm$-plane.
See Figure~\ref{fig:S_1}.
\begin{figure}[h]
  \raisebox{-0.5\height}{\includegraphics[scale=0.4]{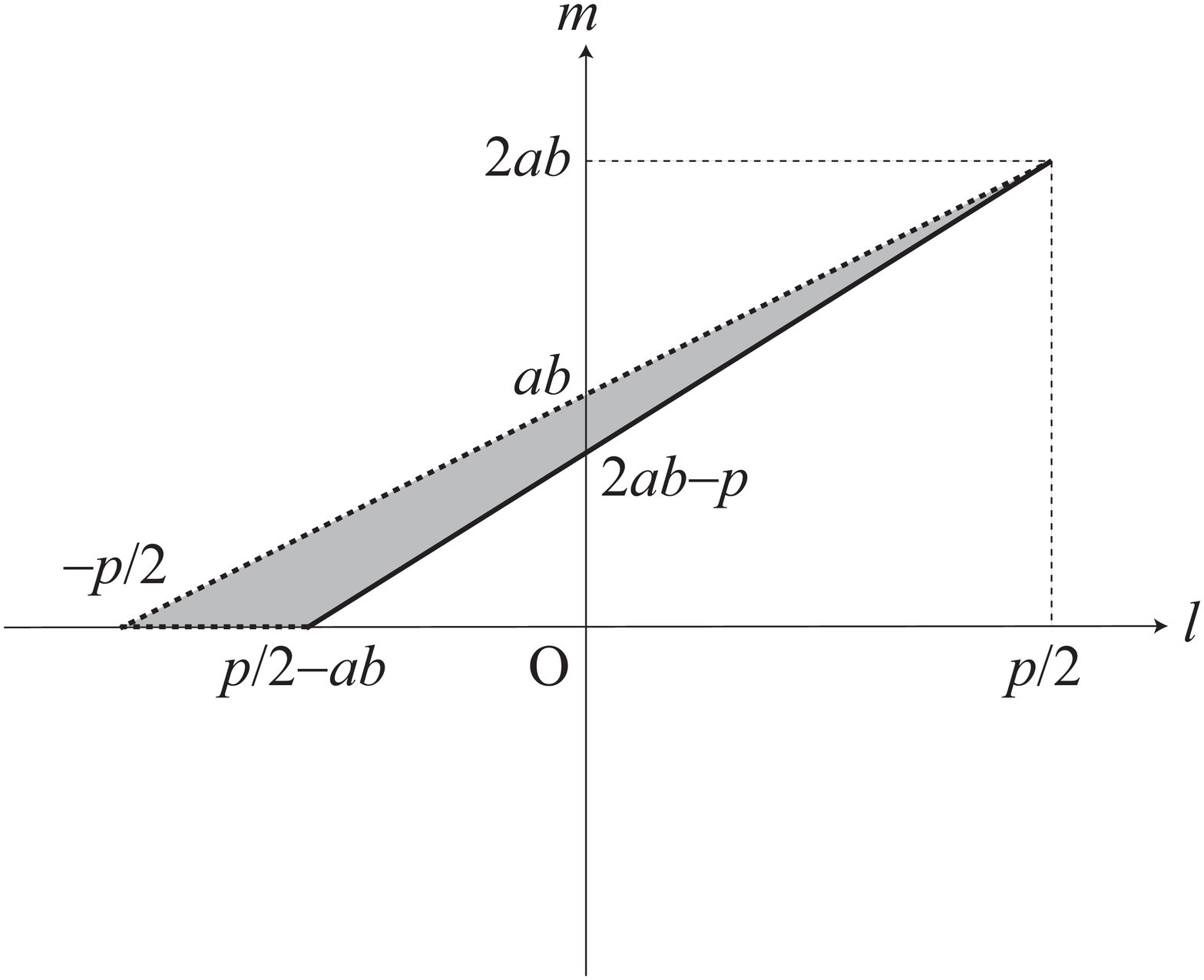}}
  \caption{The gray area indicates $\mathcal{S}_1$. The dotted lines are not included.}
  \label{fig:S_1}
\end{figure}
\par
So we have
\begin{equation*}
\begin{split}
  &V_1
  \\
  =&
  I_{1,0}
  \\
  &+
  \sum_{(l,m)\in\mathcal{S}_1}
  2\pi^2(-1)^{m}
  \sin\left(\frac{m\pi}{a}\right)\sin\left(\frac{m\pi}{b}\right)
  e^{\frac{2l-m+p}{p-ab}\pi\sqrt{-1}}
  e^{-\frac{1}{p-ab}\frac{\pi\sqrt{-1}}{n}}
  \\
  &\quad
  \times
  (-1)^{pn}\exp\left[n\left(-\frac{(l-m/2+p/2)^2}{p-ab}-\frac{m^2}{4ab} \right)\pi\sqrt{-1}\right]
  \\
  &+
  \sum_{-p/2+1/n<l<p/2+1/n}
  (2\pi\sqrt{-1})\sqrt{\frac{ab(p-ab)\pi^2\sqrt{-1}}{pn}}
  \sqrt{-1}(-1)^{ab+a+b}\
  \frac{\sin\left(\frac{2al\pi}{p}\right)\sin\left(\frac{2bl\pi}{p}\right)}
      {\sin\left(\frac{2abl\pi}{p}\right)}
  e^{-\frac{1}{p-ab}\frac{\pi\sqrt{-1}}{n}}
  \\
  &\quad\times
  (-1)^{np-1}e^{\frac{2l}{p}\pi\sqrt{-1}}
  \exp\left[n\left(-\frac{(l+p/2)^2}{p}\right)\pi\sqrt{-1}\right]
  \\
  &+
  \sum_{\substack{1\le m\le2ab-1\\\text{$m-p$: even}}}
  (-1)(2\pi\sqrt{-1})\frac{(-1)^m}{2}\sin\left(\frac{m\pi}{a}\right)\sin\left(\frac{m\pi}{b}\right)
  \\
  &\quad
  \times
  \left(-\pi\sqrt{-1}+2\sqrt{\frac{\pi^2\sqrt{-1}}{(p-ab)n}}\right)
  e^{-\frac{\pi\sqrt{-1}}{(p-ab)n}}
  \exp\left[n\left(-ab-\frac{m^2}{4ab}\right)\pi\sqrt{-1}\right]
  +O(n^{-3/2}).
\end{split}
\end{equation*}

\section{Asymptotic expansion of $V_2$}
In this section we will study the asymptotic behavior of $V_2$ as $n\to\infty$.
Recall that
\begin{equation*}
  V_2
  =
  \iint_{C_{\theta}\times C_{\theta}}
  \psi_{2}(z_1)\varphi(z_2)e^{nF(z_1,z_2)}
  \,dz_1\,dz_2,
\end{equation*}
where $\psi_{2}(z_1)=\coth(z_1-\tilde{h})$ with $\tilde{h}=(\frac{1}{n}-\frac{p}{2})\pi\sqrt{-1}$.
\par
As in the case of $V_1$, we have
\begin{equation*}
  V_2
  =
  I_{2,0}+2\pi\sqrt{-1}I_{2,1}+2\pi\sqrt{-1}I_{2,2}+(2\pi\sqrt{-1})^2I_{2,3},
\end{equation*}
where
\begin{align*}
  I_{2,0}
  &:=
  \iint_{(C_{\theta}+w_1)\times(C_{\theta}+w_2)}
  \psi_{2}(z_1)\varphi(z_2)e^{nF(z_1,z_2)}\,dz_1\,dz_2
  \\
  I_{2,1}
  &:=
  \sum_{-p/2-1/n<l<p/2-1/n}
  \Res(\psi_{2};\xi_{2,l})\int_{C_{\theta}+w_2}\varphi(z_2)e^{nF(\xi_{2,l},z_2)}\,dz_2
  \\
  I_{2,2}
  &:=
  \sum_{m=1}^{2ab-1}
  \Res(\varphi;\eta_m)\int_{C_{\theta}+w_1}\psi_{1}(z_1)e^{nF(z_1,\eta_m)}\,dz_1
  \\
  I_{2,3}
  &:=
  \sum_{\substack{-p/2-1/n<l<p/2-1/n\\1\le m\le2ab-1}}
  \Res(\psi_{2};\xi_{2,l})\Res(\varphi;\eta_m)e^{nF(\xi_{2,l},\eta_m)}.
\end{align*}
%%%%%%%%%%%%%%%%%%%%%%%%%%%%%
\subsection{The double integral in $V_2$}
In this subsection we calculate the double integral $I_{2,0}$.
\par
By changing variables, we have
\begin{equation*}
  I_{2,0}
  =
  \iint_{C_{\theta}\times C_{\theta}}
  \psi_{2}(z_1+w_1)\varphi(z_2+w_2)e^{nF(z_1+w_1,z_2+w_2)}\,dz_1\,dz_2.
\end{equation*}
Note that $\psi_{2}(z_1+w_1)=\psi_{2}(z_1)$ and $\varphi(z_2+w_2)=\varphi(z_2)$.
Moreover, we have
\begin{equation*}
  F(z_1+w_1,z_2+w_2)
  =
  -\frac{(z_1-z_2)^2}{(p-ab)\pi\sqrt{-1}}-\frac{z_2^2}{ab\pi\sqrt{-1}}+p\pi\sqrt{-1}.
\end{equation*}
Hence we have
\begin{equation*}
  I_{2,0}
  =
  \iint_{C_{\theta}\times C_{\theta}}
  \psi_{2}(z_1)\varphi(z_2)
  \exp
  \left[
     n
     \left(
       -\frac{(z_1-z_2)^2}{(p-ab)\pi\sqrt{-1}}-\frac{z_2^2}{ab\pi\sqrt{-1}}+p\pi\sqrt{-1}
     \right)
   \right]\,dz_1\,dz_2.
\end{equation*}
Now we change the variable $(z_1,z_2)\mapsto(-z_1,-z_2)$.
Since $\varphi(-z_2)$ and $\psi_{2}(-z_1)=-\psi_{2}(z_1)$, we conclude that $I_{1,0}=I_{2,0}$.
\par
So we see that $V_1-V_2=2\pi\sqrt{-1}(I_{1,1}-I_{2,2}+I_{1,2}-I_{2,2})+(2\pi\sqrt{-1})^2(I_{1,3}-I_{2,3})$.
%%%%%%%%%%%%%%%%%%%%%%%%%%%%%
\subsection{The double sum in $V_2$}
In this subsection we calculate the double summation in $V_2$.
\par
Since $\xi_{2,l} =(l+p/2+1/n) \pi \sqrt{-1}$ and $\eta_m=m\pi \sqrt{-1} /2$, we have 
\begin{equation*}
  F(\xi_{2,l},\eta_m) 
  =
  \left(
    -\frac{(l-m/2+p/2+1/n)^2}{p-ab}-\frac{m^2}{4ab}+2(l+p/2+1/n)
  \right)
  \pi\sqrt{-1}.
\end{equation*}
Hence we have
\begin{multline*}
  nF(\xi_{2,l},\eta_m)
  \\
  =
  \left[
    \left(
      -\frac{(l-m/2+p/2)^2}{p-ab}-\frac{m^2}{4ab}+2l+p
    \right)
    n
    -\frac{2l-m+p}{p-ab}+2-\frac{1}{(p-ab)n}
  \right]
  \pi\sqrt{-1}.
\end{multline*}
Since $\Res(\psi_{2};\xi_{2,l})\Res(\varphi;\eta_m)=-\frac{(-1)^{m}}{2}\sin(m\pi/a)\sin(m\pi/b)$, we obtain 
\begin{equation*}
\begin{split}
  I_{2,3}
  &=
  \sum_{\substack{-p/2-1/n<l<p/2-1/n\\1\le m\le2ab-1}}
  \Res(\psi_{2};\xi_{2,l})\Res(\varphi;\eta_m)e^{nF(\xi_{2,l},\eta_m)}
  \\
  &=
  \sum_{\substack{-p/2-1/n<l<p/2-1/n\\1\le m\le2ab-1}}
  \frac{(-1)^{m+1}}{2}
  \sin\left(\frac{m\pi}{a}\right)\sin\left(\frac{m\pi}{b}\right)
  e^{-\frac{2l-m+p}{p-ab}\pi\sqrt{-1}}
  e^{-\frac{1}{p-ab}\frac{\pi \sqrt{-1}}{n}}
  \\
  &
  (-1)^{pn}
  \exp
  \left[
    n\left(-\frac{(l-m/2+p/2)^2}{p-ab}-\frac{m^2}{4ab}\right)\pi\sqrt{-1}
  \right].
\end{split}
\end{equation*}
%%%%%%%%%%%%%%%%%%%%%%%%%%%%%
\subsection{The sum over $l$ in $V_2$}
In this subsection we calculate $I_{2,1}$.
\par
Consider the integral
\begin{equation*}
  \int_{C_{\theta}+w_2}
  \varphi(z_2)e^{nF(\xi_{2,l},z_2)}\,dz_2.
\end{equation*}
The polynomial $F(\xi_{2,l},z_2)=-\frac{(\xi_{2,l}- z_2)^2}{(p-ab)\pi\sqrt{-1}}-\frac{z_2^2}{ab\pi\sqrt{-1}}+2\xi_{2,l}$ has a unique critical point
\begin{equation*}
  \frac{ab}{p}\xi_{2,l}
  =
  \alpha+\beta/n
\end{equation*}
and
\begin{equation}\label{eq:F_2}
  F(\xi_{2,l}, z_2)
  =
  F(\xi_{2,l},\alpha+\beta/n)-\frac{p}{ab(p-ab)\pi\sqrt{-1}}(z_2-\alpha-\beta/n)^2.
\end{equation}
Recall that $\alpha=ab\left(\frac{l}{p}+\frac{1}{2}\right)\pi\sqrt{-1}$ and $\beta=\frac{ab}{p}\pi\sqrt{-1}$.
Note that $\alpha+\beta/n$ is not a pole of $\varphi(z_2)$.
\par
We will shift the line of integration from $C_{\theta}+w_2$ to $C_{\theta}+\alpha+\beta/n$.
Since $0<l+p/2+1/n<p$, we see that $\alpha+\beta/n$ is below $w_2=ab\pi\sqrt{-1}$.
So by the residue theorem we have
\begin{equation}\label{eq:J_1n}
\begin{split}
  &\int_{C_{\theta}+w_2}
  \varphi(z_2)e^{nF(\xi_{2,l},z_2)}\,dz_2
  \\
  =&
  \int_{C_{\theta}+\alpha+\beta/n}
  \varphi(z_2)e^{nF(\xi_{2,l},z_2)}\,dz_2
  -
  2\pi\sqrt{-1}
  \sum_{\frac{2ab}{p}(1+p/2+1/n)<m'<2ab}
  \Res(\varphi;\eta_{m'})e^{nF(\xi_{2,l},\eta_{m'})}\,dz_2.
\end{split}
\end{equation}
Note that from \eqref{eq:F_2} we have
\begin{equation*}
\begin{split}
  &
  \int_{C_{\theta}+\alpha+\beta/n}
  \varphi(z_2)e^{nF(\xi_{2,l},z_2)}\,dz_2
  \\
  =&
  e^{nF(\xi_{2,l},\alpha+\beta/n)}
  \int_{C_{\theta}+\alpha+\beta/n}
  \varphi(z_2) e^{\frac{-pn(z_2-\alpha-\beta/n)^2}{ab(p-ab)\pi\sqrt{-1}}}\,dz_2
  \\
  =&
  e^{nF(\xi_{2,l},\alpha+\beta/n)}
  \int_{C_{\theta}}\varphi(z_2+\alpha+\beta/n)
  e^{\frac{-pnz_2^2}{ab(p-ab)\pi\sqrt{-1}}}\,dz_2
  \\
  =&
  e^{nF(\xi_{2,l},\alpha+\beta/n)}
  \int_{C_{\theta}}
  \varphi(z_2+\alpha+\beta/n)e^{\frac{-pnz_2^2}{ab(p-ab)\pi\sqrt{-1}}}\,dz_2
  \\
  =&
  e^{nF(\xi_{2,l},\alpha+\beta/n)}
  \int_{C_{\theta}+\beta/n}
  \varphi(z_2+\alpha)e^{-\frac{pn(z_2-\beta/n)^2}{ab(p-ab)\pi\sqrt{-1}}}\,dz_2
  \\
  =&
  e^{nF(\xi_{2,l},\alpha+\beta/n)}
  e^{-\frac{ab\pi\sqrt{-1}}{p(p-ab)n}}
  \int_{C_{\theta}+\beta/n}
  e^{\frac{2z_2}{p-ab}}\varphi(z_2+\alpha)e^{-\frac{pnz_2^2}{ab(p-ab)\pi\sqrt{-1}}}\,dz_2
  \\
  =&
  e^{nF(\xi_{2,l},\alpha+\beta/n)}
  e^{-\frac{ab\pi\sqrt{-1}}{p(p-ab)n}}
  \int_{C_{\theta}}
  e^{\frac{2z_2}{p-ab}}\varphi(z_2+\alpha)e^{-\frac{pnz_2^2}{ab(p-ab)\pi\sqrt{-1}}}\,dz_2,
\end{split}
\end{equation*}
since there are no poles of $\varphi(z_2+\alpha)$ between $C_{\theta}+\beta/n$ and $C_{\theta}$.
\par
By the saddle point method, we have
\begin{equation*}
  \int_{C_{\theta}}
  e^{\frac{2z_2}{p-ab}}\varphi(z_2+\alpha) e^{-\frac{pnz_2^2}{ab(p-ab)\pi\sqrt{-1}}}\,dz_2
  =
  \sqrt{\frac{ab(p-ab)\pi^2\sqrt{-1}}{pn}}\varphi(\alpha)+O(n^{-3/2}).
\end{equation*}
Hence we have
\begin{multline*}
  \int_{C_{\theta}+\alpha+\beta/n}\varphi(z_2)e^{nF(\xi_{2,l},z_2)}\,dz_2
  \\
  =
  e^{nF(\xi_{2,l},\alpha+\beta/n)}
  e^{-\frac{ab\pi\sqrt{-1}}{p(p-ab)n}}
  \sqrt{\frac{ab(p-ab)\pi^2\sqrt{-1}}{pn}}\varphi(\alpha)+O(n^{-3/2}),
\end{multline*}
where
\begin{equation*}
  \varphi(\alpha) 
  =
  (-1)^{ab+a+b}\sqrt{-1}
  \frac{\sin\left(\frac{2al\pi}{p}\right)\sin\left(\frac{2bl\pi}{p}\right)}
       {\sin\left(\frac{2abl\pi}{p}\right)}.
\end{equation*}
\par
Since $\xi_{2,l}=(l+p/2+1/n)\pi\sqrt{-1}$ and $\alpha+\beta/n=\frac{ab}{p}\xi_{2,l}$, we have
\begin{equation*}
\begin{split}
  F(\xi_{2,l},\alpha+\beta/n)
  &=
  -\frac{(\xi_{2,l}-\alpha-\beta/n)^2}{(p-ab)\pi\sqrt{-1}}
  -\frac{(\alpha+\beta/n)^2}{ab\pi\sqrt{-1}}+2\xi_{2,l}
  \\
  &=
  -\frac{\xi_{2,l}^2}{p\pi\sqrt{-1}}+2\xi_{2,l}
  \\
  &=
  \left(-\frac{(l+p/2+1/n)^2}{p}+2(l+p/2+1/n)\right)\pi\sqrt{-1}
  \\
  &=
  \left(-\frac{(l+p/2)^2}{p}+2(l+p/2)-(\frac{l+p/2}{p}-1)\frac{2}{n}-\frac{1}{pn^2}\right)
  \pi\sqrt{-1}.
\end{split}
\end{equation*}
Hence we have
\begin{equation*}
\begin{split}
  &
  \int_{C_{\theta}+\alpha+\beta/n}\varphi(z_2)e^{nF(\xi_{2,l},z_2)}\,dz_2
  \\
  =&
  \sqrt{\frac{ab(p-ab)\pi^2\sqrt{-1}}{pn}}
  (-1)^{ab+a+b}\sqrt{-1}
  \frac{\sin\left(\frac{2al\pi}{p}\right)\sin\left(\frac{2bl\pi}{p}\right)}
       {\sin\left(\frac{2abl\pi}{p}\right)}
   e^{-(\frac{ab}{p(p-ab)}+\frac{1}{p})\frac{\pi\sqrt{-1}}{n}}
  \\
  &
  \times(-1)^{np-1}
  e^{-\frac{2l}{p}\pi\sqrt{-1}}
  \exp\left[n\left(-\frac{(l+p/2)^2}{p}\right)\pi\sqrt{-1}\right]
  +O(n^{-3/2}).
\end{split}
\end{equation*}
\par
We finally have
\begin{equation*}
\begin{split}
  &I_{2,1}
  \\
  =&
  \sum_{-p/2-1/n<l<p/2-1/n}
  \Res(\psi_{2};\xi_{2,l})
  \int_{C_{\theta}+w_2}\varphi(z_2)e^{nF(\xi_{2,l},z_2)}\,dz_2
  \\
  =&
  -2\pi\sqrt{-1}
  \sum_{\substack{-p/2-1/n<l<p/2-1/n\\\frac{2ab}{p}(1+p/2+1/n)<m'<2ab}}
  \Res(\psi_{2};\xi_{2,l})\Res(\varphi;\eta_{m'})e^{nF(\xi_{2,l},\eta_{m'})}
  \\
  &+
  \sum_{-p/2-1/n<l<p/2-1/n}
  \Res(\psi_{2};\xi_{2,l})
  \int_{C_{\theta}+\alpha+\beta/n}\varphi(z_2)e^{nF(\xi_{2,l},z_2)}\,dz_2
  \\
  =&
  -2\pi\sqrt{-1}
  \sum_{\substack{-p/2-1/n<l<p/2-1/n\\\frac{2ab}{p}(1+p/2+1/n)<m'<2ab}}
  \Res(\psi_{2};\xi_{2,l})\Res(\varphi;\eta_{m'})e^{nF(\xi_{2,l},\eta_{m'})}
  \\
  &+
  \sum_{-p/2-1/n<l<p/2-1/n}
  \sqrt{\frac{ab(p-ab)\pi^2\sqrt{-1}}{pn}}
  (-1)^{ab+a+b}\sqrt{-1}
  \frac{\sin\left(\frac{2al\pi}{p}\right)\sin\left(\frac{2bl\pi}{p}\right)}
       {\sin\left(\frac{2abl\pi}{p}\right)}
  e^{-\frac{1}{p-ab}\frac{\pi\sqrt{-1}}{n}}
  \\
  &\quad\times
  (-1)^{np-1}e^{-\frac{2l}{p}\pi\sqrt{-1}}
  \exp\left[n\left(-\frac{(l+p/2)^2}{p}\right)\pi\sqrt{-1}\right]
  +O(n^{-3/2})
\end{split}
\end{equation*}
where $\frac{2ab}{p}(l+p/2+1/n)<m'<2ab$.
%%%%%%%%%%%%%%%%%%%%%%%%%%%%%
\subsection{The sum over $m$ in $V_2$}
In this subsection, we study the asymptotic behavior of $I_{2,2}$.
\par
Consider
\begin{equation*}
  \int_{C_{\theta}+w_1}\psi_{2}(z_1)e^{nF(z_1,\eta_m)}\,dz_1.
\end{equation*}
The polynomial $F(z_1,\eta_m)=-\frac{(z_1-\eta_m)^2}{(p-ab)\pi\sqrt{-1}}-\frac{\eta_m^2}{ab\pi\sqrt{-1}}+2z_1$ has a unique critical point
\begin{equation*}
  \gamma:=(m/2+p-ab)\pi\sqrt{-1}
\end{equation*}
and
\begin{equation*}
  F(z_1,\eta_m)
  =
  F(\gamma,\eta_m)-\frac{1}{(p-ab)\pi \sqrt{-1}}(z_1-\gamma)^2.
\end{equation*}
Note that $\gamma$ is not a pole of $\psi_{2}(z_1)$. 
We have
\begin{equation}\label{eq:J_2n}
\begin{split}
  &
  \int_{C_{\theta}+w_1}\psi_{2}(z_1)e^{nF(z_1,\eta_m)}\,dz_1
  \\
  =&
  \int_{C_{\theta}+\gamma}
  \psi_{2}(z_1)e^{nF(z_1,\eta_m)}
  -
  2\pi\sqrt{-1}
  \sum_{m/2+p/2-ab-1/n<l'<p/2-1/n}
  \Res(\psi_{2};\xi_{2,l'})e^{nF(\xi_{2,l'},\eta_m)}\,dz_1.
\end{split}
\end{equation}
Note that 
\begin{equation*}
\begin{split}
  \int_{C_{\theta}+\gamma}\psi_{2}(z_1)e^{nF(z_1,\eta_m)}\,dz_1
  &=
  e^{nF(\gamma,\eta_m)}
  \int_{C_{\theta}+\gamma}
  \psi_{2}(z_1)e^{-\frac{n(z_1-\gamma)^2}{(p-ab)\pi\sqrt{-1}}}\,dz_1
  \\
  &=
  e^{nF(\gamma,\eta_m)}
  \int_{C_{\theta}}\psi_{2}(z_1+\gamma)e^{-\frac{nz_1^2}{(p-ab)\pi\sqrt{-1}}}\,dz_1.
\end{split}
\end{equation*}
Since $\gamma=(m/2+p-ab)\pi\sqrt{-1}$ and $\eta_m=m\pi\sqrt{-1}/2$, we have
\begin{equation*}
  F(\gamma,\eta_m)
  =
  \left(p-ab-\frac{m^2}{4ab}+m\right)\pi\sqrt{-1}.
\end{equation*}
\par
Note that $\psi_{2}(z_1+\zeta)=\coth(z_1+\gamma-\tilde{h}')$.
Put $\delta:=(m/2+p/2-ab)\pi\sqrt{-1}$.
Then $\psi_{2}(z_1+\gamma)=\coth(z_1+\delta-h).$ 
\par
The integral above becomes
\begin{equation*}
\begin{split}
  \int_{C_{\theta}}
  \cot(z_1+\delta-h)e^{\frac{-nz_1^2}{(p-ab)\pi\sqrt{-1}}}\,dz_1
  &=
  \int_{C_{\theta}-h}
  \coth(z_1+\delta)e^{\frac{-n(z_1+h)^2}{(p-ab)\pi\sqrt{-1}}}\,dz_1
  \\
  &=
  e^{-\frac{\pi\sqrt{-1}}{(p-ab)n}}
  \int_{C_{\theta}-h}e^{\frac{-2z_1}{p-ab}}\coth(z_1+\delta) e^{\frac{-nz_1^2}{(p-ab)\pi\sqrt{-1}}}\,dz_1.
\end{split}
\end{equation*}
\par
There are two cases to consider.
\begin{itemize}
\item[Case 1:]
If $m \neq p \pmod{2}$, then $\delta\in(\Z + 1/2)\pi\sqrt{-1}$ and so $\coth(z_1+\delta)=\coth(z_1+\pi\sqrt{-1}/2)$.
Hence we have
\begin{equation*}
\begin{split}
  \int_{C_{\theta}-h}
  e^{\frac{-2z_1}{p-ab}}\coth(z_1+\delta)e^{\frac{-nz_1^2}{(p-ab)\pi\sqrt{-1}}}\,dz_1
  &=
  \int_{C_{\theta}}
  e^{\frac{-2z_1}{p-ab}}\coth(z_1+\pi\sqrt{-1}/2)e^{\frac{-nz_1^2}{(p-ab)\pi\sqrt{-1}}}\,dz_1
  \\
  &=
  \sqrt{\frac{(p-ab)\pi^2\sqrt{-1}}{n}}\coth(\pi\sqrt{-1}/2)+O(n^{-3/2})
  \\
  &=O(n^{-3/2})
\end{split}
\end{equation*}
since there are no poles of $\coth(z_1+\pi\sqrt{-1}/2)$ between $C_{\theta}$ and $C_{\theta_1}-h$.
\item[Case 2:]
If $m = p \pmod{2}$, then $\delta\in\Z\pi\sqrt{-1}$ and $\coth(z_1+\delta)=\coth(z_1)$.
Hence we have
\begin{equation*}
\begin{split}
  &
  \int_{C_{\theta}-h}e^{\frac{-2z_1}{p-ab}}\coth(z_1+\delta)e^{\frac{-nz_1^2}{(p-ab)\pi\sqrt{-1}}}\,dz_1
  \\
  =&
  \int_{C_{\theta}-h}
  \left(e^{\frac{-2z_1}{p-ab}}\coth(z_1)-\frac{1}{z_1}\right)e^{\frac{-nz_1^2}{(p-ab)\pi\sqrt{-1}}}\,dz_1
  +
  \int_{C_{\theta}-h}
  \frac{1}{z_1}e^{\frac{-nz_1^2}{(p-ab)\pi\sqrt{-1}}}\,dz_1
  \\
  =&
  \int_{C_{\theta}}
  \left(e^{\frac{-2z_1}{p-ab}}\coth(z_1)-\frac{1}{z_1}\right)e^{\frac{-nz_1^2}{(p-ab)\pi\sqrt{-1}}}\,dz_1
  +
  \int_{C_{\theta}-h}\frac{1}{z_1}e^{\frac{-nz_1^2}{(p-ab)\pi\sqrt{-1}}}\,dz_1
  \\
  =&
  \sqrt{\frac{(p-ab)\pi^2\sqrt{-1}}{n}}
  \left(-\frac{2}{p-ab}\right)
  +
  O(n^{-3/2})
  +
  \int_{C_{\theta}-h}\frac{1}{z_1}e^{\frac{-nz_1^2}{(p-ab)\pi\sqrt{-1}}}\,dz_1
\end{split}
\end{equation*}
since there are no poles of $e^{\frac{-2z_1}{p-ab}}\coth(z_1)-\frac{1}{z_1}$ between $C_{\theta}$ and $C_{\theta}-h$.
Note that $\left(e^{\frac{-2z_1}{p-ab}}\coth(z_1)-\frac{1}{z_1}\right)\Bigm|_{z_1=0}=\frac{-2}{p-ab}$.
Moreover we have
\begin{equation*}
  \int_{C_{\theta}-h}\frac{1}{z}e^{\frac{-nz^2}{(p-ab)\pi\sqrt{-1}}}\,dz
  =
  -\int_{C_{\theta}+h}\frac{1}{z}e^{\frac{-nz^2}{(p-ab)\pi\sqrt{-1}}}\,dz
  =\pi\sqrt{-1}.
\end{equation*}
Therefore in this case we have
\begin{equation*}
  \int_{C_{\theta}-h}
  e^{\frac{-2z_1}{p-ab}}\cosh(z_1+\delta) e^{\frac{-nz_1^2}{(p-ab)\pi\sqrt{-1}}}\,dz_1
  =
  \pi\sqrt{-1}- \frac{2}{p-ab}\sqrt{\frac{(p-ab)\pi^2\sqrt{-1}}{n}}
  +O(n^{-3/2})
\end{equation*}
and
\begin{equation*}
\begin{split}
  &
  \int_{C_{\theta_1+\zeta}}\psi_{2}(z_1)e^{nF(z_1,\eta_m)}\,dz_1
  \\
  =&
  e^{nF(\gamma,\eta_m)}
  e^{\frac{-\pi\sqrt{-1}}{(p-ab)n}}
  \int_{C_{\theta}-h}e^{\frac{-2z_1}{p-ab}}
  \coth(z_1+\delta) e^{\frac{-nz_1^2}{(p-ab)\pi\sqrt{-1}}}\,dz_1
  \\
  =&
  \left(\pi\sqrt{-1}-2\sqrt{\frac{\pi^2\sqrt{-1}}{(p-ab)n}}\right)
  e^{-\frac{\pi\sqrt{-1}}{(p-ab)n}}
  \exp\left[n\left(p-ab-\frac{m^2}{4ab}+m\right)\pi\sqrt{-1}\right]
  +O(n^{-3/2}).
\end{split}
\end{equation*}
\end{itemize}
Hence we have
\begin{equation*}
\begin{split}
  I_{2,2}
  &=
  \sum_{m=1}^{2ab-1}
  \Res(\varphi;\eta_m)
  \int_{C_{\theta}+w_1}\psi_{2}(z_1)e^{nF(z_1,\eta_m)}\,dz_1
  \\
  &=
  -2\pi\sqrt{-1}
  \sum_{\substack{m/2+p/2-ab-1/n<l'<p/2-1/n\\1\le m\le2ab-1}}
  \Res(\varphi;\eta_m)\Res(\psi_2;\zeta_{2,l'})e^{nF(\zeta_{2,l'},\eta_m)}
  \\
  &\quad+
  \sum_{m=1}^{2ab-1}
  \Res(\varphi;\eta_m)
  \int_{C_{\theta}+\gamma}
  \psi_{2}(z_1)e^{nF(z_1,\eta_m)}\,dz_1
  \\
  &=
  -2\pi\sqrt{-1}
  \sum_{\substack{m/2+p/2-ab-1/n<l'<p/2-1/n\\1\le m\le2ab-1}}
  \Res(\varphi;\eta_m)\Res(\psi_2;\zeta_{2,l'})e^{nF(\zeta_{2,l'},\eta_m)}
  \\
  &\quad+
  \sum_{\substack{1\le m\le2ab-1\\\text{$m-p$: even}}}
  (-1)(2\pi\sqrt{-1})\frac{(-1)^m}{2}
  \sin\left(\frac{m\pi}{a}\right)\sin\left(\frac{m\pi}{b}\right)
  \\ 
  &
  \quad
  \times
  \left(\pi\sqrt{-1}-2\sqrt{\frac{\pi^2\sqrt{-1}}{(p-ab)n}}\right)
  e^{-\frac{\pi\sqrt{-1}}{(p-ab)n}}
  \exp\left[n\left(-ab-\frac{m^2}{4ab}\right)\pi\sqrt{-1}\right]
  +O(n^{-3/2}).
\end{split}
\end{equation*}
\subsection{Summary}
Now we combine the results so far.
\par
Note that $I_{2,3}$ is the sum of $\Res(\psi_{2};\xi_{2,l})\Res(\varphi;\eta_m)e^{nF(\xi_{2,l},\eta_m)}$ over $(l,m)$.
The same summand appears both in \eqref{eq:J_1n} and in \eqref{eq:J_2n}.
Taking cancellation into account, the range of summation turns out to be
\begin{equation*}
\begin{split}
  &
  \mathcal{S}_2
  \\
  :=&
  \left\{(l,m)\in\Z^2\bigm|
  -\frac{p}{2}<l+\frac{1}{n}<\frac{p}{2},
  0<m<2ab,
  a\nmid m, b\nmid m\right\}
  \\
  &\setminus
  \left\{
    (l,m)\in\Z^2\bigm|
    -\frac{p}{2}<l+\frac{1}{n}<\frac{p}{2},
    \frac{2ab}{p}(l+\frac{p}{2}+\frac{1}{n})<m<2ab
  \right\}
  \\
  &\setminus
  \left\{
    (l,m)\in\Z^2\Bigm|\frac{m}{2}+p-ab<l+\frac{p}{2}+\frac{1}{n}<p,0<m<2ab
  \right\}
  \\
  =&
  \left\{
    (l,m)\in\Z^2\bigm|
    -\frac{p}{2}<l+\frac{1}{n}<\frac{p}{2},m\le\frac{2ab}{p}(l+\frac{1}{n})+ab,
    l+\frac{1}{n}\le\frac{m}{2}+\frac{p}{2}-ab,
    0<m<2ab,
    a\nmid m, b\nmid m
  \right\}
  \\
  =&
  \left\{
    (l,m)\in\Z^2\bigm|
    -\frac{p}{2}\le l<\frac{p}{2},
    m\le\frac{2abl}{p}+ab,
    l<\frac{m}{2}+\frac{p}{2}-ab,
    0<m<2ab,
    a\nmid m, b\nmid m
  \right\}.
\end{split}
\end{equation*}
Observe that the line $m=2abl/p+ab$ contains no integer points.
This can be shown as follows:
Suppose that $(l,m)\in\Z^2$ satisfies $m=2abl/p+ab$.
Then we have $pm=ab(2l+p)$ and so $p$ divides $2l$ since $(p,ab)=1$.
So $m$ is a multiple of $ab$, contradicting the assumption that $a\nmid m$ and $b\nmid m$.
Therefore $\mathcal{S}_2$ is indeed of the form
\begin{equation*}
  \mathcal{S}_2
  =
  \left\{
    (l,m)\in\Z^2\bigm|
    -\frac{p}{2}<l<\frac{p}{2},
    m\le\frac{2abl}{p}+ab,
    l<\frac{m}{2}+\frac{p}{2}-ab,
    0<m<2ab,
    a\nmid m, b\nmid m
  \right\}.
\end{equation*}
See Figure~\ref{fig:S_2}.
\begin{figure}[h]
  \raisebox{-0.5\height}{\includegraphics[scale=0.4]{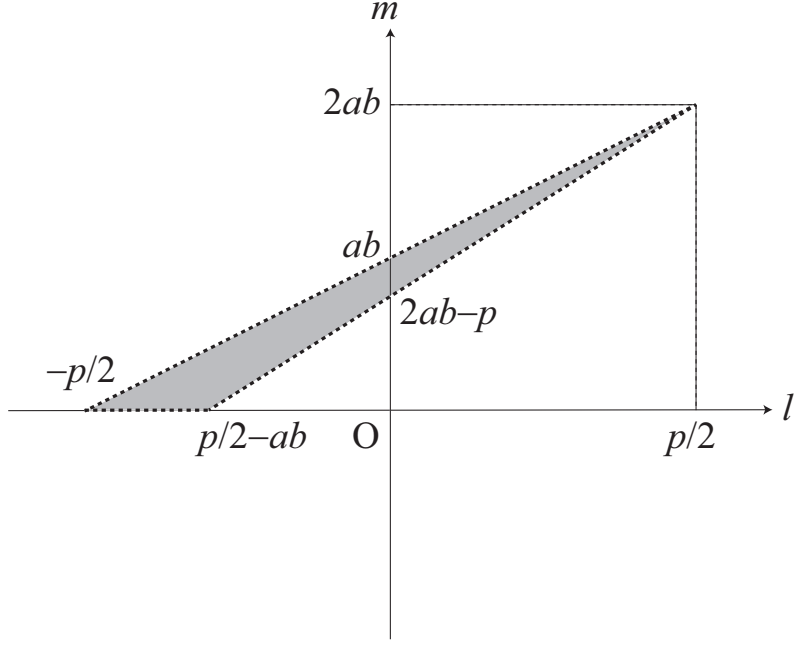}}
  \caption{The gray area indicates $\mathcal{S}_2$. The edges are not included.}
  \label{fig:S_2}
\end{figure}
Note that $\mathcal{S}_1=\mathcal{S}_2\cup\partial{S}_1$, where $\partial{S}_1$ denotes the edge connecting $(p/2,2ab)$ and $(-p/2,0)$, i.e, $\partial{S}_1=\{(l,m)\in\Z^2\mid0<m<2a,a\nmid m,b\nmid m,l=m/2+p/2-ab\}$.
\par
We have
\begin{equation*}
\begin{split}
  V_2
  &=
  I_{2,0}
  +
  \sum_{(l,m)\in\mathcal{S}_2}
  2\pi^2 (-1)^{m}
  \sin\left(\frac{m\pi}{a}\right)\sin\left(\frac{m\pi}{b}\right)
  e^{-\frac{2l-m+p}{p-ab}\pi\sqrt{-1}}
  e^{-\frac{1}{p-ab}\frac{\pi\sqrt{-1}}{n}}
  \\
  &\times
  (-1)^{pn}\exp\left[n\left(-\frac{(l- m/2+p/2)^2}{p-ab}-\frac{m^2}{4ab}\right)\pi\sqrt{-1}\right]
  \\
  &\quad
  +
  \sum_{-p/2-1/n<l<p/n-1/n}
  (2\pi\sqrt{-1})
  \sqrt{\frac{ab(p-ab)\pi^2\sqrt{-1}}{pn}}
  \sqrt{-1}(-1)^{a+b-ab}
  \frac{\sin\left(\frac{2al\pi}{p}\right)\sin\left(\frac{2bl\pi}{p}\right)}
       {\sin\left(\frac{2abl\pi}{p}\right)}
  e^{-\frac{1}{p-ab}\frac{\pi\sqrt{-1}}{n} }
  \\
  &
  (-1)^{np-1}e^{-\frac{2l}{p}\pi\sqrt{-1}}
  \exp\left[n\left(-\frac{(l+p/2)^2}{p}\right)\pi\sqrt{-1}\right]
  \\
  &+
  \quad
  \sum_{\substack{1\le m\le2ab-1\\\text{$m-p$: even}}}
  (-1)(2\pi\sqrt{-1})\frac{(-1)^m}{2}
  \sin\left(\frac{m\pi}{a}\right)\sin\left(\frac{m\pi}{b}\right)
  \\
  &
  \left(\pi\sqrt{-1}-2\sqrt{\frac{\pi^2\sqrt{-1}}{(p-ab)n}}\right)
  e^{-\frac{\pi\sqrt{-1}}{(p-ab)n}}
  \exp\left[n\left(-ab-\frac{m^2}{4ab}\right)\pi\sqrt{-1}\right]
  +O(n^{-3/2}).
\end{split}
\end{equation*}

\section{$V_1$ and $V_2$}\label{sec:V1V2}
First of all note that $I_{1,0} = I_{2,0}$.
Recall that $\mathcal{S}_1=\mathcal{S}_2\cup\partial\mathcal{S}_1$.
Hence we have
\begin{equation*}
\begin{split}
  &
  V_1-V_2
  \\
  =&
  \sum_{(l,m)\in\mathcal{S}_1}
  2\pi^2(-1)^{m}
  \sin\left(\frac{m\pi}{a}\right)\sin\left(\frac{m\pi}{b}\right)
  e^{\frac{2l-m+p}{p-ab}\pi\i}
  e^{-\frac{1}{p-ab}\frac{\pi\i}{n}}
  \\
  &\quad
  \times
  (-1)^{pn}\exp\left[n\left(-\frac{(l-m/2+p/2)^2}{p-ab}-\frac{m^2}{4ab} \right)\pi\i\right]
  \\
  &-
  \sum_{(l,m)\in\mathcal{S}_2}
  2\pi^2 (-1)^{m}
  \sin\left(\frac{m\pi}{a}\right)\sin\left(\frac{m\pi}{b}\right)
  e^{-\frac{2l-m+p}{p-ab}\pi\i}
  e^{-\frac{1}{p-ab}\frac{\pi\i}{n}}
  \\
  &\times
  (-1)^{pn}\exp\left[n\left(-\frac{(l- m/2+p/2)^2}{p-ab}-\frac{m^2}{4ab}\right)\pi\i\right]
  \\
  &+
  2\pi\i
  \sum_{l}
  \sqrt{\frac{ab(p-ab)\pi^2\i}{pn}}
  \i(-1)^{ab+a+b}
  \frac{\sin\left(\frac{2al\pi}{p}\right)\sin\left(\frac{2bl\pi}{p}\right)
        \sin\left(\frac{2l\pi}{p}\right)}
      {\sin\left(\frac{2abl\pi}{p}\right)}
  e^{-\frac{1}{p-ab}\frac{\pi\i}{n}}
  \\
  &\quad\times
  (-1)^{np-1}
  \exp\left[n\left(-\frac{(l+p/2)^2}{p}\right)\pi\i\right]
  \\
  &+
  2\sum_{\text{$m-p$: even}}
  (-1)(2\pi\i)\frac{(-1)^m}{2}\sin\left(\frac{m\pi}{a}\right)\sin\left(\frac{m\pi}{b}\right)
  \\
  &\quad
  \times
  \left(-\pi\i+2\sqrt{\frac{\pi^2\i}{(p-ab)n}}\right)
  e^{-\frac{\pi\i}{(p-ab)n}}
  \exp\left[n\left(-ab-\frac{m^2}{4ab}\right)\pi\i\right]
  +O(n^{-3/2}).
\end{split}
\end{equation*}
Since $e^{\frac{2l-m+p}{p-ab}\pi\i}-e^{-\frac{2l-m+p}{p-ab}\pi\i}=2\i\sin\left(\frac{(2l-m+p)\pi}{p-ab}\right)$, we have
\begin{equation*}
\begin{split}
  &
  V_1-V_2
  \\
  =&
  \sum_{(l,m)\in\partial\mathcal{S}_1}
  2\pi^2(-1)^{m}
  \sin\left(\frac{m\pi}{a}\right)\sin\left(\frac{m\pi}{b}\right)
  e^{\frac{2l-m+p}{p-ab}\pi\i}
  e^{-\frac{1}{p-ab}\frac{\pi\i}{n}}
  \\
  &\quad
  \times
  (-1)^{pn}\exp\left[n\left(-\frac{(l-m/2+p/2)^2}{p-ab}-\frac{m^2}{4ab} \right)\pi\i\right]
  \\
  &+
  \sum_{(l,m) \in \mathcal{S}_2}
  2\pi^2(-1)^{m}
  \sin\left(\frac{m\pi}{a}\right)\sin\left(\frac{m\pi}{b}\right)
  2\i\sin\left(\frac{(2l-m+p)\pi}{p-ab}\right)
  e^{-\frac{1}{p-ab}\frac{\pi\i}{n}}
  \\
  &\quad
  \times
  (-1)^{pn}\exp\left[n\left(-\frac{(l-m/2+p/2)^2}{p-ab}-\frac{m^2}{4ab} \right)\pi\i\right]
  \\
  &+
  2\pi\i
  \sum_{l}
  \sqrt{\frac{ab(p-ab)\pi^2\i}{pn}}
  \i(-1)^{ab+a+b}
  \frac{\sin\left(\frac{2al\pi}{p}\right)\sin\left(\frac{2bl\pi}{p}\right)
        \sin\left(\frac{2l\pi}{p}\right)}
      {\sin\left(\frac{2abl\pi}{p}\right)}
  e^{-\frac{1}{p-ab}\frac{\pi\i}{n}}
  \\
  &\quad\times
  (-1)^{np-1}
  \exp\left[n\left(-\frac{(l+p/2)^2}{p}\right)\pi\i\right]
  \\
  &+
  2\sum_{\text{$m-p$: even}}
  (-1)(2\pi\i)\frac{(-1)^m}{2}\sin\left(\frac{m\pi}{a}\right)\sin\left(\frac{m\pi}{b}\right)
  \\
  &\quad
  \times
  \left(-\pi\i+2\sqrt{\frac{\pi^2\i}{(p-ab)n}}\right)
  e^{-\frac{\pi\i}{(p-ab)n}}
  \exp\left[n\left(-ab-\frac{m^2}{4ab}\right)\pi\i\right]
  +O(n^{-3/2}).
\end{split}
\end{equation*}
When $l=m/2+p/2-ab$, the summand in the first term becomes
\begin{equation*}
\begin{split}
  &2\pi^2(-1)^{m}
  \sin\left(\frac{m\pi}{a}\right)\sin\left(\frac{m\pi}{b}\right)
  e^{-\frac{1}{p-ab}\frac{\pi\i}{n}}
  \\
  &\quad
  \times
  (-1)^{pn}\exp\left[n\left(-(p-ab)-\frac{m^2}{4ab} \right)\pi\i\right]
\end{split}
\end{equation*}
and the summation is over all $m$ such that $0<m<2ab$, $a\nmid m$, $b\nmid m$, and that $p-m$ is even.
So this term cancels with a part of the last term involving $-\pi\i$.
Therefore we have
\begin{equation*}
\begin{split}
  &V_1-V_2
  \\
  =&
  \sum_{(l,m) \in \mathcal{S}_2}
  2\pi^2(-1)^{m}
  \sin\left(\frac{m\pi}{a}\right)\sin\left(\frac{m\pi}{b}\right)
  2\i\sin\left(\frac{(2l-m+p)\pi}{p-ab}\right)
  e^{-\frac{1}{p-ab}\frac{\pi\i}{n}}
  \\
  &\quad
  \times
  (-1)^{pn}\exp\left[n\left(-\frac{(l-m/2+p/2)^2}{p-ab}-\frac{m^2}{4ab} \right)\pi\i\right]
  \\
  &+
  2\pi\i
  \sum_{l}
  \sqrt{\frac{ab(p-ab)\pi^2\i}{pn}}
  \i(-1)^{ab+a+b}
  \frac{\sin\left(\frac{2al\pi}{p}\right)\sin\left(\frac{2bl\pi}{p}\right)
        \sin\left(\frac{2l\pi}{p}\right)}
      {\sin\left(\frac{2abl\pi}{p}\right)}
  e^{-\frac{1}{p-ab}\frac{\pi\i}{n}}
  \\
  &\quad\times
  (-1)^{np-1}
  \exp\left[n\left(-\frac{(l+p/2)^2}{p}\right)\pi\i\right]
  \\
  &+
  2\sum_{\text{$m-p$: even}}
  (-1)(2\pi\i)\frac{(-1)^m}{2}\sin\left(\frac{m\pi}{a}\right)\sin\left(\frac{m\pi}{b}\right)
  \\
  &\quad
  \times
  \left(2\sqrt{\frac{\pi^2\i}{(p-ab)n}}\right)
  e^{-\frac{\pi\i}{(p-ab)n}}
  \exp\left[n\left(-ab-\frac{m^2}{4ab}\right)\pi\i\right]
  +O(n^{-3/2})
  \\
  =
  &
  (-1)^{pn}4\pi^2\i
  e^{-\frac{\pi\i}{(p-ab)n}}
  \sum_{(l,m) \in \mathcal{S}_2}
  (-1)^{m}
  \sin\left(\frac{m\pi}{a}\right)\sin\left(\frac{m\pi}{b}\right)
 \sin\left(\frac{(2l-m+p)\pi}{p-ab}\right)
  \\
  &\quad
  \times
  \exp\left[n\left(-\frac{(l-m/2+p/2)^2}{p-ab}-\frac{m^2}{4ab} \right)\pi\i\right]
  \\
  &
  +2\pi
  (-1)^{ab+a+b+pn}
  \sqrt{\frac{ab(p-ab)\pi^2\i}{pn}}
  e^{-\frac{\pi\i}{(p-ab)n}}
  \\
  &\quad\times
  \sum_{l}
  \frac{\sin\left(\frac{2al\pi}{p}\right)\sin\left(\frac{2bl\pi}{p}\right)
        \sin\left(\frac{2l\pi}{p}\right)}
      {\sin\left(\frac{2abl\pi}{p}\right)}
  \exp\left[n\left(-\frac{(l+p/2)^2}{p}\right)\pi\i\right]
  \\
  &
  -4\pi^2e^{3\pi\i/4}e^{-\frac{\pi\i}{(p-ab)n}}
  \frac{(-1)^{abn}}{\sqrt{(p-ab)n}}
  \\
  &\quad\times
  \sum_{\text{$m-p$: even}}
  (-1)^m\sin\left(\frac{m\pi}{a}\right)\sin\left(\frac{m\pi}{b}\right)
  \exp\left(-\frac{m^2n}{4ab}\pi\i\right)
  +O(n^{-3/2}).
\end{split}
\end{equation*}
\par
By Lemma~\ref{lem:sin_sin_exp} we have
\begin{equation*}
  \sum_{\substack{0\le m\le 2ab\\\text{$m-p$: even}}}
  \sin\left(\frac{m\pi}{a}\right)\sin\left(\frac{m\pi}{b}\right)
  \exp\left(-n\frac{m^2}{4ab}\pi \i\right)
  =
  0.
\end{equation*}
This implies that
\begin{equation*}
\begin{split}
  &V_1-V_2
  \\
  =&
  (-1)^{pn}4\pi^2\i
  e^{-\frac{\pi\i}{(p-ab)n}}
  \sum_{(l,m)\in\mathcal{S}_2}
  (-1)^{m}\sin\left(\frac{m\pi}{a}\right)\sin\left(\frac{m\pi}{b}\right)
  \sin\left(\frac{(2l-m+p)\pi}{p-ab}\right)
  \\
  &\quad\times
  \exp\left[n\left(-\frac{(2l-m+p)^2}{4(p-ab)}-\frac{m^2}{4ab} \right) \pi \i \right]
  \\
  &
  +
  (-1)^{a+b+ab+pn}4\pi\i\sqrt{\frac{ab(p-ab)\pi^2\i}{pn}}
  e^{-\frac{\pi\i}{(p-ab)n}}
  \\
  &\quad\times
  \sum_{|l|\le(p-1)/2}
  \frac{\sin\left(\frac{2al\pi}{p}\right)\sin\left(\frac{2bl\pi}{p}\right)
        \sin\left(\frac{2l\pi}{p}\right)}
       {\sin\left(\frac{2abl\pi}{p}\right)}
  \exp\left[n\left(-\frac{(l+p/2)^2}{p}\right)\pi\i\right]
  +O(n^{-3/2}).
\end{split}
\end{equation*}
From \eqref{eq:tau_V1_V2}, we have
\begin{equation*}
\begin{split}
  &\htau_n(X_p;\exp(4\pi\i/n))
  \\
  =&
  \frac{e^{3\pi\i/4}\sqrt{n}}{4\pi^2\sin(2\pi/n)}
  (-1)^{np+p}
  e^{-(p-ab+a/b+b/a - 3)
  \frac{\pi\i}{n}}
  e^{\frac{n}{4}\pi\i}
  \frac{1}{\sqrt{ab(p-ab)}}(V_1-V_2)
  \\
  =&
  (-1)^{p+1}
  e^{-(p-ab+a/b+b/a - 3)
  \frac{\pi\i}{n}}
  e^{\frac{n}{4}\pi\i}
  \frac{\sqrt{n}}{\sqrt{ab(p-ab)}\sin(2\pi/n)}
  \\
  &
  \Biggl\{
  e^{\pi\i/4}
  e^{-\frac{\pi\i}{(p-ab)n}}
  \sum_{(l,m)\in\mathcal{S}_2}
  (-1)^{m}\sin\left(\frac{m\pi}{a}\right)\sin\left(\frac{m\pi}{b}\right)
  \sin\left(\frac{(2l-m+p)\pi}{p-ab}\right)
  \\
  &
  \exp\left[n\left(-\frac{(2l-m+p)^2}{4(p-ab)}-\frac{m^2}{4ab}\right)\pi\i\right]
  \\
  &
  +
  \i
  \sqrt{\frac{ab(p-ab)}{pn}}
  (-1)^{a+b+ab}
  e^{-\frac{\pi\i}{(p-ab)n}}
  \sum_{|l|\le(p-1)/2}
  \frac{\sin\left(\frac{2al\pi}{p}\right)\sin\left(\frac{2bl\pi}{p}\right)
        \sin\left(\frac{2l\pi}{p}\right)}
       {\sin\left(\frac{2abl\pi}{p}\right)}
  \\
  &
  \exp\left[n\left(-\frac{(l+p/2)^2}{p}\right)\pi\i\right]
  +
  O(n^{-3/2})
  \Biggr\}.
\end{split}
\end{equation*}
Hence we have 
\begin{equation}\label{eq:tau_A_B}
  \htau_n(X_p;\exp(4\pi\i/n))
  =
  \frac{(-1)^{p+1}n^{3/2}}{2\pi}\left(A(n)+B(n)n^{-1/2}+O(n^{-1})\right),
\end{equation}
where
\begin{align*}
  A(n)
  &:=
  e^{\frac{n+1}{4}\pi\i}
  \sum_{(l,m)\in\mathcal{S}_2}
  (-1)^{m}\frac{\sin\left(\frac{m\pi}{a}\right)\sin\left(\frac{m\pi}{b}\right)
  \sin\left(\frac{(2l-m+p)\pi}{p-ab}\right)}{\sqrt{ab(p-ab)}}
  \\
  &\quad\times
  \exp
  \left[
    n\left(-\frac{(2l-m+p)^2}{4(p-ab)}-\frac{m^2}{4ab}\right)\pi\i
  \right],
  \\
  B(n)
  &:=
  2\i(-1)^{a+b+ab}
  e^{\frac{(1-p)n}{4}\pi\i}
  \sum_{1\le l\le(p-1)/2}
  (-1)^{l}
  \frac{\sin\left(\frac{2al\pi}{p}\right)\sin\left(\frac{2bl\pi}{p}\right)
  \sin\left(\frac{2l\pi}{p}\right)}
  {\sqrt{p}\sin\left(\frac{2abl\pi}{p}\right)}
  \\
  &\quad\times
  \exp
  \left[
    n\left(-\frac{l^2}{p}\right)\pi\i
  \right].
\end{align*}
If we put $g:=2l-m+p$, then the summation range $\mathcal{S}_2$ becomes
\begin{equation*}
\begin{split}
  \tilde{\mathcal{S}}
  :=
  \Bigl\{(g,m)\in\Z^2\mid&\,0<m<2ab,\frac{p-ab}{ab}m<g<2(p-ab),
  \\
  &\,
  h\equiv p-m\pmod{2},a\nmid m,b\nmid m\Bigr\},
\end{split}
\end{equation*}
which is the interior of the right-angled triangle with vertices $(0,0)$, $(2(p-ab),0)$, and $(2(p-ab),2ab)$.
Using parameters $(g,m)$, we have
\begin{equation}\label{eq:A_hm}
  A(n)
  =
  \frac{e^{\frac{n+1}{4}\pi\i}}{\sqrt{ab(p-ab)}}
  \sum_{(g,m)\in\tilde{\mathcal{S}}}G(g,m)
\end{equation}
with
\begin{multline}\label{eq:G_def}
  G(g,m)
  \\
  :=
  (-1)^{m}
  \sin\left(\frac{m\pi}{a}\right)
  \sin\left(\frac{m\pi}{b}\right)
  \sin\left(\frac{g\pi}{p-ab}\right)
  \exp
  \left[
    n\left(-\frac{g^2}{4(p-ab)}-\frac{m^2}{4ab}\right)\pi\i
  \right].
\end{multline}
We have the following symmetries of $G(g,m)$:
\begin{align*}
  G(2(p-ab)-g,m)
  &=
  (-1)^{ab+m+1}G(g,m),
  \\
  G(g,2ab-m)
  &=
  (-1)^{ab+m}G(g,m).
\end{align*}
We divide $\tilde{\mathcal{S}}$ into three parts $\RD$, $\mathcal{S}'$ and $\tilde{\mathcal{R}}$:
\begin{align*}
  \RD
  &:=
  \tilde{\mathcal{S}}\cap\{(g,m)\in\Z^2\mid g<p-ab\}
  \\
  \mathcal{S}'
  &:=
  \tilde{\mathcal{S}}\cap\{(g,m)\in\Z^2\mid m>ab\}
  \\
  \tilde{\mathcal{R}}
  &:=
  \tilde{\mathcal{S}}\cap\{(g,m)\in\Z^2\mid p-ab<g, m<ab\}
\end{align*}
Note that we can exclude the line $g=p-ab$, since $A(n)$ vanishes when $g=p-ab$.
\begin{figure}[H]
  \raisebox{-0.5\height}{\includegraphics[scale=0.4]{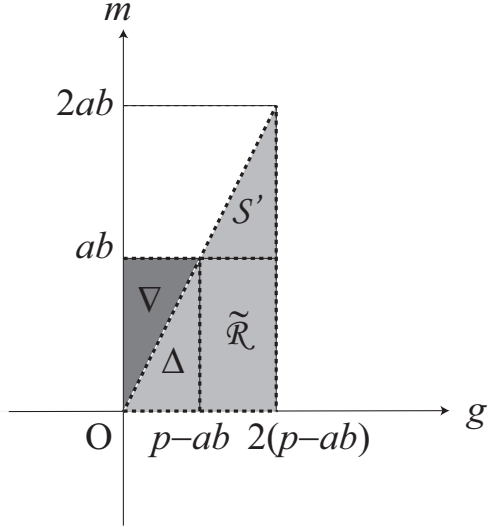}}
  \caption{The light gray area indicates $\tilde{\mathcal{S}}=\RD\cup\mathcal{S}'\cup\tilde{\mathcal{R}}$.
  The dark gray area is $\RN$.
  We put $\mathcal{R}:=\RD\cup\RN$.}
  \label{fig:S_tilde}
\end{figure}
Then we have
\begin{equation}\label{eq:G_sum_R}
\begin{split}
  \sum_{(g,m)\in\tilde{\mathcal{S}}}G(g,m)
  &=
  \sum_{(g,m)\in\RD\cup\mathcal{S}'\cup\tilde{\mathcal{R}}}G(g,m)
  \\
  &=
  \sum_{(g,m)\in\RD}G(g,m)
  -
  \sum_{(g,m)\in\RN}G(g,m)
  +
  \sum_{(g,m)\in\mathcal{R}}(-1)^{ab+m+1}G(g,m),
\end{split}
\end{equation}
where
\begin{align*}
  \RN
  &:=
  \left\{
    (g,m)
    \mid
    0<m<ab,0<g<p-ab,h<\frac{p-ab}{ab}m,
    h\modtwo{p-m},a\nmid m,b\nmid m
  \right\},
  \\
  \mathcal{R}
  &:=
  \left\{
    (g,m)
    \mid
    0<m<ab,0<g<p-ab,
    h\modtwo{p-m},a\nmid m,b\nmid m
  \right\}.
\end{align*}
Since
\begin{equation*}
  \RD
  =
  \left\{
    (g,m)
    \mid
    0<m<2ab,\frac{p-ab}{ab}m<g<p-ab,h\modtwo{p-m},a\nmid m,b\nmid m
  \right\},
\end{equation*}
$\mathcal{R}$ splits into $\RD$ and $\RN$.
So we can regard $A(n)$ as a summation over $\mathcal{R}$.

\section{The Reidemeister torsion and the Chern--Simons invariant}
\subsection{Fundamental group}
We assume that $b$ is odd.
For the torus knot $T(a,b)$ we have
\begin{equation*}
  \pi_1(S^3\setminus{T(a,b)})
  =
  \langle x,y\mid x^a=y^b\rangle.
\end{equation*}
Note that
\begin{itemize}
\item $z:=x^a=y^b$ is central,
\item $\mu=x^{-c}y^d$ is a meridian, where $c$ and $d$ are integers such that $ad-bc=1$, and 
\item $\lambda = z\mu^{-ab}$ is the preferred longitude. 
\end{itemize}
Put $X:=S^3\setminus\Int{N(T(a,b))}$, and $D:=D^2\times S^1$.
Let $X_p$ be the closed three-manifold obtained from $X$ by $p$-surgery.
Then $X_p$ is $X\cup_{i_p}D$, where $i_p\colon\partial{D}\to\partial{X}$ sending $\partial{D}^2\times\{\text{point}\}$ to $\mu_D:=\lambda\mu^p$, and $\{\text{point in $\partial{D^2}$}\}\times S^1$ to $\mu^{-1}$, where we identify a simple closed curve on the torus $\partial{X}$ with an element in $\pi_1(\partial{X})$.
\par
Since in $\pi_1(X_p)$, $\mu_D=1$, which is equivalent to $\mu^{p-ab}=z^{-1}$, we have the following presentation of $\pi_1(X_p)$:
\begin{equation}\label{eq:pi1_X_p}
  \pi_1(X_p)
  =
  \langle x,y,\mu\mid x^a=y^b,\mu=x^{-c}y^d,\mu^{p-ab}=x^{-a}\rangle.
  \tag{G}
\end{equation}
Note that we do not need the generator $\mu$.
\par
Regarding $X_p$ as the Seifert fibered space $S(-a/c,b/d,p-ab)$, we have another presentation:
\begin{equation}\label{eq:pi1_Seifert}
  \pi_1\bigr(S(-a/c,b/d,p-ab)\bigr)
  =
  \langle
    \alpha,\beta,\gamma,f
    \mid
    [\alpha,f]=[\beta,f]=[\gamma,f]=\alpha^af^{-c}=\beta^bf^{d}=\gamma^{p-ab}f
    =\alpha\beta\gamma=1
  \rangle,
\end{equation}
where $\alpha$, $\beta$, and $\gamma$ go around the singular fibers with indices $-a/b$, $b/d$, and $p-ab$ respectively, and $f$ is a regular fiber.
We use $\pi_1(X_p)$ for the presentation \eqref{eq:pi1_X_p} and $\pi_1\bigr(S(-a/c,b/d,p-ab)\bigr)$ for that described in \eqref{eq:pi1_Seifert}.
\par
We will construct a concrete isomorphism between $\pi_1(X_p)$ and $\pi_1\bigr(S(-a/c,b/d,p-ab)\bigr)$.
\par
Define a homomorphism $\Phi\colon \pi_1\bigr(S(-a/c,b/d,p-ab)\bigr)\to\pi_1(X_p)$ by
\begin{align*}
  \Phi(\alpha)
  &:=
  x^{-c}
  \\
  \Phi(\beta)
  &:=
  y^{d},
  \\
  \Phi(\gamma)
  &:=
  \mu^{-1},
  \\
  \Phi(f)
  &:=
  \mu^{p-ab}.
\end{align*}
Since we have
\begin{align*}
  \Phi(\alpha^{a}f^{-c})
  &=
  x^{-ac}\mu^{-c(p-ab)}
  =
  x^{-ac}x^{ac}
  =1,
  \\
  \Phi(\beta^{b}f^{d})
  &=
  y^{bd}\mu^{d(p-ab)}
  =
  y^{bd}x^{-ad}
  =1,
  \\
  \Phi(\gamma^{p-ab}f)
  &=
  \mu^{ab-p}\mu^{p-ab}
  =1,
  \\
  \Phi([\alpha,f])
  &=
  [x^{-c},\mu^{p-ab}]
  =
  [x^{-c},x^{-a}]
  =1,
  \\
  \Phi([\beta,f])
  &=
  [y^{d},\mu^{p-ab}]
  =
  [y^{d},y^{-b}]
  =1,
  \\
  \Phi([\gamma,f])
  &=
  [\mu^{-1},\mu^{p-ab}]
  =1,
  \\
  \Phi(\alpha\beta\gamma)
  &=
  x^{-c}y^{d}\mu^{-1}
  =1,
\end{align*}
$\Phi$ is well-defined.
\par
We also define $\Psi\colon\pi_1(X_p)\to\pi_1\bigr(S(-a/c,b/d,p-ab)\bigr)$ by
\begin{align*}
  \Psi(x)
  &:=
  \alpha^{b}f^{-d},
  \\
  \Psi(y)
  &:=
  \beta^{a}f^{c},
  \\
  \Phi(\mu)
  &:=
  \gamma^{-1}.
\end{align*}
Since we have
\begin{align*}
  \Psi(x^ay^{-b})
  &=
  \alpha^{ab}f^{-ad}\beta^{-ab}f^{-bc}
  =
  f^{bc}f^{-ad}f^{ad}f^{-bc}
  =1,
  \\
  \Psi(\mu y^{-d}x^{c})
  &=
  \gamma^{-1}f^{-cd}\beta^{-ad}\alpha^{bc}f^{-cd}
  =
  \gamma^{-1}\beta^{bc}\beta^{-ad}\alpha^{bc}\alpha^{-ad}
  =
  \gamma^{-1}\beta^{-1}\alpha^{-1}
  =1,
  \\
  \Psi(\mu^{p-ab}x^{a})
  &=
  \gamma^{ab-p}\left(\alpha^{b}f^{-d}\right)^{a}
  =
  f\alpha^{ab}f^{-ad}
  =
  f\cdot f^{bc}f^{-ad}
  =1,
\end{align*}
$\Psi$ is also well-defined.
\par
Since $\Phi\circ\Psi=\Id_{\pi_1(X_p)}$ and $\Psi\circ\Phi=\Id_{\pi_1\bigr(S(-a/c,b/d,p-ab)\bigr)}$, both $\Phi$ and $\Psi$ are isomorphisms, as desired.
%%%%%%%%%%%%%%%%%%%%%%%%%%%%%%%%%%%%%%%%%%%%%%%%%%%%%%%%%%%
\subsection{Representations}\label{subsec:representation}
It is known that the $\SL(2;\C)$ character variety of $T(a,b)$ has $(a-1)(b-1)/2$ irreducible components and an Abelian component.
Put
\begin{equation*}
  \mathcal{P}
  :=
  \{(k,l)\mid0<k<a,0<l<b,k\equiv l\pmod{2}\}.
\end{equation*}
Then the irreducible components are indexed by $\mathcal{P}$ as follows.
Let $\rho_{k,l}^{\rm{Irr}}\colon\pi_1(X)\to\SL(2;\C)$ be an irreducible representation that belongs to the component indexed by $(k,l)\in\mathcal{P}$.
Then it satisfies
\begin{align}\label{eq:irrep_kl}
  \tr\rho_{k,l}^{\rm{Irr}}(x)
  &=
  2\cos\left(\frac{k\pi}{a}\right),
  \\
  \tr\rho_{k,l}^{\rm{Irr}}(y)
  &=
  2\cos\left(\frac{l\pi}{b}\right).
\end{align}
See \cite{Klassen:TRAAM1991,Munoz:REVMC2009}.
\par
The irreducible component indexed by $(k,l)\in\mathcal{P}$ intersects with the Abelian component in two representations whose traces of the meridian $\mu$ are $2\cos\left(\frac{(adl-bck)\pi}{ab}\right)$ and $2\cos\left(\frac{(adl+bck)\pi}{ab}\right)$, where $c$ and $d$ are integers satisfying $ad-bc=1$.
\par
We want to extend the representation $\rho_{k,l}^{\rm{Irr}}$ to a representation of $\pi_1(X_p)$.
Since $\tr\rho_{k,l}^{\rm{Irr}}(x)\ne\pm2$, we can find $Q\in\SL(2;\C)$ such that $Q^{-1}\rho_{k,l}^{\rm{Irr}}(x)Q=\begin{pmatrix}e^{k\pi\i/a}&0\\0&e^{-k\pi\i/a}\end{pmatrix}$.
Then we have $\rho_{k,l}^{\rm{Irr}}(\mu_D)=(-1)^k\left(\rho_{k,l}^{\rm{Irr}}(\mu)\right)^{p-ab}$.
Therefore if $\tr\rho_{k,l}^{\rm{Irr}}(\mu)=2\cos\left(\frac{h\pi}{p-ab}\right)$ with $0<h<p-ab$ and $h\equiv k\equiv l\pmod{2}$, then there exists a representation $\trho_{h,k,l}^{\rm{Irr}}\colon\pi_1(X_p)\to\SL(2;\C)$.
\par
Therefore we have the following proposition.
\begin{proposition}\label{prop:irr_rep}
Put
\begin{equation}\label{eq:H}
  \mathcal{H}
  :=
  \{(h,k,l)\in\Z^3\mid 0<h<p-ab,0<k<a,0<l<b,h\equiv k\equiv l\pmod{2}\}.
\end{equation}
Then for any triple $(h,k,l)$ there exists an irreducible representation $\trho_{h,k,l}^{\rm{Irr}}\colon\pi_1(X_p)\to\SL(2;\C)$ such that
\begin{align*}
  \tr\trho_{h,k,l}^{\rm{Irr}}(x)
  &=
  2\cos\left(\frac{k\pi}{a}\right),
  \\
  \tr\trho_{h,k,l}^{\rm{Irr}}(y)
  &=
  2\cos\left(\frac{l\pi}{b}\right),
  \\
  \tr\trho_{h,k,l}^{\rm{Irr}}(\mu)
  &=
  2\cos\left(\frac{h\pi}{p-ab}\right).
\end{align*}
Moreover the representation is unique up to conjugation.
\end{proposition}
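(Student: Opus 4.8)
The plan is to obtain $\trho_{h,k,l}^{\rm{Irr}}$ by descending a suitable irreducible representation of $\pi_1(X)$ across the Dehn filling. Recall from \eqref{eq:pi1_X_p} that $\pi_1(X_p)$ is the quotient of $\pi_1(X)=\langle x,y\mid x^a=y^b\rangle$ by the normal closure of $\mu_D=z\mu^{p-ab}$, where $z=x^a=y^b$ is central and $\mu=x^{-c}y^d$. The discussion preceding the statement already shows that an irreducible $\rho\colon\pi_1(X)\to\SL(2;\C)$ in the component indexed by $(k,l)$ descends to $\pi_1(X_p)$ as soon as $\rho(\mu)^{p-ab}=(-1)^k\Id$, since then $\rho(\mu_D)=\rho(z)\rho(\mu)^{p-ab}=(-1)^k(-1)^k\Id=\Id$. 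So the real content is to realize, inside this component, the meridian trace $2\cos\left(\frac{h\pi}{p-ab}\right)$ by an \emph{irreducible} representation, and to prove uniqueness up to conjugation.

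The key step I would carry out is an explicit parametrization of the component. After conjugating so that $\rho(x)=\begin{pmatrix}e^{k\pi\i/a}&0\\0&e^{-k\pi\i/a}\end{pmatrix}$ (possible since $\tr\rho(x)\ne\pm2$), the residual diagonal gauge freedom shows that the conjugacy class is determined by the single entry $t:=\rho(y)_{11}$, with $\rho(y)_{22}=2\cos\left(\frac{l\pi}{b}\right)-t$ and the off-diagonal product fixed by $\det=1$; irreducibility is equivalent to $t\ne e^{\pm\i l\pi/b}$. Expanding $\rho(y)^d$ by the Cayley--Hamilton recursion and using $\rho(\mu)=\rho(x)^{-c}\rho(y)^d$, one finds that
\[
  \tr\rho(\mu)
  =
  S_{d-1}\!\left(2\cos\tfrac{l\pi}{b}\right)\left(e^{-\i ck\pi/a}-e^{\i ck\pi/a}\right)t
  +(\text{const})
\]
is an \emph{affine} function of $t$. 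Its slope is nonzero: $S_{d-1}\!\left(2\cos\frac{l\pi}{b}\right)=\frac{\sin(dl\pi/b)}{\sin(l\pi/b)}\ne0$ because $b\nmid dl$ (as $\gcd(b,d)=1$, $0<l<b$), and $e^{-\i ck\pi/a}-e^{\i ck\pi/a}=-2\i\sin\frac{ck\pi}{a}\ne0$ because $a\nmid ck$ (as $\gcd(a,c)=1$, $0<k<a$). Hence $t\mapsto\tr\rho(\mu)$ is an affine bijection $\C\to\C$, and the two reducible values $t=e^{\pm\i l\pi/b}$ are sent exactly to the abelian meridian traces $2\cos\left(\frac{(adl\mp bck)\pi}{ab}\right)$ recorded before the statement.

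For existence it then suffices to check that $2\cos\left(\frac{h\pi}{p-ab}\right)$ is not one of these two reducible values, and this is where the hypothesis $\gcd(p,ab)=1$ is essential. Indeed $\gcd(p-ab,ab)=\gcd(p,ab)=1$, and an equality $\frac{h}{p-ab}\equiv\pm\frac{adl\pm bck}{ab}\pmod2$ would give, after clearing denominators and reducing modulo $p-ab$, the congruence $abh\equiv0\pmod{p-ab}$, forcing $h\equiv0\pmod{p-ab}$ and contradicting $0<h<p-ab$. Therefore the affine bijection attains $2\cos\left(\frac{h\pi}{p-ab}\right)$ at an irreducible $t$; the resulting $\rho$ has $\rho(\mu)$ with the distinct eigenvalues $e^{\pm\i h\pi/(p-ab)}$, hence is diagonalizable with $\rho(\mu)^{p-ab}=(-1)^h\Id=(-1)^k\Id$ (using $h\equiv k\pmod2$), so it descends to the desired $\trho_{h,k,l}^{\rm{Irr}}$, which stays irreducible because $\pi_1(X)\to\pi_1(X_p)$ is surjective.

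Uniqueness follows from the same parametrization: any irreducible representation of $\pi_1(X_p)$ with the prescribed traces pulls back to an irreducible representation of $\pi_1(X)$, its traces of $x$ and $y$ force it into the component $(k,l)$ (the values $2\cos(k\pi/a)$, $2\cos(l\pi/b)$ determine $k,l$ since $0<k<a$, $0<l<b$), and its meridian trace then pins down $t$ by injectivity of the affine map, so its conjugacy class is determined. I expect the main obstacle to be precisely the existence half, namely guaranteeing that the prescribed meridian trace is attained by an irreducible rather than a merely reducible representation, which is exactly the point controlled by the coprimality $\gcd(p,ab)=1$.
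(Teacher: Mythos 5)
Your proof is correct, and it is genuinely more self-contained than what the paper does. The paper's own treatment is essentially the paragraph preceding the proposition: it quotes the structure of the $\SL(2;\C)$ character variety of $T(a,b)$ from Klassen and Mu\~{n}oz (the $(a-1)(b-1)/2$ irreducible components indexed by $(k,l)$, each meeting the abelian component at the two meridian traces $2\cos\bigl((adl\pm bck)\pi/(ab)\bigr)$), combines this with the same descent criterion you use, namely $\rho(\mu_D)=(-1)^k\rho(\mu)^{p-ab}$, and then adds a remark that the proposition is ``well known'' because $X_p$ is a Seifert fibered space. You instead re-prove the quoted input by an explicit parametrization: after diagonalizing $\rho(x)$, the conjugacy class is recorded by the entry $t=\rho(y)_{11}$, and Cayley--Hamilton makes $\tr\rho(\mu)$ an affine function of $t$ whose slope $\frac{\sin(dl\pi/b)}{\sin(l\pi/b)}\cdot\bigl(-2\i\sin(ck\pi/a)\bigr)$ is nonzero because $\gcd(c,a)=\gcd(d,b)=1$; injectivity of this affine map gives uniqueness, and surjectivity gives existence. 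This buys two things the paper does not spell out: uniqueness is actually proved rather than asserted, and you verify that the target value $2\cos\bigl(h\pi/(p-ab)\bigr)$ avoids the two reducible corner values --- which is exactly where the hypothesis $\gcd(p,ab)=1$ enters and which is needed for the existence half; the paper leaves this point implicit. The only detail worth adding to your argument is the one-line check that every $t$ really defines a representation of $\pi_1(X)$, i.e.\ that the relation $x^a=y^b$ holds for all $t$: any $M\in\SL(2;\C)$ with $\tr M=2\cos(l\pi/b)$ and $0<l<b$ has the distinct eigenvalues $e^{\pm l\pi\i/b}$, hence is diagonalizable and satisfies $M^b=(-1)^l\Id$, which equals $\rho(x)^a=(-1)^k\Id$ by the parity condition $k\equiv l\pmod2$; with that remark the parametrization of the component is complete and the rest of your argument stands.
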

We can describe these representations in terms of $\pi_1\bigl(S(-a/c,b/d,p-ab)\bigr)$ by using the isomorphism $\Phi\colon\pi_1\bigl(S(-a/c,b/d,p-ab)\bigr)\to\pi_1(X_p)$.
The generators of the presentation \eqref{eq:pi1_Seifert} is mapped as follows:
\begin{align}\label{eq:irr_pi1_Seifert}
  \tr\trho_{h,k,l}^{\rm{Irr}}(\alpha)
  &=
  2\cos\left(\frac{ck\pi}{a}\right),
  \\
  \tr\trho_{h,k,l}^{\rm{Irr}}(\beta)
  &=
  2\cos\left(\frac{dl\pi}{b}\right),
  \\
  \tr\trho_{h,k,l}^{\rm{Irr}}(\gamma)
  &=
  2\cos\left(\frac{h\pi}{p-ab}\right),
  \\
  \tr\trho_{h,k,l}^{\rm{Irr}}(f)
  &=
  2(-1)^h.
\end{align}
\begin{remark}
Since $X_{p}$ is the Seifert fibered space $S(-a/c,b/d,p-ab)$, the result above is well known.
\end{remark}
We also introduce reducible Abelian representations indexed by $\{l\in\Z\mid0<l<p\}$.
\begin{definition}\label{def:abel_rep}
For an integer with $0<l<p$, let $\trho_{l}^{\rm{Abel}}\colon\pi_1(X_p)\to\SL(2;\C)$ be the Abelian representation sending $\mu\to\begin{pmatrix}e^{2l\pi\i/p}&0\\0&e^{-2l\pi\i/p}\end{pmatrix}$.
\end{definition}
Since $H_1(X_{p})=\Z/p\Z$, this is well-defined.
%%%%%%%%%%%%%%%%%%%%%%%%%%%%%%%%%%%%%%%%%%%%%%%%%%%%%%%%%%%
\subsection{Reidemeister torsion}\label{subsec:torsion}
In this subsection, we describe the \emph{homological} Reidemeister torsion of $X_p$ twisted by the adjoint action of the irreducible representation $\trho_{h,k,l}^{\rm{Irr}}$ with $(h,k,l)\in\mathcal{H}$.
\par
For a representation $\rho\colon\pi_1(X)\to\SL(2;\C)$, let $\Tor_{\gamma}(X;\rho)$ be the twisted Reidemeister torsion associated with a simple closed curve $\gamma\subset\partial{X}$.
Here we assume that $\rho$ is $\gamma$-regular \cite[D{\'e}finition~3.21]{Porti:MAMCAU1997}.
\par
If $M$ is an oriented, closed three-manifold and $\trho\colon\pi_1(M)\to\SL(2;\C)$ is a representation.
We denote by $\Tor(M;\trho)$ the Reidemeister torsion twisted by the adjoint action of $\rho$.
\par
Now we can calculate $\Tor(X_{p};\trho_{h,k,l}^{\rm{Irr}})$ as follows.
\begin{lemma}\label{lem:torsion_hkl}
The Reidemeister torsion twisted by the irreducible representation $\trho_{h,k,l}^{\rm{Irr}}$ is given by
\begin{equation*}
  \Tor(X_{p};\trho_{h,k,l}^{\rm{Irr}})
  =
  \pm
  \frac{ab(p-ab)}
       {64\sin^2\left(\frac{k\pi}{a}\right)
          \sin^2\left(\frac{l\pi}{b}\right)
          \sin^2\left(\frac{h\pi}{p-ab}\right)}.
\end{equation*}
\end{lemma}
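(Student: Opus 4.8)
The plan is to chain together the three torsion formulas \eqref{eq:torsion_lambda}, \eqref{eq:torsion_gamma}, and \eqref{eq:torsion_surgery} recorded in the preliminaries. Since $X_p=X\cup_{i_p}D$ with the meridian $\partial D^2\times\{\text{point}\}$ of $D$ sent to $\lambda\mu^p$ and the longitude $\{\text{point}\}\times S^1$ sent to $\mu^{-1}$, the surgery formula \eqref{eq:torsion_surgery} immediately gives
\begin{equation*}
  \Tor(X_p;\trho_{h,k,l}^{\rm{Irr}})
  =
  \pm\frac{\Tor_{\lambda\mu^p}(X;\rho_{k,l}^{\rm{Irr}})}
         {\bigl(\tr\trho_{h,k,l}^{\rm{Irr}}(\mu^{-1})\bigr)^2-4}.
\end{equation*}
As $\tr\trho_{h,k,l}^{\rm{Irr}}(\mu^{-1})=\tr\trho_{h,k,l}^{\rm{Irr}}(\mu)=2\cos\bigl(\tfrac{h\pi}{p-ab}\bigr)$ by Proposition~\ref{prop:irr_rep}, the denominator equals $-4\sin^2\bigl(\tfrac{h\pi}{p-ab}\bigr)$, which is nonzero because $0<h<p-ab$. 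It therefore remains to evaluate the boundary torsion $\Tor_{\lambda\mu^p}(X;\rho_{k,l}^{\rm{Irr}})$.

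For this I would use the change-of-curve formula \eqref{eq:torsion_gamma}, which expresses $\Tor_\gamma$ in terms of $\Tor_\mu$ through the logarithmic derivative $\frac{d\,w(u)}{d\,u}$, where $e^{u/2}$ and $e^{w(u)/2}$ are the upper-left eigenvalues of $\rho(\mu)$ and $\rho(\gamma)$ along the one-parameter family $\rho_{k,l}^{\rm{Irr}}$ deforming the eigenvalue of $\mu$. Along this family the traces of $x$ and $y$ stay fixed at $2\cos(k\pi/a)$ and $2\cos(l\pi/b)$, so the central element $z=x^a=y^b$ satisfies $\rho(z)=(-1)^kI$ identically. Since $\lambda=z\mu^{-ab}$ is the preferred longitude, the eigenvalue of $\rho(\lambda)$ is $(-1)^ke^{-abu/2}$, hence $w_\lambda(u)=-abu+2k\pi\i$ and $\frac{d\,w_\lambda}{d\,u}=-ab$; likewise $\rho(\lambda\mu^p)$ has eigenvalue $(-1)^ke^{(p-ab)u/2}$, so $\frac{d\,w_{\lambda\mu^p}}{d\,u}=p-ab$. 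Applying \eqref{eq:torsion_gamma} to both curves and eliminating $\Tor_\mu$ gives $\Tor_{\lambda\mu^p}(X;\rho_{k,l}^{\rm{Irr}})=\pm\frac{p-ab}{ab}\Tor_\lambda(X;\rho_{k,l}^{\rm{Irr}})$.

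Finally I would substitute the known value $\Tor_\lambda(X;\rho_{k,l}^{\rm{Irr}})=\pm\frac{a^2b^2}{16\sin^2(k\pi/a)\sin^2(l\pi/b)}$ from \eqref{eq:torsion_lambda}, obtaining $\Tor_{\lambda\mu^p}(X;\rho_{k,l}^{\rm{Irr}})=\pm\frac{ab(p-ab)}{16\sin^2(k\pi/a)\sin^2(l\pi/b)}$, and divide by $-4\sin^2(h\pi/(p-ab))$; the factor $-4$ is absorbed into the overall sign, yielding exactly the claimed formula. Because the statement is only asserted up to sign, the sign arithmetic is harmless; the two points requiring care are the correct identification of the surgery curves with $\lambda\mu^p$ and $\mu^{-1}$ (taken from the fundamental-group computation) and the verification that $\rho_{k,l}^{\rm{Irr}}$ is $\gamma$-regular for $\gamma\in\{\mu,\lambda,\lambda\mu^p\}$, so that \eqref{eq:torsion_gamma} and \eqref{eq:torsion_surgery} legitimately apply. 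The main, though modest, obstacle is thus this $\gamma$-regularity bookkeeping together with the logarithmic-derivative computation of $\frac{d\,w}{d\,u}$ along the deformation.
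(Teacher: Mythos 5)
Your proposal is correct and follows essentially the same route as the paper: the paper likewise applies \eqref{eq:torsion_gamma} twice (using $\lambda=z\mu^{-ab}$ with $\rho_{k,l}^{\rm{Irr}}(z)=(-1)^kI$ to get $\Tor_{\mu}=\pm\tfrac{1}{ab}\Tor_{\lambda}$ and then $\Tor_{\lambda\mu^p}=\pm(p-ab)\Tor_{\mu}$), substitutes \eqref{eq:torsion_lambda}, and finishes with the surgery formula \eqref{eq:torsion_surgery} using $i_p(\mu_D)=\lambda\mu^p$, $i_p(\lambda_D)=\mu^{-1}$ and $\tr\trho_{h,k,l}^{\rm{Irr}}(\mu^{-1})=2\cos\bigl(\tfrac{h\pi}{p-ab}\bigr)$. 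The only difference is the order in which the formulas are chained, which is immaterial.
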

\begin{proof}
Since $\lambda=z\mu^{-ab}$ and $z$ is central, by a conjugation we may assume that $\rho_{k,l}^{\rm{Irr}}(\mu)=\begin{pmatrix}e^{\frac{h\pi\i}{p-ab}}&\ast\\0&e^{-\frac{h\pi\i}{p-ab}}\end{pmatrix}$ and that $\rho_{k,l}^{\rm{Irr}}(\lambda)=\pm\begin{pmatrix}e^{\frac{-abh\pi\i}{p-ab}}&\ast\\0&e^{\frac{abh\pi\i}{p-ab}}\end{pmatrix}$.
Therefore we have
\begin{equation*}
  \Tor_{\mu}(X;\rho_{k,l}^{\rm{Irr}})
  =
  \pm
  \frac{1}{ab}
  \frac{a^2b^2}{16\sin^2\left(\frac{k\pi}{a}\right)\sin^2\left(\frac{l\pi}{b}\right)}
  \\
  =
  \pm\frac{ab}{16\sin^2\left(\frac{k\pi}{a}\right)\sin^2\left(\frac{l\pi}{b}\right)}
\end{equation*}
from \eqref{eq:torsion_gamma}.
Note that this holds for any irreducible representation in the component indexed by $(k,l)$.
\par
Since $\rho_{k,l}^{\rm{Irr}}(\lambda\mu^{p})=\pm\begin{pmatrix}e^{h\pi\i}&\ast\\0&e^{-h\pi\i}\end{pmatrix}=\pm\trho_{h,k,l}^{\rm{Irr}}(\mu)^{p-ab}$, we have
\begin{equation*}
  \Tor_{i_p(\mu_D)}(X;\rho_{k,l}^{\rm{Irr}})
  =
  \pm
  (p-ab)
  \Tor_{\mu}(X;\rho_{k,l}^{\rm{Irr}})
  =
  \pm
  \frac{ab(p-ab)}{16\sin^2\left(\frac{k\pi}{a}\right)\sin^2\left(\frac{l\pi}{b}\right)}
\end{equation*}
from \eqref{eq:torsion_gamma}.
\par
Since $\rho_{k,l}^{\rm{Irr}}(i_p(\lambda_D))=\rho_{k,l}^{\rm{Irr}}(\mu)^{-1}$, we have $\tr\rho_{k,l}^{\rm{Irr}}(i_p(\lambda_D))=2\cos\left(\frac{h\pi}{p-ab}\right)$.
So we finally have
\begin{equation*}
\begin{split}
  \Tor(X_{p};\trho_{h,k,l}^{\rm{Irr}})
  &=
  \pm
  \frac{ab(p-ab)}{16\sin^2\left(\frac{k\pi}{a}\right)\sin^2\left(\frac{l\pi}{b}\right)}
  \times
  \frac{1}{4\cos^2\left(\frac{h\pi}{p-ab}\right)-4}
  \\
  &=
  \pm
  \frac{ab(p-ab)}
       {64\sin^2\left(\frac{k\pi}{a}\right)
          \sin^2\left(\frac{l\pi}{b}\right)
          \sin^2\left(\frac{h\pi}{p-ab}\right)}
\end{split}
\end{equation*}
from \eqref{eq:torsion_surgery}.
\end{proof}
\par
Let $\rho_{l}^{\rm{Abel}}$ be the reducible Abelian representation $\pi_1(X)\to\SL(2;\C)$ sending $\mu$ to $\begin{pmatrix}e^{2l\pi\i/p}&0\\0&e^{-2l\pi\i/p}\end{pmatrix}$.
Note that $\rho_{l}^{\rm{Abel}}$ can be extended to $\trho_{l}^{\rm{Abel}}\colon\pi_1(X_{p})\to\SL(2;\C)$; $\rho_{l}^{\rm{Abel}}=\trho_{l}^{\rm{Abel}}\Bigm|_{\pi_1(X)}$.
The Reidemeister torsion $\Tor_{\mu}(X;\rho_{l}^{\rm{Abel}})$ of $X$ twisted by $\rho_{l}^{\rm{Abel}}$ associated with $\mu$ is given by $\pm\left(\frac{\Delta(T(a,b);e^{4l\pi\i/p})}{2\sinh(2l\pi\i/p)}\right)^2$ (\cite[Theorem~4]{Milnor:ANNMA21962}, \cite[Theorem~1.1.2]{Turaev:USPMN1986}; see also \cite[Proposition~5.1]{Murakami/Yokota:2018}), where $\Delta(K;t)$ is the normalized Alexander polynomial of a knot $K$.
\par
Then we show
\begin{lemma}\label{lem:Reidemeister_X_p_Abel}
The Reidemeister torsion twisted by the Abelian representation $\rho_{l}^{\rm{Abel}}$ is given by
\begin{equation*}
  \Tor(X_p;\trho_{l}^{\rm{Abel}})
  =
  \pm
  \frac{p\sin^2\left(\frac{2lab\pi}{p}\right)}
       {16\sin^2\left(\frac{2la\pi}{p}\right)\sin^2\left(\frac{2lb\pi}{p}\right)
          \sin^2\left(\frac{2l\pi}{p}\right)}.
\end{equation*}
\end{lemma}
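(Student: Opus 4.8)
The plan is to follow the same three-step strategy as in the proof of Lemma~\ref{lem:torsion_hkl}: first compute the torsion $\Tor_{\mu}(X;\rho_{l}^{\mathrm{Abel}})$ of the knot exterior associated with the meridian, then transfer it to the surgery curve $\mu_D=\lambda\mu^{p}$ by \eqref{eq:torsion_gamma}, and finally close up by the surgery formula \eqref{eq:torsion_surgery}. The main work is algebraic, and the only genuinely delicate point is the transfer step, where the Abelian case differs from the irreducible one.

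First I would evaluate the meridional torsion. Starting from the formula $\Tor_{\mu}(X;\rho_{l}^{\mathrm{Abel}})=\pm\left(\frac{\Delta(T(a,b);e^{4l\pi\i/p})}{2\sinh(2l\pi\i/p)}\right)^{2}$ recalled just before the lemma, I substitute $t=e^{4l\pi\i/p}$ into the normalized Alexander polynomial $\Delta(T(a,b);t)=\frac{(t^{ab/2}-t^{-ab/2})(t^{1/2}-t^{-1/2})}{(t^{a/2}-t^{-a/2})(t^{b/2}-t^{-b/2})}$. Each factor $t^{m/2}-t^{-m/2}$ becomes $2\i\sin(2lm\pi/p)$, so $\Delta(T(a,b);e^{4l\pi\i/p})=\frac{\sin(2lab\pi/p)\sin(2l\pi/p)}{\sin(2la\pi/p)\sin(2lb\pi/p)}$. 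Dividing by $2\sinh(2l\pi\i/p)=2\i\sin(2l\pi/p)$ and squaring, the factor $\sin(2l\pi/p)$ cancels and one obtains $\Tor_{\mu}(X;\rho_{l}^{\mathrm{Abel}})=\pm\frac{\sin^{2}(2lab\pi/p)}{4\sin^{2}(2la\pi/p)\sin^{2}(2lb\pi/p)}$.

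Next I transfer from $\mu$ to $\mu_D=\lambda\mu^{p}$. Parametrizing the Abelian family by $\rho(\mu)=\begin{pmatrix}e^{u/2}&0\\0&e^{-u/2}\end{pmatrix}$ with $u=4l\pi\i/p$, the key observation is that the preferred longitude $\lambda$ is null-homologous in $X$, so the Abelian representation, which factors through $H_{1}(X)=\Z$, satisfies $\rho_{l}^{\mathrm{Abel}}(\lambda)=\mathrm{Id}$; equivalently, since $z\mapsto\mu^{ab}$ under abelianization, $\rho_{l}^{\mathrm{Abel}}(\lambda)=\rho_{l}^{\mathrm{Abel}}(z)\rho_{l}^{\mathrm{Abel}}(\mu)^{-ab}=\mathrm{Id}$. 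Hence $\rho_{l}^{\mathrm{Abel}}(\mu_D)=\rho_{l}^{\mathrm{Abel}}(\mu)^{p}$, so in the notation of \eqref{eq:torsion_gamma} we have $w(u)=pu$ and $\frac{d\,w(u)}{d\,u}=p$. This is precisely where the Abelian computation departs from Lemma~\ref{lem:torsion_hkl}: there the central element $z$ contributed a scalar $\pm\mathrm{Id}$, yielding the factor $p-ab$, whereas here it contributes the nontrivial diagonal matrix $\rho_{l}^{\mathrm{Abel}}(\mu)^{ab}$, and the factor becomes $p$. Therefore \eqref{eq:torsion_gamma} gives $\Tor_{\mu_D}(X;\rho_{l}^{\mathrm{Abel}})=\pm\,p\,\Tor_{\mu}(X;\rho_{l}^{\mathrm{Abel}})=\pm\frac{p\sin^{2}(2lab\pi/p)}{4\sin^{2}(2la\pi/p)\sin^{2}(2lb\pi/p)}$.

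Finally I apply the surgery formula \eqref{eq:torsion_surgery} with $\lambda_D=\mu^{-1}$. Since $\tr\rho_{l}^{\mathrm{Abel}}(\mu^{-1})=2\cos(2l\pi/p)$, the denominator equals $\bigl(\tr\rho_{l}^{\mathrm{Abel}}(\lambda_D)\bigr)^{2}-4=4\cos^{2}(2l\pi/p)-4=-4\sin^{2}(2l\pi/p)$. Dividing and absorbing the sign into $\pm$ yields $\Tor(X_{p};\trho_{l}^{\mathrm{Abel}})=\pm\frac{p\sin^{2}(2lab\pi/p)}{16\sin^{2}(2la\pi/p)\sin^{2}(2lb\pi/p)\sin^{2}(2l\pi/p)}$, as claimed. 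I expect the determination of $\frac{d\,w(u)}{d\,u}=p$ in the transfer step to be the only place requiring care, since it hinges on the homological triviality of $\lambda$ for the Abelian representation; the remaining steps are routine trigonometric simplification.
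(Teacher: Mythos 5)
Your proposal is correct and follows essentially the same route as the paper: the Alexander-polynomial formula for $\Tor_{\mu}(X;\rho_{l}^{\rm{Abel}})$, the transfer to $\mu_D=\lambda\mu^{p}$ via \eqref{eq:torsion_gamma} using $\rho_{l}^{\rm{Abel}}(\mu_D)=\rho_{l}^{\rm{Abel}}(\mu)^{p}$ (hence the factor $p$), and the surgery formula \eqref{eq:torsion_surgery} with $\lambda_D=\mu^{-1}$ giving the extra $-4\sin^{2}(2l\pi/p)$ in the denominator. Your explicit justification that $\rho_{l}^{\rm{Abel}}(\lambda)=\mathrm{Id}$ (and the contrast with the factor $p-ab$ in the irreducible case) is a point the paper leaves implicit, but it is the same argument.
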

\begin{proof}
Since it is well-known (see for example \cite[Chapter~11]{Lickorish:1997}) that
\begin{equation*}
  \Delta(T(a,b);t)
  =
  \frac{\left(t^{ab/2}-t^{-ab/2}\right)\left(t^{1/2}-t^{-1/2}\right)}
       {\left(t^{a/2}-t^{-a/2}\right)\left(t^{b/2}-t^{-b/2}\right)},
\end{equation*}
we have
\begin{equation*}
  \Tor_{\mu}(X;\rho_{l}^{\rm{Abel}})
  =
  \pm
  \frac{\sin^2\left(\frac{2abl\pi}{p}\right)}
       {4\sin^2\left(\frac{2al\pi}{p}\right)\sin^2\left(\frac{2bl\pi}{p}\right)}.
\end{equation*}
Since $\rho_{l}^{\rm{Abel}}(i_p(\mu_D))=\rho_{l}^{\rm{Abel}}(\mu)^p$, we have
\begin{equation*}
  \Tor_{i_p(\mu_D)}(X;\rho_{l}^{\rm{Abel}})
  =
  \pm
  \frac{p\sin^2\left(\frac{2ab\pi}{p}\right)}
       {4\sin^2\left(\frac{2al\pi}{p}\right)\sin^2\left(\frac{2bl\pi}{p}\right)}
\end{equation*}
from \eqref{eq:torsion_gamma}.
Since we also know that $\rho_{l}^{\rm{Abel}}(i_p(\lambda_D))=\rho_{l}^{\rm{Abel}}(\mu)^{-1}$, from \eqref{eq:torsion_surgery} we have
\begin{equation*}
\begin{split}
  \Tor(X_p;\trho_{l}^{\rm{Abel}})
  &=
  \pm
  \frac{\Tor_{i_p(\mu_D)}(X;\rho_{l}^{\rm{Abel}})}{(\tr\rho_{l}^{\rm{Abel}}(i_p(\lambda_D)))^2-4}
  \\
  &=
  \pm
  \frac{p\sin^2\left(\frac{2abl\pi}{p}\right)}
       {4\sin^2\left(\frac{2al\pi}{p}\right)\sin^2\left(\frac{2bl\pi}{p}\right)}
  \times\frac{1}{4\cos^2\left(\frac{2l\pi}{p}\right)-4}
  \\
  &=
  \pm
  \frac{p\sin^2\left(\frac{2abl\pi}{p}\right)}
       {16\sin^2\left(\frac{2al\pi}{p}\right)\sin^2\left(\frac{2bl\pi}{p}\right)
          \sin^2\left(\frac{2l\pi}{p}\right)},
\end{split}
\end{equation*}
completing the proof
\end{proof}
%%%%%%%%%%%%%%%%%%%%%%%%%%%%%%%%%%%%%%%%%%%%%%%%%%%%%%%%%%%
\subsection{Chern-Simons invariant}
In this subsection we calculate the Chern--Simons invariants of $X_p$ associated with representations described in Subsection~\ref{subsec:representation}.
We denote by $\CS(M;\rho)\in\C/\Z$ the Chern--Simons invariant of a closed three-manifold associated with a representation $\rho$.
We use a formula by Kirk and Klassen \cite[Theorem~4.2]{Kirk/Klassen:MATHA1990} (see also the first paragraph in Page 354 in that paper).
Note that in our case, $K=T(a,b)$, $E=X$, $M=X_p$, $\mu_D=\mu^p\lambda\in\pi_1(X)$ and $\lambda_D=\mu^{-1}\in\pi_1(X)$.
\par
First we consider the Abelian representations.
See \cite[Theorem~4.5]{Kirk/Klassen:COMMP1993}, noting our sign convention (Remark~\ref{rem:CS_sign}).
\begin{lemma}
Let $\trho_{l}^{\rm{Abel}}$ be the Abelian representation defined in Definition~\ref{def:abel_rep}.
Then the Chern--Simons invariant of $\trho_{l}^{\rm{Abel}}$ is given as
\begin{equation*}
  \CS(X_p;\trho_{l}^{\rm{Abel}})
  =
  -\frac{l^2}{p}
  \in\C/\Z.
\end{equation*}
\end{lemma}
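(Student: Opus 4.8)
The plan is to apply the Kirk--Klassen formula (Theorem~\ref{thm:Kirk_Klassen}) along a path of Abelian representations connecting the trivial representation of $\pi_1(X_p)$ to $\trho_{l}^{\rm{Abel}}$. First I would define, for $0\le t\le1$, the Abelian representation $\rho_t\colon\pi_1(X)\to\SL(2;\C)$ determined by
\[
  \rho_t(\mu)=\begin{pmatrix}e^{2\pi\i lt/p}&0\\0&e^{-2\pi\i lt/p}\end{pmatrix}.
\]
Since $\rho_t$ factors through $H_1(X)=\Z\langle\mu\rangle$, it is well defined; it is trivial at $t=0$ and equals $\rho_{l}^{\rm{Abel}}$ at $t=1$. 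Because each $\rho_t(\mu)$ is diagonal, no $\rho_t$ is parabolic, so the path is admissible for Theorem~\ref{thm:Kirk_Klassen}.

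Next I would express the images of the Dehn-filling curves. Recall $\mu_D=\mu^{p}\lambda$ and $\lambda_D=\mu^{-1}$. The preferred longitude $\lambda=z\mu^{-ab}$ is null-homologous in $X$: abelianizing $\langle x,y\mid x^a=y^b\rangle$ gives $[x]=b[\mu]$ and $[y]=a[\mu]$, hence $[z]=ab[\mu]$ and $[\lambda]=[z]-ab[\mu]=0$. Therefore $\rho_t(\lambda)=\Id$ for all $t$, so that $\rho_t(\mu_D)=\rho_t(\mu)^{p}$ and $\rho_t(\lambda_D)=\rho_t(\mu)^{-1}$. In the notation of Theorem~\ref{thm:Kirk_Klassen} this reads $\alpha(t)=lt$ and $\beta(t)=-lt/p$. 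Since $\alpha(0)=0$ and $\alpha(1)=l$ are integers, both endpoints satisfy $\rho_i(\mu_D)=\Id$ and thus extend across the solid torus to representations of $\pi_1(X_p)$, namely the trivial representation and $\trho_{l}^{\rm{Abel}}$.

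Then the formula of Theorem~\ref{thm:Kirk_Klassen} would give, in $\C/\Z$,
\[
  \CS(X_p;\trho_{l}^{\rm{Abel}})-\CS(X_p;\text{trivial})
  =2\int_0^1\beta(t)\,\frac{d\alpha(t)}{dt}\,dt
  =2\int_0^1\left(-\frac{lt}{p}\right)l\,dt
  =-\frac{l^2}{p}.
\]
As the trivial representation has vanishing Chern--Simons invariant, this yields $\CS(X_p;\trho_{l}^{\rm{Abel}})=-l^2/p$, as claimed; the same value is recorded in \cite[Theorem~4.5]{Kirk/Klassen:COMMP1993}.

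The steps requiring care, rather than a single hard obstacle, are: verifying that $\lambda$ is null-homologous so that $\rho_t(\lambda)=\Id$ (which is precisely what collapses $\mu_D$ to $\mu^{p}$ and makes the integrand elementary), confirming that the path meets no parabolic representations (immediate here, since every $\rho_t(\mu)$ is diagonalizable with eigenvalues on the unit circle), and tracking the sign convention of Kirk--Klassen as flagged in Remark~\ref{rem:CS_sign}. The evaluation of the integral itself is then routine.
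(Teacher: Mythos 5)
Your proposal is correct and follows essentially the same route as the paper: the same linear path of diagonal Abelian representations with $\alpha(t)=lt$, $\beta(t)=-lt/p$, the same application of the Kirk--Klassen theorem, and the same evaluation $2\int_0^1(-l^2t/p)\,dt=-l^2/p$. Your explicit verification that $\lambda$ is null-homologous (hence $\rho_t(\lambda)=\Id$ automatically) and that the path avoids parabolics is a slightly more careful rendering of facts the paper takes for granted.
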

\begin{proof}
Now let us consider a path of representations $\rho_t\colon\pi_1(X)\to\SL(2;\C)$ sending $\mu$ to $\begin{pmatrix}e^{2tl\pi\i/p}&0\\0&e^{-2tl\pi\i/p}\end{pmatrix}$ and $\lambda$ to the identity matrix.
Then $\rho_0$ is trivial and $\rho_1(\mu)=\trho_{l}^{\rm{Abel}}\Bigm|_{X}(\mu)$.
Therefore both $\rho_0$ and $\rho_1$ can be extended to representations $\tilde{\rho}_0$, which is trivial, and $\tilde{\rho}_1=\trho_{l}^{\rm{Abel}}$ of $\pi_1(X_p)$.
We also see that if we put $\alpha(t):=lt$ and $\beta(t):=-lt/p$, then we have
\begin{align*}
  \rho_t(\mu_D)
  &=
  \begin{pmatrix}e^{2\pi\i\alpha(t)}&0\\0&e^{-2\pi\i\alpha(t)}\end{pmatrix},
  \\
  \rho_t(\lambda_D)
  &=
  \begin{pmatrix}e^{2\pi\i\beta(t)}&0\\0&e^{-2\pi\i\beta(t)}\end{pmatrix}.
\end{align*}
Therefore from Theorem~\ref{thm:Kirk_Klassen} we have
\begin{equation*}
  \CS(X_p;\trho_{l}^{\rm{Abel}})
  =
  \CS(X_p;\tilde{\rho}_1)-\CS(X_p;\tilde{\rho}_0)
  =
  2\int_{0}^{1}\frac{-l^2t}{p}\,dt
  =
  -\frac{l^2}{p},
\end{equation*}
since $\CS(X_p;\tilde{\rho}_0)=0$.
See \cite[Theorem~5.1]{Kirk/Klassen:MATHA1990}.
\end{proof}
Next we consider the irreducible representations.
The following lemma is also well known.
See \cite[5.2.~Theorem]{Kirk/Klassen:MATHA1990} and \cite[Proposition~2.3]{Andersen/Petersen:2018}.
Notice again our sign convention; ours agrees with that of \cite{Andersen/Petersen:2018}.
\begin{lemma}\label{lem:CS_X_p_irr}
Let $\trho_{h,k,l}^{\rm{Irr}}$ be the irreducible representation described in Proposition~\ref{prop:irr_rep}.
Then we have
\begin{equation}\label{eq:CS_hkl}
  \CS(X_p;\trho_{h,k,l}^{\rm{Irr}})
  =
  -\frac{h^2}{4(p-ab)}-\frac{(adl-bck)^2}{4ab}
  \in\C/\Z.
\end{equation}
\end{lemma}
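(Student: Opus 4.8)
The plan is to apply the Kirk--Klassen formula (Theorem~\ref{thm:Kirk_Klassen}) to a path of representations of $\pi_1(X)$ joining the trivial representation to $\trho_{h,k,l}^{\rm{Irr}}|_{X}$, exactly as in the Abelian lemma, but now passing through the irreducible component indexed by $(k,l)$. Since the trivial representation extends to $X_p$ with $\CS(X_p;\text{trivial})=0$, the claimed value will be computed as $2\int_0^1\beta(t)\,\alpha'(t)\,dt$, where $\alpha(t)$ and $\beta(t)$ are the eigenvalue exponents of $\rho_t(\mu_D)$ and $\rho_t(\lambda_D)$ along the path. Recall that $\mu_D=\mu^p\lambda$, $\lambda_D=\mu^{-1}$, and that $z=x^a=y^b$ is central.

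First I would take an Abelian segment $\rho_t$ ($0\le t\le1/2$) from the trivial representation to the bifurcation representation where the irreducible component meets the Abelian component, namely the Abelian representation with $\rho_{1/2}(\mu)=\begin{pmatrix}e^{(adl-bck)\pi\i/ab}&0\\0&e^{-(adl-bck)\pi\i/ab}\end{pmatrix}$, the meridian trace at the intersection being the one recorded in Subsection~\ref{subsec:representation}. Along an Abelian representation the preferred longitude $\lambda$ is null-homologous, so $\rho_t(\lambda)=\Id$ and hence $\rho_t(\mu_D)=\rho_t(\mu)^p$, $\rho_t(\lambda_D)=\rho_t(\mu)^{-1}$. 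Writing $\rho_t(\mu)=\mathrm{diag}(e^{\i\phi(t)},e^{-\i\phi(t)})$ with $\phi$ running from $0$ to $(adl-bck)\pi/ab$, this segment contributes $-\dfrac{p(adl-bck)^2}{4(ab)^2}$ to the integral.

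Then I would take an irreducible segment $\rho_t$ ($1/2\le t\le1$) inside the $(k,l)$-component, parametrized by the meridian eigenvalue $e^{\i\theta(t)}$, running from the bifurcation value $\theta=(adl-bck)\pi/ab$ to $\theta=h\pi/(p-ab)$, the latter giving $\trho_{h,k,l}^{\rm{Irr}}|_{X}$ by Proposition~\ref{prop:irr_rep}. On this segment $\rho_t(z)=(-1)^k\Id$, so $\rho_t(\lambda)=(-1)^k\rho_t(\mu)^{-ab}$ and therefore $\rho_t(\mu_D)$ has eigenvalue $e^{\i((p-ab)\theta+k\pi)}$ while $\rho_t(\lambda_D)$ has eigenvalue $e^{-\i\theta}$. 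Since in the variable $\theta$ the integrand is the exact form $d\!\left(-\tfrac{(p-ab)\theta^2}{4\pi^2}\right)$, this segment contributes $-\dfrac{h^2}{4(p-ab)}+\dfrac{(p-ab)(adl-bck)^2}{4(ab)^2}$. Adding the two segments, the $(adl-bck)^2$ terms combine to $-\dfrac{(adl-bck)^2}{4ab}$, which yields \eqref{eq:CS_hkl}.

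The step I expect to be the main obstacle is the continuity and admissibility of the concatenated path. I would check that the two segments match at the bifurcation point: the parities satisfy $adl-bck\equiv k\pmod2$ (using $ad-bc=1$ and $k\equiv l\pmod2$), which both guarantees that $\rho_t(z)$ is continuous there and shows that the two natural choices of $\alpha(t)$ across the junction differ only by an additive integer, so that $\alpha'(t)$—the only quantity entering the integral—is unaffected. One must also verify that the path avoids parabolic representations so that Theorem~\ref{thm:Kirk_Klassen} applies, and that the result is independent of the chosen branch of $\theta$; the latter holds because replacing $\theta_1$ by $\theta_1+2\pi$ or by $-\theta_1$ alters the integral by an integer. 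Finally, the same parity computation shows that using the other intersection value $(adl+bck)$ would change the answer only by $cdkl\in\Z$, confirming that \eqref{eq:CS_hkl} is well defined as an element of $\C/\Z$.
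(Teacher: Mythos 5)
Your proposal is correct and is essentially the paper's own proof: the paper likewise applies the Kirk--Klassen theorem to a two-segment path, an Abelian segment from the trivial representation to the bifurcation point contributing $-\frac{p(adl-bck)^2}{4a^2b^2}$, followed by a segment with $\tr\varphi_t(x)$ and $\tr\varphi_t(y)$ held fixed along which the meridian eigenvalue exponent runs from $\frac{adl-bck}{2ab}$ to $\frac{h}{2(p-ab)}$, contributing $-\frac{h^2}{4(p-ab)}+\frac{(p-ab)(adl-bck)^2}{4a^2b^2}$. The only cosmetic differences are that the paper realizes the second segment explicitly by conjugating the image of $y$ along a path $P(t)$ in $\SL(2;\C)$ while keeping the image of $x$ diagonal, and your final well-definedness check comparing $adl+bck$ with $adl-bck$ corresponds to the remark the paper places immediately after the lemma.
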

\begin{proof}
Fix integers $c$ and $d$ such that $ad-bc=1$ as usual.
\par
We will define a path of representations $\varphi_t\colon\pi_1(X)\to\SL(2;\C)$ ($0\le t\le1$) as follows.
\begin{itemize}
\item
For $0\le t\le1/2$, define
\begin{equation}\label{eq:path_1}
\begin{cases}
  \varphi_t(x)
  &:=
  \begin{pmatrix}e^{2(adl-bck)t\pi\i/a}&0\\0&e^{-2(adl-bck)t\pi\i/a}\end{pmatrix},
  \\
  \varphi_t(y)
  &:=
  \begin{pmatrix}e^{2(adl-bck)t\pi\i/b}&0\\0&e^{-2(adl-bck)t\pi\i/b}\end{pmatrix}.
\end{cases}
\end{equation}
Note that $\varphi_0$ is trivial and so it can be extended to the trivial representation $\tilde{\varphi}_0\colon\pi_1(X_p)\to\SL(2;\C)$.
Note also that
\begin{equation*}
\begin{cases}
  \varphi_{1/2}(x)
  &=
  \begin{pmatrix}e^{k\pi\i/a}&0\\0&e^{-k\pi\i/a}\end{pmatrix},
  \\
  \varphi_{1/2}(y)
  &=
  \begin{pmatrix}e^{l\pi\i/b}&0\\0&e^{-l\pi\i/b}\end{pmatrix}
\end{cases}
\end{equation*}
since $k\equiv l\pmod{2}$.
\item
For $1/2\le t\le1$, we will construct a path from $\varphi_{1/2}$ to $\rho_{k,l}^{\rm{Irr}}\colon\pi_1(X)\to\SL(2;\C)$.
Recall that the representation $\rho_{k,l}^{\rm{Irr}}$ satisfies $\tr\rho_{k,l}^{\rm{Irr}}(x)=2\cos(k\pi/a)$ and $\tr\rho_{k,l}^{\rm{Irr}}(y)=2\cos(l\pi/b)$ (see Proposition~\ref{prop:irr_rep}).
Since the Chern--Simons invariant does not depend on conjugacy classes, we may assume that $\rho_{k,l}^{\rm{Irr}}(x)=\begin{pmatrix}e^{k\pi\i/a}&0\\0&e^{-k\pi\i/a}\end{pmatrix}$.
Let $Q\in\SL(2;\C)$ be a matrix such that $Q^{-1}\rho_{k,l}^{\rm{Irr}}(y)Q=\begin{pmatrix}e^{l\pi\i/b}&0\\0&e^{-l\pi\i/b}\end{pmatrix}$.
Since $\SL(2;\C)$ is connected, there exists a path $P(t)\colon[1/2,1]\to\SL(2;\C)$ such that $P(1/2)=I_2$ and $P(1)=Q$, where $I_2$ is the $2\times2$ identity matrix.
Now we put
\begin{equation}\label{eq:path_2}
\begin{cases}
  \varphi_t(x)
  &:=
  \begin{pmatrix}e^{k\pi\i/a}&0\\0&e^{-k\pi\i/a}\end{pmatrix},
  \\
  \varphi_t(y)
  &:=
  P(t)\begin{pmatrix}e^{l\pi\i/b}&0\\0&e^{-l\pi\i/b}\end{pmatrix}P(t)^{-1}
\end{cases}
\end{equation}
for $1/2\le t\le1$.
Then since $\varphi_t(y)^b=(-1)^{l}I_2$ and $k\equiv l\pmod{2}$, we see that $\varphi_t$ is well defined.
Note that $\varphi_{1/2}$ in \eqref{eq:path_2} coincides with that in \eqref{eq:path_1}, and that $\varphi_{1}$ equals $\rho_{k,l}^{\rm{Irr}}$, which can be extended to $\trho_{h,k,l}^{\rm{Irr}}\colon\pi_1(X_p)\to\SL(2;\C)$ for $(h,k,l)\in\mathcal{H}$.
\end{itemize}
\par
Next we calculate $\varphi_t(\mu_D)$ and $\varphi_t(\lambda_D)$.
Recall that $\mu=x^{-c}y^d$, $\lambda=x^a\mu^{-ab}$, $\mu_D=\lambda\mu^p$, and $\lambda_D=\mu^{-1}$.
\begin{itemize}
\item
For $0\le t\le1/2$ we see that $\varphi_t(\mu)$ is a diagonal matrix with $(1,1)$-entry $e^{2(adl-bck)t\pi\i/(ab)}$ and that $\varphi_t(\lambda)=I_2$.
Therefore we have
\begin{equation*}
\begin{cases}
  \varphi_t(\mu_D)
  &=
  \begin{pmatrix}
    e^{2p(adl-bck)t\pi\i/(ab)}&0\\0&e^{-2p(adl-bck)t\pi\i/(ab)}
  \end{pmatrix},
  \\
  \varphi_t(\lambda_D)
  &=
  \begin{pmatrix}
    e^{-2(adl-bck)t\pi\i/(ab)}&0\\0&e^{2(adl-bck)t\pi\i/(ab)}
  \end{pmatrix},
\end{cases}
\end{equation*}
and so we can put $\alpha(t):=\dfrac{p(adl-bck)}{ab}t$ and $\beta(t):=\dfrac{-(adl-bck)}{ab}t$ to use Theorem~\ref{thm:Kirk_Klassen}.
Then we have
\begin{equation}\label{eq:cs_rho_hkl_1}
  2\int_{0}^{1/2}\beta(t)\frac{d\,\alpha(t)}{d\,t}\,dt
  =
  -\frac{p(adl-bck)^2}{4a^2b^2}
\end{equation}
\item
For $1/2\le t\le1$ we see that $\varphi_t(x^a)=(-1)^kI_2$ for any $1/2\le t\le1$.
So we have $\varphi_t(\lambda)=(-1)^k\varphi_t(\mu)^{-ab}$, $\varphi_t(\mu_D)=(-1)^k\varphi_t(\mu)^{p-ab}$ and $\varphi_t(\lambda_D)=\varphi_t(\mu)^{-1}$.
Therefore if we assume that $\varphi_t(\mu)=\begin{pmatrix}e^{2\pi\i f(t)}&0\\0&e^{-2\pi\i f(t)}\end{pmatrix}$ after conjugation, then we have
\begin{equation*}
\begin{cases}
  \varphi_t(\mu_D)
  &=
  \begin{pmatrix}
    e^{2\pi\i((p-ab)f(t)+k/2)}&0 \\
    0&e^{-2\pi\i((p-ab)f(t)+k/2)}
  \end{pmatrix},
  \\
  \varphi_t(\lambda_D)
  &=
  \begin{pmatrix}
    e^{-2\pi\i f(t)}&0 \\
    0&e^{2\pi\i f(t)}
  \end{pmatrix}.
\end{cases}
\end{equation*}
Therefore we can put $\alpha(t):=(p-ab)f(t)+k/2$ and $\beta(t):=-f(t)$ to use Theorem~\ref{thm:Kirk_Klassen}.
Then we have
\begin{equation}\label{eq:cs_rho_hkl_2}
\begin{split}
  2
  \int_{1/2}^{1}\beta(t)\frac{d\,\alpha(t)}{d\,t}\,dt
  &=
  -2
  \int_{1/2}^{1}
  (p-ab)f'(t)f(t)
  \,dt
  \\
  &=
  -(p-ab)\Bigl[f(t)^2\Bigr]_{1/2}^{1}
  \\
  &=
  -\frac{h^2}{4(p-ab)}
  +
  \frac{(p-ab)(adl-bck)^2}{4a^2b^2}
\end{split}
\end{equation}
since $f(1)=\frac{h}{2(p-ab)}$ and $f(\frac{1}{2})=\frac{adl-bck}{2ab}$.
\end{itemize}
Therefore from \eqref{eq:cs_rho_hkl_1}, \eqref{eq:cs_rho_hkl_2} and Theorem~\ref{thm:Kirk_Klassen}, we conclude that
\begin{equation*}
  \CS(X_p;\trho_{h,k,l}^{\rm{Irr}})
  =
  2
  \int_{0}^{1}\beta(t)\frac{d\,\alpha(t)}{d\,t}dt
  =
  -\frac{h^2}{4(p-ab)}-\frac{(adl-bck)^2}{4ab},
\end{equation*}
completing the proof.
\end{proof}
\begin{remark}
Let us confirm that the right hand side of \eqref{eq:CS_hkl} does not depend on the choice of $(c,d)$ as an element in $\C/\Z$.
\par
Suppose that $c'$ and $d'$ are integers such that $ad'-bc'=1$.
Then we have
\begin{equation*}
\begin{split}
  &(adl-bck)^2-(ad'l-bc'k)^2
  \\
  =&
  a^2l^2(d^2-d'^2)+b^2k^2(c^2-c'^2)-2abkl(cd-c'd')
  \\
  =&
  al^2(d+d')b(c-c')+bk^2(c+c')a(d-d')-2abkl(cd-c'd')
  \\
  =&
  ab\bigl(l^2(d+d')(c-c')+k^2(c+c')(d-d')-2kl(cd-c'd')\bigr)
\end{split}
\end{equation*}
since $a(d-d')=b(c-c')$.
So $\frac{(adl-bck)^2}{4ab}-\frac{(ad'l-bc'k)}{4ab}$ is an integer because $k\equiv l\pmod{2}$ and the right hand side of \eqref{eq:CS_hkl} does not depend on the choice of $c$ and $d$.
\end{remark}

\section{$\mathcal{H}$ and $\mathcal{R}$}
In the previous subsection we show that the irreducible representations of $\pi_1(X_p)$ to $\SL(2;\C)$ are indexed by $\mathcal{H}$.
In this section we construct two injections $\tGamma_{+}$ and $\tGamma_{-}$ from $\mathcal{H}$ to $\mathcal{R}$ so that $\mathcal{R}=\tGamma_{+}(\mathcal{H})\sqcup\tGamma_{-}(\mathcal{H})$, where $\sqcup$ means a disjoint union.
\par
Let $a$ and $b$ positive coprime integers.
We assume that $b$ is odd.
\par
We first define two finite sets $\mathcal{P}$ and $\mathcal{Q}$ as follows:
\begin{align*}
  \mathcal{P}
  &:=
  \{(k,l)\mid1\le k\le a-1,1\le l\le b-1,k\modtwo l\},
  \\
  \mathcal{Q}
  &:=
  \{m\in\Z\mid1\le m\le ab-1,a\nmid m,b\nmid m\},
\end{align*}
where $k\modtwo l$ means $k\equiv l\pmod{2}$.
\par
Define maps $\Gamma_+$ and $\Gamma_-$ from $\mathcal{P}$ to $\mathcal{Q}$ by
\begin{equation*}
\begin{split}
  \Gamma_{+}(k,l)
  &:=
  [adl-bck]_{ab},
  \\
  \Gamma_{-}(k,l)
  &:=
  [-adl-bck]_{ab}.
\end{split}
\end{equation*}
Here we choose integers $c$ and $d$ such that $ad-bc=1$, and $[x]_p$ is the integer satisfying $0\le[x]_p<p$ and $x\equiv[x]_p\pmod{p}$.
\begin{remark}
By Sunzi's theorem \cite{Lam/Ang:SunZi}, $\Gamma_{\pm}(k,l)$ is characterized as follows:
\begin{align*}
  ([\Gamma_+(k,l)]_a,[\Gamma_+(k,l)]_b)&=(k,l),
  \\
  ([\Gamma_-(k,l)]_a,[\Gamma_-(k,l)]_b)&=(k,b-l)
\end{align*}
since $\Gamma_{+}(k,l)\equiv k\pmod{a}$ and $\Gamma_{+}(k,l)\equiv l\pmod{b}$, and $\Gamma_{-}(k,l)\equiv k\pmod{a}$ and $\Gamma_{-}(k,l)\equiv b-l\pmod{b}$.
\par
We also have
\begin{align}
  \Gamma_{+}([m]_{a},[m]_{b})
  &=
  m,
  \label{eq:characterization_Gamma_+}
  \\
  \Gamma_{-}([m]_{a},[-m]_{b})
  &=
  m\label{eq:characterization_Gamma_-}
\end{align}
for any integer $m$ with $0<m<ab-1$.
\end{remark}
Note that both $\Gamma_{+}$ and $\Gamma_{-}$ are injective, and that $\Gamma_{+}(\mathcal{P})\cap\Gamma_{-}(\mathcal{P})=\emptyset$.
The former follows from Sunzi's theorem.
The latter is because $[\Gamma_{+}(k,l)]_a\modtwo[\Gamma_{+}(k,l)]_b$ but $[\Gamma_{-}(k,l)]_a\not\modtwo[\Gamma_{-}(k,l)]_b$ since $b$ is odd.
\par
Since $\#\mathcal{P}=(a-1)(b-1)/2$ and $\#\mathcal{Q}=(a-1)(b-1)$, we conclude that
\begin{equation*}
  \mathcal{Q}=\mathcal{Q}_{+}\sqcup\mathcal{Q}_{-},
\end{equation*}
where $\sqcup$ denotes the disjoint union and $\mathcal{Q}_{\pm}:=\Gamma_{\pm}(\mathcal{P})$.
Note that
\begin{align*}
  \mathcal{Q}_{+}
  &=
  \left\{m\in\mathcal{Q}\Bigm|[m]_a\modtwo[m]_b\right\}.
  \\
  \mathcal{Q}_{-}
  &=
  \left\{m\in\mathcal{Q}\Bigm|[m]_a\nmodtwo[m]_b\right\}.
\end{align*}
\par
We also define maps $\Theta_{+}\colon\mathcal{Q}_{+}\to\mathcal{P}$ and $\Theta_{-}\colon\mathcal{Q}_{-}\to\mathcal{P}$by
\begin{equation*}
  \Theta_{\pm}(m)
  :=
  ([m]_a,[\pm m]_b)
\end{equation*}
\par
Then we have
\begin{equation*}
\begin{split}
  (\Theta_{\pm}\circ\Gamma_{\pm})(k,l)
  &=
  \Theta_{\pm}([\pm adl-bck]_{ab})
  =
  ([\pm adl-bck]_{a},[adl\mp bck)]_{b}
  =
  (k,l).
\end{split}
\end{equation*}
We also have
\begin{equation*}
\begin{split}
  (\Gamma_{+}\circ\Theta_{+})(m)
  &=
  \Gamma_{+}([m]_a,[m]_b)=m\quad\text{if $m\in\mathcal{Q}_{+}$},
  \\
  (\Gamma_{-}\circ\Theta_{-})(m)
  &=
  \Gamma_{-}([m]_a,[-m]_b)=m\quad\text{if $m\in\mathcal{Q}_{-}$}
\end{split}
\end{equation*}
from \eqref{eq:characterization_Gamma_+} and \eqref{eq:characterization_Gamma_-}.
So $\Theta_{\pm}$ is the inverse of $\Gamma_{+}$.
\begin{example}\label{ex:a_1_P_Q}
When $a=2$, we have
\begin{equation*}
  \mathcal{P}
  =
  \{(1,l)\mid1\le l\le b-1,\text{$l$: odd}\}.
\end{equation*}
and
\begin{align*}
  \mathcal{Q}_{+}
  &=
  \{1,3,\dots,b-2\},
  \\
  \mathcal{Q}_{-}
  &=
  \{b+2,b+4,\dots,2b-1\}.
\end{align*}
We also see that $\Gamma_{+}(1,l)=l$ and $\Gamma_{-}(1,l)=2b-l$, and that $\Theta_{+}(m)=(1,m)$ if $m\in\mathcal{Q}_{+}$ and $\Theta_{-}(m)=(1,2b-m)$ if $m\in\mathcal{Q}_{-}$.
\end{example}
\begin{example}
When $(a,b)=(4,3)$, we have
\begin{align*}
  \mathcal{P}
  &=
  \{(1,1),(2,2),(3,1)\},
  \\
  \mathcal{Q}_{+}
  &=
  \{1,2,7\}
  \\
  \mathcal{Q}_{-}
  &=
  \{5,10,11\},
\end{align*}
\begin{align*}
  \Gamma_{+}&\colon
  (1,1)\to1, (2,2)\to 2, (3,1)\to7,
  \\
  \Gamma_{-}&\colon
  (1,1)\to5, (2,2)\to10,(3,1)\to11,
\end{align*}
and
\begin{align*}
  \Theta_{+}\colon
  &1\to(1,1), 2\to(2,2), 7\to(3,1),
  \\
  \Theta_{-}\colon
  &5\to(1,1), 10\to(2,2),11\to(3,1).
\end{align*}
\end{example}
\begin{example}\label{ex:P_Q_3_5}
When $(a,b)=(3,5)$, we have
\begin{align*}
  \mathcal{P}
  &=
  \{(1,1),(1,3),(2,2),(2,4)\},
  \\
  \mathcal{Q}_{+}
  &=
  \{1,2,13,14\},
  \\
  \mathcal{Q}_{-}
  &=
  \{4,7,8,11\},
\end{align*}
\begin{align*}
  \Gamma_{+}
  &\colon
  (1,1)\to1,(1,3)\to13,(2,2)\to2,(2,4)\to14,
  \\
  \Gamma_{-}
  &\colon
  (1,1)\to4,(1,3)\to7,(2,2)\to8,(2,4)\to11,
\end{align*}
and
\begin{align*}
  \Theta_{+}
  \colon
  &1\to(1,1),2\to(2,2),13\to(1,3),14\to(2,4),
  \\
  \Theta_{-}
  \colon
  &4\to(1,1),7\to(1,3),8\to(2,2),11\colon(2,4).
\end{align*}
\end{example}
\par
Next we construct maps $\tGamma_{\pm}$ from $\mathcal{H}$ to $\mathcal{R}$.
\par
Recall the following:
\begin{align*}
  \mathcal{H}
  :=&
  \Bigl\{(h,k,l)\in\Z^3\mid0<h<p-ab,0<k<a,0<l<b, h\modtwo k\modtwo l\Bigr\},
  \\
  \mathcal{R}
  :=&
  \Bigl\{(g,m)\in\Z^2\mid\,0<m<ab,0<g<p-ab,g\modtwo p-m,a\nmid m,b\nmid m\Bigr\},
  \\
  \Gamma_{\pm}(k,l)
  :=&
  [\pm adl-bck]_{ab}.
\end{align*}
\par
We put
\begin{equation*}
  \tGamma_{\pm}(h,k,l)
  :=
  \begin{cases}
    (p-ab-h,\Gamma_{\pm}(k,l))
    &\text{if $\Gamma_{\pm}(k,l)+ab+h\modtwo0$},
    \\
    (p-ab-h,ab-\Gamma_{\pm}(k,l))
    &\text{if $\Gamma_{\pm}(k,l)+ab+h\nmodtwo0$}.
  \end{cases}
\end{equation*}
Note that if $a$ is even, we have
\begin{equation*}
\begin{split}
  \Gamma_{\pm}(k,l)+ab+h
  &=
  [\pm adl-bck]_{ab}+ab+h
  \\
  &=
  [bc(-k\pm l)\pm l]_{ab}+ab+h
  \\
  &\modtwo
  0
\end{split}
\end{equation*}
since $k\modtwo{l}\modtwo{h}$.
So in this case $\tGamma_{\pm}(h,k,l)=(p-ab-h,\Gamma_{\pm}(k,l))$.
\par
Let us check whether the image of $\tGamma_{\pm}$ is contained in $\mathcal{R}$ or not.
\par
First assume that $\Gamma_{\pm}(k,l)+ab+h\modtwo0$.
Then we have
\begin{equation*}
  (p-ab-h)-\tGamma_{\pm}(h,k,l)
  \modtwo
  p
\end{equation*}
and so the image is in $\mathcal{R}$.
Next assume that $\Gamma_{\pm}(k,l)+ab+h\nmodtwo0$.
Noting that $a$ should be odd, we have
\begin{equation*}
  (p-ab-h)-\bigl(ab-\tGamma_{\pm}(h,k,l)\bigr)
  \modtwo
  (p-ab-h)+(h+1)
  \modtwo
  p.
\end{equation*}
Therefore the image of $\tGamma_{\pm}$ is also contained in $\mathcal{R}$.
\par
Since $\Gamma_{\pm}(a-k,b-l)=[\pm ad(b-l)-bc(a-k)]_{ab}=[-(\pm adl-bck)]_{ab}=ab-\Gamma_{\pm}(k,l)$ and $\Gamma_{+}(\mathcal{P})\cap\Gamma_{-}(\mathcal{P})=\emptyset$, we see that $\tGamma_{+}(\mathcal{H})\cap\tGamma_{-}(\mathcal{H})=\emptyset$.
\par
We show that $\tGamma_{\pm}$ is injective.
Suppose that $\tGamma_{\pm}(h,k,l)=\tGamma_{\pm}(k',k',h')$ for $(h,k,l),(k',l',h')\in\mathcal{H}$.
Then from the definition we have $h=h'$.
\par
We also have either
\begin{enumerate}
\item[(i).]
$\Gamma_{\pm}(k,l)\underset{(2)}{\equiv}ab+h\underset{(2)}{\equiv}\Gamma_{\pm}(k',l')$ and $\Gamma_{\pm}(k,l)=\Gamma_{\pm}(k',l')$,
\item[(ii).]
$\Gamma_{\pm}(k,l)\underset{(2)}{\equiv}ab+h\underset{(2)}{\not\equiv}\Gamma_{\pm}(k',l')$ and $\Gamma_{\pm}(k,l)=ab-\Gamma_{\pm}(k',l')$,
\item[(iii).]
$\Gamma_{\pm}(k,l)\underset{(2)}{\not\equiv}ab+h\underset{(2)}{\equiv}\Gamma_{\pm}(k',l')$ and $ab-\Gamma_{\pm}(k,l)=\Gamma_{\pm}(k',l')$, or
\item[(iv).]
$\Gamma_{\pm}(k,l)\underset{(2)}{\not\equiv}ab+h\underset{(2)}{\not\equiv}\Gamma_{\pm}(k',l')$ and $\Gamma_{\pm}(k,l)=\Gamma_{\pm}(k',l')$.
\end{enumerate}
For the cases (i) and (iv), we have $(k,l)=(k',l')$ from the injectivity of $\Gamma_{\pm}$.
For the case (ii), since $ab-\Gamma_{\pm}(k',l')=\Gamma_{\pm}(a-k',b-l')$, we have $b=l+l'$, which contradicts the condition $l\modtwo h\modtwo l'$.
So the case (ii) does not happen.
Similarly, the case (iii) does not happen, either.
\par
Therefore we have $(h,k,l)=(k',l',h')$ and $\tGamma_{\pm}$ are injective.
\par
Noting that
\begin{equation*}
  \#\mathcal{H}
  =
  \begin{cases}
  \frac{1}{4}(a-1)(b-1)(p-ab-1)&
  \text{when $p$ is odd,}
  \\
  \frac{(a-1)(b-1)}{2}\lfloor\frac{p-ab-1}{2}\rfloor&
  \text{when $p$ is even.}
  \end{cases}
\end{equation*}
\begin{equation*}
  \#\mathcal{R}
  =
  \begin{cases}
  \frac{1}{2}(a-1)(b-1)(p-ab-1)&
  \text{when $p$ is odd,}
  \\
  (a-1)(b-1)\lfloor\frac{p-ab-1}{2}\rfloor&
  \text{when $p$ is even.}
  \end{cases}
\end{equation*}
we see that $\tGamma_{+}(\mathcal{H})\sqcup\tGamma_{-}(\mathcal{H})=\mathcal{R}$.
\par
We denote by $\Rpm$ the image of $\tGamma_{\pm}$.
\begin{lemma}
We can characterize $\Rpm$ as follows.
\begin{align*}
  \Rp
  =&
  \left\{(g,m)\in\mathcal{R}\Bigm|[m]_a\modtwo[m]_b\right\},
  \\
  \Rm
  =&
  \left\{g,m)\in\mathcal{R}\Bigm|[m]_a\nmodtwo[m]_b\right\}.
\end{align*}
\end{lemma}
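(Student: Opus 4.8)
The plan is to reduce the lemma to two facts already established in this section: that $\mathcal{R}=\tGamma_{+}(\mathcal{H})\sqcup\tGamma_{-}(\mathcal{H})$, and that $\mathcal{Q}_{+}=\left\{m\in\mathcal{Q}\mid[m]_a\modtwo[m]_b\right\}$ while $\mathcal{Q}_{-}=\left\{m\in\mathcal{Q}\mid[m]_a\nmodtwo[m]_b\right\}$. The entire content is to track the second coordinate $m$ of $\tGamma_{\pm}(h,k,l)$ and show it always lands in $\mathcal{Q}_{\pm}$.

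First I would record the relevant partition of $\mathcal{R}$. For any $(g,m)\in\mathcal{R}$ we have $0<m<ab$ with $a\nmid m$ and $b\nmid m$, so $m\in\mathcal{Q}$ and both residues $[m]_a\in\{1,\dots,a-1\}$ and $[m]_b\in\{1,\dots,b-1\}$ are nonzero; hence exactly one of $[m]_a\modtwo[m]_b$ or $[m]_a\nmodtwo[m]_b$ holds. This splits $\mathcal{R}$ as the disjoint union of $\left\{(g,m)\in\mathcal{R}\mid[m]_a\modtwo[m]_b\right\}$ and $\left\{(g,m)\in\mathcal{R}\mid[m]_a\nmodtwo[m]_b\right\}$, which by the characterization of $\mathcal{Q}_{\pm}$ are precisely the points of $\mathcal{R}$ whose $m$-coordinate lies in $\mathcal{Q}_{+}$ and in $\mathcal{Q}_{-}$ respectively.

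Next I would take $(g,m)=\tGamma_{\pm}(h,k,l)$ with $(h,k,l)\in\mathcal{H}$ and show $m\in\mathcal{Q}_{\pm}$. By definition $m$ is either $\Gamma_{\pm}(k,l)$ or $ab-\Gamma_{\pm}(k,l)$. In the first case $m=\Gamma_{\pm}(k,l)\in\Gamma_{\pm}(\mathcal{P})=\mathcal{Q}_{\pm}$ directly. In the second case I would invoke the symmetry $ab-\Gamma_{\pm}(k,l)=\Gamma_{\pm}(a-k,b-l)$ already noted above; since this branch is triggered only when $\Gamma_{\pm}(k,l)+ab+h\nmodtwo0$, which (as shown) forces $a$ to be odd, the pair $(a-k,b-l)$ again belongs to $\mathcal{P}$ (with $a,b$ both odd and $k\modtwo l$ one checks $a-k\modtwo b-l$), so $m=\Gamma_{\pm}(a-k,b-l)\in\mathcal{Q}_{\pm}$ as well. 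Thus in every case the $m$-coordinate of $\tGamma_{\pm}(h,k,l)$ lies in $\mathcal{Q}_{\pm}$.

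Combining these gives the inclusions $\Rp=\tGamma_{+}(\mathcal{H})\subseteq\left\{(g,m)\in\mathcal{R}\mid[m]_a\modtwo[m]_b\right\}$ and $\Rm=\tGamma_{-}(\mathcal{H})\subseteq\left\{(g,m)\in\mathcal{R}\mid[m]_a\nmodtwo[m]_b\right\}$. Because the left-hand sides partition $\mathcal{R}$ and the right-hand sides partition $\mathcal{R}$ into complementary pieces, two inclusions into complementary sets forces both to be equalities, yielding the lemma. The one step requiring genuine care—and hence the main obstacle—is the flip branch $m=ab-\Gamma_{\pm}(k,l)$: here one must combine the reflection identity $\Gamma_{\pm}(a-k,b-l)=ab-\Gamma_{\pm}(k,l)$ with the parity bookkeeping that this branch occurs only when $a$ is odd, ensuring $(a-k,b-l)\in\mathcal{P}$ so that $m$ stays in the correct $\mathcal{Q}_{\pm}$; the remainder is formal set-theoretic wrapping around results proved earlier.
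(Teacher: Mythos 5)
Your proof is correct and is in essence the paper's own argument: both rest on the residue computations $[\Gamma_{\pm}(k,l)]_{a}=k$, $[\Gamma_{+}(k,l)]_{b}=l$, $[\Gamma_{-}(k,l)]_{b}=b-l$ (Sunzi's theorem together with $ad-bc=1$) and the parity bookkeeping coming from $b$ odd and $k\modtwo l$, which you simply access through the previously established description of $\mathcal{Q}_{\pm}$ instead of redoing the computation. The only substantive difference is one of completeness: the paper opens with ``it is sufficient to show'' the parity relation for $\Gamma_{\pm}(k,l)$ itself, silently absorbing the second branch $m=ab-\Gamma_{\pm}(k,l)$ in the definition of $\tGamma_{\pm}$, whereas you handle that branch explicitly (it occurs only when $a$ is odd, so $(a-k,b-l)\in\mathcal{P}$ and $ab-\Gamma_{\pm}(k,l)=\Gamma_{\pm}(a-k,b-l)\in\mathcal{Q}_{\pm}$) and then close with the partition argument ($\Rp\sqcup\Rm=\mathcal{R}$, and inclusions into complementary pieces of a partition must be equalities), which fills in exactly what the paper leaves to the reader.
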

\begin{proof}
It is sufficient to show that $[\Gamma_{+}(k,l)]_{a}\modtwo[\Gamma_{+}(k,l)]_{b}$, and that $[\Gamma_{-}(k,l)]_{a}\nmodtwo[\Gamma_{-}(k,l)]_{b}$.
\par
Since $\Gamma_{\pm}(k,l)=[\pm adl-bck]_{ab}$ and $ad-bc=1$, we have
\begin{align*}
  [\Gamma_{\pm}(k,l)]_{a}
  &=
  [k]_{a}
  =
  k,
  \\
  [\Gamma_{+}(k,l)]_{b}
  &=
  [l]_{b}
  =
  l,
  \\
  [\Gamma_{-}(k,l)]_{b}
  &=
  [-l]_{b}
  =
  b-l.
\end{align*}
The conclusion follows since $b$ is odd and $k\modtwo l$.
\end{proof}
We define maps $\tTheta_{+}\colon\Rp\to\mathcal{H}$ and $\tTheta_{-}\colon\Rm\to\mathcal{H}$ as follows:
\begin{align*}
  \tTheta_{\pm}(g,m)
  &:=
  (p-ab-g,[m]_{a},[\pm m]_{b})
  &\quad\text{if $ab+m+[m]_{a}\modtwo0$},
  \\
  \tTheta_{\pm}(g,m)
  &:=
  (p-ab-g,[-m]_{a},[\mp m]_{b})
  &\quad\text{if $ab+m+[m]_{a}\nmodtwo0$}.
\end{align*}
Note that if $a$ is even, then $[m]_{a}\modtwo m$ and so $ab+m+[m]_{a}\modtwo0$.
As a result, if $ab+m+[m]_{a}\nmodtwo0$, then $a$ is odd.
\par
We need to check that $\tTheta_{\pm}$ is a map to $\mathcal{H}$.
\par
Assume that $ab+m+[m]_{a}\modtwo0$.
Then, since $(g,m)\in\mathcal{R}$, we have
\begin{equation*}
  (p-ab-g)-[m]_{a}
  \modtwo
  p+g+m
  \modtwo
  0.
\end{equation*}
If $(g,m)\in\Rp$, then $[m]_{a}\modtwo[m]_{b}$ and so $\tTheta_{+}(g,m)\in\mathcal{H}$.
If $(g,m)\in\Rm$, then $[m]_{a}\nmodtwo[m]_{b}$.
Since $[-m]_{b}=b-[m]_{b}\nmodtwo[m]_{b}$, $[m]_{a}\modtwo[-m]_{b}$ and so $\tTheta_{+}(g,m)\in\mathcal{H}$.
\par
Assume that $ab+m+[m]_{a}\nmodtwo0$.
Since $a$ is odd as mentioned before, we have
\begin{equation*}
  (p-ab-g)-[-m]_{a}
  =
  (p-ab-g)-(a-[m]_{a})
  \modtwo
  p+m+g
  \modtwo0.
\end{equation*}
If $(g,m)\in\Rp$, then $[-m]_{a}=a-[m]_{a}\modtwo b-[m]_{b}\modtwo [-m]_{b}$ and so $\tTheta_{+}(g,m)\in\mathcal{H}$.
If $(g,m)\in\Rm$, then $[-m]_{a}=a-[m]_{a}\nmodtwo b-[m]_{b}\nmodtwo [m]_{b}$ and so $\tTheta_{-}(g,m)\in\mathcal{H}$.
Therefore the image of $\tTheta_{\pm}$ is in $\mathcal{H}$.
\par
Next we show that $\tTheta_{\pm}$ is the inverse of $\tGamma_{\pm}$.
\par
When $\Gamma_{\pm}(k,l)+ab+h\modtwo0$, we have
\begin{equation*}
\begin{split}
  (\tTheta_{\pm}\circ\tGamma_{\pm})(h,k,l)
  &=
  \tTheta_{\pm}(p-ab-h,\Gamma_{\pm}(k,l))
  \\
  &=
  (h,[\Gamma_{\pm}(k,l)]_{a},[\pm\Gamma_{\pm}(k,l)]_{b})
  \\
  &=
  (h,k,l)
\end{split}
\end{equation*}
since $ab+\Gamma_{\pm}(k,l)+[\Gamma_{\pm}(k,l)]_{a}=ab+\Gamma_{\pm}(k,l)+k\modtwo0$.
When $\Gamma_{\pm}(k,l)+ab+h\nmodtwo0$, we have
\begin{equation*}
\begin{split}
  (\tTheta_{\pm}\circ\tGamma_{\pm})(h,k,l)
  &=
  \tTheta_{\pm}(p-ab-h,\Gamma_{\pm}(a-k,b-l))
  \\
  &=
  (h,\left[-\Gamma_{\pm}(a-k,b-l)\right]_{a},\left[\mp\Gamma_{\pm}(a-k,b-l)\right]_{b})
  \\
  &=
  (h,[\mp ad(b-l)+bc(a-k)]_{a},[-ad(b-l)\pm bc(a-k)]_{b})
  \\
  &=
  (h,k,l).
\end{split}
\end{equation*}
Here the second equality follows since $ab+\Gamma_{\pm}(a-k,b-l)+[\Gamma_{\pm}(a-k,b-l)]_a\modtwo\Gamma_{\pm}(k,l)+b-l\nmodtwo ab+h+b+l\modtwo0$.
Therefore $\tTheta_{\pm}\circ\tGamma_{\pm}$ is the identity on $\mathcal{H}$.
\par
When $(g,m)\in\Rp$ and $ab+m+[m]_{a}\modtwo0$, we have
\begin{equation*}
  (\tGamma_{+}\circ\tTheta_{+})(g,m)
  =
  \tGamma_{+}(p-ab-g,[m]_{a},[m]_{b}).
\end{equation*}
Since $\Gamma_{+}([m]_{a},[m]_{b})=m$ from \eqref{eq:characterization_Gamma_+}, and $m+ab+(p-ab-g)\modtwo0$, we have
\begin{equation*}
  \tGamma_{+}(p-ab-g,[m]_{a},[m]_{b})
  =
  (g,\Gamma_{+}([m]_{a},[m]_{b}))
  =
  (g,m).
\end{equation*}
When $(g,m)\in\Rp$ and $ab+m+[m]_{a}\nmodtwo0$, we have
\begin{equation*}
  (\tGamma_{+}\circ\tTheta_{+})(g,m)
  =
  \tGamma_{+}(p-ab-g,[-m]_{a},[-m]_{b}).
\end{equation*}
Since $\Gamma_{+}([-m]_{a},[-m]_{b})=\Gamma_{+}([ab-m]_{a},[ab-m]_{b})=ab-m$ and $(ab-m)+ab+(p-ab-g)\modtwo ab\nmodtwo0$, we have
\begin{equation*}
\begin{split}
  \tGamma_{+}(p-ab-g,[-m]_{a},[-m]_{b})
  &=
  (g,ab-\Gamma_{+}([-m]_{a},[-m]_{b}))
  \\
  &=
  (g,ab-\Gamma_{+}([ab-m]_{a},[ab-m]_{b}))
  \\
  &=
  (g,m).
\end{split}
\end{equation*}
Therefore $\tGamma_{+}\circ\tTheta_{+}$ is the identity on $\Rp$.
\par
When $(g,m)\in\Rm$ and $ab+m+[m]_{a}\modtwo0$, we have
\begin{equation*}
  (\tGamma_{-}\circ\tTheta_{-})(g,m)
  =
  \tGamma_{-}(p-ab-g,[m]_{a},[-m]_{b}).
\end{equation*}
Since $\Gamma_{-}([m]_{a},[-m]_{b})=m$ from \eqref{eq:characterization_Gamma_-}, and $m+ab+(p-ab-g)\modtwo0$, we have
\begin{equation*}
  \tGamma_{-}(p-ab-g,[m]_{a},[-m]_{b})
  =
  (g,\Gamma_{-}([m]_{a},[-m]_{b}))
  =
  (g,m).
\end{equation*}
When $(g,m)\in\Rm$ and $ab+m+[m]_{a}\nmodtwo0$, we have
\begin{equation*}
  (\tGamma_{-}\circ\tTheta_{-})(g,m)
  =
  \tGamma_{-}(p-ab-g,[-m]_{a},[m]_{b}).
\end{equation*}
Since $\Gamma_{-}([-m]_{a},[m]_{b})=\Gamma_{-}([ab-m]_{a},[m-ab]_{b})=ab-m$, and $(ab-m)+ab+(p-ab-g)\modtwo ab\nmodtwo0$, we have
\begin{equation*}
  \tGamma_{-}(p-ab-g,[-m]_{a},[m]_{b})
  =
  (g,ab-\tGamma_{-}([-m]_{a},[m]_{b}))
  =
  (g,m).
\end{equation*}
Therefore $\tGamma_{\pm}\circ\tTheta_{-}$ is the identity on $\Rm$.
\begin{example}\label{ex:H_R_Gamma}.
When $a=2$, $b$ and $p$ are odd from the assumption.
So we have
\begin{align*}
  \mathcal{H}
  &=
  \{(h,1,l)\mid h=1,3,\dots,p-2b-2,l=1,3,\dots,b-2\},
  \\
  \mathcal{R}
  &=
  \{(g,m)\mid g=2,4,\dots,p-2b-1,m=1,3,\dots,b-2,b+2,\dots,2b-1\},
  \\
  \Rp
  &=
  \{(g,m)\mid g=2,4,\dots,p-2b-1,m=1,3,\dots,b-2\},
  \\
  \Rm
  &=
  \{(g,m)\mid g=2,4,\dots,p-2b-1,m=b+2,b+4,\dots,2b-1\}.
\end{align*}
We can put $d:=(1-b)/2$ and $c:=-1$.
So we have
\begin{align*}
  \tGamma_{+}(h,1,l)
  &=
  (p-2b-h,[(1-b)l+b]_{2b})
  =
  (p-2b-h,l),
  \\
  \tGamma_{-}(h,1,l)
  &=
  (p-2b-h,[-(1-b)l+b]_{2b})
  =
  (p-2b-h,2b-l),
\end{align*}
since $l$ is odd.
\par
We also have
\begin{equation*}
  \tTheta_{+}(g,m)
  =
  (p-2b-g,1,m)
\end{equation*}
when $(g,m)\in\Rp$ and
\begin{equation*}
  \tTheta_{-}(g,m)
  =
  (p-2b-g,1,2b-m)
\end{equation*}
when $(g,m)\in\Rm$.
\end{example}
\begin{example}\label{H_R_Gamma_Theta_3_5}
When $(a,b)=(3,5)$ and $p=19$, we have
\begin{align*}
  \mathcal{H}
  &=
  \{(1,1,1),(1,1,3),(2,2,2),(2,2,4),(3,1,1),(3,1,3)\},
  \\
  \mathcal{R}
  &=
  \{(1,2),(1,4),(1,8),(1,14),(2,1),(2,7),(2,11),(2,13),(3,2),(3,4),(3,8),(3,14)\},
  \\
  \Rp
  &=
  \{(1,2),(1,14),(2,1),(2,13),(3,2),(3,14)\},
  \\
  \Rm
  &=
  \{(1,4),(1,8),(2,7),(2,11),(3,4),(3,8)\}.
\end{align*}
From Example~\ref{ex:P_Q_3_5}, we have
\begin{align*}
  \tGamma_{+}
  &\colon
  (1,1,1)\to(3,14),
  (1,1,3)\to(3,2),
  (2,2,2)\to(2,13),
  (2,2,4)\to(2,1),
  \\
  &\phantom{\colon}
  (3,1,1)\to(1,14),
  (3,1,3)\to(1,2),
  \\
  \tGamma_{-}
  &\colon
  (1,1,1)\to(3,4),
  (1,1,3)\to(3,8),
  (2,2,2)\to(2,7),
  (2,2,4)\to(2,11),
  \\
  &\phantom{\colon}
  (3,1,1)\to(1,4),
  (3,1,3)\to(1,8),
  \\
  \tTheta_{+}
  &\colon
  (1,2)\to(3,1,3),
  (1,14)\to(3,1,1),
  (2,1)\to(2,2,4),
  (2,13)\to(2,2,2),
  \\
  &\phantom{\colon}
  (3,2)\to(1,1,3),
  (3,14)\to(1,1,1),
  \\
  \tTheta_{-}
  &\colon
  (1,4)\to(3,1,1),
  (1,8)\to(3,1,3),
  (2,7)\to(2,2,2),
  (2,11)\to(2,2,4),
  \\
  &\phantom{\colon}
  (3,4)\to(1,1,1),
  (3,8)\to(1,1,3).
\end{align*}
\end{example}

\section{Topological Interpretations of $A(n)$ and $B(n)$}\label{sec:A_n}
As shown in \eqref{eq:H}, the irreducible representations of $\pi_1(X_p)$ to $\SL(2;\C)$ are indexed by $\mathcal{H}$.
\subsection{Topological interpretation of $A(n)$}
In this subsection, we give a topological interpretation of $A(n)$.
\par
Let $\tGamma_{\pm}\colon\mathcal{H}\to\Rpm$ be the bijections described in the previous section.
Recall the following subsets of $\mathcal{R}$ (see Figure~\ref{fig:R}):
\begin{align*}
  \mathcal{R}
  =&
  \left\{
    (g,m)\in\Z^2
    \mid
    0<m<ab,0<g<p-ab,
    g\modtwo{p-m},a\nmid m,b\nmid m
  \right\},
  \\
  \RD
  =&
  \left\{(g,m)\in\mathcal{R}\Bigm|\frac{m}{ab}<\frac{g}{p-ab}\right\},
  \\
  \RN
  =&
  \left\{(g,m)\in\mathcal{R}\Bigm|\frac{m}{ab}>\frac{g}{p-ab}\right\},
  \\
  \Rp
  =&
  \left\{(g,m)\in\mathcal{R}\Bigm|[m]_a\modtwo[m]_b\right\},
  \\
  \Rm
  =&
  \left\{(g,m)\in\mathcal{R}\Bigm|[m]_a\nmodtwo[m]_b\right\}.
\end{align*}
\begin{figure}[H]
  \raisebox{-0.5\height}{\includegraphics[scale=0.4]{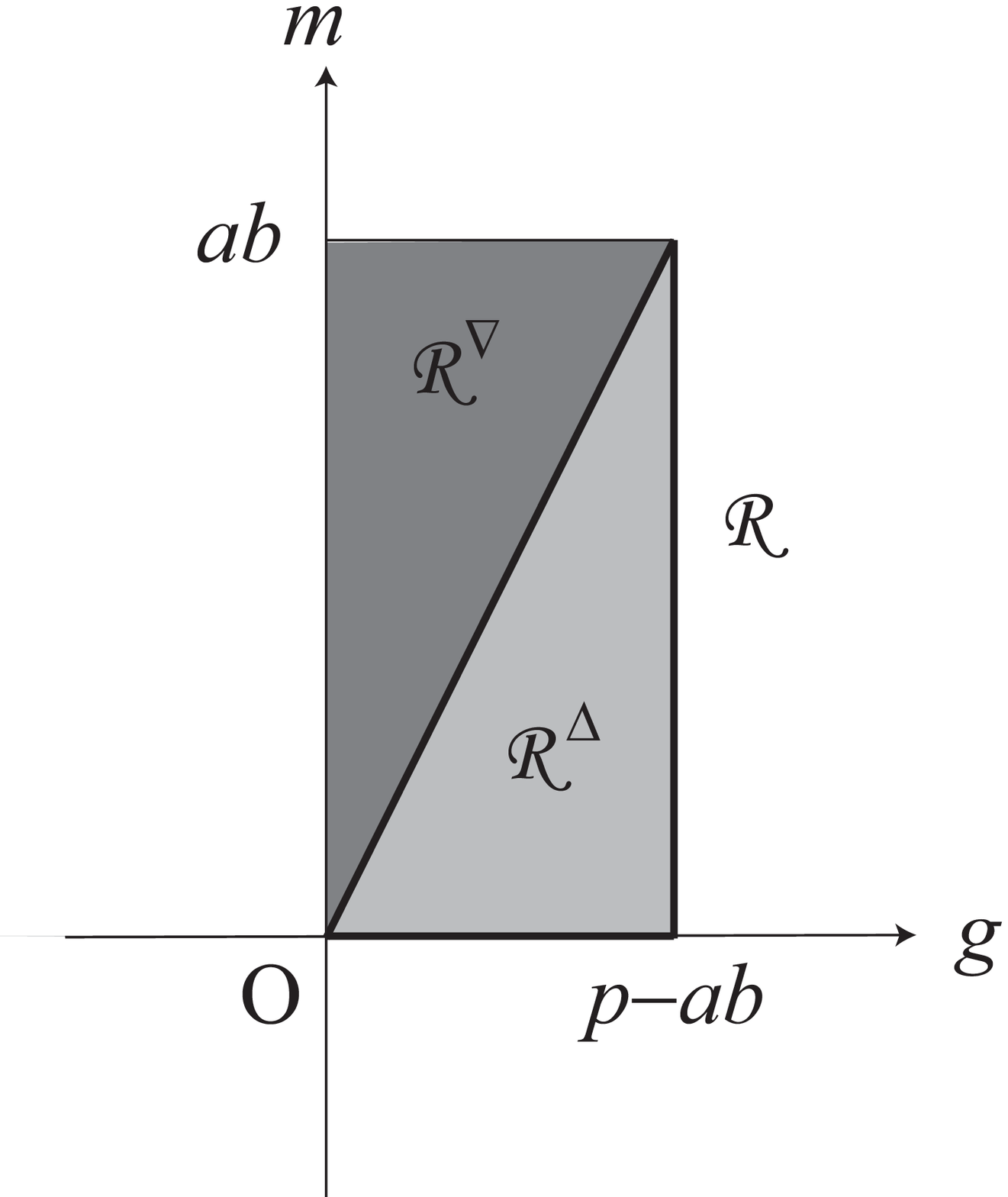}}
  \caption{The dark gray area is $\RN$, the light gray area is $\RD$,
  and $\mathcal{R}=\RD\cup\RN$.}
  \label{fig:R}
\end{figure}
\par
Put $\RpmD:=\Rpm\cap\RD$, $\RpmN:=\Rpm\cap\RN$, $\HpmD:=\tTheta_{\pm}(\RpmD)$ and $\HpmN:=\tTheta_{\pm}(\RpmN)$.
Then we have the two decompositions of $\mathcal{H}$ and $\mathcal{R}$:
\begin{align*}
  \mathcal{H}
  &=
  \HpD\sqcup\HpN,
  \\
  \mathcal{H}
  &=
  \HmD\sqcup\HmN,
  \\
  \mathcal{R}
  &=
  \RpD\sqcup\RpN\sqcup
  \RmD\sqcup\RmN.
\end{align*}
\begin{remark}
Note that $\HpmD\cap\HpmN=\emptyset$ but that $\HpD\cap\HmD\ne\emptyset$ and $\HmD\cap\HpD\ne\emptyset$ in general.
\end{remark}
\begin{remark}
Since an element $(h,k,l)$ in $\mathcal{H}$ belongs to $\HpmD$ ($\HpmN$, respectively) if and only if $\tGamma_{\pm}(h,k,l)\in\RD$ ($\tGamma_{\pm}(h,k,l)\in\RN$, respectively).
Therefore, concretely speaking, the sets $\HpmD$ and $\HpmN$ are given as follows.
\begin{align*}
  \HpmD
  &=
  \Bigl\{
    (h,k,l)\in\mathcal{H}\Bigm|
    \text{$\frac{\Gamma_{\pm}(k,l)}{ab}+\frac{h}{p-ab}<1$
     (if $\Gamma_{\pm}(k,l)+ab+h\modtwo0)$},
  \\
  &\phantom{=\Bigl\{(h,k,l)\in\mathcal{H}\Bigm|}\quad
    \text{$\frac{\Gamma_{\pm}(k,l)}{ab}>\frac{h}{p-ab}$
     (if $\Gamma_{\pm}(k,l)+ab+h\modtwo1)$}
  \Bigr\},
  \\
  \HpmN
  &=
  \Bigl\{
    (h,k,l)\in\mathcal{H}\Bigm|
    \text{$\frac{\Gamma_{\pm}(k,l)}{ab}+\frac{h}{p-ab}>1$
     (if $\Gamma_{\pm}(k,l)+ab+h\modtwo0)$},
  \\
  &\phantom{=\Bigl\{(h,k,l)\in\mathcal{H}\Bigm|}\quad
    \text{$\frac{\Gamma_{\pm}(k,l)}{ab}<\frac{h}{p-ab}$
     (if $\Gamma_{\pm}(k,l)+ab+h\modtwo1)$}
  \Bigr\}.
\end{align*}
\end{remark}
\begin{example}\label{ex:H}
Suppose that $a=2$.
Then we have
\begin{align*}
  \Rp
  &=
  \{(g,m)\mid h=2,4,\dots,p-2b-1,m=1,3,\dots,b-2\},
  \\
  \Rm
  &=
  \{(g,m)\mid h=2,4,\dots,p-2b-1,m=b+2,b+4,\dots,2b-1\}
\end{align*}
and so
\begin{align*}
  \RpD
  &=
  \left\{
    (g,m)\mid
    2\le h\le p-2b-1,1\le m\le b-2,\frac{h}{p-2b}>\frac{m}{2b},
    h\modtwo0,m\modtwo1
  \right\},
  \\
  \RmD
  &=
  \left\{
    (g,m)\mid
    2\le h\le p-2b-1,b+2\le m\le 2b-1,\frac{h}{p-2b}>\frac{m}{2b},
    h\modtwo0,m\modtwo1
  \right\},
  \\
  \RpN
  &=
  \left\{
    (g,m)\mid
    2\le h\le p-2b-1,m\le l\le b-2,\frac{h}{p-2b}<\frac{m}{2b},
    h\modtwo0,m\modtwo1
  \right\},
  \\
  \RmN
  &=
  \left\{
    (g,m)\mid
    2\le h\le p-2b-1,b+2\le m\le 2b-1,\frac{h}{p-2b}>\frac{m}{2b},
    h\modtwo0,m\modtwo1
  \right\}.
\end{align*}
See Figure~\ref{fig:R_a_2}.
\begin{figure}[H]
  \raisebox{-0.5\height}{\includegraphics[scale=0.4]{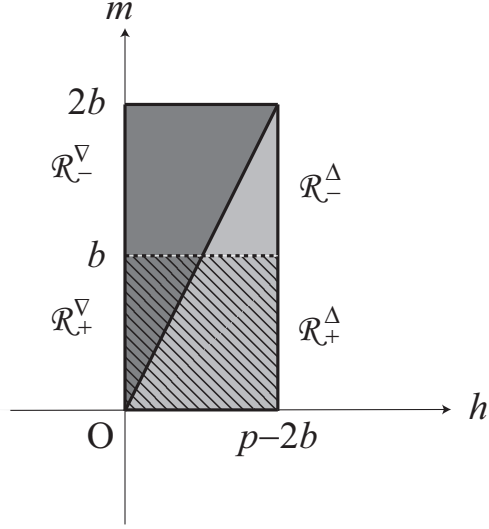}}
  \caption{The decomposition of $\mathcal{R}$ when $a=2$ ($k=1$).}
  \label{fig:R_a_2}
\end{figure}
Therefore from Example~\ref{ex:H_R_Gamma}, we have
\begin{align*}
  \HpD
  &=
  \left\{
    (h,1,l)\mid
    1\le h\le p-2b-2,1\le l\le b-2,\frac{h}{p-2b}+\frac{l}{2b}<1,
    h\modtwo1,l\modtwo1
  \right\},
  \\
  \HpN
  &=
  \left\{
    (h,1,l)\mid
    1\le h\le p-2b-2,1\le l\le b-2,\frac{h}{p-2b}+\frac{l}{2b}>1,
    h\modtwo1,l\modtwo1
  \right\},
  \\
  \HmD
  &=
  \left\{
    (h,1,l)\mid
    1\le h\le p-2b-2,1\le l\le b-2,\frac{h}{p-2b}<\frac{l}{2b},
    h\modtwo1,l\modtwo1
  \right\},
  \\
  \HmN
  &=
  \left\{
    (h,1,l)\mid
    1\le h\le p-2b-2,1\le l\le b-2,\frac{h}{p-2b}>\frac{l}{2b},
    h\modtwo1,l\modtwo1
  \right\}
\end{align*}
as indicated in Figures~\ref{fig:H}.
\begin{figure}[H]
  \raisebox{-0.5\height}{\includegraphics[scale=0.4]{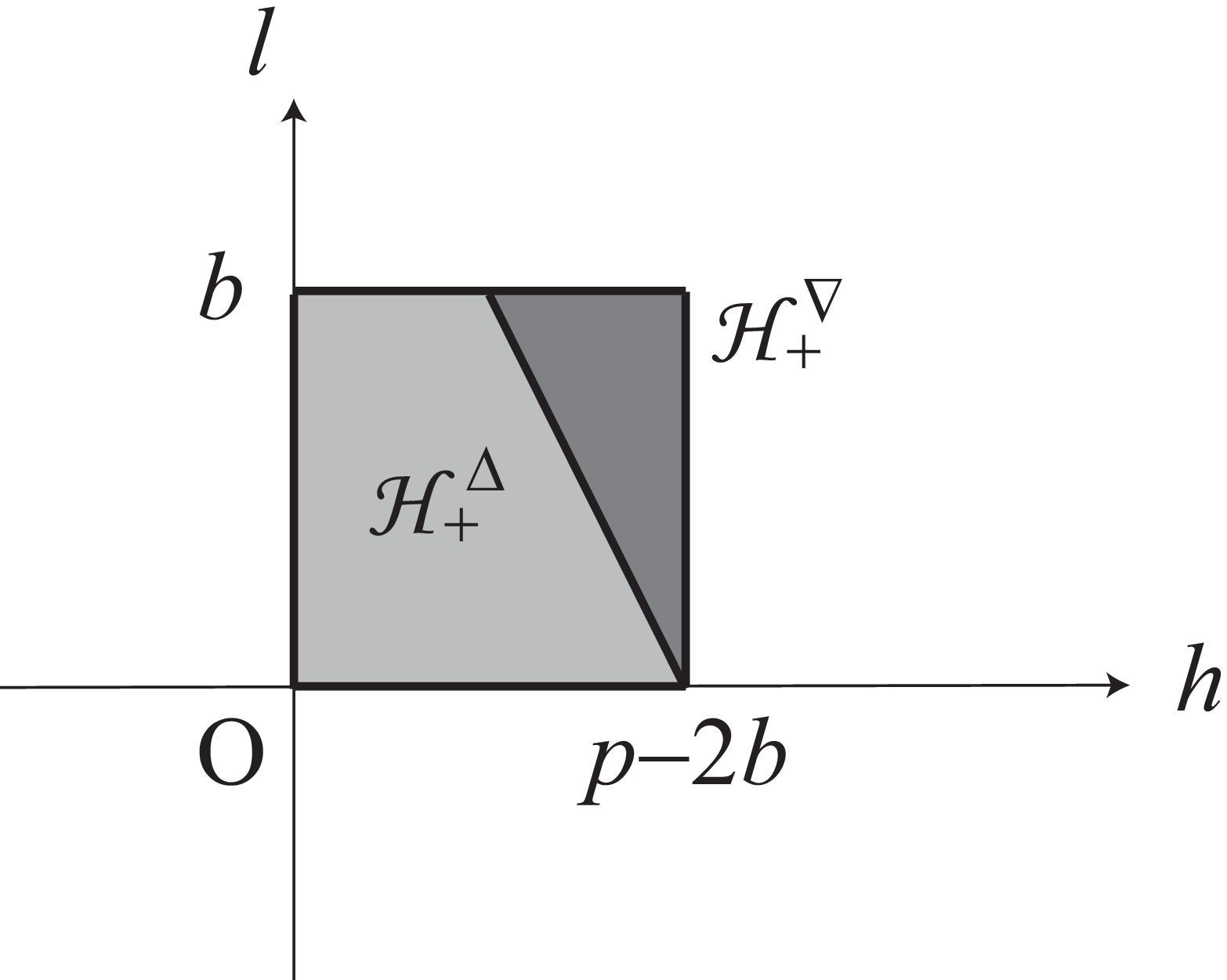}}
  \quad
  \raisebox{-0.5\height}{\includegraphics[scale=0.4]{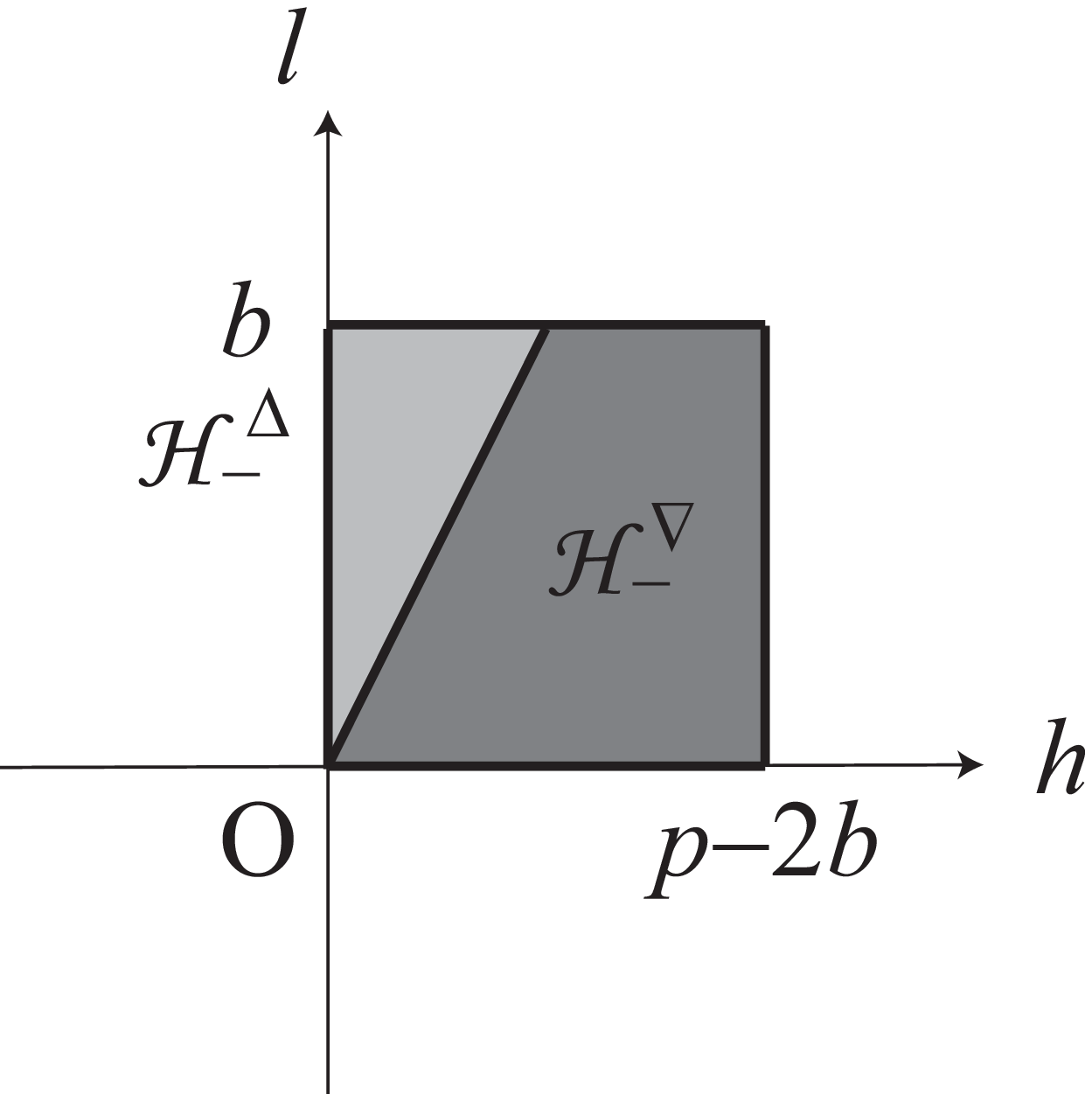}}
  \caption{Two decompositions of $\mathcal{H}$.}
  \label{fig:H}
\end{figure}
Note that $\HmD\subset\HpD$ and that $\HpN\subset\HmN$.
\end{example}
\begin{example}
If $(a,b)=(3,5)$ and $p=19$, we have
\begin{align*}
  \RD
  &=
  \{(1,2),(2,1),(3,2),(3,4),(3,8)\}
  \\
  \RN
  &=
  \{(1,4),(1,8),(1,14),(2,7),(2,11),(2,13),(3,14)\},
\end{align*}
\begin{align*}
  \RpD
  &=
  \{(1,2),(2,1),(3,2)\},
  \\
  \RpN
  &=
  \{(1,14),(2,13),(3,14)\},
  \\
  \RmD
  &=
  \{(3,4),(3,8)\},
  \\
  \RmN
  &=
  \{(1,4),(1,8),(2,7),(2,11)\},
\end{align*}
and
\begin{align*}
  \HpD
  &=
  \{(3,1,3),(2,2,4),(1,1,3)\},
  \\
  \HpN
  &=
  \{(3,1,1),(2,2,2),(1,1,1)\},
  \\
  \HmD
  &=
  \{(1,1,1),(1,1,3)\},
  \\
  \HmN
  &=
  \{(3,1,1),(3,1,3),(2,2,2),(2,2,4)\},
\end{align*}
from \ref{H_R_Gamma_Theta_3_5}.
\par
Note that $\HpD\cap\HmD=\{(1,1,3)\}$ and that $\HpN\cap\HmN=\{(3,1,1),(2,2,2)\}$.
\end{example}
\par
Since $\mathcal{R}=\RD\sqcup\RN$, $\RD=\RpD\sqcup\RmD$, and $\RN=\RpN\sqcup\RmN$, from \eqref{eq:G_sum_R} and the condition that $g\modtwo p-m$ when $(g,m)\in\mathcal{R}$, we have
\begin{equation*}
\begin{split}
  &e^{-\frac{(n+1)\pi\i}{4}}\sqrt{ab(p-ab)}A(n)
  \\
  =&
  \sum_{(g,m)\in\RD}G(g,m)
  -
  \sum_{(g,m)\in\RN}G(g,m)
  -
  \sum_{(g,m)\in\mathcal{R}}(-1)^{ab+p+g}G(g,m)
  \\
  =&
  \sum_{(g,m)\in\RpD}G(g,m)
  +
  \sum_{(g,m)\in\RmD}G(g,m)
  -
  \sum_{(g,m)\in\RpN}G(g,m)
  -
  \sum_{(g,m)\in\RmN}G(g,m)
  \\
  &-
  \sum_{(g,m)\in\RpD}(-1)^{ab+p+g}G(g,m)
  -
  \sum_{(g,m)\in\RmD}(-1)^{ab+p+g}G(g,m)
  \\
  &-
  \sum_{(g,m)\in\RpN}(-1)^{ab+p+g}G(g,m)
  -
  \sum_{(g,m)\in\RmN}(-1)^{ab+p+g}G(g,m)
  \\
  =&
  \sum_{(g,m)\in\RpD}
  \left(1-(-1)^{ab+p+g}\right)G(g,m)
  +
  \sum_{(g,m)\in\RmD}
  \left(1-(-1)^{ab+p+g}\right)G(g,m)
  \\
  &-
  \sum_{(g,m)\in\RpN}
  \left(1+(-1)^{ab+p+g}\right)G(g,m)
  -
  \sum_{(g,m)\in\RmN}
  \left(1+(-1)^{ab+p+g}\right)G(g,m)
  \\
  =&
  \sum_{(h,k,l)\in\HpD}
  \left(1-(-1)^{h}\right)G(\tGamma_{+}(h,k,l))
  +
  \sum_{(h,k,l)\in\HmD}
  \left(1-(-1)^{h}\right)G(\tGamma_{-}(h,k,l))
  \\
  &-
  \sum_{(h,k,l)\in\HpN}
  \left(1+(-1)^{h}\right)G(\tGamma_{+}(h,k,l))
  -
  \sum_{(h,k,l)\in\HmN}
  \left(1+(-1)^{h}\right)G(\tGamma_{-}(h,k,l)),
\end{split}
\end{equation*}
since $ab+p+g\modtwo h$ if $(g,m)=\tGamma_{\pm}(h,k,l)$.
\par
Therefore we have
\begin{equation*}
\begin{split}
  &e^{-\frac{(n+1)\pi\i}{4}}\sqrt{ab(p-ab)}A(n)
  \\
  =&
  2\sum_{\substack{(h,k,l)\in\HpD\\ h\modtwo k\modtwo l\modtwo1}}
  G(\tGamma_{+}(h,k,l))
  +
  2\sum_{\substack{(h,k,l)\in\HmD\\ h\modtwo k\modtwo l\modtwo1}}
  G(\tGamma_{-}(h,k,l))
  \\
  &-
  2\sum_{\substack{(h,k,l)\in\HpN\\ h\modtwo k\modtwo l\modtwo0}}
  G(\tGamma_{+}(h,k,l))
  -
  2\sum_{\substack{(h,k,l)\in\HmN\\ h\modtwo k\modtwo l\modtwo0}}
  G(\tGamma_{-}(h,k,l)).
\end{split}
\end{equation*}
%%%%%%%%%%%%%%%%%%%%%%%%%%%%%%%%%%%%%%%%
\par
Now we define $\ChernSimons_{\pm}(h,k,l)$ and $\Reidemeister_{\pm}(h,k,l)$ as follows:
\begin{align*}
  \ChernSimons_{\pm}^{\rm{Irr}}(h,k,l)
  &:=
  \begin{cases}
    -\frac{(p-ab-h)^2}{4(p-ab)}-\frac{\Gamma_{\pm}(k,l)^2}{4ab}
    &\quad\text{if $\Gamma_{\pm}(k,l)+ab+h\modtwo0$,}
    \\
    -\frac{(p-ab-h)^2}{4(p-ab)}-\frac{(ab-\Gamma_{\pm}(k,l))^2}{4ab}
    &\quad\text{if $\Gamma_{\pm}(k,l)+ab+h\modtwo1$,}
  \end{cases}
  \\
  \Reidemeister_{\pm}^{\rm{Irr}}(h,k,l)
  &:=
  \begin{cases}
    (-1)^{\Gamma_{\pm}(k,l)}
    \frac{8\sin\left(\frac{\Gamma_{\pm}(k,l)\pi}{a}\right)
           \sin\left(\frac{\Gamma_{\pm}(k,l)\pi}{b}\right)
           \sin\left(\frac{(p-ab-h)\pi}{p-ab}\right)}
         {\sqrt{ab(p-ab)}}
    &\quad\text{if $\Gamma_{\pm}(k,l)+ab+h\modtwo0$,}
    \\
    (-1)^{ab-\Gamma_{\pm}(k,l)}
    \frac{8\sin\left(\frac{(ab-\Gamma_{\pm}(k,l))\pi}{a}\right)
           \sin\left(\frac{(ab-\Gamma_{\pm}(k,l))\pi}{b}\right)
           \sin\left(\frac{(p-ab-h)\pi}{p-ab}\right)}
         {\sqrt{ab(p-ab)}}
    &\quad\text{if $\Gamma_{\pm}(k,l)+ab+h\modtwo1$.}
  \end{cases}
\end{align*}
Then from \eqref{eq:G_def} we have
\begin{equation}\label{eq:A_H}
\begin{split}
  &e^{-\frac{(n+1)\pi\i}{4}}A(n)
  \\
  =&
  \frac{1}{4}\sum_{\substack{(h,k,l)\in\HpD\\ h\modtwo k\modtwo l\modtwo1}}
  \Reidemeister_{+}^{\rm{Irr}}(h,k,l)e^{n\ChernSimons_{+}^{\rm{Irr}}(h,k,l)\pi\i}
  +
  \frac{1}{4}\sum_{\substack{(h,k,l)\in\HmD\\ h\modtwo k\modtwo l\modtwo1}}
  \Reidemeister_{-}^{\rm{Irr}}(h,k,l)e^{n\ChernSimons_{-}^{\rm{Irr}}(h,k,l)\pi\i}
  \\
  &-
  \frac{1}{4}\sum_{\substack{(h,k,l)\in\HpN\\ h\modtwo k\modtwo l\modtwo0}}
  \Reidemeister_{+}^{\rm{Irr}}(h,k,l)e^{n\ChernSimons_{+}^{\rm{Irr}}(h,k,l)\pi\i}
  -
  \frac{1}{4}\sum_{\substack{(h,k,l)\in\HmN\\ h\modtwo k\modtwo l\modtwo0}}
  \Reidemeister_{-}^{\rm{Irr}}(h,k,l)e^{n\ChernSimons_{-}^{\rm{Irr}}(h,k,l)\pi\i}.
\end{split}
\end{equation}
\par
\begin{lemma}\label{lem:Reidemeister_X_p_irr}
We have
\begin{equation*}
  \Reidemeister_{\pm}^{\rm{Irr}}(h,k,l)^{-2}
  =
  \left|
    \mathbb{T}(X_{p};\trho_{p-ab-h,a-k,b-l}^{\rm{Irr}})
  \right|.
\end{equation*}
\end{lemma}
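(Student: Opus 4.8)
The plan is to reduce the statement entirely to Lemma~\ref{lem:torsion_hkl} and then observe that passing to $|\cdot|^{2}$ wipes out every sign and the case distinction in the definition of $\Reidemeister_{\pm}^{\rm{Irr}}$. Here $\mathbb{T}$ denotes the homological Reidemeister torsion $\Tor$, so the two notations are interchangeable. First I would apply Lemma~\ref{lem:torsion_hkl} with the triple $(p-ab-h,a-k,b-l)$ substituted for $(h,k,l)$, which yields
\[
  \Tor(X_p;\trho_{p-ab-h,a-k,b-l}^{\rm{Irr}})
  =
  \pm\frac{ab(p-ab)}
    {64\sin^2\!\left(\frac{(a-k)\pi}{a}\right)
       \sin^2\!\left(\frac{(b-l)\pi}{b}\right)
       \sin^2\!\left(\frac{(p-ab-h)\pi}{p-ab}\right)}.
\]
Since $\sin\!\left(\frac{(a-k)\pi}{a}\right)=\sin\!\left(\frac{k\pi}{a}\right)$ and $\sin\!\left(\frac{(b-l)\pi}{b}\right)=\sin\!\left(\frac{l\pi}{b}\right)$, taking absolute values gives
\[
  \left|\Tor(X_p;\trho_{p-ab-h,a-k,b-l}^{\rm{Irr}})\right|
  =
  \frac{ab(p-ab)}
    {64\sin^2\!\left(\frac{k\pi}{a}\right)
       \sin^2\!\left(\frac{l\pi}{b}\right)
       \sin^2\!\left(\frac{(p-ab-h)\pi}{p-ab}\right)}.
\]

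Next I would compute $\Reidemeister_{\pm}^{\rm{Irr}}(h,k,l)^{2}$ directly from its definition. The essential point is the residue characterization recorded earlier: $\Gamma_{\pm}(k,l)\equiv k\pmod{a}$ and $\Gamma_{\pm}(k,l)\equiv\pm l\pmod{b}$. Hence $\sin^2\!\left(\frac{\Gamma_{\pm}(k,l)\pi}{a}\right)=\sin^2\!\left(\frac{k\pi}{a}\right)$ and $\sin^2\!\left(\frac{\Gamma_{\pm}(k,l)\pi}{b}\right)=\sin^2\!\left(\frac{l\pi}{b}\right)$, and the same identities hold verbatim with $\Gamma_{\pm}(k,l)$ replaced by $ab-\Gamma_{\pm}(k,l)$, because $ab-\Gamma_{\pm}(k,l)\equiv-\Gamma_{\pm}(k,l)$ modulo both $a$ and $b$ (and squaring the sine kills the extra sign). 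Since squaring also removes the prefactors $(-1)^{\Gamma_{\pm}(k,l)}$ and $(-1)^{ab-\Gamma_{\pm}(k,l)}$, both branches of the definition of $\Reidemeister_{\pm}^{\rm{Irr}}(h,k,l)$ collapse to the single expression
\[
  \Reidemeister_{\pm}^{\rm{Irr}}(h,k,l)^{2}
  =
  \frac{64\sin^2\!\left(\frac{k\pi}{a}\right)
        \sin^2\!\left(\frac{l\pi}{b}\right)
        \sin^2\!\left(\frac{(p-ab-h)\pi}{p-ab}\right)}
       {ab(p-ab)}.
\]

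Comparing the two displayed formulas gives $\Reidemeister_{\pm}^{\rm{Irr}}(h,k,l)^{-2}=\left|\mathbb{T}(X_p;\trho_{p-ab-h,a-k,b-l}^{\rm{Irr}})\right|$, as desired. I do not expect any genuine obstacle here; the only thing to keep straight is the bookkeeping of which residues survive, and the whole argument turns on the fact that the case split and the various signs in $\Reidemeister_{\pm}^{\rm{Irr}}$ become irrelevant once one passes to $|\cdot|^{2}$, which is precisely why the identity is stated with an absolute value and a square.
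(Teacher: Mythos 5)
Your proof is correct and follows essentially the same route as the paper's: the paper's own argument likewise rests on the congruences $\Gamma_{\pm}(k,l)\equiv k\pmod{a}$, $\Gamma_{\pm}(k,l)\equiv\pm l\pmod{b}$ (so that the sine factors agree up to sign) combined with the torsion formula of Lemma~\ref{lem:torsion_hkl}. If anything, your write-up is slightly more careful than the paper's, since you treat the $ab-\Gamma_{\pm}(k,l)$ branch of the definition and the substitution of $(p-ab-h,a-k,b-l)$ into Lemma~\ref{lem:torsion_hkl} explicitly, both of which the paper leaves implicit.
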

\begin{proof}
We will show
\begin{equation*}
  \Reidemeister_{\pm}^{\rm{Irr}}(h,k,l)
  =
  \pm\sin\left(\frac{k\pi}{a}\right)\sin\left(\frac{l\pi}{b}\right)
  \sin\left(\frac{h\pi}{p-ab}\right).
\end{equation*}
It is sufficient to prove
\begin{equation*}
  \sin\left(\frac{\Gamma_{\pm}(k,l)\pi}{a}\right)
  \sin\left(\frac{\Gamma_{\pm}(k,l)\pi}{b}\right)
  =
  \pm
  \sin\left(\frac{k\pi}{a}\right)
  \sin\left(\frac{l\pi}{b}\right).
\end{equation*}
Since $\Gamma_{\pm}(k,l)=\pm adl-bck\equiv -bck\equiv-k\pmod{a}$ and $\Gamma_{\pm}(k,l)\equiv adl\equiv l\pmod{b}$, the equality above holds.
\end{proof}
\begin{lemma}\label{lem:ChernSimons_X_p_irr}
We have
\begin{equation*}
  \ChernSimons(h,k,l)
  \equiv
  \CS(X_p;\trho_{p-ab-h,a-k,b-l}^{\rm{Irr}})
  \in\C/(2\Z).
\end{equation*}
\end{lemma}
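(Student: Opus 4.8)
The plan is to evaluate the right-hand side using the already-proven Lemma~\ref{lem:CS_X_p_irr} and then match it termwise against the definition of $\ChernSimons_{\pm}^{\rm{Irr}}(h,k,l)$. Applying Lemma~\ref{lem:CS_X_p_irr} to the triple $(p-ab-h,\,a-k,\,b-l)$ yields
\begin{equation*}
  \CS(X_p;\trho_{p-ab-h,a-k,b-l}^{\rm{Irr}})
  =
  -\frac{(p-ab-h)^2}{4(p-ab)}
  -\frac{\bigl(ad(b-l)-bc(a-k)\bigr)^2}{4ab}
  \in\C/\Z.
\end{equation*}
The first summand is literally the first summand of $\ChernSimons_{\pm}^{\rm{Irr}}(h,k,l)$, so the whole statement reduces to comparing the two ``$\tfrac{(\cdot)^2}{4ab}$'' terms, now modulo $2\Z$ rather than modulo $\Z$.

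Next I would relate the numerator $w:=ad(b-l)-bc(a-k)$ to $\Gamma_{\pm}(k,l)$. Since $ad-bc=1$, expanding gives $w=ab(d-c)-(adl-bck)$, and the defining congruence $\Gamma_{+}(k,l)\equiv adl-bck\pmod{ab}$ yields $w\equiv -\Gamma_{+}(k,l)\equiv ab-\Gamma_{+}(k,l)\pmod{ab}$; equivalently $w\equiv\Gamma_{+}(a-k,b-l)\pmod{ab}$ via the identity $\Gamma_{\pm}(a-k,b-l)=ab-\Gamma_{\pm}(k,l)$ established in the section on $\mathcal{H}$ and $\mathcal{R}$. The two branches in the definition of $\ChernSimons_{+}^{\rm{Irr}}(h,k,l)$, keyed to the parity of $\Gamma_{+}(k,l)+ab+h$, are exactly the two branches defining $\tGamma_{+}$, which select the representative $m\in\{\Gamma_{+}(k,l),\,ab-\Gamma_{+}(k,l)\}$ of $-\Gamma_{+}(k,l)\bmod ab$ that makes $(p-ab-h,m)$ lie in $\mathcal{R}$. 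Thus it suffices to show that $\tfrac{w^2}{4ab}$ agrees modulo $2\Z$ with $\tfrac{m^2}{4ab}$ for the $m$ dictated by the case.

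For the reduction itself, write $w=ab-\Gamma_{+}(k,l)+ab\,s$ with $s\in\Z$ and expand, using
\begin{equation*}
  \frac{(ab-\Gamma_{+}(k,l))^2}{4ab}-\frac{\Gamma_{+}(k,l)^2}{4ab}
  =
  \frac{ab}{4}-\frac{\Gamma_{+}(k,l)}{2}.
\end{equation*}
In either case the comparison reduces to verifying that an explicit quantity of the shape
\begin{equation*}
  \varepsilon\left(\frac{ab}{4}-\frac{\Gamma_{+}(k,l)}{2}\right)
  +
  \frac{s\bigl(ab-\Gamma_{+}(k,l)\bigr)}{2}
  +
  \frac{ab\,s^2}{4}
\end{equation*}
lies in $2\Z$, where $\varepsilon=1$ in the first case (target $\Gamma_{+}(k,l)^2$) and $\varepsilon=0$ in the second (target $(ab-\Gamma_{+}(k,l))^2$). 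This final parity check is the step I expect to be the main obstacle: it must invoke $b$ odd, the constraint $h\modtwo k\modtwo l$ defining $\mathcal{H}$, the relation $ad-bc=1$, and the case hypothesis on the parity of $\Gamma_{+}(k,l)+ab+h$ to pin down the parities of $\Gamma_{+}(k,l)$, of $s$, and of $ab$ simultaneously. When $a$ is even one always lands in the $\varepsilon=1$ branch with $m=\Gamma_{+}(k,l)$, so the bookkeeping simplifies; when $a$ is odd both branches occur and the parity of $s$ must be tracked carefully. The identical argument with $\Gamma_{-}$ in place of $\Gamma_{+}$ (and the companion representation furnished by the $\tGamma_{-}$-correspondence) disposes of $\ChernSimons_{-}^{\rm{Irr}}(h,k,l)$. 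The passage from $\C/\Z$ to $\C/2\Z$ is exactly what the two-branch definition of $\ChernSimons_{\pm}^{\rm{Irr}}$ is engineered to make legitimate, resolving the $\Z$-ambiguity of Lemma~\ref{lem:CS_X_p_irr} to this finer congruence, which is the one needed so that $e^{n\ChernSimons_{\pm}^{\rm{Irr}}(h,k,l)\pi\i}$ is well defined for odd $n$.
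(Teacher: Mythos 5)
Your opening reduction is the same as the paper's: apply Lemma~\ref{lem:CS_X_p_irr} to the triple $(p-ab-h,a-k,b-l)$, cancel the common term $-\tfrac{(p-ab-h)^2}{4(p-ab)}$, and reduce the lemma to comparing $\bigl(ad(b-l)-bc(a-k)\bigr)^2$ with $m^2$, where $m$ is $\Gamma_{\pm}(k,l)$ or $ab-\Gamma_{\pm}(k,l)$ according to the parity of $\Gamma_{\pm}(k,l)+ab+h$. The problem is that you stop exactly where the paper's proof begins. The deferred ``final parity check'' is not a routine verification: it is the entire content of the paper's argument, which writes $\pm adl-bck=abv_{\pm}+\Gamma_{\pm}(k,l)$, expands the difference of squares into a product of the form $ab\,(\cdots)(\cdots)$, and then kills it modulo $2ab$ using two parity facts: $abv_{\pm}\modtwo ab$ (a consequence of $h\modtwo k\modtwo l$ together with the case hypothesis) and, when $a$ is odd, $c+d+1\modtwo 0$ (from $ad-bc=1$, with $b$ odd). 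Announcing which hypotheses the check ``must invoke'' does not constitute carrying it out, so the proposal has a genuine gap at its mathematical core.

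Moreover, the check cannot be completed in the form you set it up. You require
\begin{equation*}
  \varepsilon\left(\frac{ab}{4}-\frac{\Gamma_{+}(k,l)}{2}\right)
  +\frac{s\bigl(ab-\Gamma_{+}(k,l)\bigr)}{2}
  +\frac{ab\,s^{2}}{4}\in 2\Z,
\end{equation*}
which is equivalent to $\bigl(ad(b-l)-bc(a-k)\bigr)^2\equiv m^2\pmod{8ab}$. That congruence is false in general: take $a=2$, $b=3$, $(c,d)=(1,2)$, $p=11$, $(h,k,l)=(1,1,1)$. Then $\Gamma_{+}(1,1)=1$ and $\Gamma_{+}(k,l)+ab+h=8$ is even, so $m=1$ and $\varepsilon=1$, while $ad(b-l)-bc(a-k)=8-3=5$, hence $s=0$ and your quantity equals $\tfrac{6}{4}-\tfrac{1}{2}=1\notin2\Z$; equivalently $5^2-1^2=24$ is divisible by $2ab=12$ (indeed by $4ab=24$) but not by $8ab=48$. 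The paper itself only proves the congruence modulo $2ab$, i.e.\ a statement four times weaker than your target, and that is all its case analysis yields. (How the paper reconciles this mod-$2ab$ congruence with the ``$\in\C/(2\Z)$'' in the statement is a separate bookkeeping issue---$\CS$ is only defined in $\C/\Z$ to begin with---but in any case a proof along your lines must aim at the paper's weaker congruence, not at membership in $2\Z$, which is unattainable.)
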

\begin{proof}
We will show that
\begin{equation*}
  (ad(b-l)-bc(a-k))^2
  \equiv
  \begin{cases}
    \Gamma_{\pm}(k,l)^2
    &\pmod{2ab}
    \quad\text{if $\Gamma_{\pm}(k,l)+ab+h\modtwo0$,}
    \\
    (ab-\Gamma_{\pm}(k,l))^2
    &\pmod{2ab}
    \quad\text{if $\Gamma_{\pm}(k,l)+ab+h\modtwo1$.}
  \end{cases}
\end{equation*}
\begin{itemize}
\item
$\Gamma_{\pm}(k,l)+ab+h\modtwo0$.
\par
Put $u_{\pm}:=\Gamma_{\pm}(k,l)$.
Then there exists $v_{\pm}\in\Z$ such that $\pm adl-bck=abv_{\pm}+u_{\pm}$.
So we have
\begin{equation*}
\begin{split}
  \Gamma_{+}(k,l)^2-(ad(b-l)-bc(a-k))^2
  &=
  (adl-bck-abv_{+})^2-(ad(b-l)-bc(a-k))^2
  \\
  &=
  ab(d-c-v_{+})(2adl-abd+abc-2bck-abv_{+}).
  \\
  &\equiv
  ab(d-c-v_{+})(abd+abc+abv_{+})
  \pmod{2ab}.
\end{split}
\end{equation*}
However since $abv_{+}=adl-bck-u_{+}\modtwo(ad-bc)l+\Gamma_{+}(k,l)\modtwo l+ab+h\modtwo ab$, this is congruent to
\begin{equation*}
  ab(d-c-v_{+})(abd+abc+ab)
  =
  a^2b^2(d-c-v_{+})(d+c+1).
\end{equation*}
If $a$ is even, this is congruent to $0$ modulo $2ab$.
If $a$ is odd, then $d+c+1$ is congruent to $0$ modulo $2$ since $ad-bc=1$ and so this is also congruent to $0$ modulo $2ab$.
\par
We also have
\begin{equation*}
\begin{split}
  \Gamma_{-}(k,l)^2-(ad(b-l)-bc(a-k))^2
  &=
  (-adl-bck-abv_{-})^2-(ad(b-l)-bc(a-k))^2
  \\
  &=
  (abd-2adl-abc-abv_{-})(-abd+abc-2bck-abv_{-})
  \\
  &=
  ab(bd-2dl-bc-bv_{-})(-ad+ac-2ck-av_{-})
  \\
  &\equiv
  ab(bd-bc-bv_{-})(-ad+ac-av_{-})
  \\
  &\equiv
  (abd-abc-abv_{-})(-abd+abc-abv_{-})
  \\
  &\equiv0
  \pmod{2ab},
\end{split}
\end{equation*}
where the last congruence follows by the same reason as above.
\par
Therefore we conclude that $\Gamma_{\pm}(k,l)^2\equiv(ad(b-l)-bc(a-k))^2\pmod{2ab}$.
\item
$\Gamma_{\pm}(k,l)+ab+h\modtwo1$.
\par
Let $u_{\pm}$ and $v_{\pm}$ be as above.
\par
We have
\begin{equation*}
\begin{split}
  &(ab-\Gamma_{+}(k,l))^2-(ad(b-l)-bc(a-k))^2
  \\
  =&
  (ab-adl+bck+abv_{+})^2-(ad(b-l)-bc(a-k))^2
  \\
  =&
  (ab+abd-2adl-abc+2bck+abv_{+})(ab-abd+abc+abv_{+})
  \\
  =&
  ab(ab+abd-2adl-abc+2bck+abv_{+})(1-d+c+v_{+})
  \\
  =&
  ab(ab+abd-abc+abv_{+})(1-d+c+v_{+})
  \pmod{2ab}.
\end{split}
\end{equation*}
Now since $abv_{+}=adl-bck-u_{+}\modtwo ab+1$, this is congruent to
\begin{equation*}
  ab(ab+abd-abc+ab+1)(1-d+c+v_{+})
  \equiv
  a^2b^2(c+d+1)(1-d+c+v_{+})
  \pmod{2ab},
\end{equation*}
which is congruent by the same reason as above.
\par
We also have
\begin{equation*}
\begin{split}
  &\Gamma_{-}(k,l)^2-(ad(b-l)-bc(a-k))^2
  \\
  =&
  (ab+adl+bck+abv_{-})^2-(ad(b-l)-bc(a-k))^2
  \\
  =&
  (ab+abd+2bck-abc+abv_{-})(ab-abd+2adl+abc+abv_{-})
  \\
  =&
  ab(a+ad+2ck-ac+av_{-})(b-bd+2dl+bc+bv_{-})
  \\
  &\equiv
  ab(a+ad-ac+av_{-})(b-bd+bc+bv_{-})
  \\
  &=
  ab(1+d-c+v_{-})(ab-abd+abc+abv_{-})
  \\
  &\equiv0
  \pmod{2ab}
\end{split}
\end{equation*}
by the same reason as above.
\par
Therefore we conclude that $\Gamma_{\pm}(k,l)^2\equiv(ad(b-l)-bc(a-k))^2\pmod{2ab}$.
\end{itemize}
The proof is complete.
\end{proof}
From Lemmas~\ref{lem:Reidemeister_X_p_irr} and \ref{lem:ChernSimons_X_p_irr}, we have topological interpretations of the terms in the right hand side of \eqref{eq:A_H}.
\begin{example}\label{ex:a_2_A_B}
Suppose that $a=2$.
If $(h,k,l)\in\mathcal{H}$, then $k=1$, and $l$ and $h$ are odd.
So the last two terms in \eqref{eq:A_H} vanish.
\par
Moreover, from Example~\ref{ex:H}, we see that $\HmD\subset\HpD$ and that $\HpN\subset\HmN$.
So \eqref{eq:A_H} becomes
\begin{equation*}
\begin{split}
  &e^{-\frac{(n+1)\pi\i}{4}}A(n)
  \\
  =&
  \frac{1}{4}\sum_{(h,1,l)\in\HpD}
  \Reidemeister_{+}^{\rm{Irr}}(h,1,l)e^{n\ChernSimons_{+}^{\rm{Irr}}(h,1,l)\pi\i}
  +
  \frac{1}{4}\sum_{(h,1,l)\in\HmD}
  \Reidemeister_{-}^{\rm{Irr}}(h,1,l)e^{n\ChernSimons_{-}^{\rm{Irr}}(h,1,l)\pi\i}
\end{split}
\end{equation*}
\par
From Example~\ref{ex:a_1_P_Q}, $\Gamma_{+}(1,l)=l$ and $\Gamma_{-}(1,l)=2b-l$.
Note that $\Gamma_{\pm}(k,l)+ab+h\equiv0$ in this case.
So we have
\begin{align*}
  \ChernSimons_{+}^{\rm{Irr}}(h,1,l)
  &=
  -\frac{(p-2b-h)^2}{4(p-2b)}-\frac{l^2}{8b},
  \\
  \ChernSimons_{-}^{\rm{Irr}}(h,1,l)
  &=
  -\frac{(p-2b-h)^2}{4(p-2b)}-\frac{(2b-l)^2}{8b}
  =
  \ChernSimons_{+}^{\rm{Irr}}(h,1,l)+\frac{b-l}{2}
\end{align*}
from Example~\ref{ex:H_R_Gamma}.
Note that $(b-l)/2\in\Z$.
We also have
\begin{align*}
  \Reidemeister_{+}^{\rm{Irr}}(h,1,l)
  &=
  -
  \frac{8\sin\left(\frac{l}{2}\pi\right)\sin\left(\frac{l}{b}\pi\right)
         \sin\left(\frac{p-2b-h}{p-2b}\pi\right)}
       {\sqrt{2b(p-2b)}},
  \\
  \Reidemeister_{-}^{\rm{Irr}}(h,1,l)
  &=
  -
  \frac{8\sin\left(\frac{2b-l}{2}\pi\right)\sin\left(\frac{2b-l}{b}\pi\right)
         \sin\left(\frac{p-2b-h}{p-2b}\pi\right)}
       {\sqrt{2b(p-2b)}}
  \\
  &=
  -\Reidemeister_{+}^{\rm{Irr}}(h,1,l).
\end{align*}
So we have
\begin{equation*}
\begin{split}
  &e^{-\frac{(n+1)\pi\i}{4}}A(n)
  \\
  =&
  \frac{1}{4}\sum_{(h,1,l)\in\HpD}
  \Reidemeister_{+}^{\rm{Irr}}(h,1,l)e^{n\ChernSimons_{+}^{\rm{Irr}}(h,1,l)\pi\i}
  -
  \frac{1}{4}\sum_{(h,1,l)\in\HmD}
  (-1)^{(b-l)/2}
  \Reidemeister_{+}^{\rm{Irr}}(h,1,l)e^{n\ChernSimons_{+}^{\rm{Irr}}(h,1,l)\pi\i}
  \\
  =&
  \frac{1}{2}\sum_{\substack{(h,1,l)\in\HmD\\ b-l\equiv2\pmod4}}
  \Reidemeister_{+}^{\rm{Irr}}(h,1,l)e^{n\ChernSimons_{+}^{\rm{Irr}}(h,1,l)\pi\i}
  +
  \frac{1}{4}\sum_{(h,1,l)\in\HpD\setminus\HmD}
  \Reidemeister_{+}^{\rm{Irr}}(h,1,l)e^{n\ChernSimons_{+}^{\rm{Irr}}(h,1,l)\pi\i}
\end{split}
\end{equation*}
since $\HmD\subset\HpD$.
\end{example}
%%%%%%%%%%%%%%%%%%%%%%%%%%%%%%%%%%%%%%%%%%%%%%%%%%%%%%%%%%%
\subsection{Topological interpretation of $B(n)$}
In this subsection we give a topological interpretation of $B(n)$.
\par
First note that $H_1(M_p;\Z)=\Z/p\Z$ and so a reducible Abelian representation $\sigma_l$ of $\pi_1(M_p)$ is characterized as $\tr\sigma_l=\cos(2l\pi/p)$ with $1<l<(p-1)/2$.
\par
Put
\begin{align*}
  \Reidemeister^{\rm{Abel}}(l)
  &:=
  (-1)^l
  \frac{4\sin\left(\frac{2al\pi}{p}\right)\sin\left(\frac{2bl\pi}{p}\right)
        \sin\left(\frac{2l\pi}{p}\right)}
        {\sqrt{p}\sin\left(\frac{2abl\pi}{p}\right)},
  \\
  \ChernSimons^{\rm{Abel}}(l)
  &:=
  -\frac{l^2}{p}.
\end{align*}
Then we can write $B(n)$ as follows:
\begin{equation}\label{eq:B}
  B(n)
  =
  \frac{1}{2}\i(-1)^{a+b+ab}e^{n(1-p)\pi\i/4}
  \sum_{0<l<(p-1)/2}
  \Reidemeister^{\rm{Abel}}(l)e^{n\ChernSimons^{\rm{Abel}}(l)\pi\i}.
\end{equation}
Note that
\begin{align}\label{RCS_Abel}
  \left(\Reidemeister^{\rm{Abel}}(l)\right)^{-2}
  &=
  \left|\Tor(X_{p};\trho^{\rm{Abel}}_{l})\right|,
  \\
  \ChernSimons^{\rm{Abel}}(l)
  &=
  \CS(X_{p};\trho^{\rm{Abel}}_{l})
  \quad
  \pmod\Z.
\end{align}
\par
From \eqref{eq:tau_A_B}, \eqref{eq:A_H}, \eqref{eq:B}, \eqref{RCS_Abel}, and Lemmas~\ref{lem:Reidemeister_X_p_irr} and \ref{lem:ChernSimons_X_p_irr} we have the following theorem.
\begin{theorem}\label{thm:main}
The Witten--Reshetikhin-Turaev invariant of $X_p$ evaluated at $e^{4\pi\i/n}$ has the following asymptotic expansion.
\begin{equation*}
  \htau_n(X_p;\exp(4\pi\i/n))
  =
  \frac{(-1)^{p+1}n^{3/2}}{2\pi}
  \left(A(n)+B(n)n^{-1/2}+O(n^{-1})\right)
\end{equation*}
with
\begin{equation*}
\begin{split}
  A(n)
  =&
  2e^{\frac{n+1}{4}\pi\sqrt{-1}}
  \left(
    \sum_{\substack{(h,k,l)\in\mathcal{H}_{+}^{\Delta}\\ h\modtwo k\modtwo l\modtwo1}}
    -
    \sum_{\substack{(h,k,l)\in\mathcal{H}_{+}^{\nabla}\\ h\modtwo k\modtwo l\modtwo0}}
  \right)
  \Reidemeister_{+}^{\rm{Irr}}(h,k,l)e^{n\ChernSimons_{+}^{\rm{Irr}}(h,k,l)\pi\i}
  \\
  &+
  2e^{\frac{n+1}{4}\pi\sqrt{-1}}
  \left(
    \sum_{\substack{(h,k,l)\in\mathcal{H}_{-}^{\Delta}\\ h\modtwo k\modtwo l\modtwo1}}
    -
    \sum_{\substack{(h,k,l)\in\mathcal{H}_{-}^{\nabla}\\ h\modtwo k\modtwo l\modtwo0}}
  \right)
  \Reidemeister_{-}^{\rm{Irr}}(h,k,l)e^{n\ChernSimons_{-}^{\rm{Irr}}(h,k,l)\pi\i}
\end{split}
\end{equation*}
and
\begin{equation*}
  B(n)
  =
  \frac{1}{2}\i(-1)^{a+b+ab}e^{n(1-p)\pi\i/4}
  \sum_{0<l<(p-1)/2}
  \Reidemeister^{\rm{Abel}}(l)e^{n\ChernSimons^{\rm{Abel}}(l)\pi\i},
\end{equation*}
where $\Reidemeister_{\pm}^{\rm{Irr}}(h,k,l)$ and $\Reidemeister^{\rm{Abel}}(l)$ are related to the twisted Reidemeister torsions, and $\ChernSimons_{\pm}^{\rm{Irr}}(h,k,l)$ and $\ChernSimons^{\rm{Abel}}(l)$ are related to the Chern--Simons invariant as described above.
\end{theorem}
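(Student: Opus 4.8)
The plan is to observe that by the time we reach the statement, every analytic estimate and every combinatorial bijection we need is already in hand, so the argument is essentially a bookkeeping assembly of \eqref{eq:tau_A_B}, \eqref{eq:A_H}, \eqref{eq:B}, \eqref{RCS_Abel}, and Lemmas~\ref{lem:Reidemeister_X_p_irr} and \ref{lem:ChernSimons_X_p_irr}. Indeed \eqref{eq:tau_A_B} already presents $\htau_n(X_p;\exp(4\pi\i/n))$ in the required shape $\frac{(-1)^{p+1}n^{3/2}}{2\pi}\bigl(A(n)+B(n)n^{-1/2}+O(n^{-1})\bigr)$, where at that point $A(n)$ and $B(n)$ are still written as purely trigonometric sums over $\mathcal{S}_2$ and over $0<l<(p-1)/2$. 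The entire remaining task is to re-express these two sums so that their summands manifestly carry the twisted Reidemeister torsions and the Chern--Simons invariants.

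First I would substitute \eqref{eq:A_H}, which rewrites $A(n)$ as the signed combination of $\Reidemeister_{\pm}^{\rm{Irr}}(h,k,l)\,e^{n\ChernSimons_{\pm}^{\rm{Irr}}(h,k,l)\pi\i}$ over the four index sets $\HpD,\HmD,\HpN,\HmN$. This identity itself rests on the earlier reduction of $A(n)$ to a sum of $G(g,m)$ over $\tilde{\mathcal{S}}$ via the substitution $g=2l-m+p$, on the two symmetries of $G$ recorded just after \eqref{eq:G_def}, on the splitting \eqref{eq:G_sum_R}, and on the bijections $\tGamma_{\pm}\colon\mathcal{H}\to\mathcal{R}_{\pm}$ with $\tGamma_{+}(\mathcal{H})\sqcup\tGamma_{-}(\mathcal{H})=\mathcal{R}$. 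Grouping the $\Delta$-contributions (carrying $h\modtwo k\modtwo l\modtwo1$) against the $\nabla$-contributions (carrying $h\modtwo k\modtwo l\modtwo0$) produces precisely the two bracketed differences $\bigl(\sum_{\HpD}-\sum_{\HpN}\bigr)$ and $\bigl(\sum_{\HmD}-\sum_{\HmN}\bigr)$ in the statement. For $B(n)$ there is nothing to reorganize, since \eqref{eq:B} already has the stated form. Finally I would apply the topological dictionaries, Lemma~\ref{lem:Reidemeister_X_p_irr} and Lemma~\ref{lem:ChernSimons_X_p_irr} for the irreducible terms and \eqref{RCS_Abel} for the Abelian terms, to certify that $\Reidemeister_{\pm}^{\rm{Irr}}$ and $\Reidemeister^{\rm{Abel}}$ encode the torsions $\mathbb{T}(X_p;\trho^{\rm{Irr}})$ and $\Tor(X_p;\trho^{\rm{Abel}})$, and that $\ChernSimons_{\pm}^{\rm{Irr}}$ and $\ChernSimons^{\rm{Abel}}$ encode $\CS(X_p;\trho^{\rm{Irr}})$ modulo $2\Z$ and $\CS(X_p;\trho^{\rm{Abel}})$ modulo $\Z$.

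The genuinely delicate point is not this final substitution but the parity and sign bookkeeping feeding into it, and that is where I expect the main obstacle to lie. One must check that the defining congruence $g\modtwo p-m$ of $\mathcal{R}$ is transported by $\tGamma_{\pm}$ into $h\modtwo k\modtwo l$, that the sign appearing when the $\RD$ and $\RN$ pieces are combined really equals $(-1)^{ab+p+g}=(-1)^{h}$, and hence that the factors $1\mp(-1)^{h}$ collapse the four sums to the two $\Delta$/$\nabla$ differences while doubling the surviving terms. Once these parity identities, together with the earlier saddle-point and residue analysis of $V_1-V_2$, are granted, the Main Theorem follows by direct substitution, with the overall prefactor and the exponential phases $e^{\frac{n+1}{4}\pi\i}$ and $e^{n(1-p)\pi\i/4}$ read off unchanged from \eqref{eq:A_H} and \eqref{eq:B}.
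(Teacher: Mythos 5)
Your proposal is correct and follows essentially the same route as the paper: the paper's own proof of Theorem~\ref{thm:main} is precisely the one-line assembly of \eqref{eq:tau_A_B}, \eqref{eq:A_H}, \eqref{eq:B}, \eqref{RCS_Abel}, and Lemmas~\ref{lem:Reidemeister_X_p_irr} and \ref{lem:ChernSimons_X_p_irr}, with all the real work (the residue/saddle-point analysis of $V_1-V_2$, the change of variables $g=2l-m+p$, the symmetries of $G$, the bijections $\tGamma_{\pm}$, and the parity identity $ab+p+g\modtwo h$) carried out beforehand exactly as you describe. Your identification of the parity and sign bookkeeping as the only delicate input matches where the paper actually expends its effort.
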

\subsection{$\SU(2)$ representations and $\SU(1,1)$ representations}
Motivated by \cite[Theorem~1.3]{Ohtsuki/Takata::COMMP2019} (see Theorem~\ref{thm:Ohtsuki_Takata}), we will study when a given irreducible representation $\rho\colon\pi_1(X_p)\to\SL(2;\C)$ is an $\SU(2)$ or an $\SU(1,1)$ representation.
As a result, at least in the case where $a=2$, we show that $A(n)$ in Theorem~\ref{thm:main} can be written in terms of these representations.
\par
A $2\times2$ complex matrix $L$ is in $\SU(2)$ if and only if $L^{\ast}\,L=I_2$ and $\det{L}=1$, where $L^{\ast}$ is the conjugate transpose of $L$.
Note that $\SU(2)=\left\{\begin{pmatrix}u&v\\-\overline{v}&\overline{u}\end{pmatrix}\Bigm|u,v\in\C,|u|^2+|v|^2=1\right\}$, where $\overline{u}$ is the complex conjugate of $u$.
A $2\times2$ complex matrix $M$ is in $\SU(1,1)$ if and only if $M^{\ast}\begin{pmatrix}1&0\\0&-1\end{pmatrix}M=\begin{pmatrix}1&0\\0&-1\end{pmatrix}$ and $\det{M}=1$.
Note that $\SU(1,1)=\left\{\begin{pmatrix}u&v\\\overline{v}&\overline{u}\end{pmatrix}\mid u,v\in\C,|u|^2-|v|^2=1\right\}$.
The Cayley map defined by $M\mapsto\mathcal{C}M\mathcal{C}^{-1}$ gives an isomorphism between $\SU(1,1)$ and $\SL(2;\R)$, where $\mathcal{C}:=\frac{1}{\sqrt{2}}\begin{pmatrix}1&\i\\\i&1\end{pmatrix}\in\SL(2;\C)$.
\begin{proposition}\label{prop:SU2_SL2R}
An irreducible representation $\trho_{h,k,l}^{\rm{Irr}}\to\SL(2;\C)$ is either an $\SU(2)$ representation or an $\SU(1,1)$ representation.
Moreover it is an $\SU(2)$ representation if and only if
\begin{equation}\label{eq:SU(2)_rep}
  \left(
    \cos\left(\frac{h\pi}{p-ab}\right)
    -
    \cos\left(\frac{(adl+bck)\pi}{ab}\right)
  \right)
  \left(
    \cos\left(\frac{h\pi}{p-ab}\right)
    -
    \cos\left(\frac{(adl-bck)\pi}{ab}\right)
  \right)
  >0,
\end{equation}
and it is an $\SU(1,1)$ representation if and only if
\begin{equation}\label{eq:SU(1,1)_rep}
  \left(
    \cos\left(\frac{h\pi}{p-ab}\right)
    -
    \cos\left(\frac{(adl+bck)\pi}{ab}\right)
  \right)
  \left(
    \cos\left(\frac{h\pi}{p-ab}\right)
    -
    \cos\left(\frac{(adl-bck)\pi}{ab}\right)
  \right)
  <0.
\end{equation}
\end{proposition}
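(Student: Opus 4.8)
The plan is to exploit the standard fact that an irreducible representation into $\SL(2;\C)$ whose character is real preserves a nondegenerate Hermitian form, unique up to a real scalar, and is conjugate into $\SU(2)$ or $\SU(1,1)$ according as this form is definite or indefinite. First I would record that the character of $\trho_{h,k,l}^{\rm{Irr}}$ is real: the traces of $x$, $y$ and $\mu$ are the real numbers $2\cos(k\pi/a)$, $2\cos(l\pi/b)$, $2\cos(h\pi/(p-ab))$, and reality of $\tr\trho_{h,k,l}^{\rm{Irr}}(xy)$ is checked in the last paragraph, so every word has real trace. Since $\trho_{h,k,l}^{\rm{Irr}}$ is irreducible, it is conjugate to its complex conjugate, $\overline{\trho_{h,k,l}^{\rm{Irr}}}=M\,\trho_{h,k,l}^{\rm{Irr}}\,M^{-1}$; conjugating once more shows $M\overline{M}$ is central, hence $M\overline{M}=\lambda I_2$ with $\lambda\in\R\setminus\{0\}$. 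After rescaling, $M$ may be taken Hermitian, and then $H:=M$ defines a $\trho_{h,k,l}^{\rm{Irr}}$-invariant Hermitian form which is definite when $\lambda>0$ and indefinite when $\lambda<0$. Using the matrix descriptions of $\SU(2)$ and $\SU(1,1)$ recalled above, this already proves the first assertion, that $\trho_{h,k,l}^{\rm{Irr}}$ is an $\SU(2)$ representation in the definite case and an $\SU(1,1)$ representation in the indefinite case.

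To decide the sign I would make the form explicit. Since $\tr\trho_{h,k,l}^{\rm{Irr}}(x)\neq\pm2$, conjugate so that $X:=\trho_{h,k,l}^{\rm{Irr}}(x)=\begin{pmatrix}e^{\i k\pi/a}&0\\0&e^{-\i k\pi/a}\end{pmatrix}$, which is unitary, and set $Y:=\trho_{h,k,l}^{\rm{Irr}}(y)$. The invariant form commutes with the diagonal unitary $X$, so $H=\begin{pmatrix}h_1&0\\0&h_2\end{pmatrix}$ with $h_1,h_2\in\R$; imposing invariance under $Y$ and using $\det Y=1$ gives $h_2/h_1=(1-|Y_{11}|^2)/|Y_{21}|^2$, where $Y_{21}\neq0$ by irreducibility. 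Thus the form is definite, and the representation lies in $\SU(2)$, exactly when $|Y_{11}|<1$; equivalently, via the trace identity $\tr[X,Y]=(\tr X)^2+(\tr Y)^2+(\tr XY)^2-\tr X\,\tr Y\,\tr XY-2$, the dichotomy is governed by the sign of $\tr\trho_{h,k,l}^{\rm{Irr}}([x,y])-2$, the reducible locus being $\tr[x,y]=2$.

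It then remains to rewrite $\tr\trho_{h,k,l}^{\rm{Irr}}([x,y])-2$ through the meridian. Writing $Y^{d}=U_{d-1}Y-U_{d-2}I_2$ by Cayley--Hamilton, with $U_{d-1},U_{d-2}$ the Chebyshev values at $\cos(l\pi/b)$, and recalling $\mu=x^{-c}y^{d}$, only the diagonal entries of $Y$ enter $\tr\trho_{h,k,l}^{\rm{Irr}}(\mu)$, so $m:=\cos(h\pi/(p-ab))=\tfrac12\tr\trho_{h,k,l}^{\rm{Irr}}(\mu)$ is an affine function $m=\alpha W+\beta$ of $W:=\tfrac12\tr\trho_{h,k,l}^{\rm{Irr}}(xy)$, with real coefficients; here $\alpha\neq0$ because $\gcd(c,a)=\gcd(d,b)=1$, which in particular forces $W\in\R$ and establishes the reality claimed above. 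At the two reducible representations of the component, where $Y$ is diagonal, one has $\tr[x,y]=2$ and the meridian half-traces equal $\cos((adl-bck)\pi/(ab))$ and $\cos((adl+bck)\pi/(ab))$.

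Since $\tr[X,Y]-2$ is a quadratic in $W$ vanishing precisely at these two representations, the affine substitution $W=(m-\beta)/\alpha$ turns it into a nonzero real multiple of
\[
  \left(\cos\frac{h\pi}{p-ab}-\cos\frac{(adl+bck)\pi}{ab}\right)\left(\cos\frac{h\pi}{p-ab}-\cos\frac{(adl-bck)\pi}{ab}\right),
\]
and combining this with the criterion of the previous paragraph yields \eqref{eq:SU(2)_rep} and \eqref{eq:SU(1,1)_rep}. The main obstacle is precisely this last step: performing the factorization explicitly and pinning down the sign of the proportionality constant, so that the definiteness criterion $|Y_{11}|<1$ is matched to the correct one of the two displayed inequalities; the remaining ingredients are either standard representation theory or a direct, if somewhat lengthy, trace computation.
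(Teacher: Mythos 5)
Your outline is genuinely different from the paper's proof (the paper never invokes invariant Hermitian forms: it writes $\trho_{h,k,l}^{\rm{Irr}}(y^d)$ explicitly as $\begin{pmatrix}u&v\\ \mp\overline{v}&\overline{u}\end{pmatrix}$, solves for $\Im u$ from the meridian trace, and for the converse direction reconstructs $\rho(y)$ from $\rho(y^d)$ via Chebyshev polynomials and verifies unitarity by hand), and the first half of your plan is sound: real character plus irreducibility gives a nondegenerate invariant Hermitian form, hence the $\SU(2)$/$\SU(1,1)$ dichotomy, and the affine relation between $\tr\rho(xy)$ and the meridian trace (with real, nonzero slope because $\gcd(a,c)=\gcd(b,d)=1$) is the right mechanism for the quantitative part. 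The genuine gap is exactly the step you yourself postpone: deciding which sign of $\tr\rho([x,y])-2$ corresponds to which of \eqref{eq:SU(2)_rep}, \eqref{eq:SU(1,1)_rep}. That is not a routine constant to be chased later; it is the entire content of the ``moreover'' clauses, and your proposal contains no argument for it.

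Moreover, when you do carry that step out, the pairing comes out opposite to the one you promise. With $X=\trho_{h,k,l}^{\rm{Irr}}(x)$ diagonal one has $\tr[X,Y]-2=4\sin^2(k\pi/a)\,Y_{12}Y_{21}$, which equals $-4\sin^2(k\pi/a)|v|^2<0$ when $Y\in\SU(2)$ and $+4\sin^2(k\pi/a)|v|^2>0$ when $Y\in\SU(1,1)$ (and $v\ne0$ by irreducibility); on the other hand $\tr[X,Y]-2$ is a \emph{monic} quadratic in $z=\tr\rho(xy)$ with roots at the two reducible representations, and your affine substitution (real slope) turns it into a \emph{positive} multiple of the displayed product of cosine differences. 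Hence your method proves: $\SU(2)$ if and only if the product is negative, and $\SU(1,1)$ if and only if it is positive --- that is, \eqref{eq:SU(2)_rep} and \eqref{eq:SU(1,1)_rep} with the inequality signs interchanged. This is in fact what the paper's own computation gives: there the $\SU(2)$ hypothesis $|u|^2<1$ yields $\left(\cos\frac{h\pi}{p-ab}-\cos\frac{dl\pi}{b}\cos\frac{ck\pi}{a}\right)^2-\sin^2\frac{ck\pi}{a}\sin^2\frac{dl\pi}{b}<0$ (the plus sign in the paper's middle display is a slip, since $-s^2+s^2\cos^2=-s^2\sin^2$), and this factors as the product being negative. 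A sanity check with $T(2,3)$, $(k,l)=(1,1)$: irreducible $\SU(2)$ representations of the trefoil group have meridian trace strictly between the two abelian limits $\pm\sqrt{3}=2\cos\frac{(4\mp3)\pi}{6}$, so for them the product in \eqref{eq:SU(2)_rep} is negative. So do not force your sign computation to reproduce the proposition as printed: done honestly, your approach (like the paper's own calculation, once the slip is corrected) shows that the two inequalities in the statement are swapped.
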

To prove the proposition, we prepare a lemma.
\begin{lemma}\label{lem:Chebyshev}
For a matrix $M\in\SL(2;\C)$ and an integer $m$, we have
\begin{equation*}
  M^{n}=S_{n-1}(\tr{M})M-S_{n-2}(\tr{M})I_2,
\end{equation*}
where $S_n(z)$ is the $n$-th Chebyshev polynomial defined by $S_0(z)=1$, $S_1(z)=z$, and $S_n(z)=zS_{n-1}-S_{n-2}(z)$ and $\tr$ is the trace.
\end{lemma}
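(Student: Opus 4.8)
The plan is to deduce the identity from the Cayley--Hamilton theorem together with an induction on the exponent. The first step is to record that every $M\in\SL(2;\C)$ satisfies its own characteristic equation $M^2-(\tr M)M+I_2=0$, that is, $M^2=(\tr M)M-I_2$, since $\det M=1$. Writing $t:=\tr M$, this is already the asserted formula for $n=2$, because $S_1(t)=t$ and $S_0(t)=1$.

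Next I would set up the induction, taking $n=1$ and $n=2$ as base cases. The case $n=2$ is Cayley--Hamilton as just noted. For $n=1$ the desired identity reads $M=S_0(t)M-S_{-1}(t)I_2$, so I would first extend the Chebyshev recursion one step downward to define $S_{-1}$: from $S_1=zS_0-S_{-1}$ one reads off $S_{-1}(z)=0$, which makes the $n=1$ case consistent with the stated formula.

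For the inductive step, assuming $M^n=S_{n-1}(t)M-S_{n-2}(t)I_2$, I would multiply on the left by $M$ and substitute $M^2=tM-I_2$:
\begin{align*}
  M^{n+1}
  &=S_{n-1}(t)M^2-S_{n-2}(t)M \\
  &=\bigl(tS_{n-1}(t)-S_{n-2}(t)\bigr)M-S_{n-1}(t)I_2 \\
  &=S_n(t)M-S_{n-1}(t)I_2,
\end{align*}
where the last equality uses the defining recursion $S_n=tS_{n-1}-S_{n-2}$. This closes the induction for all $n\ge1$.

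Finally, to cover the negative exponents allowed by the statement, I would run the identical argument in the opposite direction, using $M^{-1}=tI_2-M$ (again a consequence of Cayley--Hamilton) together with the backward extension of the two-term Chebyshev recursion; the cases $n\le0$ then follow by the same bookkeeping. I expect no genuine obstacle: the whole proof is a routine induction powered by one application of Cayley--Hamilton, and the only point demanding a little care is the correct tracking of the index shifts, in particular the value $S_{-1}=0$ that is needed to fit the $n=1$ base case to the stated formula.
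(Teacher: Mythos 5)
Your proof is correct and follows essentially the same route as the paper, which simply invokes the Cayley--Hamilton identity $M^2=(\tr M)M-I_2$ and says the rest follows by induction. You merely spell out the details the paper leaves implicit, including the base cases (with $S_{-1}=0$) and the downward induction for negative exponents, which is a sensible elaboration since the lemma is stated for all integers.
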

\begin{proof}
By the Cayley--Hamilton theorem, $M^2=(\tr{M})M-I_2$.
The lemma follows easily by induction.
\end{proof}
Note that $S_n(2\cos\theta)=\frac{\sin\bigl((n+1)\theta\bigr)}{\sin\theta}$ for $\theta\in\R$.
\par
Now we prove Proposition~\ref{prop:SU2_SL2R}.
\begin{proof}[Proof of Proposition~\ref{prop:SU2_SL2R}]
In this proof we use $\rho$ instead of $\trho_{h,k,l}^{\rm{Irr}}$ for short.
Recall the following from Proposition~\ref{prop:irr_rep}:
\begin{align*}
  \tr\rho(x)&=2\cos\left(\frac{k\pi}{a}\right),
  \\
  \tr\rho(y)&=2\cos\left(\frac{l\pi}{b}\right),
  \\
  \tr\rho(\mu)&=2\cos\left(\frac{h\pi}{p-ab}\right).
\end{align*}
\par
First, we show that $\rho$ is a representation to $\SU(2)$ if and only if \eqref{eq:SU(2)_rep} holds.
Suppose that the image of $\rho$ is in $\SU(2)\subset\SL(2;\C)$ .
Since a unitary matrix is diagonalizable, we may assume that $\rho(x)=\begin{pmatrix}e^{k\pi\i/a}&0\\0&e^{-k\pi\i/a}\end{pmatrix}$ up to conjugation.
Note that $\rho(y)$ is conjugate to $\begin{pmatrix}e^{l\pi\i/b}&0\\0&e^{-l\pi\i/b}\end{pmatrix}$.
Write $\rho(y^d)=\begin{pmatrix}u&v\\-\overline{v}&\overline{u}\end{pmatrix}$ with $|u|^2+|v|^2=1$ and $v\ne0$ since $\rho$ is irreducible.
Since $u+\overline{u}=\tr\rho(y^d)=2\cos(dl\pi/b)$, the real part of $u$ equals $\cos(dl\pi/b)$.
So we can put $u=\cos(dl\pi/b)+r\i$ for some $r\in\R$.
Now we have
\begin{equation*}
\begin{split}
  \tr\rho(\mu)
  &=
  \tr\rho(x^{-c}y^d)
  =
  \tr
  \left(
    \begin{pmatrix}
      e^{-ck\pi\i/a}&0 \\
      0            &e^{ck\pi\i/a}
    \end{pmatrix}
    \begin{pmatrix}
      u            &v\\
      -\overline{v}&\overline{u}
    \end{pmatrix}
  \right)
  =
  \tr
  \begin{pmatrix}
    ue^{-ck\pi\i/a}           &ve^{-ck\pi\i/a} \\
    -\overline{v}e^{ck\pi\i/a}&\overline{u}e^{ck\pi\i/a}
  \end{pmatrix}
  \\
  &=
  (\cos(dl\pi/b)+r\i)(\cos(ck\pi/a)-\sin(ck\pi/a)\i)
  \\
  &\quad
  +
  (\cos(dl\pi/b)-r\i)(\cos(ck\pi/a)+\sin(ck\pi/a)\i)
  \\
  &=
  2\cos(dl\pi/b)\cos(ck\pi/a)+2r\sin(ck\pi/a).
\end{split}
\end{equation*}
Since $\tr\rho(u)=2\cos\bigl(h\pi/(p-ab)\bigr)$, we obtain
\begin{equation}\label{eq:r}
  r
  =
  \frac{\cos\bigl(h\pi/(p-ab)\bigr)-\cos(dl\pi/b)\cos(ck\pi/a)}{\sin(ck\pi/a)}.
\end{equation}
Since $|u|^2+|v|^2=1$ and $v\ne0$, we have $|u|^2<1$ and so $\cos^2(dl\pi/b)+r^2<1$.
Hence we have
\begin{equation*}
  \cos^2(dl\pi/b)
  +
  \frac{\left(\cos\bigl(h\pi/(p-ab)\bigr)-\cos(dl\pi/b)\cos(ck\pi/a)\right)^2}{\sin^2(ck\pi/a)}
  <1,
\end{equation*}
which means that
\begin{equation*}
\begin{split}
  0&>
  -\sin^2(ck\pi/a)+\sin^2(ck\pi/a)\cos^2(dl\pi/b)
  +\left(\cos\bigl(h\pi/(p-ab)\bigr)-\cos(dl\pi/b)\cos(ck\pi/a)\right)^2
  \\
  &=
  \left(\cos\bigl(h\pi/(p-ab)\bigr)-\cos(dl\pi/b)\cos(ck\pi/a)\right)^2
  +
  \sin^2(ck\pi/a)\sin^2(dl\pi/b)
  \\
  &=
  \left(\cos\bigl(h\pi/(p-ab)\bigr)-\cos(dl\pi/b+ck\pi/a)\right)
  \left(\cos\bigl(h\pi/(p-ab)\bigr)-\cos(dl\pi/b-ck\pi/a)\right).
\end{split}
\end{equation*}
Therefore if $\rho$ is a representation to $\SU(2)$, then \eqref{eq:SU(2)_rep} holds true.
\par
Conversely, suppose that \eqref{eq:SU(2)_rep} is satisfied.
With $r$ given by \eqref{eq:r} and $u:=\cos(dl\pi/b)+r\i$, we have $|u|^2<1$.
So there exists $v\in\C$ such that $|u|^2+|v|^2=1$.
We now show that there exists $\rho(y)\in\SU(2)$ such that $\rho(y^d)=\begin{pmatrix}u&v\\-\overline{v}&\overline{u}\end{pmatrix}$.
\par
By Lemma~\ref{lem:Chebyshev}, we have
\begin{equation}\label{eq:y^d_SU(2)}
  \rho(y^d)
  =
  S_{d-1}\bigl(2\cos(l\pi/b)\bigr)\rho(y)-S_{d-2}\bigl(2\cos(l\pi/b)\bigr)I_2.
\end{equation}
Since $S_{d-1}\bigl(2\cos(l\pi/b)\bigr)=\frac{\sin(dl\pi/b)}{\sin(l\pi/b)}\ne0$, we put
\begin{equation}\label{eq:y_SU(2)}
  \rho(y)
  :=
  \frac{1}{S_{d-1}\bigl(2\cos(l\pi/b)\bigr)}
  \begin{pmatrix}
    u+S_{d-2}\bigl(2\cos(l\pi/b)\bigr)&v \\
    -\overline{v}                     &\overline{u}+S_{d-2}\bigl(2\cos(l\pi/b)\bigr)
  \end{pmatrix}.
\end{equation}
Then from \eqref{eq:y^d_SU(2)}
\begin{equation*}
  \rho(y^d)
  =
  \begin{pmatrix}
    u+S_{d-2}\bigl(2\cos(l\pi/b)\bigr)&v \\
    -\overline{v}                     &\overline{u}+S_{d-2}\bigl(2\cos(l\pi/b)\bigr)
  \end{pmatrix}
  -
  S_{d-2}\bigl(2\cos(l\pi/b)\bigr)I_2
  =
  \begin{pmatrix}u&v\\-\overline{v}&\overline{u}\end{pmatrix}.
\end{equation*}
We need to show that the right hand side of \eqref{eq:y_SU(2)} is an element in $\SU(2)$.
It will be sufficient if we can show that
\begin{equation*}
  |v|^2+\left|u+S_{d-2}\bigl(2\cos(l\pi/b)\bigr)\right|^2
  =
  \left(S_{d-1}\bigl(2\cos(l\pi/b)\bigr)\right)^2.
\end{equation*}
Put $\theta:=l\pi/b$.
Since $|u|^2+|v|^2=1$ and $u+\overline{u}=2\cos(d\theta)$, we have
\begin{equation*}
\begin{split}
  &|v|^2+\left|u+S_{d-2}\bigl(2\cos(l\pi/b)\bigr)\right|^2
  -
  \left(S_{d-1}\bigl(2\cos(l\pi/b)\bigr)\right)^2
  \\
  =&
  |v|^2
  +
  \left(u+S_{d-2}\bigl(2\cos\theta\bigr)\right)
  \left(\overline{u}+S_{d-2}\bigl(2\cos\theta\bigr)\right)
  -
  \left(S_{d-1}\bigl(2\cos(\theta)\bigr)\right)^2
  \\
  =&
  |v|^2+|u|^2
  +(u+\overline{u})S_{d-2}\bigl(2\cos\theta\bigr)
  +
  \left(S_{d-2}\bigl(2\cos\theta\bigr)\right)^2
  -
  \left(S_{d-1}\bigl(2\cos(\theta)\bigr)\right)^2
  \\
  =&
  1+2\cos(d\theta)\frac{\sin\bigl((d-1)\theta\bigr)}{\sin\theta}
  +
  \left(\frac{\sin\bigl((d-1)\theta\bigr)}{\sin\theta}\right)^2
  -
  \left(\frac{\sin(d\theta)}{\sin\theta}\right)^2
  \\
  =&
  1+2\cos(d\theta)\left(\frac{\sin(d\theta)}{\sin\theta}\cos\theta-\cos(d\theta)\right)
  +
  \left(\frac{\sin(d\theta)}{\sin\theta}\cos\theta-\cos(d\theta)\right)^2
  -
  \left(\frac{\sin(d\theta)}{\sin\theta}\right)^2
  \\
  =&
  1
  -
  2\cos^2(d\theta)
  +
  \left(\frac{\sin(d\theta)}{\sin\theta}\cos(d\theta)\right)^2
  +
  \cos^2(d\theta)
  -
  \left(\frac{\sin(d\theta)}{\sin\theta}\right)^2
  \\
  =&
  1
  -
  \cos^2(d\theta)
  -
  \left(\frac{\sin(d\theta)}{\sin\theta}\right)^2\left(1-\cos^2\theta\right)
  \\
  =&
  0.
\end{split}
\end{equation*}
\par
Next, we show that $\rho$ is a representation to $\SU(1,1)$ if and only if \eqref{eq:SU(1,1)_rep} holds.
\par
Suppose that the image of $\rho$ is in $\SU(1,1)\subset\SL(2;\C)$.
Since the eigevalues of $\rho(x)$ are $e^{\pm k\pi\i/a}$, we may assume that $\rho(x)=\begin{pmatrix}e^{k\pi\i/a}&0\\0&e^{-k\pi\i/a}\end{pmatrix}$ up to conjugation.
By the same reason, $\rho(y)$ is conjugate to $\begin{pmatrix}e^{l\pi\i/b}&0\\0&e^{-l\pi\i/b}\end{pmatrix}$.
Write $\rho(y^d)=\begin{pmatrix}u&v\\\overline{v}&\overline{u}\end{pmatrix}$ with $|u|^2-|v|^2=1$ and $v\ne0$.
Since $u+\overline{u}=\tr\rho(y^d)=2\cos(dl\pi/b)$, the real part of $u$ equals $\cos(dl\pi/b)$.
So we can put $u=\cos(dl\pi/b)+r\i$ for some $r\in\R$.
Now we have
\begin{equation*}
\begin{split}
  \tr\rho(u)
  &=
  \tr\rho(x^{-c}y^d)
  =
  \tr
  \left(
    \begin{pmatrix}
      e^{-ck\pi\i/a}&0 \\
    0             &e^{ck\pi\i/a}
    \end{pmatrix}
    \begin{pmatrix}
      u           &v\\
      \overline{v}&\overline{u}
    \end{pmatrix}
  \right)
  =
  \tr
  \begin{pmatrix}
    ue^{-ck\pi\i/a}          &ve^{-ck\pi\i/a} \\
    \overline{v}e^{ck\pi\i/a}&\overline{u}e^{ck\pi\i/a}
  \end{pmatrix}
  \\
  &=
  (\cos(dl\pi/b)+r\i)(\cos(ck\pi/a)-\sin(ck\pi/a)\i)
  \\
  &\quad+
  (\cos(dl\pi/b)-r\i)(\cos(ck\pi/a)+\sin(ck\pi/a)\i)
  \\
  &=
  2\cos(dl\pi/b)\cos(ck\pi/a)+2r\sin(ck\pi/a).
\end{split}
\end{equation*}
Since $\tr\rho(u)=2\cos\bigl(h\pi/(p-ab)\bigr)$, we obtain
\begin{equation*}
  r
  =
  \frac{\cos\bigl(h\pi/(p-ab)\bigr)-\cos(dl\pi/b)\cos(ck\pi/a)}{\sin(ck\pi/a)}.
\end{equation*}
Since $|u|^2-|v|^2=1$ and $v\ne0$, we have $|u|^2>1$ and so $\cos^2(dl\pi/b)+r^2>1$.
In the same way as above, we can prove \eqref{eq:SU(1,1)_rep}.
\par
Conversely, suppose that \eqref{eq:SU(1,1)_rep} is satisfied.
With $r$ given by \eqref{eq:r} and $u:=\cos(dl\pi/b)+r\i$, we have $|u|^2>1$.
So there exists $v\in\C$ such that $|u|^2-|v|^2=1$.
We now show that there exists $\rho(y)\in\SU(1,1)$ such that $\rho(y^d)=\begin{pmatrix}u&v\\\overline{v}&\overline{u}\end{pmatrix}$.
\par
By Lemma~\ref{lem:Chebyshev}, we have
\begin{equation}\label{eq:y^d_SU(1,1)}
  \rho(y^d)
  =
  S_{d-1}\bigl(2\cos(l\pi/b)\bigr)\rho(y)-S_{d-2}\bigl(2\cos(l\pi/b)\bigr)I_2.
\end{equation}
Since $S_{d-1}\bigl(2\cos(l\pi/b)\bigr)=\frac{\sin(dl\pi/b)}{\sin(l\pi/b)}\ne0$, we put
\begin{equation}\label{eq:y_SU(1,1)}
  \rho(y)
  :=
  \frac{1}{S_{d-1}\bigl(2\cos(l\pi/b)\bigr)}
  \begin{pmatrix}
    u+S_{d-2}\bigl(2\cos(l\pi/b)\bigr)&v \\
    \overline{v}                      &\overline{u}+S_{d-2}\bigl(2\cos(l\pi/b)\bigr)
  \end{pmatrix}.
\end{equation}
Then from \eqref{eq:y^d_SU(1,1)}
\begin{equation*}
  \rho(y^d)
  =
  \begin{pmatrix}
    u+S_{d-2}\bigl(2\cos(l\pi/b)\bigr)&v \\
    \overline{v}                     &\overline{u}+S_{d-2}\bigl(2\cos(l\pi/b)\bigr)
  \end{pmatrix}
  -
  S_{d-2}\bigl(2\cos(l\pi/b)\bigr)I_2
  =
  \begin{pmatrix}u&v\\\overline{v}&\overline{u}\end{pmatrix}.
\end{equation*}
We need to show that the right hand side of \eqref{eq:y_SU(1,1)} is an element in $\SU(1,1)$.
It will be sufficient if we can show that
\begin{equation*}
  -|v|^2+\left|u+S_{d-2}\bigl(2\cos(l\pi/b)\bigr)\right|^2
  =
  \left(S_{d-1}\bigl(2\cos(l\pi/b)\bigr)\right)^2.
\end{equation*}
Put $\theta:=l\pi/b$.
Since $|u|^2-|v|^2=1$ and $u+\overline{u}=2\cos(d\theta)$, we have
\begin{equation*}
\begin{split}
  &-|v|^2+\left|u+S_{d-2}\bigl(2\cos(l\pi/b)\bigr)\right|^2
  -
  \left(S_{d-1}\bigl(2\cos(l\pi/b)\bigr)\right)^2
  \\
  =&
  -|v|^2
  +
  \left(u+S_{d-2}\bigl(2\cos\theta\bigr)\right)
  \left(\overline{u}+S_{d-2}\bigl(2\cos\theta\bigr)\right)
  -
  \left(S_{d-1}\bigl(2\cos(\theta)\bigr)\right)^2
  \\
  =&
  -|v|^2+|u|^2
  +(u+\overline{u})S_{d-2}\bigl(2\cos\theta\bigr)
  +
  \left(S_{d-2}\bigl(2\cos\theta\bigr)\right)^2
  -
  \left(S_{d-1}\bigl(2\cos(\theta)\bigr)\right)^2
  \\
  =&
  1+2\cos(d\theta)\frac{\sin\bigl((d-1)\theta\bigr)}{\sin\theta}
  +
  \left(\frac{\sin\bigl((d-1)\theta\bigr)}{\sin\theta}\right)^2
  -
  \left(\frac{\sin(d\theta)}{\sin\theta}\right)^2
  \\
  =&
  1+2\cos(d\theta)\left(\frac{\sin(d\theta)}{\sin\theta}\cos\theta-\cos(d\theta)\right)
  +
  \left(\frac{\sin(d\theta)}{\sin\theta}\cos\theta-\cos(d\theta)\right)^2
  -
  \left(\frac{\sin(d\theta)}{\sin\theta}\right)^2
  \\
  =&
  1
  -
  2\cos^2(d\theta)
  +
  \left(\frac{\sin(d\theta)}{\sin\theta}\cos(d\theta)\right)^2
  +
  \cos^2(d\theta)
  -
  \left(\frac{\sin(d\theta)}{\sin\theta}\right)^2
  \\
  =&
  1
  -
  \cos^2(d\theta)
  -
  \left(\frac{\sin(d\theta)}{\sin\theta}\right)^2\left(1-\cos^2\theta\right)
  \\
  =&
  0.
\end{split}
\end{equation*}
\end{proof}
%%%%%%%%%%%%%%%%%%%%%%%%%%%%%%%%%%%%%
%%%%%%%%%%%%%%%%%%%%%%%%%%%%%%%%%%%%%
%%%%%%%%%%%%%%%%%%%%%%%%%%%%%%%%%%%%%
%%%%%%%%%%%%%%%%%%%%%%%%%%%%%%%%%%%%%
%%%%%%%%%%%%%%%%%%%%%%%%%%%%%%%%%%%%%
%%%%%%%%%%%%%%%%%%%%%%%%%%%%%%%%%%%%%
%%%%%%%%%%%%%%%%%%%%%%%%%%%%%%%%%%%%%
%%%%%%%%%%%%%%%%%%%%%%%%%%%%%%%%%%%%%
%%%%%%%%%%%%%%%%%%%%%%%%%%%%%%%%%%%%%
%%%%%%%%%%%%%%%%%%%%%%%%%%%%%%%%%%%%%
%%%%%%%%%%%%%%%%%%%%%%%%%%%%%%%%%%%%%
%%%%%%%%%%%%%%%%%%%%%%%%%%%%%%%%%%%%%
\begin{example}\label{ex:Main_Theorem_a_2}
When $a=2$, we can put $d:=(b+1)/2$ and $c:=1$.
Since $k=1$, the representation $\trho_{h,1,l}^{\rm{Irr}}$ is an $\SU(2)$ representation if and only if
\begin{equation*}
\begin{split}
  &\left(
    \cos\left(\frac{h\pi}{p-2b}\right)
    -
    \cos\left(\frac{\bigl((b+1)l+b\bigr)\pi}{2b}\right)
  \right)
  \left(
    \cos\left(\frac{h\pi}{p-2b}\right)
    -
    \cos\left(\frac{\bigl((b+1)l-b\bigr)\pi}{2b}\right)
  \right)
  \\
  =&
  \left(
    \cos\left(\frac{h\pi}{p-2b}\right)
    -
    (-1)^{(l+1)/2}
    \cos\left(\frac{l\pi}{2b}\right)
  \right)
  \left(
    \cos\left(\frac{h\pi}{p-2b}\right)
    -
    (-1)^{(l-1)/2}
    \cos\left(\frac{l\pi}{2b}\right)
  \right)
  \\
  =&
  \left(
    \cos\left(\frac{h\pi}{p-2b}\right)
    -
    \cos\left(\frac{l\pi}{2b}\right)
  \right)
  \left(
    \cos\left(\frac{h\pi}{p-2b}\right)
    +
    \cos\left(\frac{l\pi}{2b}\right)
  \right)
  >0.
\end{split}
\end{equation*}
The second equality follows since $(l+1)/2$ and $(l-1)/2$ have different parities.
Since $0<\frac{h}{p-2b}<1$ and $0<\frac{l}{2b}<\frac{1}{2}$, we have
\begin{equation*}
  \frac{h}{p-2b}<\frac{l}{2b}
  \quad\text{or}\quad
  \frac{l}{2b}+\frac{h}{p-2b}>1.
\end{equation*}
Therefore $\rho$ is an irreducible $\SU(2)$ representation if and only if the pair $(h,l)$ is in the following set:
\begin{equation*}
\begin{split}
  &
  \left\{
    (h,l)\in\Z^2\Bigm|
    1\le h\le p-2b-2,1\le l\le b-2,h\modtwo l\modtwo 1,
    \frac{h}{p-2b}<\frac{l}{2b}
  \right\}
  \\
  \cup&
  \left\{
    (h,l)\in\Z^2\Bigm|
    1\le h\le p-2b-2,1\le l\le b-2,h\modtwo l\modtwo 1,
    \frac{l}{2b}+\frac{h}{p-2b}>1
  \right\}.
\end{split}
\end{equation*}
Since the first set equals $\HmD$ from Example~\ref{ex:H}, we can write $\HmD$ as
\begin{equation*}
  \HmD
  =
  \{(h,1,l)
  \mid
  \text{$\trho_{(h,k,l)}^{\rm{Irr}}$: $\SU(2)$-representation},
  \frac{l}{2b}+\frac{h}{p-2b}<1\}
\end{equation*}
\par
It is an $\SL(2;\R)$ representation if and only if
\begin{equation*}
  \frac{h}{p-2b}>\frac{l}{2b}
  \quad\text{and}\quad
  \frac{l}{2b}+\frac{h}{p-2b}<1.
\end{equation*}
Therefore $\rho$ is an irreducible $\SU(1,1)$ representation if and only if the pair $(h,l)$ is in the following set:
\begin{equation*}
  \left\{
    (h,l)\in\Z^2\mid
    1\le h\le p-2b-2,1\le l\le b-2,h\modtwo l\modtwo 1,
    \frac{h}{p-2b}>\frac{l}{2b},
    \frac{l}{2b}+\frac{h}{p-2b}<1
  \right\},
\end{equation*}
which equals $\HpD\setminus\HmD$.
\par
Now from Example~\ref{ex:a_2_A_B}, we have
\begin{equation*}
\begin{split}
  &\htau_n(X_p;\exp(4\pi\i/n))
  \\
  =&
  \frac{e^{(n+1)\pi\i/4}n^{3/2}}{8\pi}
  \left(
    2\sum_{\substack{(h,1,l)\in\HmD\\ b-l\equiv2\pmod4}}
    +
    \sum_{(h,1,l)\in\HpD\setminus\HmD}
  \right)
  \Reidemeister_{+}^{\rm{Irr}}(h,1,l)e^{n\ChernSimons_{+}^{\rm{Irr}}(h,1,l)\pi\i}
  \\
  &-
  \frac{\i e^{n(1-p)\pi\i/4}n}{4\pi}
  \sum_{0<l<(p-1)/2}
  \Reidemeister^{\rm{Abel}}(l)e^{n\ChernSimons^{\rm{Abel}}(l)\pi\i}
  +
  O(n^{1/2})
  \\
  =&
  \frac{e^{(n+1)\pi\i/4}n^{3/2}}{8\pi}
  \left(
    2\sum_{
      \substack{\text{$\trho_{h,1,l}^{\rm{Irr}}$: $\SU(2)$-representation}
      \\
      \frac{l}{2b}+\frac{h}{p-2b}<1, b-l\equiv2\pmod4}}
    +
    \sum_{\text{$\trho_{h,1,l}^{\rm{Irr}}$: $\SU(1,1)$-representation}}
  \right)
  \Reidemeister_{+}^{\rm{Irr}}(h,1,l)e^{n\ChernSimons_{+}^{\rm{Irr}}(h,1,l)\pi\i}
  \\
  &-
  \frac{\i e^{n(1-p)\pi\i/4}n}{4\pi}
  \sum_{0<l<(p-1)/2}
  \Reidemeister^{\rm{Abel}}(l)e^{n\ChernSimons^{\rm{Abel}}(l)\pi\i}
  +
  O(n^{1/2}).
\end{split}
\end{equation*}
\end{example}
\begin{example}
When $a=4$, $b=3$, and $p=17$, we have
\begin{align*}
  \mathcal{H}&=\{(1,1,1),(1,1,3),(2,2,2),(3,1,1),(3,1,3),(4,2,2)\},\\
  \HpD&=\{(1,1,1),(1,1,3),(2,2,2),(3,1,1),(4,2,2)\},\\
  \HmD&=\{(1,1,1)\},\\
  \HpN&=\{(3,1,3)\},\\
  \HmN&=\{(1,1,3),(2,2,2),(3,1,1),(3,1,3),(4,2,2)\}
\end{align*}
We also see that $\trho_{h,k,l}^{\rm{Irr}}$ is an $\SU(2)$ representation if and only if $(h,k,l)$ is in
\begin{equation*}
  \{(1,1,3), (3,1,1)\},
\end{equation*}
and is an $\SU(1,1)$ representation if and only if $(h,k,l)$ is in
\begin{equation*}
  \{(1,1,1),(2,2,2),(3,1,3),(4,2,2)\}.
\end{equation*}
So there seems to be no good interpretation as in the case $a=2$.
\end{example}
\begin{example}
When $a=3$, $b=5$, and $p=19$, we have
\begin{align*}
  \mathcal{H}&=\{(1,1,1),(1,1,3),(2,2,2),(2,2,4),(3,1,1),(3,1,3)\},\\
  \HpD&=\{(1,1,3),(2,2,4),(3,1,3)\},\\
  \HmD&=\{(1,1,1),(1,1,3),(2,2,2)\},\\
  \HpN&=\{(1,1,1),(2,2,2),(3,1,1)\},\\
  \HmN&=\{(2,2,4),(3,1,1),(3,1,3)\}
\end{align*}
We also see that $\trho_{h,k,l}^{\rm{Irr}}$ is an $\SU(2)$ representation if and only if $(h,k,l)$ is in
\begin{equation*}
  \{(1,1,3), (3,1,1)\},
\end{equation*}
and is an $\SU(1,1)$ representation if and only if $(h,k,l)$ is in
\begin{equation*}
  \{(1,1,1),(2,2,2),(2,2,4),(3,1,3)\}.
\end{equation*}
So there seems to be no good interpretation as in the case $a=2$, either.
\end{example}

\section{Lemma}
In this section we prove the following lemma that we use in this paper.
\begin{lemma}\label{lem:sin_sin_exp}
Suppose that $a$ and $b$ are coprime positive integers.
Then for any odd integer $n\ge3$ we have
\begin{align}
  \sum_{\substack{0\le m\le 2ab \\\text{$m$: \rm{even}}}}
  \sin\left(\frac{m\pi}{a}\right)\sin\left(\frac{m\pi}{b}\right)
  \exp\left(-n\frac{m^2}{4ab}\pi\sqrt{-1}\right)
  &=
  0,
  \label{eq:sin_sin_exp_even}
  \\
  \sum_{\substack{0\le m\le 2ab \\\text{$m$: \rm{odd}}}}
  \sin\left(\frac{m\pi}{a}\right)\sin\left(\frac{m\pi}{b}\right)
  \exp\left(-n\frac{m^2}{4ab}\pi\sqrt{-1}\right)
  &=
  0.
  \label{eq:sin_sin_exp_odd}
\end{align}
\end{lemma}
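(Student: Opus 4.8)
The plan is to set $T(m):=\sin(m\pi/a)\sin(m\pi/b)\exp\!\bigl(-nm^2\pi\sqrt{-1}/(4ab)\bigr)$ and to combine two elementary symmetries of $T$ with one genuine Gauss-sum evaluation. First I would record that $T$ is even in $m$ and $4ab$-periodic, and that the reflection $m\mapsto 2ab-m$ obeys $T(2ab-m)=(-1)^{ab+m}T(m)$: this uses $\sin((2ab-m)\pi/a)=-\sin(m\pi/a)$, the analogue for $b$, and the fact that $n$ is odd to turn $e^{-nab\pi\sqrt{-1}}$ and $e^{nm\pi\sqrt{-1}}$ into $(-1)^{ab}$ and $(-1)^{m}$. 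Equivalently $T(m+2ab)=(-1)^{ab+m}T(m)$. Writing $S_{\mathrm{ev}}$ and $S_{\mathrm{od}}$ for the two sums in the lemma, the reflection pairs each $m$ with $2ab-m$ of the same parity and shows directly that $S_{\mathrm{ev}}=0$ when $ab$ is odd and $S_{\mathrm{od}}=0$ when $ab$ is even (the only possible fixed point $m=ab$ and the endpoints $m=0,2ab$ contribute $0$ because a sine factor vanishes there).

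The two remaining cases I would reduce to the vanishing of the full-period sum $\widetilde S:=\sum_{m=0}^{4ab-1}T(m)$. Indeed the quasi-periodicity gives $\widetilde S=\sum_{m=0}^{2ab-1}\bigl(1+(-1)^{ab+m}\bigr)T(m)$, which equals $2S_{\mathrm{ev}}$ when $ab$ is even and $2S_{\mathrm{od}}$ when $ab$ is odd; so $\widetilde S=0$ disposes of exactly the cases the reflection missed. To evaluate $\widetilde S$, I would expand $\sin(m\pi/a)\sin(m\pi/b)$ into four exponentials; since the range is a complete period modulo $4ab$, this rewrites $\widetilde S=-\tfrac12\bigl(G(a+b)-G(b-a)\bigr)$, where $G(\epsilon):=\sum_{m\bmod 4ab}\exp\!\bigl(\tfrac{\pi\sqrt{-1}}{4ab}(-nm^2+4\epsilon m)\bigr)$ and $G(-\epsilon)=G(\epsilon)$ via $m\mapsto -m$. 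Thus everything reduces to the single identity $G(a+b)=G(b-a)$.

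For this I would invoke the reciprocity theorem for generalized quadratic Gauss sums (its parity hypothesis $-4nab+4\epsilon\in 2\Z$ holds), which yields $G(\epsilon)=C\,e^{\pi\sqrt{-1}\,\epsilon^2/(nab)}H(\epsilon)$ with a constant $C=(4ab/n)^{1/2}e^{-\pi\sqrt{-1}/4}$ independent of $\epsilon$ and $H(\epsilon):=\sum_{m\bmod n}\exp\!\bigl(\tfrac{4\pi\sqrt{-1}}{n}(ab\,m^2+\epsilon m)\bigr)$. Since $(a+b)^2-(b-a)^2=4ab$, the claim $G(a+b)=G(b-a)$ is equivalent to $H(b-a)=e^{4\pi\sqrt{-1}/n}H(a+b)$. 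Put $d:=\gcd(ab,n)$, which is odd. If $d=1$, then $2ab$ is invertible modulo $n$ and completing the square in $H$ gives $H(b-a)/H(a+b)=\exp\!\bigl(\tfrac{2\pi\sqrt{-1}}{n}\,(8ab)^{-1}(B_+^2-B_-^2)\bigr)$ with $B_\pm=2(a\pm b)$; as $B_+^2-B_-^2=16ab$ and $(8ab)^{-1}\cdot 16ab\equiv 2\pmod n$, this ratio is exactly $e^{4\pi\sqrt{-1}/n}$. If $d>1$, then because $\gcd(a,b)=1$ every prime dividing $d$ divides exactly one of $a,b$, hence divides neither $a+b$ nor $b-a$, so $d\nmid(a\pm b)$; the standard vanishing criterion that $\sum_{m\bmod n}e^{2\pi\sqrt{-1}(Am^2+Bm)/n}=0$ whenever $\gcd(A,n)\nmid B$ (applied with $A=2ab$, $\gcd(A,n)=d$) then forces $H(a+b)=H(b-a)=0$. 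In either case $G(a+b)=G(b-a)$, so $\widetilde S=0$, and together with the reflection argument this proves both \eqref{eq:sin_sin_exp_even} and \eqref{eq:sin_sin_exp_odd}.

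The main obstacle is the bookkeeping in the Gauss-sum step: one must apply the generalized reciprocity with the correct normalizing phase and then treat the divisor $d=\gcd(ab,n)$ with care, because for $d>1$ the two sums $H(a\pm b)$ satisfy no naive shift relation and instead both vanish for a purely arithmetic reason. The saving grace is that only the ratio $G(a+b)/G(b-a)$ enters, so the precise value of the constant $C$ is immaterial; the genuine content is the clean dichotomy between the coprime case, handled by completing the square, and the non-coprime case, handled by divisibility-forced vanishing.
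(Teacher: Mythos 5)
Your proof is correct, and its treatment of the two hard cases is genuinely different from the paper's. Both arguments dispose of the cases ($ab$ odd, $m$ even) and ($ab$ even, $m$ odd) by the reflection $m\mapsto 2ab-m$, and both pivot on Gauss-sum reciprocity (the paper's Theorem~\ref{thm:Gauss}); the difference is the endgame. The paper keeps the remaining two cases separate and, after applying reciprocity to each, recognizes the dual sum (over residues mod $n$, resp.\ mod $4n$) as the Rosso--Jones expression $\tilde{J}_n\bigl(e^{4\pi\sqrt{-1}/n}\bigr)$ for the colored Jones polynomial of $T(a,b)$, which vanishes because $\tilde{J}_n(q)=q^{ab(n^2-1)/4}\bigl(q^{n/2}-q^{-n/2}\bigr)J_n(T(a,b);q)$ carries the factor $q^{n/2}-q^{-n/2}=e^{2\pi\sqrt{-1}}-e^{-2\pi\sqrt{-1}}=0$ while $J_n$ is a Laurent polynomial. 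You instead merge the two cases into the single full-period sum $\widetilde{S}=\sum_{m\bmod 4ab}T(m)$, reduce it to the one identity $G(a+b)=G(b-a)$, and prove that identity arithmetically: reciprocity (whose parity hypothesis and $\epsilon$-dependent phase $e^{\pi\sqrt{-1}\epsilon^2/(nab)}$ you track correctly) followed by completing the square when $\gcd(ab,n)=1$, and by the shift-argument vanishing of both $H(a\pm b)$ when $d:=\gcd(ab,n)>1$ (each prime of $d$ divides exactly one of $a,b$, hence $d\nmid a\pm b$). Your route is purely number-theoretic, self-contained, and markedly shorter than the paper's case (iii), whose folding of summation ranges your full-period unification eliminates; its cost is exactly the $d>1$ dichotomy, which the paper never confronts because the Laurent-polynomial argument kills the sum irrespective of $\gcd(ab,n)$ --- that uniformity, plus the thematically apt reappearance of the torus-knot Jones polynomial, is what the paper's route buys. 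One cosmetic point: in the $d=1$ case, state the outcome of completing the square as the identity $H(\epsilon)=e^{-2\pi\sqrt{-1}\,\epsilon^2(2ab)^{-1}/n}\,g$ with $g=\sum_{m\bmod n}e^{4\pi\sqrt{-1}\,abm^2/n}$ independent of $\epsilon$, rather than as a ratio $H(b-a)/H(a+b)$, so that no nonvanishing hypothesis is even implicitly invoked.
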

\begin{proof}
We consider the following four cases:
\begin{enumerate}
\item[(i).] \eqref{eq:sin_sin_exp_even} for $ab$ odd,
\item[(ii).] \eqref{eq:sin_sin_exp_even} for $ab$ even,
\item[(iii).] \eqref{eq:sin_sin_exp_odd} for $ab$ odd,
\item[(iv).] \eqref{eq:sin_sin_exp_odd} for $ab$ even,
\end{enumerate}
\par
First of all, replacing $m$ with $2ab-m$ in the summation, we have
\begin{multline*}
  \sum_{0\le m\le 2ab}
  \sin\left(\frac{m\pi}{a}\right)\sin\left(\frac{m\pi}{b}\right)
  \exp\left(-n\frac{m^2}{4ab}\pi\sqrt{-1}\right)
  \\
  =
  \sum_{0\le m\le 2ab}
  \sin\left(\frac{(2ab-m)\pi}{a}\right)\sin\left(\frac{(2ab-m)\pi}{b}\right)
  \exp\left(-n\frac{(2ab-m)^2}{4ab}\pi\sqrt{-1}\right).
\end{multline*}
However, since
\begin{align*}
  \sin\left(\frac{(2ab-m)\pi}{a}\right)
  &=
  -\sin\left(\frac{m\pi}{a}\right),
  \\
  \sin\left(\frac{(2ab-m)\pi}{b}\right)
  &=
  -\sin\left(\frac{m\pi}{b}\right),
  \\
  \exp\left(-n\frac{(2ab-m)^2}{4ab}\pi\sqrt{-1}\right)
  &=
  (-1)^{n(m-ab)}
  \exp\left(-n\frac{m^2}{4ab}\pi\sqrt{-1}\right),
\end{align*}
and $n$ is odd, we have
\begin{multline}\label{eq:sin_sin_exp_1}
  \sum_{0\le m\le 2ab}
  \sin\left(\frac{m\pi}{a}\right)\sin\left(\frac{m\pi}{b}\right)
  \exp\left(-n\frac{m^2}{4ab}\pi\sqrt{-1}\right)
  \\
  =
  \sum_{0\le m\le 2ab}
  (-1)^{m-ab}
  \sin\left(\frac{m\pi}{a}\right)\sin\left(\frac{m\pi}{b}\right)
  \exp\left(-n\frac{m^2}{4ab}\pi\sqrt{-1}\right).
\end{multline}
Therefore, if $ab$ is even, then we have
\begin{equation*}
\begin{split}
  &
  \sum_{0\le m\le 2ab}
  \sin\left(\frac{m\pi}{a}\right)\sin\left(\frac{m\pi}{b}\right)
  \exp\left(-n\frac{m^2}{4ab}\pi\sqrt{-1}\right)
  \\
  =&
  \sum_{0\le m\le 2ab}
  (-1)^{m}
  \sin\left(\frac{m\pi}{a}\right)\sin\left(\frac{m\pi}{b}\right)
  \exp\left(-n\frac{m^2}{4ab}\pi\sqrt{-1}\right)
  \\
  =&
  \sum_{\substack{0\le m\le 2ab\\\text{$m$: \rm{even}}}}
  \sin\left(\frac{m\pi}{a}\right)\sin\left(\frac{m\pi}{b}\right)
  \exp\left(-n\frac{m^2}{4ab}\pi\sqrt{-1}\right)
  \\
  &-
  \sum_{\substack{0\le m\le 2ab\\\text{$m$: \rm{odd}}}}
  \sin\left(\frac{m\pi}{a}\right)\sin\left(\frac{m\pi}{b}\right)
  \exp\left(-n\frac{m^2}{4ab}\pi\sqrt{-1}\right)
\end{split}
\end{equation*}
So we have
\begin{equation*}
  \sum_{\substack{0\le m\le 2ab\\\text{$m$: \rm{odd}}}}
  \sin\left(\frac{m\pi}{a}\right)\sin\left(\frac{m\pi}{b}\right)
  \exp\left(-n\frac{m^2}{4ab}\pi\sqrt{-1}\right)
  =
  0,
\end{equation*}
proving (iv).
Similarly, if $ab$ is odd, we have
\begin{equation*}
  \sum_{\substack{0\le m \le 2ab\\\text{$m$: \rm{even}}}}
  \sin\left(\frac{m\pi}{a}\right)\sin\left(\frac{m\pi}{b}\right)
  \exp\left(-n\frac{m^2}{4ab}\pi\sqrt{-1}\right)
  =
  0
\end{equation*}
from \eqref{eq:sin_sin_exp_1}, proving (i).
\par
Next we consider the case (ii).
We assume that $ab$ is even.
Putting $m=2k$, we have
\begin{equation*}
\begin{split}
  &
  \sum_{\substack{0\le m\le 2ab\\\text{$m$: \rm{even}}}}
  \sin\left(\frac{m\pi}{a}\right)\sin\left(\frac{m\pi}{b}\right)
  \exp\left(-n\frac{m^2}{4ab}\pi\sqrt{-1}\right)
  \\
  =&
  \sum_{k=0}^{ab}
  \sin\left(\frac{2k\pi}{a}\right)\sin\left(\frac{2k\pi}{b}\right)
  \exp\left(-n\frac{k^2}{ab}\pi\sqrt{-1}\right).
\end{split}
\end{equation*}
We denote the right-hand side by $W_{\rm{ii}}$.
Note that
\begin{equation}\label{eq:sin_sin_exp_ii}
  \sum_{k=1}^{ab}
  \left(e^{2k\pi\sqrt{-1}/a}-e^{-2k\pi\sqrt{-1}/a}\right)
  \left(e^{2k\pi\sqrt{-1}/b}-e^{-2k\pi\sqrt{-1}/b}\right)
  e^{-n\frac{k^2}{ab}\pi\sqrt{-1}}
  =
  -4W_{\rm{ii}}.
\end{equation}
We will use the following Gauss sum reciprocity formula (see for example \cite[Chapter IX]{Chandrasekharan:1985}):
\begin{theorem}[Cauchy--Kronecker]\label{thm:Gauss}
Suppose that $c$ and $d$ are positive integers.
Let $w$ be a rational number such that $cd+2cw\equiv0\pmod{2}$.
Then we have
\begin{equation*}
  \frac{1}{\sqrt{d}}
  \sum_{k=1}^{d}e^{\frac{c}{d}(k+w)^2\pi\sqrt{-1}}
  =
  \frac{e^{\pi\sqrt{-1}/4}}{\sqrt{c}}
  \sum_{l=1}^{c}e^{-\frac{d}{c}l^2\pi\sqrt{-1}+2lw\pi\sqrt{-1}}.
\end{equation*}
\end{theorem}
From \eqref{eq:sin_sin_exp_ii}, we have
\begin{multline*}
  -4W_{\rm{ii}}
  \\
  =
  \sum_{k=1}^{ab}
  \left(
    e^{\left(2k\frac{a+b}{ab}-n\frac{k^2}{ab}\right)\pi\sqrt{-1}}
    -
    e^{\left(2k\frac{-a+b}{ab}-n\frac{k^2}{ab}\right)\pi\sqrt{-1}}
    -
    e^{\left(2k\frac{a-b}{ab}-n\frac{k^2}{ab}\right)\pi\sqrt{-1}}
    +
    e^{\left(2k\frac{-a-b}{ab}-n\frac{k^2}{ab}\right)\pi\sqrt{-1}}
  \right)
\end{multline*}
Putting $c:=ab$ and $d:=n$, and choose $w:=\frac{\pm{a}\pm{b}}{ab}$, we can apply Theorem~\ref{thm:Gauss} because $ab$ is even.
We have
\begin{equation*}
\begin{split}
  &-4W_{\rm{ii}}
  \\
  =&
  \frac{\sqrt{ab}}{e^{\pi\sqrt{-1}/4}\sqrt{n}}
  \sum_{k=1}^{n}
  \left(
    e^{\frac{ab}{n}(k+\frac{a+b}{ab})^2\pi\sqrt{-1}}
    -
    e^{\frac{ab}{n}(k+\frac{-a+b}{ab})^2\pi\sqrt{-1}}
    -
    e^{\frac{ab}{n}(k+\frac{a-b}{ab})^2\pi\sqrt{-1}}
    +
    e^{\frac{ab}{n}(k+\frac{-a-b}{ab})^2\pi\sqrt{-1}}
  \right)
  \\
  =&
  \sqrt{\frac{ab}{n}}e^{-\pi\sqrt{-1}/4}
  \sum_{k=1}^{n}
  \left(
    e^{\frac{\pi\sqrt{-1}}{abn}(abk+a+b)^2}
    -
    e^{\frac{\pi\sqrt{-1}}{abn}(abk-a+b)^2}
    -
    e^{\frac{\pi\sqrt{-1}}{abn}(abk+a-b)^2}
    +
    e^{\frac{\pi\sqrt{-1}}{abn}(abk-a-b)^2}
  \right).
\end{split}
\end{equation*}
Replacing $k$ with $n-k$, we see that
\begin{align*}
  \sum_{k=1}^{n}e^{\frac{\pi\sqrt{-1}}{abn}(abk-a-b)^2}
  &=
  \sum_{k=1}^{n}e^{\frac{\pi\sqrt{-1}}{abn}(abk+a+b)^2}
  \\
  \sum_{k=1}^{n}e^{\frac{\pi\sqrt{-1}}{abn}(abk+a-b)^2}
  &=
  \sum_{k=1}^{n}e^{\frac{\pi\sqrt{-1}}{abn}(abk-a+b)^2}.
\end{align*}
Therefore we have
\begin{equation*}
\begin{split}
  &-4W_{\rm{ii}}
  \\
  =&
  2\sqrt{\frac{ab}{n}}e^{-\pi\sqrt{-1}/4}
  \sum_{k=1}^{n}
  \left(
    e^{\frac{\pi\sqrt{-1}}{abn}(abk+a+b)^2}
    -
    e^{\frac{\pi\sqrt{-1}}{abn}(abk-a+b)^2}
  \right)
  \\
  =&
  2\sqrt{\frac{ab}{n}}e^{-\pi\sqrt{-1}/4}e^{\frac{a\pi\sqrt{-1}}{bn}+\frac{b\pi\sqrt{-1}}{an}}
  \sum_{k=1}^{n}
  e^{\frac{bk(ak+2)\pi\sqrt{-1}}{n}}
  \left(
    e^{\frac{2(ak+1)\pi\sqrt{-1}}{n}}
    -
    e^{\frac{-2(ak+1)\pi\sqrt{-1}}{n}}
  \right)
\end{split}
\end{equation*}
\par
Now let us consider the colored Jones polynomial of the torus knot $T(a,b)$.
We put
\begin{equation*}
\begin{split}
  \tilde{J}_k(q)
  &:=
  q^{ab(k^2-1)/4}(q^{k/2}-q^{-k/2})J_k(T(a,b);q)
  \\
  &=
  \sum_{j=-(k-1)/2}^{(k-1)/2}
  q^{bj(aj+1)}(q^{aj+1/2}-q^{-aj-1/2}).
\end{split}
\end{equation*}
Note that this is nothing but the Kauffman bracket of $T(a,b)$ with $n-1$-th Jones--Wenzl idempotent inserted, replacing $A$ with $q^{-1/4}$.
Then we have
\begin{equation*}
\begin{split}
  &
  \tilde{J}_n\left(e^{4\pi\sqrt{-1}/n}\right)
  \\
  =&
  \sum_{j=-(n-1)/2}^{(n-1)/2}
  e^{4bj(aj+1)\pi\sqrt{-1}/n}
  \left(e^{4(aj+1/2)\pi\sqrt{-1}/n}-e^{-4(aj+1/2)\pi \sqrt{-1}/n}\right)
  \\
  &\text{(Put $l=2j$, noting that $n$ is odd)}
  \\
  =&
  \sum_{\substack{1-n\le l\le n-1\\\text{$l$: even}}}
  e^{bl(al+2)\pi\sqrt{-1}/n}
  \left(e^{2(al+1)\pi\sqrt{-1}/n}-e^{-2(al+1)\pi \sqrt{-1}/n}\right)
  \\
  =&
  \sum_{\substack{0\le l\le n-1\\\text{$l$: even}}}
  e^{bl(al+2)\pi\sqrt{-1}/n}
  \left(e^{2(al+1)\pi\sqrt{-1}/n}-e^{-2(al+1)\pi\sqrt{-1}/n}\right)
  \\
  &+
  \sum_{\substack{1\le l\le n-2\\\text{$l$ odd}}}
  e^{b(l-n)(a(l-n)+2)\pi\sqrt{-1}/n}
  \left(e^{2(a(l-n)+1)\pi\sqrt{-1}/n}-e^{-2(a(l-n)+1)\pi\sqrt{-1}/n}\right)
  \\
  &\text{(since $ab$ is even and $n$ is odd)}
  \\
  =&
  \sum_{l=0}^{n-1}
  e^{ \frac{\pi\sqrt{-1}}{n}bl(al+2)}
  \left(e^{ \frac{2\pi\sqrt{-1}}{n}(al+1)}-e^{-\frac{2\pi\sqrt{-1}}{n}(al+1)}\right). 
\end{split}
\end{equation*}
Hence we have
\begin{equation*}
\begin{split}
  -4W_{\rm{ii}}
  &=
  2\sqrt{\frac{ab}{n}}e^{-\pi\sqrt{-1}/4}e^{\frac{a\pi\sqrt{-1}}{bn}+\frac{b\pi\sqrt{-1}}{an}}
  \tilde{J}_n(K;e^{4\pi \sqrt{-1}/n})
  \\
  &=
  2\sqrt{\frac{ab}{n}}e^{-\pi\sqrt{-1}/4}e^{\frac{a\pi\sqrt{-1}}{bn}+\frac{b\pi\sqrt{-1}}{an}}
  e^{ab(n^2-1)\pi\sqrt{-1}/n}
  \left(e^{2\pi\sqrt{-1}}-e^{-2\pi\sqrt{-1}}\right)
  \tilde{J}_n(e^{4\pi\sqrt{-1}/n})
  \\
  &=
  0
\end{split}
\end{equation*}
proving (ii).
%%%%%%%%%%%%%%%%%%%%%%%%%%%%%%%%%%%%%%%%%%%%%%%%%%%%%%%%%%%
\par
We consider $\rm{(iii)}$.
Note that we are assuming that both $a$ and $b$ are odd.
We have
\begin{equation*}
\begin{split}
  &
  \sum_{\substack{0\le m\le 2ab\\\text{$m$: \rm{odd}}}}
  \sin\left(\frac{m\pi}{a}\right)\sin\left(\frac{m\pi}{b}\right)
  \exp\left(-n\frac{m^2}{4ab}\pi\sqrt{-1}\right)
  \\
  =&
  \sum_{\substack{0\le m\le ab\\\text{$m$: \rm{odd}}}}
  \sin\left(\frac{m\pi}{a}\right)\sin\left(\frac{m\pi}{b}\right)
  \exp\left(-n\frac{m^2}{4ab}\pi\sqrt{-1}\right)
  \\
  &+
  \sum_{\substack{0\le m\le ab\\\text{$m$: \rm{odd}}}}
  \sin\left(\frac{(2ab-m)\pi}{a}\right)\sin\left(\frac{(2ab-m)\pi}{b}\right)
  \exp\left(-n\frac{(2ab-m)^2}{4ab}\pi\sqrt{-1}\right)
  \\
  =&
  2\sum_{\substack{0\le m\le ab\\\text{$m$: \rm{odd}}}}
  \sin\left(\frac{m\pi}{a}\right)\sin\left(\frac{m\pi}{b}\right)
  \exp\left(-n\frac{m^2}{4ab}\pi\sqrt{-1}\right).
  \\
  =&
  2\sum_{\substack{0\le k\le ab\\\text{$k$: \rm{even}}}}
  \sin\left(\frac{(ab-k)\pi}{a}\right)\sin\left(\frac{(ab-k)\pi}{b}\right)
  \exp\left(-n\frac{(ab-k)^2}{4ab}\pi\sqrt{-1}\right)
  \\
  =&
  2\exp\left(\frac{-abn\pi\sqrt{-1}}{4}\right)
  \sum_{\substack{0\le k\le ab\\\text{$k$: \rm{even}}}}
  \sqrt{-1}^{k}\sin\left(\frac{k\pi}{a}\right)\sin\left(\frac{k\pi}{b}\right)
  \exp\left(-n\frac{k^2}{4ab}\pi\sqrt{-1}\right).
\end{split}
\end{equation*}
On the other hand, since we have
\begin{equation*}
\begin{split}
  &
  \sum_{\substack{0\le k\le 2ab\\\text{$k$: \rm{even}}}}
  \sqrt{-1}^{k}\sin\left(\frac{k\pi}{a}\right)\sin\left(\frac{k\pi}{b}\right)
  \exp\left(-n\frac{k^2}{4ab}\pi\sqrt{-1}\right)
  \\
  =&
  \sum_{\substack{0\le k\le ab\\\text{$k$: \rm{even}}}}
  \sqrt{-1}^{k}\sin\left(\frac{k\pi}{a}\right)\sin\left(\frac{k\pi}{b}\right)
  \exp\left(-n\frac{k^2}{4ab}\pi\sqrt{-1}\right)
  \\
  &+
  \sum_{\substack{0\le k\le ab\\\text{$k$: \rm{even}}}}
  \sqrt{-1}^{2ab-k}\sin\left(\frac{(2ab-k)\pi}{a}\right)\sin\left(\frac{(2ab-k)\pi}{b}\right)
  \exp\left(-n\frac{(2ab-k)^2}{4ab}\pi\sqrt{-1}\right)
  \\
  =&
  2
  \sum_{\substack{0\le k\le ab\\\text{$k$: \rm{even}}}}
  \sqrt{-1}^{k}\sin\left(\frac{k\pi}{a}\right)\sin\left(\frac{k\pi}{b}\right)
  \exp\left(-n\frac{k^2}{4ab}\pi\sqrt{-1}\right)
  \\
  =&
  2
  \sum_{l=0}^{ab}(-1)^{l}\sin\left(\frac{2l\pi}{a}\right)\sin\left(\frac{2l\pi}{b}\right)
  \exp\left(-n\frac{l^2}{ab}\pi\sqrt{-1}\right),
\end{split}
\end{equation*}
\eqref{eq:sin_sin_exp_odd} becomes
\begin{equation*}
  \exp\left(\frac{-abn\pi\sqrt{-1}}{4}\right)
  \sum_{l=0}^{ab}(-1)^{l}\sin\left(\frac{2l\pi}{a}\right)\sin\left(\frac{2l\pi}{b}\right)
  \exp\left(-n\frac{l^2}{ab}\pi\sqrt{-1}\right).
\end{equation*}
In a similar way, the summation above becomes
\begin{equation*}
\begin{split}
  &
  \sum_{l=0}^{ab}
  (-1)^{l}
  \sin\left(\frac{2l\pi}{a}\right)\sin\left(\frac{2l\pi}{b}\right)
  \exp\left(-n\frac{l^2}{ab}\pi\sqrt{-1}\right)
  \\
  =&
  \sum_{\substack{0\le l\le ab\\\text{$l$: even}}}
  \sin\left(\frac{2l\pi}{a}\right)\sin\left(\frac{2l\pi}{b}\right)
  \exp\left(-n\frac{l^2}{ab}\pi\sqrt{-1}\right)
  \\
  &
  -
  \sum_{\substack{0\le l\le ab\\\text{$l$: even}}}
  \sin\left(\frac{2(ab-l)\pi}{a}\right)\sin\left(\frac{2(ab-l)\pi}{b}\right)
  \exp\left(-n\frac{(ab-l)^2}{ab}\pi\sqrt{-1}\right)
  \\
  &\text{(since $ab$ and $n$ are odd)}
  \\
  =&
  2\sum_{\substack{0\le l\le ab\\\text{$l$: even}}}
  \sin\left(\frac{2l\pi}{a}\right)\sin\left(\frac{2l\pi}{b}\right)
  \exp\left(-n\frac{l^2}{ab}\pi\sqrt{-1}\right)
  \\
  =&
  \sum_{\substack{0\le l\le ab\\\text{$l$: even}}}
  \sin\left(\frac{2l\pi}{a}\right)\sin\left(\frac{2l\pi}{b}\right)
  \exp\left(-n\frac{l^2}{ab}\pi\sqrt{-1}\right)
  \\
  &+
  \sum_{\substack{0\le l\le ab\\\text{$l$: even}}}
  \sin\left(\frac{2(2ab-l)\pi}{a}\right)\sin\left(\frac{2(2ab-l)\pi}{b}\right)
  \exp\left(-n\frac{(2ab-l)^2}{ab}\pi\sqrt{-1}\right)
  \\
  =&
  \sum_{\substack{0\le l\le 2ab\\\text{$l$: even}}}
  \sin\left(\frac{2l\pi}{a}\right)\sin\left(\frac{2l\pi}{b}\right)
  \exp\left(-n\frac{l^2}{ab}\pi\sqrt{-1}\right)
  \\
  =&
  \sum_{h=0}^{ab}
  \sin\left(\frac{4h\pi}{a}\right)\sin\left(\frac{4h\pi}{b}\right)
  \exp\left(-4n\frac{h^2}{ab}\pi\sqrt{-1}\right).
\end{split}
\end{equation*}
We denote the right-hand side by $W_{\rm{iii}}$, and show that it vanishes.
\par
Using the equality
\begin{equation*}
  \sum_{k=1}^{ab}
  \left(e^{4k\pi\sqrt{-1}/a}-e^{4k\pi\sqrt{-1}/a}\right)
  \left(e^{4k\pi\sqrt{-1}/b}-e^{4k\pi \sqrt{-1}/b}\right)
  e^{-4n\frac{k^2}{ab}\pi\sqrt{-1}}
  =
  -4W_{\rm{iii}},
\end{equation*}
we apply the Gauss sum reciprocity formula.
Putting $c:=ab$, $d:=4n$, and $w:=\frac{2(\pm a\pm b)}{ab}$ in Theorem~\ref{thm:Gauss}, we have
\begin{equation*}
\begin{split}
  &-4W_{\rm{iii}}
  \\
  =&
  \sum_{k=1}^{ab}
  \left(
    e^{\left(4k\frac{a+b}{ab}-4n\frac{k^2}{ab}\right)\pi\sqrt{-1}}
    -
    e^{\left(4k\frac{-a+b}{ab}-4n\frac{k^2}{ab}\right)\pi\sqrt{-1}}
    -
    e^{\left(4k\frac{a-b}{ab}-4n\frac{k^2}{ab}\right)\pi\sqrt{-1}}
    +
    e^{\left(4k\frac{-a-b}{ab}-4n\frac{k^2}{ab}\right)\pi\sqrt{-1}}
  \right)
  \\
  =&
  \frac{\sqrt{ab}}{e^{\pi\sqrt{-1}/4}\sqrt{4n}}
  \\
  &\times
  \sum_{l=1}^{4n}
  \left(
    e^{\frac{ab}{4n}(l+\frac{2(a+b)}{ab})^2\pi\sqrt{-1}}
    -
    e^{\frac{ab}{4n}(l+\frac{2(-a+b)}{ab})^2\pi\sqrt{-1}}
    -
    e^{\frac{ab}{4n}(l+\frac{2(a-b)}{ab})^2\pi\sqrt{-1}}
    +
    e^{\frac{ab}{4n}(l+\frac{2(-a-b)}{ab})^2\pi\sqrt{-1}}
  \right)
  \\
  =&
  \sqrt{\frac{ab}{4n}}e^{-\pi\sqrt{-1}/4}
  \\
  &\times
  \sum_{l=1}^{4n}
  \left(
    e^{\frac{\pi\sqrt{-1}}{4abn}(abl+2a+2b)^2\pi\sqrt{-1}}
    -
    e^{\frac{\pi\sqrt{-1}}{4abn}(abl-2a+2b)^2\pi\sqrt{-1}}
    -
    e^{\frac{\pi\sqrt{-1}}{4abn}(abl+2a-2b)^2\pi\sqrt{-1}}
    +
    e^{\frac{\pi\sqrt{-1}}{4abn}(abl-2a-2b)^2\pi\sqrt{-1}}
  \right).
\end{split}
\end{equation*}
Replacing $l$ with $4n-l$, we have
\begin{align*}
  \sum_{l=1}^{4n}e^{\frac{\pi\sqrt{-1}}{4abn}(abl-2a-2b)^2\pi\sqrt{-1}}
  &=
  \sum_{l=1}^{4n}e^{\frac{\pi\sqrt{-1}}{4abn}(abl+2a+2b)^2\pi\sqrt{-1}},
  \\
  \sum_{l=1}^{4n}e^{\frac{\pi\sqrt{-1}}{4abn}(abl+2a-2b)^2\pi\sqrt{-1}}
  &=
  \sum_{l=1}^{4n}e^{\frac{\pi\sqrt{-1}}{4abn}(abl+2a-2b)^2\pi\sqrt{-1}}.
\end{align*}
Therefore we have
\begin{equation*}
\begin{split}
  -4W_{\rm{iii}}
  &=
  \sqrt{\frac{ab}{n}}e^{-\pi\sqrt{-1}/4}
  \sum_{l=1}^{4n}
  \left(
    e^{\frac{\pi\sqrt{-1}}{4abn}(abl+2a+2b)^2\pi\sqrt{-1}}
    -
    e^{\frac{\pi\sqrt{-1}}{4abn}(abl-2a+2b)^2\pi\sqrt{-1}}
  \right).
  \\
  =&
  \sqrt{\frac{ab}{n}}e^{-\pi\sqrt{-1}/4}
  e^{\frac{a\pi\sqrt{-1}}{bn}+\frac{b\pi\sqrt{-1}}{an}}
  \sum_{l=1}^{4n}
  e^{\frac{bl(al+4)\pi\sqrt{-1}}{4n}}
  \left(e^{\frac{(al+2)\pi\sqrt{-1}}{n}}-e^{-\frac{(al+2)\pi\sqrt{-1}}{n}}\right).
\end{split}
\end{equation*}
We denote the summation in the right-hand side by $\tilde{W}_{\rm{iii}}$.
Note that
\begin{equation*}
\begin{split}
  &
  \sum_{\substack{1\le l\le4n\\\text{$l$: even}}}
  e^{\frac{bl(al+4)\pi\sqrt{-1}}{4n}}
  \left(e^{\frac{(al+2)\pi\sqrt{-1}}{n}}-e^{-\frac{(al+2)\pi\sqrt{-1}}{n}}\right)
  \\
  =&
  \sum_{\substack{1\le l\le 2n\\\text{$l$: even}}}
  e^{\frac{bl(al+4)\pi\sqrt{-1}}{4n}}
  \left(e^{\frac{(al+2)\pi\sqrt{-1}}{n}}-e^{-\frac{(al+2)\pi\sqrt{-1}}{n}}\right)
  \\
  &
  +
  \sum_{\substack{1\le l\le 2n\\\text{$l$: even}}}
  e^{\frac{b(l+2n)(a(l+2n)+4)\pi\sqrt{-1}}{4n}}
  \left(e^{\frac{(a(l+2n)+2)\pi\sqrt{-1}}{n}}-e^{-\frac{(a(l+2n)+2)\pi\sqrt{-1}}{n}}\right)
  \\
  =&
  \sum_{\substack{1\le l\le2n\\\text{$l$: even}}}
  \left(1+(-1)^{b(2+al+an)}\right)
  e^{\frac{bl(al+4)\pi\sqrt{-1}}{4n}}
  \left(e^{\frac{(al+2)\pi\sqrt{-1}}{n}}-e^{-\frac{(al+2)\pi\sqrt{-1}}{n}}\right)
  \\
  =&
  0.
\end{split}
\end{equation*}
In a similar way we can prove
\begin{equation*}
\begin{split}
  &
  \sum_{\substack{1\le l\le4n\\\text{$l$: odd}}}
  e^{\frac{bl(al+4)\pi\sqrt{-1}}{4n}}
  \left(e^{\frac{(al+2)\pi\sqrt{-1}}{n}}-e^{-\frac{(al+2)\pi\sqrt{-1}}{n}}\right)
  \\
  =&
  \sum_{\substack{1\le l\le2n\\\text{$l$: odd}}}
  \left(1+(-1)^{b(2+al+an)}\right)
  e^{\frac{bl(al+4)\pi\sqrt{-1}}{4n}}
  \left(e^{\frac{(al+2)\pi\sqrt{-1}}{n}}-e^{-\frac{(al+2)\pi\sqrt{-1}}{n}}\right)
  \\
  =&
  2
  \sum_{\substack{1\le l\le2n\\\text{$l$: odd}}}
  e^{\frac{bl(al+4)\pi\sqrt{-1}}{4n}}
  \left(e^{\frac{(al+2)\pi\sqrt{-1}}{n}}-e^{-\frac{(al+2)\pi\sqrt{-1}}{n}}\right).
\end{split}
\end{equation*}
Hence we have the following four equalities:
\begin{equation}\label{eq:W_iii_1}
  \tilde{W}_{\rm{iii}}
  =
  2
  \sum_{\substack{1\le l\le2n\\\text{$l$: odd}}}
  e^{\frac{bl(al+4)\pi\sqrt{-1}}{4n}}
  \left(e^{\frac{(al+2)\pi\sqrt{-1}}{n}}-e^{-\frac{(al+2)\pi\sqrt{-1}}{n}}\right),
\end{equation}
\begin{equation}\label{eq:W_iii_2}
\begin{split}
  \tilde{W}_{\rm{iii}}
  &=
  2
  \sum_{\substack{1\le l\le 2n\\\text{$l$: odd}}}
  e^{\frac{b(2n-l)(a(2n-l)+4)\pi\sqrt{-1}}{4n}}
  (e^{\frac{(a(2n-l)+2)\pi\sqrt{-1}}{n}}-e^{\frac{-(a(2n-l)+2)\pi\sqrt{-1}}{n}})
  \\
  &=
  2
  \sum_{\substack{1\le l\le 2n\\\text{$l$: odd}}}
  e^{\frac{bl(al-4)\pi\sqrt{-1}}{4n}}
  (e^{\frac{-(al-2)\pi\sqrt{-1}}{n}}-e^{\frac{(al-2)\pi\sqrt{-1}}{n}}),
\end{split}
\end{equation}
\begin{equation}\label{eq:W_iii_3}
\begin{split}
  &
  \sum_{\substack{n+1\le l\le 2n-1\\\text{$l$: odd}}}
  e^{\frac{bl(al+4)\pi\sqrt{-1}}{4n}}
  \left(e^{\frac{(al+2)\pi\sqrt{-1}}{n}}-e^{-\frac{(al+2)\pi\sqrt{-1}}{n}}\right)
  \\
  =&
  \sum_{\substack{1\le l\le n-1\\\text{$l$: odd}}}
  e^{\frac{b(2n-l)(a(2n-l)+4)\pi\sqrt{-1}}{4n}}
  \left(e^{\frac{(a(2n-l)+2)\pi\sqrt{-1}}{n}}-e^{-\frac{(a(2n-l)+2)\pi\sqrt{-1}}{n}}\right)
  \\
  =&
  \sum_{\substack{1\le l\le n-1\\\text{$l$: odd}}}
  e^{\frac{bl(al-4)\pi\sqrt{-1}}{4n}}
  \left(e^{-\frac{(al-2)\pi\sqrt{-1}}{n}}-e^{\frac{(al-2)\pi\sqrt{-1}}{n}}\right),
\end{split}
\end{equation}
\begin{equation}\label{eq:W_iii_4}
\begin{split}
  &
  \sum_{\substack{n+1\le l\le 2n-1\\\text{$l$: odd}}}
  e^{\frac{bl(al-4)\pi\sqrt{-1}}{4n}}
  \left(e^{-\frac{(al-2)\pi\sqrt{-1}}{n}}-e^{\frac{(al-2)\pi\sqrt{-1}}{n}}\right)
  \\
  =&
  \sum_{\substack{1\le l\le n-1\\\text{$l$: odd}}}
  e^{\frac{b(2n-l)(a(2n-l)-4)\pi\sqrt{-1}}{4n}}
  \left(e^{-\frac{(a(2n-l)-2)\pi\sqrt{-1}}{n}}-e^{\frac{(a(2n-l)-2)\pi\sqrt{-1}}{n}}\right)
  \\
  =&
  \sum_{\substack{1\le l\le n-1\\\text{$l$: odd}}}
  e^{\frac{bl(al+4)\pi\sqrt{-1}}{4n}}
  \left(e^{\frac{(al+2)\pi\sqrt{-1}}{n}}-e^{-\frac{(al+2)\pi\sqrt{-1}}{n}}\right)
\end{split}
\end{equation}
since $n$ is odd.
\par
Add \eqref{eq:W_iii_1} and \eqref{eq:W_iii_2}, and divide it by two, we have
\begin{equation}\label{eq:W_iii_5}
\begin{split}
  \tilde{W}_{\rm{iii}}
  &=
  \sum_{\substack{1\le l\le 2n\\\text{$l$: odd}}}
  e^{\frac{bl(al+4)\pi\sqrt{-1}}{4n}}
  \left(e^{\frac{(al+2)\pi\sqrt{-1}}{n}}-e^{-\frac{(al+2)\pi\sqrt{-1}}{n}}\right)
  \\
  &\quad+
  \sum_{\substack{1\le l\le 2n\\\text{$l$: odd}}}
  e^{\frac{bl(al-4)\pi\sqrt{-1}}{4n}}
  \left(e^{-\frac{(al-2)\pi\sqrt{-1}}{n}}-e^{\frac{(al-2)\pi\sqrt{-1}}{n}}\right).
\end{split}
\end{equation}
From \eqref{eq:W_iii_3}, the first summation in the right-hand side of \eqref{eq:W_iii_5} becomes
\begin{multline*}
  \sum_{\substack{1\le l\le n-1\\\text{$l$: odd}}}
  e^{\frac{bl(al+4)\pi\sqrt{-1}}{4n}}
  \left(e^{\frac{(al+2)\pi\sqrt{-1}}{n}}-e^{-\frac{(al+2)\pi\sqrt{-1}}{n}}\right)
  \\
  +
  e^{\frac{bn(an-4)\pi\sqrt{-1}}{4n}}
  \left(e^{\frac{(an-2)\pi\sqrt{-1}}{n}}-e^{-\frac{(an-2)\pi\sqrt{-1}}{n}}\right)
  \\
  +
  \sum_{\substack{1\le l\le n-1\\\text{$l$: odd}}}
  e^{\frac{bl(al-4)\pi\sqrt{-1}}{4n}}
  \left(e^{\frac{(al-2)\pi\sqrt{-1}}{n}}-e^{-\frac{(al-2)\pi\sqrt{-1}}{n}}\right).
\end{multline*}
From \eqref{eq:W_iii_4}, the second summation in the right-hand side of \eqref{eq:W_iii_5} becomes
\begin{multline*}
  \sum_{\substack{1\le l\le n-1\\\text{$l$: odd}}}
  e^{\frac{bl(al-4)\pi\sqrt{-1}}{4n}}
  \left(e^{-\frac{(al-2)\pi\sqrt{-1}}{n}}-e^{\frac{(al-2)\pi\sqrt{-1}}{n}}\right)
  \\
  +
  e^{\frac{bn(an-4)\pi\sqrt{-1}}{4n}}
  \left(e^{-\frac{(an-2)\pi\sqrt{-1}}{n}}-e^{\frac{(an-2)\pi\sqrt{-1}}{n}}\right)
  \\
  +
  \sum_{\substack{1\le l\le n-1\\\text{$l$: odd}}}
  e^{\frac{bl(al+4)\pi\sqrt{-1}}{4n}}
  \left(e^{-\frac{(al+2)\pi\sqrt{-1}}{n}}-e^{\frac{(al+2)\pi\sqrt{-1}}{n}}\right).
\end{multline*}
Therefore \eqref{eq:W_iii_5} turns out to be
\begin{equation*}
\begin{split}
  \tilde{W}_{\rm{iii}}
  &=
  2\sum_{\substack{1\le l\le n-1\\\text{$l$: odd}}}
  e^{\frac{bl(al+4)\pi\sqrt{-1}}{4n}}
  \left(e^{\frac{(al+2)\pi\sqrt{-1}}{n}}-e^{-\frac{(al+2)\pi\sqrt{-1}}{n}}\right)
  \\
  &\quad+
  2\sum_{\substack{1\le l\le n-1\\\text{$l$: odd}}}
  e^{\frac{bl(al-4)\pi\sqrt{-1}}{4n}}
  \left(e^{-\frac{(al-2)\pi\sqrt{-1}}{n}}-e^{\frac{(al-2)\pi\sqrt{-1}}{n}}\right)
  \\
  &\quad-
  2e^{abn\pi\sqrt{-1}/4}
  \left(e^{\frac{2\pi\sqrt{-1}}{n}}-e^{-\frac{2\pi \sqrt{-1}}{n}}\right)
  \\
  &=
  2\sum_{\substack{1\le l\le n-1\\\text{$l$: even}}}
  e^{\frac{b(n-l)(a(n-l)+4)\pi\sqrt{-1}}{4n}}
  \left(e^{\frac{(a(n-l)+2)\pi\sqrt{-1}}{n}}-e^{-\frac{(a(n-l)+2)\pi\sqrt{-1}}{n}}\right)
  \\
  &\quad+
  2\sum_{\substack{1\le l\le n-1\\\text{$l$: even}}}
  e^{\frac{b(n-l)(a(n-l)-4)\pi\sqrt{-1}}{4n}}
  \left(e^{-\frac{(a(n-l)-2)\pi\sqrt{-1}}{n}}-e^{\frac{(a(n-l)-2)\pi\sqrt{-1}}{n}}\right)
  \\
  &\quad-
  2e^{abn\pi\sqrt{-1}/4}
  \left(e^{\frac{2\pi\sqrt{-1}}{n}}-e^{-\frac{2\pi\sqrt{-1}}{n}}\right)
  \\
  &=
  2\sum_{\substack{1\le l\le n-1\\\text{$l$: even}}}
  (-1)^{l/2}e^{abn\pi\sqrt{-1}/4}e^{\frac{bl(al-4)\pi\sqrt{-1}}{4n}}
  \left(e^{-\frac{(al-2)\pi\sqrt{-1}}{n}}-e^{\frac{(al-2)\pi\sqrt{-1}}{n}}\right)
  \\
  &\quad+
  2\sum_{\substack{1\le l\le n-1\\\text{$l$: even}}}
  (-1)^{l/2}e^{abn\pi\sqrt{-1}/4}e^{\frac{bl(al+4)\pi\sqrt{-1}}{4n}}
  \left(e^{\frac{(al+2)\pi\sqrt{-1}}{n}}-e^{-\frac{(al+2)\pi\sqrt{-1}}{n}}\right)
  \\
  &\quad-
  2e^{abn\pi\sqrt{-1}/4}
  \left(e^{\frac{2\pi\sqrt{-1}}{n}}-e^{-\frac{2\pi\sqrt{-1}}{n}}\right)
  \\
  &=
  2e^{abn\pi\sqrt{-1}/4}
  \sum_{l=1}^{(n-1)/2}(-1)^{l}e^{\frac{bl(al-2)\pi\sqrt{-1}}{n}}
  \left(e^{-\frac{2(al-1)\pi\sqrt{-1}}{n}}-e^{\frac{2(al-1)\pi\sqrt{-1}}{n}}\right)
  \\
  &\quad+
  2e^{abn\pi\sqrt{-1}/4}
  \sum_{l=1}^{(n-1)/2}(-1)^{l}e^{\frac{bl(al+2)\pi\sqrt{-1}}{n}}
  \left(e^{\frac{2(al+1)\pi\sqrt{-1}}{n}}-e^{-2\frac{(al+1)\pi\sqrt{-1}}{n}}\right)
  \\
  &\quad-
  2e^{abn\pi\sqrt{-1}/4}
  \left(e^{\frac{2\pi\sqrt{-1}}{n}}-e^{-\frac{2\pi\sqrt{-1}}{n}}\right).
\end{split}
\end{equation*}
\par
Now we will calculate $\tilde{J}_n\left(e^{4\pi\sqrt{-1}/4}\right)$.
Since $ab$ is odd, we have
\begin{equation*}
\begin{split}
  \tilde{J}_n\left(e^{4\pi\sqrt{-1}/n}\right)
  =&
  \sum_{j=-(n-1)/2}^{(n-1)/2}
  e^{4bj(aj+1)\pi\sqrt{-1}/n}
  \left(e^{4(aj+1/2)\pi\sqrt{-1}/n}-e^{-4(aj+1/2)\pi\sqrt{-1}/n}\right)
  \\
  &\text{(Put $l=2j$, noting that $n$ is odd)}
  \\
  =&
  \sum_{\substack{1-n\le l\le n-1\\\text{$l$: even}}}
  e^{bl(al+2)\pi\sqrt{-1}/n}
  \left(e^{2(al+1)\pi\sqrt{-1}/n}-e^{-2(al+1)\pi\sqrt{-1}/n}\right)
  \\
  =&
  \sum_{\substack{0\le l\le n-1\\\text{$l$: even}}}
  e^{bl(al+2)\pi\sqrt{-1}/n}
  \left(e^{2(al+1)\pi\sqrt{-1}/n}-e^{-2(al+1)\pi\sqrt{-1}/n}\right)
  \\
  &\quad+
  \sum_{\substack{1\le l\le n-2\\\text{$l$: odd}}}
  e^{b(l-n)(a(l-n)+2)\pi\sqrt{-1}/n}
  \left(e^{2(a(l-n)+1)\pi\sqrt{-1}/n}-e^{-2(a(l-n)+1)\pi \sqrt{-1}/n}\right)
  \\
  &\text{(since $ab$ and $n$ are odd)}
  \\
  =&
  \sum_{\substack{0\le l\le n-1\\\text{$l$: even}}}
  e^{bl(al+2)\pi\sqrt{-1}/n}
  \left(e^{2(al+1)\pi\sqrt{-1}/n}-e^{-2(al+1)\pi\sqrt{-1}/n}\right)
  \\
  &-
  \sum_{\substack{1\le l\le n-2\\\text{$l$: odd}}}
  e^{bl(al+2)\pi\sqrt{-1}/n}
  \left(e^{2(al+1)\pi\sqrt{-1}/n}-e^{-2(al+1)\pi\sqrt{-1}/n}\right)
  \\
  =&
  \sum_{l=0}^{n-1}
  (-1)^le^{bl(al+2)\pi\sqrt{-1}/n}
  \left(e^{2(al+1)\pi\sqrt{-1}/n}-e^{-2(al+1)\pi\sqrt{-1}/n}\right).
\end{split}
\end{equation*}
Since we have
\begin{equation*}
\begin{split}
  &
  \sum_{l=0}^{n-1}
  (-1)^le^{bl(al+2)\pi\sqrt{-1}/n}
  \left(e^{2(al+1)\pi\sqrt{-1}/n}-e^{-2(al+1)\pi\sqrt{-1}/n}\right)
  \\
  =&
  \sum_{l=1}^{n}
  (-1)^{n-l}e^{b(n-l)(a(n-l)+2)\pi\sqrt{-1}/n}
  \left(e^{2(a(n-l)+1)\pi\sqrt{-1}/n}-e^{-2(a(n-l)+1)\pi\sqrt{-1}/n}\right)
  \\
  =&
  \sum_{l=0}^{n-1}
  (-1)^le^{bl(al-2)\pi\sqrt{-1}/n}
  \left(e^{-2(al-1)\pi\sqrt{-1}/n}-e^{2(al-1)\pi\sqrt{-1}/n}\right),
\end{split}
\end{equation*}
we conclude that
\begin{equation}\label{eq:W_iii_6}
\begin{split}
  \tilde{J}_n\left(e^{4\pi\sqrt{-1}/n}\right)
  =&
  \frac{1}{2}
  \sum_{l=0}^{n-1}
  (-1)^le^{bl(al+2)\pi\sqrt{-1}/n}
  \left(e^{2(al+1)\pi\sqrt{-1}/n}-e^{-2(al+1)\pi \sqrt{-1}/n}\right)
  \\
  &+
  \frac{1}{2}
  \sum_{l=0}^{n-1}
  (-1)^le^{bl(al-2)\pi\sqrt{-1}/n}
  \left(e^{-2(al-1)\pi\sqrt{-1}/n}-e^{2(al-1)\pi\sqrt{-1}/n}\right).
\end{split}
\end{equation}
\par
Now we have
\begin{equation}\label{eq:W_iii_7}
\begin{split}
  &
  \sum_{l=0}^{n-1}
  (-1)^le^{bl(al+2)\pi\sqrt{-1}/n}
  \left(e^{2(al+1)\pi\sqrt{-1}/n}-e^{-2(al+1)\pi \sqrt{-1}/n}\right)
  \\
  =&
  \sum_{l=0}^{(n-1)/2}
  (-1)^le^{bl(al+2)\pi\sqrt{-1}/n}
  \left(e^{2(al+1)\pi\sqrt{-1}/n}-e^{-2(al+1)\pi\sqrt{-1}/n}\right)
  \\
  &+
  \sum_{l=1}^{(n-1)/2}
  (-1)^{n-l}e^{b(n-l)(a(n-l)+2)\pi\sqrt{-1}/n}
  \left(e^{2(a(n-l)+1)\pi\sqrt{-1}/n}-e^{-2(a(n-l)+1)\pi\sqrt{-1}/n}\right)
  \\
  =&
  \sum_{l=0}^{(n-1)/2}
  (-1)^le^{bl(al+2)\pi\sqrt{-1}/n}
  \left(e^{2(al+1)\pi\sqrt{-1}/n}-e^{-2(al+1)\pi\sqrt{-1}/n}\right)
  \\
  &+
  \sum_{l=1}^{(n-1)/2}
  (-1)^le^{bl(al-2)\pi\sqrt{-1}/n}
  \left(e^{-2(al-1)\pi\sqrt{-1}/n}-e^{2(al-1)\pi\sqrt{-1}/n}\right)
\end{split}
\end{equation}
and
\begin{equation}\label{eq:W_iii_8}
\begin{split}
  &
  \sum_{l=0}^{n-1}
  (-1)^le^{bl(al-2)\pi\sqrt{-1}/n}
  \left(e^{-2(al-1)\pi\sqrt{-1}/n}-e^{2(al-1)\pi\sqrt{-1}/n}\right)
  \\
  =&
  \sum_{l=0}^{(n-1)/2}
  (-1)^le^{bl(al-2)\pi\sqrt{-1}/n}
  \left(e^{-2(al-1)\pi\sqrt{-1}/n}-e^{2(al-1)\pi\sqrt{-1}/n}\right)
  \\
  &+
  \sum_{l=1}^{(n-1)/2}
  (-1)^le^{bl(al+2)\pi\sqrt{-1}/n}
  \left(e^{2(al+1)\pi\sqrt{-1}/n}-e^{-2(al+1)\pi\sqrt{-1}/n}\right).
\end{split}
\end{equation}
\par
Adding \eqref{eq:W_iii_7} and \eqref{eq:W_iii_8}, and dividing it by two, we obtain from \eqref{eq:W_iii_6}
\begin{equation*}
\begin{split}
  \tilde{J}_n\left(e^{4\pi\sqrt{-1}/n}\right)
  &=
  \sum_{l=0}^{(n-1)/2}
  (-1)^le^{bl(al+2)\pi\sqrt{-1}/n}
  \left(e^{2(al+1)\pi\sqrt{-1}/n}-e^{-2(al+1)\pi\sqrt{-1}/n}\right)
  \\
  &\quad+
  \sum_{l=0}^{(n-1)/2}
  (-1)^le^{bl(al-2)\pi\sqrt{-1}/n}
  \left(e^{-2(al-1)\pi\sqrt{-1}/n}-e^{2(al-1)\pi\sqrt{-1}/n}\right)
  \\
  &\quad-
  \left(e^{2\pi\sqrt{-1}/n}-e^{-2\pi\sqrt{-1}/n}\right).
\end{split}
\end{equation*}
Therefore we finally have
\begin{equation*}
  \tilde{W}_{\rm{iii}}
  =
  2e^{abn\pi\sqrt{-1}/4}
  \widetilde{J}_n\left(e^{4\pi \sqrt{-1}/n}\right)
  =
  0
\end{equation*}
and
\begin{equation*}
  -4W_{\rm{iii}}
  =
  \sqrt{\frac{ab}{n}}e^{-\pi\sqrt{-1}/4}
  e^{\frac{a\pi\sqrt{-1}}{bn}+\frac{b\pi\sqrt{-1}}{an}}
  \tilde{W}_{\rm{iii}}
  =
  0.
\end{equation*}
This completes the proof.
\end{proof}

\bibliography{mrabbrev,hitoshi}
\bibliographystyle{amsplain}

\end{document}